\theoremstyle{plain} 
\newtheorem{theorem}{\bf Theorem}[section]
\newtheorem{proposition}[theorem]{\bf Proposition}
\newtheorem{Theorem}{\bf Theorem}
\newtheorem{lemma}[theorem]{\bf Lemma}
\newtheorem{corollary}[theorem]{\bf Corollary}
\newtheorem{Corollary}[Theorem]{\bf Corollary}
\newtheorem*{remark}{\bf Remark}
\newtheorem*{notation}{\bf Notations}
\newtheorem{definition}[theorem]{\bf Definition}
\def\C{{\mathbb C}}
\def\N{{\mathbb N}}
\def\R{{\mathbb R}}
\def\Q{{\mathbb Q}}
\def\D{\mathbb{D}}
\def\J{\mathcal{J}}
\def\P{\mathbb{P}}
\def\supp{\textup{supp}}
\def\id{\mathrm{id}}
\def\pe{\textup{ := }}
\def\rat{\textup{Rat}}
\def\Per{\textup{Per}}
\def\bif{\textup{bif}}
\def\J{\mathcal{J}}
\def\Fix{\textup{Fix}}
\def\un{{\underline{n}}}
\def\uw{{\underline{w}}}
\def\cm{\textup{cm}}
\def\fm{\textup{fm}}
\def\tm{\textup{tm}}
\def\cM{\mathcal{M}} 
\def\PSL{\mathrm{PSL}}
\def\and{{\quad\text{and}\quad}}
\title{Hyperbolic components of rational maps: Quantitative equidistribution and counting}
\small\textsc{Thomas Gauthier, Y\^usuke Okuyama $\&$ Gabriel Vigny}]{\small\thepage}
\author{Thomas Gauthier}
\address{LAMFA, UPJV, 33 rue Saint-Leu, 80039 AMIENS Cedex 1, FRANCE}
\address{CMLS, Ecole Polytechnique, 91128 PALAISEAU Cedex, FRANCE}
\email{thomas.gauthier@u-picardie.fr}
\author{Y\^usuke Okuyama}
\address{Division of Mathematics, Kyoto Institute of Technology, Sakyo-ku, Kyoto 606-8585 JAPAN}
\email{okuyama@kit.ac.jp}
\author{Gabriel Vigny}
\address{LAMFA, UPJV, 33 rue Saint-Leu, 80039 AMIENS Cedex 1, FRANCE}
\email{gabriel.vigny@u-picardie.fr}
\thanks{The first and third authors' research is partially supported by the ANR grant Lambda ANR-13-BS01-0002.}
\thanks{The second author's research is partially supported by JSPS Grant-in-Aid 
for Scientific Research (C), 15K04924}
\thanks{The second author would also like to thank the LAMFA and the Universit\'e de Picardie Jules Verne, which he was visiting in spring 2016 and where this work grew up}
\begin{document}

\begin{abstract}
Let $\Lambda$ be a quasi-projective variety and assume that, either $\Lambda$ is a subvariety of the moduli space $\cM_d$ of degree $d$ rational maps, or $\Lambda$ parametrizes an algebraic family $(f_\lambda)_{\lambda\in\Lambda}$ of degree $d$ rational maps on $\P^1$. We prove the equidistribution of parameters having $p$ distinct 
neutral cycles towards the bifurcation current $T_\bif^p$ letting the periods of the cycles go to $\infty$, with an exponential speed of convergence. Several consequences of this result are:
\begin{itemize}
\item a precise asymptotic of the number of hyperbolic components of parameters admitting $2d-2$ distinct attracting cycles of exact periods $n_1,\dots, n_{2d-2}$ as $\min_j n_j \to \infty$ in term of the mass of the bifurcation measure and compute that mass in the case where $d=2$. In particular, in $\mathcal{M}_d$, the number of such components 
is $\asymp d^{n_1+\cdots+n_{2d-2}}$, provided that $\min_j n_j$ is large enough.
\item in the moduli space $\mathcal{P}_d$ of polynomials of degree $d$, 
among hyperbolic components such that all (finite) critical points are in {the immediate basins of (not necessarily distinct) attracting cycles} of respective exact periods $n_1,\ldots,n_{d-1}$, the proportion of those components, counted with multiplicity, having at least two
critical points are in the same basin of attraction is exponentially small.
\item in $\cM_d$, we prove the equidistribution of the centers of the hyperbolic components  admitting $2d-2$ distinct attracting cycles of exact periods 
$n_1,\dots, n_{2d-2}$ towards the bifurcation measure $\mu_\bif$
with an exponential speed of convergence. 
\item we have equidistribution, up to extraction, of the parameters having $p$ distinct cycles of given multipliers towards the bifurcation current $T_\bif^p$ outside a pluripolar set of multipliers as the minimum of the periods of the cycles goes to $\infty$. 
\end{itemize} 
As a by-product, we also get the weak genericity of hyperbolic postcritically finiteness
in the moduli space of rational maps.
A key step of the proof is a {locally uniform} version of the quantitative approximation of the Lyapunov exponent of a rational map by the $\log^+$ of the modulus of the multipliers of periodic points. 
\end{abstract}

\maketitle

\tableofcontents

\section{Introduction}
For a holomorphic family $(f_{\lambda})_{\lambda\in \Lambda}$ 
of degree $d>1$ rational maps on the Riemann sphere $\P^1$
parametrized by a quasi-projective variety $\Lambda$, 
the \emph{bifurcation locus} of $(f_{\lambda})_{\lambda\in \Lambda}$
on $\Lambda$ is the $J$-unstability locus in the sense of Ma\~ne-Sad-Sullivan, i.e., the closure of 
the set of all parameters in $\Lambda$ at which
the Julia set $J_{\lambda}$ of $f_{\lambda}$ does not move
continuously. 
It is now classical that  this set is nowhere dense in $\Lambda$ and admits several distinct topological descriptions, such as the closure of the set of parameters for which $f_{\lambda}$ admits a non-persistent neutral cycle or the existence of an unstable critical dynamics (see e.g.~\cite{MSS,lyubich,McMullen}).
From now on, pick any integer $d>1$.

On the other hand, any (individual) rational map $f$ of degree $d$ on 
$\P^1$ admits a unique maximal entropy measure $\mu_f$, whose support coincides with the Julia set $\J_f$ of $f$, and the Lyapunov exponent of $f$ with respect to $\mu_f$ is defined by $L(f):=\int_{\P^1}\log|f'|\mu_f$ and satisfies $L(f)\geq\frac{1}{2}\log d>0$. For a family $(f_\lambda)_{\lambda\in\Lambda}$,
the induced Lyapunov function $L:\lambda\in\Lambda\longrightarrow L(f_\lambda)\in\R$ is \emph{p.s.h} and continuous on the parameter space $\Lambda$. We can define the \emph{bifurcation current} of $(f_{\lambda})_{\lambda\in \Lambda}$ on $\Lambda$ as the closed positive $(1,1)$-current
\[
T_\bif:=dd^cL.
\]
By DeMarco \cite{DeMarco2}, the support of $dd^cL$ coincides with the bifurcation locus of the family $(f_\lambda)_{\lambda\in\Lambda}$. 
For any integer $1\leq p\leq \dim\Lambda$, Bassanelli and Berteloot
also defined the $p$-\emph{bifurcation currents} $T_\bif^p$ as the $p$-th exterior product of $T_\bif$.
It is a positive closed current of bidegree $(p,p)$ so 
the \emph{bifurcation measure} $\mu_\bif:=(dd^cL)^{\dim\Lambda}$
is a positive measure
on $\Lambda$. 
{If $p>1$,} the current $T_\bif^p$ detects, in a certain sense, stronger bifurcations than $T_\bif=T_\bif^1$ \cite{BB1}. 
Indeed, its topological support admits several dynamical characterizations 
 {similar to} that of the bifurcation locus: for example, it is the closure of parameters admitting $p$ distinct neutral cycles or $p$ critical points preperiodic to repelling cycles (see~\cite{Dujardin2012,gauthier:Indiana}).

The group $\PSL_2(\C)$ of M\"obius transformations acts on 
the space $\rat_d$ of degree $d$ rational maps on $\P^1$,
which is itself a holomorphic family of rational maps,
by conjugacy. The \emph{moduli space $\mathcal{M}_d$} 
of degree $d$ rational maps on $\P^1$
is the {\itshape orbit space} of $\mathrm{PSL}_2(\C)$ in $\rat_d$, that is,
the quotient of $\rat_d$ resulting from this action of $\PSL_2(\C)$.
It is an irreducible affine variety of dimension $2d-2$,
and is singular if and only if $d\geq3$ (Silverman \cite{silverman-spacerat}).
The Lyapunov function $f\mapsto L(f)$ on $\rat_d$
{descends to}
a continuous and psh function $\mathcal{L}:\mathcal{M}_d\to\R$.
For any integer $1\le p\le 2d-2$, 
the $p$-bifurcation current on $\mathcal{M}_d$ is thus given by $T_\bif^p:=(dd^c\mathcal{L})^p$, and the \emph{bifurcation measure} on $\mathcal{M}_d$ is by
\[\mu_\bif:=T_\bif^{2d-2}=(dd^c\mathcal{L})^{2d-2}, \]
which is a finite positive measure on $\mathcal{M}_d$ of
strictly positive total mass (see \cite{BB1}). 

One of the features of the bifurcation currents is to allow measurable statements of the above density, or in general, accumulation properties. 
Let us be more precise. 
Let $\Lambda$ be a quasi-projective variety such that, either $\Lambda \subset \cM_d$, or parametrizing an algebraic family $(f_\lambda)_{\lambda\in\Lambda}$ of degree $d$ rational maps on $\P^1$.
For any $n\in\N^*$ 
and any $w\in\C\setminus\{1\}$, let $\Per_n(w)$ be the analytic hypersurface
\begin{gather*}
\Per_n(w):=\{\lambda\in\Lambda:f_\lambda\text{ has a cycle of multiplier }w\text{ and the exact period }n\}
\end{gather*}
in $\Lambda$
 and denote by $[\Per_n(w)]$ the current of integration over $\Per_n(w)$
on $\Lambda$. Since $\Lambda$ is quasi-projective, the hypersurfaces $\Per_n(w)$ are actually algebraic hypersurfaces of $\Lambda$ (see e.g.~\cite{BB3}).
By Bassanelli and Berteloot~\cite{BB3}, the sequence $(d^{-n}[\Per_n(w)])$ 
weighted by the Lebesgue measure on the disk of center $0$ and radius $|w|$ 
converges towards the bifurcation current $T_\bif$.
Similar dynamically significant equidistribution properties towards the bifurcation current have been recently established in various contexts, as general holomorphic families of rational maps~\cite{favredujardin,BB2,okuyama:distrib} or moduli spaces of polynomials~\cite{multipliers,DistribTbif}.

The proofs developed in op. cit. do not allow establishing equidistribution phenomena towards
the bifurcation measure  $\mu_\bif$. Indeed, {any of}
the  above convergences obtained 
is essentially $L^1_{\mathrm{loc}}$ convergence of the potentials 
{of currents}, which does not guarantee continuity of the intersection.

One of the main purposes of the article is to prove the equidistribution 
of  parameters having
$p$ non-repelling cycles towards the bifurcation current $T_\bif^p$ 
{as} the {minimum of the} periods of
those cycles go{es} to $\infty$, 
with an exponential speed of convergence. 
We will then deduce several important consequences, notably in 
counting hyperbolic components of disjoint types in $\cM_d$. Notice that such counting results are of combinatorial and algebraic nature and have a priori no relation to bifurcation measures. Furthermore, they are the first general results 
in that direction so far. 

\begin{notation} \normalfont
Let $\mu:\N^*\to\{-1,0,1\}$ be the M\"obius function.
For any $n\in\N^*$, set
\[
d_n:=\sum_{m|n}\mu\left(\frac{n}{m}\right)(d^m+1) \in\N^*,
\]
which is $\sim d^n$ as $n\to\infty$. For any $p\in\N^*$, any 
$\un=(n_1,\ldots,n_p)\in(\N^*)^p$, and any 
$\underline{\rho}=(\rho_1,\ldots,\rho_p)\in]0,1]^p$, 
we set $|\underline{n}|:=\sum_{j=1}^p n_j$, so that 
$d^{|\un|}= \prod_{j=1}^pd^{n_j}$, and set
\[
d_{|\un|}:=\prod_{j=1}^pd_j 
\]
in a similar way.
For any $i\in\{0,1,2\}$ and any $n\in\N^*$, we
also set $\sigma_i(n):=\sum_{m|n}m^i$, 
 so in particular $\sigma_0\le\sigma_1\le\sigma_2$ on $\N^*$ (beware that $\sigma_2(n)\leq C n^2 \log \log n$ for some constant $C$).

For any $\un=(n_1,\ldots,n_{2d-2})\in(\N^*)^{2d-2}$ 
 and any $\uw=(w_1,\ldots,w_{2d-2})\in \C^{2d-2}$,
let $\mathrm{Stab}(\un)$ (resp.\ $\mathrm{Stab}(\un, \uw)$)
be the set of all permutations  
of the indices $\{1,2,\ldots,2d-2\}$ 
that do not change the ordered $(2d-2)$-tuple 
$(n_1,\ldots,n_{2d-2})\in(\N^*)^{2d-2}$
(resp.\ $((n_1,w_1), \dots, (n_{2d-2},w_{2d-2}))\in(\N^*\times \C)^{2d-2}$), 
so in particular $\#\mathrm{Stab}(\un,\uw)\le\#\mathrm{Stab}(\un)\le(2d-2)!$.

For $r>0$, we set $\D_r=\{|z|<r\}$, so that $\partial\D_r=\mathbb{S}_r=\{|z|=r\}$.
\end{notation}

\subsection*{Statement of the main results}

Let $\Lambda$ be a quasi-projective variety such that, either $\Lambda \subset \cM_d$, or parametrizing an algebraic family $(f_\lambda)_{\lambda\in\Lambda}$ of degree $d$ rational maps on $\P^1$.
We refer to \cite{Demailly_SMF} for basics on positive closed currents 
and intersection theory on algebraic varieties.

For any integer $1\leq p\leq \min\{\dim\Lambda,2d-2\}$, 
any $\un=(n_1,\ldots,n_p)\in(\N^*)^p$, 
and any $\underline{\rho}=(\rho_1,\ldots,\rho_p)\in]0,1]^p$, the
following positive closed current

\begin{equation}\label{def:approxcurrent}
T_{\un}^p(\underline{\rho}):=\frac{1}{d_{|\un|}}\int_{[0,2\pi]^p}
\bigwedge_{j=1}^p[\Per_{n_j}(\rho_je^{i\theta_j})]
\frac{\mathrm{d}\theta_1\cdots\mathrm{d}\theta_p}{(2\pi)^p}
\end{equation}
on $\Lambda$ is well-defined,
and coincides with $\bigwedge_{j=1}^pT_{n_j}^1(\rho_j)$
by the Fubini theorem (see e.g.~\cite{BB3}).
We say a form $\Psi$ on $\Lambda$ to be DSH if $dd^c\Psi=T^+-T^-$ for some 
positive  closed currents $T^\pm$
 of finite masses on $\Lambda$. 
We refer to \S\ref{secDSH} for the precise definition of the semi-norm 
$\|\Psi\|_{\mathrm{DSH}}^*$.

One of our principal results is the following.

\begin{Theorem}\label{tm:vitessemoyennesalgebraic} 
Let $\Lambda$ be a quasi-projective variety such that, either $\Lambda \subset \cM_d$, or parametrizing an algebraic family $(f_\lambda)_{\lambda\in\Lambda}$ of degree $d$ rational maps on $\P^1$.
Then for any compact subset $K$ in $\Lambda$, there exists a constant $C(K)>0$ such that for any integer
$1\leq p\leq \min\{\dim\Lambda,2d-2\}$, any $\un=(n_1,\ldots,n_p)\in(\N^*)^p$, 
any $\underline{\rho}=(\rho_1,\ldots,\rho_p)\in]0,1]^p$, and
any continuous $\mathrm{DSH}$-form $\Psi$ of bidegree $(m-p,m-p)$ supported in $K$,
we have
\[
\left|\left\langle T_{\un}^p(\underline{\rho})-T_\bif^p,\Psi\right\rangle\right|\leq C(K)\cdot \max_{1\leq j\leq p}\left((1+|\log\rho_j|)\frac{\sigma_2(n_j)}{d^{n_j}}\right)\cdot\|\Psi\|^*_{\mathrm{DSH}}.
\]
\end{Theorem}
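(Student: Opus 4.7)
\textbf{My plan} is to combine a multilinear telescoping identity with a $p=1$ quasi-potential comparison, using the $\mathrm{DSH}$ formalism to do the pairing.

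By Fubini, $T_{\un}^p(\underline\rho)=\bigwedge_{j=1}^p T_{n_j}^1(\rho_j)$. Each factor $T_{n_j}^1(\rho_j)$ is $dd^c$ of an explicit, locally bounded psh potential
\[ L_n^\rho(\lambda) := \frac{1}{d_n}\int_0^{2\pi} \log|p_n^*(\lambda,\rho e^{i\theta})|\,\frac{d\theta}{2\pi}\;=\;\log\rho + \frac{1}{d_n}\sum_c \log^+\!\frac{|w_c(\lambda)|}{\rho}, \]
obtained by Poisson-Jensen from the $n$-th dynatomic polynomial $p_n^*(\lambda,\cdot)$, whose $d_n$ roots are the multipliers $w_c(\lambda)$ of the exact period-$n$ cycles of $f_\lambda$. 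The $p=1$ case of the theorem thus reduces to the \emph{locally uniform quantitative Lyapunov approximation}
\[ \|L_n^\rho - L\|_{L^\infty(K)} \;\le\; C(K)\,(1+|\log\rho|)\,\sigma_2(n)/d^n, \]
announced in the abstract as the key step; I would prove it separately and take it here as a black box.

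For general $p$, I invoke the telescoping identity
\[ T_{\un}^p(\underline\rho) - T_\bif^p \;=\; \sum_{k=1}^p dd^c\bigl(L_{n_k}^{\rho_k}-L\bigr) \wedge R_k,\qquad R_k:=\Bigl(\bigwedge_{j<k} T_{n_j}^1(\rho_j)\Bigr) \wedge T_\bif^{p-k}. \]
Pairing with the continuous $\mathrm{DSH}$ form $\Psi$ and integrating by parts---legitimate since $L_{n_k}^{\rho_k}-L$ is locally bounded and all wedges are well defined by Bedford-Taylor---the $k$-th term becomes $\langle L_{n_k}^{\rho_k}-L,\,R_k\wedge dd^c\Psi\rangle$. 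Writing $dd^c\Psi=T^+-T^-$ with $T^\pm$ positive closed of masses controlled by $\|\Psi\|_{\mathrm{DSH}}^*$, and using iterated Chern-Levine-Nirenberg to bound $\|T^\pm\wedge R_k\|_K$ uniformly (the relevant potentials being uniformly close to $L$, hence uniformly bounded, on a neighborhood $K'$ of $K$ in the nontrivial regime where the right-hand side of the statement is small, the opposite regime being absorbed in $C(K)$ by the trivial bound on the masses), the $k$-th term is controlled by $C(K)\|L_{n_k}^{\rho_k}-L\|_{L^\infty(K')}\|\Psi\|_{\mathrm{DSH}}^*$; summing the $p$ terms and applying the key estimate yields the claim with the maximum appearing naturally.

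The main obstacle is the $p=1$ key estimate. Pointwise quantitative Lyapunov approximations via averaged $\log^+$ of multipliers exist in the literature (Bassanelli-Berteloot, Okuyama), but making them locally uniform in $\lambda\in\Lambda$ with the explicit rate $\sigma_2(n)/d^n$ in $n$ (and the logarithmic loss $1+|\log\rho|$ in $\rho$) demands uniform control, over compacts $K\subset\Lambda$, of (i) cycles of period $m\mid n$ with $m<n$ (which produce the $\sigma$ factor through M\"obius inversion), (ii) multipliers clustering near the circle $|w|=\rho$, and (iii) the escape-rate and linearization estimates at repelling periodic points. This quantitative equidistribution of periodic-point data is the true technical core of the paper.
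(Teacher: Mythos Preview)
Your approach is essentially the paper's: the same telescoping decomposition, the same Stokes integration by parts, and the same black-box invocation of the locally uniform Lyapunov approximation (this is the paper's Theorem~\ref{tm:approx1}). The one point where you diverge is the uniform bound on $\|R_k\wedge T^\pm\|$. You propose iterated Chern--Levine--Nirenberg plus a good/bad regime dichotomy, whereas the paper simply observes that since $\Lambda$ is quasi-projective and all currents involved are positive and closed, the mass $\int_\Lambda R_k\wedge T^\pm$ can be computed in cohomology and is bounded by $C_2\|T^\pm\|$ uniformly via B\'ezout (each $d_n^{-1}[\Per_n(w)]$ has uniformly bounded degree, cf.\ \eqref{eq:boundedmass}). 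Your CLN route is more delicate: in the ``bad regime'' the potentials $L_{n_j}^{\rho_j}$ need not be uniformly bounded on $K'$, so CLN alone yields a bound that is polynomial rather than linear in $\max_j(1+|\log\rho_j|)\sigma_2(n_j)/d^{n_j}$, and your ``trivial bound on the masses'' is left unspecified---if it is meant to be the cohomology/B\'ezout bound, then just use that bound in all regimes and drop the dichotomy entirely, as the paper does.
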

We first prove this theorem in the case where $p=1$.
To do that, we show in Section~\ref{sec:lyap} a locally uniform version of the second author's result \cite{okuyama:speed} 
on the quantitative approximation of the Lyapunov exponent 
of an (individual) $f\in\rat_d$
by the average of the $\log$ of the moduli of the multipliers of all non-attracting $n$-periodic points  of $f$ (Lemma~\ref{th:error}). This leads to an error term on the proximity between $f^n(c)$ and $c$ for each critical point $c$ of $f$ and an error term on how close to $0$ the multipliers of the periodic points of $f$ are. To control 
 those terms,
we use a parametric version of a lemma of Przytycki~\cite[Lemma 1]{Przytycki3} proved by the first and third authors in \cite{distribGV}. Intersection of currents and integrations by parts lead to the result for any $p$. Theorem~\ref{tm:vitessemoyennesalgebraic} is proved in Section~\ref{sec:main}.

The following application of Theorem~\ref{tm:vitessemoyennesalgebraic}
immediately implies various accumulation properties 
to the support of $T^p_\bif$ much finer than ever known results mentioned above.

\begin{Corollary}\label{tm:outsideluripolar}
Let $\Lambda$ be a quasi-projective variety such that, either $\Lambda \subset \cM_d$, or parametrizing an algebraic family $(f_\lambda)_{\lambda\in\Lambda}$ of degree $d$ rational maps on $\P^1$.
Pick an integer $1\leq p\leq \min\{\dim\Lambda,2d-2\}$. Then 
for any sequence $(\un_k)_{k\in\N^*}$ of $p$-tuples 
$\un_k=(n_{1,k},\ldots,n_{p,k})$ in $(\N^*)^p$ 
such that $\sum_k\max_j(n_{j,k}^{-1})<\infty$, there exists a pluripolar subset $\mathcal{E}$ in $\C^{p}$ such that for any $\uw=(w_1,\ldots,w_{p})\in\C^{p}\setminus\mathcal{E}$, 
$\bigcap_{i=1}^{p}\Per_{n_{i,k}}(w_i)$ is of pure codimension $p$ in $\Lambda$ for any $k\in\N^*$ and 
\[
T^p_\bif=\lim_{k\rightarrow\infty} \frac{1}{d_{|\un_k|}}\bigwedge_{i=1}^{p}[\Per_{n_{i,k}}(w_i)]
\]
in the weak sense of currents on $\Lambda$.
\end{Corollary}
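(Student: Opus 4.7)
\medskip

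\noindent\textbf{Proof plan for Corollary~\ref{tm:outsideluripolar}.} The plan is to combine the exponential speed of convergence afforded by Theorem~\ref{tm:vitessemoyennesalgebraic} with a pluripotential-theoretic argument that upgrades convergence averaged over the tori $\{|w_j|=\rho_j\}$ into convergence at genuine multipliers, at the cost of excluding a pluripolar subset $\mathcal{E}\subset\C^p$. The proof proceeds in three steps.

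\emph{Step 1: Joint psh potentials of $\Per_n(w)$.} There is a natural polynomial $P_n(\lambda,w)$ cutting out $\Per_n(w)$ as a divisor of $\Lambda\times\C$, so that
\[
g_n(\lambda,w):=\frac{1}{d_n}\log|P_n(\lambda,w)|
\]
is psh \emph{jointly} on $\Lambda\times\C$, and satisfies $dd^c_\lambda g_n(\cdot,w)=[\Per_n(w)]/d_n$. From the $p=1$ case of Theorem~\ref{tm:vitessemoyennesalgebraic} combined with classical Bassanelli--Berteloot estimates, the potential $g_n(\cdot,w)$ converges to $L$ in $L^1_{\mathrm{loc}}(\Lambda)$ with rate $O(\sigma_2(n)/d^n)$ after averaging in $w$ on a circle.

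\emph{Step 2: Constructing the pluripolar exceptional set.} Fix a smooth positive $(m,m)$-form $\omega$ compactly supported in $\Lambda$, and consider the function
\[
\psi_k(\uw):=\sum_{j=1}^p\int_\Lambda\bigl(g_{n_{j,k}}(\lambda,w_j)-L(\lambda)\bigr)\,\omega(\lambda)
\]
on $\C^p$. Because $g_n(\lambda,\cdot)$ is psh and $\omega\geq 0$, each summand is psh in $w_j$ up to an additive constant, so $\psi_k$ is psh on $\C^p$ modulo a constant. Apply Theorem~\ref{tm:vitessemoyennesalgebraic} at $p=1$ to a fixed DSH primitive of $\omega$: the integral of $\psi_k$ over any polytorus is $O(\max_j\sigma_2(n_{j,k})/d^{n_{j,k}})$, and this is summable in $k$ under the hypothesis $\sum_k\max_j n_{j,k}^{-1}<\infty$. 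A standard pluripotential argument---take a suitable positive linear combination of the $-\psi_k$ whose masses are summable, observe that the resulting psh function on $\C^p$ is not identically $-\infty$, and localize its polar set---yields a pluripolar set $\mathcal{E}_0(\omega)\subset\C^p$ outside of which $\psi_k(\uw)\to 0$, hence $g_{n_{j,k}}(\cdot,w_j)\to L$ in $L^1_{\mathrm{loc}}(\Lambda)$ for each $j=1,\dots,p$. Letting $\omega$ range over a countable dense family of test forms and taking the (still pluripolar) countable union produces one pluripolar set $\mathcal{E}_0\subset\C^p$ outside of which this $L^1_{\mathrm{loc}}$ convergence holds for every such $j$.

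\emph{Step 3: Proper intersection and wedge continuity.} Outside a possibly larger pluripolar set $\mathcal{E}\supset\mathcal{E}_0$, the factors $P_{n_{j,k}}(\cdot,w_j)$ share no common divisor on any positive-dimensional piece of any partial intersection; this is a standard genericity statement for hypersurface families parametrized by $\C$, so in each step the bad multipliers form a polar (hence pluripolar) set. Consequently $\bigcap_j\Per_{n_{j,k}}(w_j)$ has pure codimension $p$ in $\Lambda$ and $\bigwedge_j[\Per_{n_{j,k}}(w_j)]$ is a well-defined closed positive $(p,p)$-current. The potentials $g_{n_{j,k}}(\cdot,w_j)$ are locally uniformly bounded in $\lambda$ (since $P_n$ is polynomial in $w$ on compacts and $P_n\not\equiv 0$) and converge to the continuous psh limit $L$ in $L^1_{\mathrm{loc}}(\Lambda)$; the Bedford--Taylor continuity theorem for Monge--Amp\`ere operators along locally uniformly bounded convergent psh sequences then gives
\[
\bigwedge_{j=1}^p dd^c g_{n_{j,k}}(\cdot,w_j)\longrightarrow(dd^c L)^p=T_\bif^p
\]
weakly as currents on $\Lambda$. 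Since $d_n\sim d^n$, the normalization $1/d_{|\un_k|}$ is equivalent to $\prod_j(1/d_{n_{j,k}})$ in the limit, completing the argument.

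\emph{Main obstacle.} The decisive step is Step~2: transferring the averaged-over-moduli decay of Theorem~\ref{tm:vitessemoyennesalgebraic} into pointwise-in-$\uw$ convergence off a pluripolar locus. This relies essentially on the joint psh nature of $g_n(\lambda,w)$ in both variables, and it is precisely the summability condition $\sum_k\max_j n_{j,k}^{-1}<\infty$ that allows the naturally formed series of psh summands on $\C^p$ to define a genuine psh function whose $-\infty$ set provides the desired pluripolar exceptional set $\mathcal{E}$.
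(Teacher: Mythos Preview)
Your plan has a genuine gap in Step~3, and a secondary one in Step~2.

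The decisive problem is Step~3. You invoke ``Bedford--Taylor continuity for Monge--Amp\`ere operators along locally uniformly bounded convergent psh sequences'', but the potentials $g_{n_{j,k}}(\cdot,w_j)=d_{n_{j,k}}^{-1}\log|p_{n_{j,k}}(\cdot,w_j)|$ are \emph{not} locally bounded: they equal $-\infty$ precisely on the hypersurfaces $\Per_{n_{j,k}}(w_j)$. For unbounded psh sequences, $L^1_{\mathrm{loc}}$ convergence of each factor to a continuous limit does \emph{not} imply convergence of the wedge products. The paper's introduction singles out exactly this as the central obstacle (``$L^1_{\mathrm{loc}}$ convergence of the potentials \ldots\ does not guarantee continuity of the intersection''), so your Step~3 attempts to short-circuit precisely the difficulty the whole paper is designed to overcome. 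Step~2 is also not correctly set up: $\psi_k$ is psh, hence $-\psi_k$ is \emph{super}harmonic, and a positive combination of the $-\psi_k$ is not psh. The standard summation trick requires instead an upper bound $\psi_k\le C_k$ with $\sum_k C_k<\infty$, which you have not established (Theorem~\ref{tm:vitessemoyennesalgebraic} controls only circle-averages in $w$, not pointwise suprema).

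The paper's route is entirely different: it transports the problem to the multiplier space $(\P^1)^p$ via the incidence variety $\Gamma_{\un}\subset\Lambda\times(\P^1)^p$ and the multiplier map $F_{\un}$, so that the normalized intersection $d_{|\un|}^{-1}\bigwedge_j[\Per_{n_{j,k}}(w_j)]$ becomes a push-pull of $\delta_{\uw}$. The difference with the averaged current $T^p_{\un_k}(\tilde\nu)$ is then bounded by $|\langle U_{\uw},S_k\rangle|$, where $U_{\uw}$ is the Green quasi-potential of $\delta_{\uw}$ on $(\P^1)^p$ and $S_k$ is a closed positive $(1,1)$-current on $(\P^1)^p$ of mass $O(\max_j n_{j,k}^{-1})$. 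Under the summability hypothesis, $T_{\mathrm{test}}:=\sum_k S_k$ has finite mass, and the pluripolar exceptional set is simply $\{\uw:\langle U_{\uw},T_{\mathrm{test}}\rangle=-\infty\}$. No continuity of wedge products of unbounded potentials on $\Lambda$ is ever invoked.
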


The techniques used in the proof of Corollary \ref{tm:outsideluripolar}
also give that the current equidistributed on the set of parameters having  
$p$ cycles of respective periods $n_{1,k},\ldots,n_{p,k}$ and multipliers $w_1,\ldots,w_{p}$ distributed by a PB measure on $(\P^1)^p$
converges towards the bifurcation current $T_\bif^p$
when $k\to \infty$, with the best possible order estimate
$O(\max_j(n_j^{-1}))$ as $\min_j(n_j)\to\infty$.
(see Theorem~\ref{tm:vitessemoyennesPB} below). 

\begin{remark} \normalfont \normalfont
Finally, let us observe that, as in~\cite{BB3}, Theorem~\ref{tm:vitessemoyennesalgebraic} gives another proof that Shishikura's upper bound $2d-2$ of the number of distinct cycles of Fatou components of a given rational map of degree $d$ is sharp (see~\cite{Shishikura3}). In fact, provided $\min_jn_j$ is large enough, we can construct a rational map having $2d-2$ distinct attracting periodic points of respective period $n_j$ (we no longer need to take a subsequence and have no arithmetic restrictions on the periods).
\end{remark}

Now let us focus on the moduli spaces of rational maps and hyperbolic components. Recall that the \emph{hyperbolic locus} in $\cM_d$
is the set of all conjugacy classes of hyperbolic maps (i.e. uniformly expanding maps on their Julia sets). It
is an open subset of $\cM_d$ and a connected component of this hyperbolic locus is called a hyperbolic component
in $\cM_d$.

\begin{definition}
 A rational map $f\in\rat_d$ is said to be 
\emph{hyperbolic of type} $\un=(n_1,\ldots,n_{2d-2})\in(\N^*)^{2d-2}$ 
if $f$ has $2d-2$ distinct
attracting cycles of respective exact periods $n_1,\ldots,n_{2d-2}$.

A hyperbolic component $\Omega$ in $\mathcal{M}_d$ 
 is said to be 
of {\itshape type} $\un\in(\N^*)^{2d-2}$
if, for any $[f]\in\Omega$, $f$ 
is hyperbolic of type $\un$. An hyperbolic component in $\cM_d$ is of \emph{disjoint type} if it is of type $\un$ for some $\un \in (\N^*)^{2d-2}$. 
\end{definition}

\begin{definition}
For any $\un=(n_1,\ldots,n_{2d-2})\in(\N^*)^{2d-2}$, let $ N(\un)$ denote the number of hyperbolic components of type $\un$ in $\cM_d$.
\end{definition}

A  striking application of Theorem~\ref{tm:vitessemoyennesalgebraic} 
is the following asymptotic on the global counting of hyperbolic
components. 

\begin{Theorem}\label{tm:counting}
As $\min_jn_j\to+\infty$, 
\[
\#\mathrm{Stab}(\un)\cdot
\frac{N(\un)}{d_{|\un|}}
= \int_{\mathcal{M}_d}\mu_\bif+O \left(\max_j
\left(\frac{\sigma_2(n_j)}{d^{n_j}}\right) \right).
\]
\end{Theorem}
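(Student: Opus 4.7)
The strategy is to apply Theorem~\ref{tm:vitessemoyennesalgebraic} in top bidegree $p=2d-2$ (so that $T_\bif^{2d-2}=\mu_\bif$) and translate the resulting quantitative equidistribution of $T_{\un}^{2d-2}(\underline\rho)$ toward $\mu_\bif$ into a counting statement for $N(\un)$. Fix $\underline\rho=(\tfrac{1}{2},\dots,\tfrac{1}{2})$, so that the factors $1+|\log\rho_j|$ appearing in Theorem~\ref{tm:vitessemoyennesalgebraic} are bounded by a universal constant. For every $\underline w$ with $|w_j|=\tfrac{1}{2}<1$, any parameter $[f]\in\bigcap_j\Per_{n_j}(w_j)$ corresponds to a rational map with $2d-2$ distinct attracting cycles of respective exact periods $n_1,\dots,n_{2d-2}$; combining Fatou's theorem with Shishikura's upper bound on non-repelling cycles forces each critical point of $f$ to lie in the immediate basin of one of these cycles, so $f$ is hyperbolic of type $\un$. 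In particular, $\mathrm{supp}\,T_{\un}^{2d-2}(\underline\rho)$ is contained in the union of the $N(\un)$ hyperbolic components of type $\un$.

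The main geometric step is to establish the mass identity
\[
\int_{\cM_d}T_{\un}^{2d-2}(\underline\rho)=\frac{\#\mathrm{Stab}(\un)\cdot N(\un)}{d_{|\un|}}.
\]
On each hyperbolic component $\Omega$ of type $\un$, the Milnor--McMullen description of disjoint-type hyperbolic components identifies $\Omega$ with $\D^{2d-2}/\mathrm{Stab}(\un)$ via the unordered multiplier map; equivalently, once a labeling of the $2d-2$ attracting cycles is fixed, the ordered multiplier map lifts to a branched covering $\Omega\to\D^{2d-2}$ of degree $\#\mathrm{Stab}(\un)$. Consequently, for generic $\underline w\in\prod_j\mathbb{S}_{\rho_j}$ the $0$-dimensional intersection $\Omega\cap\bigcap_j\Per_{n_j}(w_j)$ carries total multiplicity $\#\mathrm{Stab}(\un)=\prod_n m_n!$, where $m_n=\#\{j:n_j=n\}$; this multiplicity records the number of injective assignments of the indices $j$ to attracting cycles of matching period. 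Averaging over $\underline\theta\in[0,2\pi]^{2d-2}$ and summing over all $N(\un)$ hyperbolic components of type $\un$ yields the identity.

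To conclude, observe that the closure in $\cM_d$ of any hyperbolic component of disjoint type is compact, and the locus of parameters with $2d-2$ distinct attracting cycles is contained in a fixed compact subset $K\subset\cM_d$ independent of $\un$; similarly $\mathrm{supp}\,\mu_\bif$ is compact. Pick a smooth compactly supported cutoff $\chi\colon\cM_d\to[0,1]$ with $\chi\equiv 1$ on $K\cup\mathrm{supp}\,\mu_\bif$; viewed as a DSH form of bidegree $(0,0)$, its semi-norm $\|\chi\|_{\mathrm{DSH}}^{*}$ is bounded by a constant $C_0$ independent of $\un$. Applying Theorem~\ref{tm:vitessemoyennesalgebraic} to $\Psi=\chi$ yields
\[
\bigl|\langle T_{\un}^{2d-2}(\underline\rho),\chi\rangle-\langle\mu_\bif,\chi\rangle\bigr|\le C(K)\,C_0\,(1+\log 2)\,\max_{j}\frac{\sigma_2(n_j)}{d^{n_j}}.
\]
Since both currents are supported where $\chi\equiv 1$, the pairings reduce to the respective total masses, and substituting the mass identity completes the proof.

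The principal obstacle will be the second step: the identity $\int T_{\un}^{2d-2}(\underline\rho)=\#\mathrm{Stab}(\un)\,N(\un)/d_{|\un|}$ depends on the detailed Milnor--McMullen description of the multiplier map on disjoint-type hyperbolic components together with a careful computation of intersection multiplicities, combined with the fact that only hyperbolic parameters of type $\un$ contribute for $|w_j|<1$. The uniform compactness claim in the third step is secondary but must be verified in order to apply Theorem~\ref{tm:vitessemoyennesalgebraic} with a fixed test function.
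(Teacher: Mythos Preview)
Your strategy and your identification of the mass identity as the ``main geometric step'' are both correct, and indeed the paper proves exactly this identity (Lemma~\ref{5.5}) via Theorem~\ref{prouve_le_bebe}. The gap is in what you call the ``secondary'' step: neither the union of disjoint-type hyperbolic components nor $\mathrm{supp}\,\mu_\bif$ is contained in a fixed compact subset of $\cM_d$. Both can approach the boundary of the moduli space (the paper remarks on this explicitly in the introduction, citing~\cite{Mod2}), so no single smooth cutoff $\chi$ with $\chi\equiv 1$ on all the relevant supports exists. Your application of Theorem~\ref{tm:vitessemoyennesalgebraic} with a fixed compact $K$ therefore does not go through.

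The paper circumvents this by a balancing argument in the affine variety $\rat_d^{0,\fm}\subset\C^N$. First it shows (Lemma~\ref{tobedonebythomas}) that the constant $C(K)$ in Theorem~\ref{tm:vitessemoyennesalgebraic} grows at most like $\sup_K\log^+|Z|$. Then, rather than a smooth cutoff, it uses the DSH test function
\[
\Psi_A=\frac{1}{A}\min\bigl\{\max(\log^+|Z|,A)-2A,\,0\bigr\},
\]
which equals $-1$ on $B(0,e^A)$, vanishes outside $B(0,e^{2A})$, and has $\|\Psi_A\|_{\mathrm{DSH}}^*\le C'/A$. For each $\un$ one chooses $R$ large enough that $\mathrm{supp}\,T_{\un}^{2d-2,\fm}(\underline\rho)\subset B(0,R)$ (only the relatively compact sublevel sets $\mathcal{W}_\Omega^{-1}(\D_{1/2}^{2d-2})$ are needed, not whole components) and that $\mu_\bif^\fm(B(0,R))$ is within $\varepsilon$ of its total mass, then takes $A=\log R$. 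The growth of $C(K)\sim 2A$ exactly cancels the decay $\|\Psi_A\|_{\mathrm{DSH}}^*\sim 1/A$, yielding a bound independent of $R$; letting $\varepsilon\to 0$ finishes. This trade-off is the real content of the proof once the mass identity is in hand, and it is precisely why the DSH semi-norm, rather than a $\mathcal{C}^2$-norm, appears in Theorem~\ref{tm:vitessemoyennesalgebraic}.
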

In particular, $N(\un)>0$ if $\min_jn_j$ is large enough. This result gives a combinatorial interpretation of the mass of the bifurcation measure.
In the case $d=2$, as a consequence of Theorem~\ref{tm:counting} together with 
the precise estimates of $N(n_1,n_2)$ 
by Kiwi and Rees~\cite{kiwirees}, we can determine the (total) mass
of the bifurcation measure on $\mathcal{M}_2$.

\begin{Corollary}\label{tm:massM2}
 Let $\phi$ be the Euler totient function on $\N^*$. Then
\[
\int_{\mathcal{M}_2}\mu_\bif=\frac{1}{3}-\frac{1}{8}\sum_{n\geq1}\frac{\phi(n)}{(2^n-1)^2}.
\]
\end{Corollary}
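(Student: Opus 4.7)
The plan is to specialize Theorem~\ref{tm:counting} to $d=2$ and cross-reference it with the explicit enumeration of hyperbolic components of disjoint type in $\mathcal{M}_2$ obtained by Kiwi and Rees in~\cite{kiwirees}. First, I select a sequence of pairs $(n_{1,k},n_{2,k}) \in (\N^*)^2$ with $n_{1,k} \neq n_{2,k}$ and $\min(n_{1,k},n_{2,k}) \to \infty$ as $k \to \infty$. Since $n_{1,k}\neq n_{2,k}$ forces $\#\mathrm{Stab}(n_{1,k},n_{2,k}) = 1$, and since $\sigma_2(n_{j,k})/2^{n_{j,k}} \to 0$, Theorem~\ref{tm:counting} yields
\[
\int_{\mathcal{M}_2}\mu_\bif \;=\; \lim_{k\to\infty}\frac{N(n_{1,k},n_{2,k})}{d_{n_{1,k}}\,d_{n_{2,k}}}.
\]
Thus it suffices to identify this limit as the desired arithmetic expression.

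The second step is to substitute the Kiwi--Rees closed-form enumeration of $N(n_1,n_2)$ for $n_1 \neq n_2$. Their combinatorial classification of hyperbolic components in $\mathcal{M}_2$ attaches to each component a pair of admissible periodic kneading data for the two critical points, and corrects this count for the unavoidable identifications produced by the symmetries of quadratic rational maps (the degree of the fixed-point symmetric-function parametrization, and the overcounting of primitive cycles of each length). Their formula is of the form
\[
N(n_1,n_2) \;=\; \tfrac{1}{3}\,d_{n_1}d_{n_2} \;-\; R(n_1,n_2),
\]
where $R(n_1,n_2)$ collects the arithmetic corrections indexed by the common divisor structure of $n_1$ and $n_2$.

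The third step is to divide by $d_{n_1}d_{n_2}$ and pass to the limit along the chosen sequence: the leading term contributes $\tfrac13$, which reflects the generic $3$-to-$1$ nature of the multiplier map on $\mathcal{M}_2$, while the renormalized correction $R(n_1,n_2)/(d_{n_1}d_{n_2})$ organizes, after rearrangement using Möbius inversion and the identity $\sum_{d\mid n}\phi(d)=n$, into the sum $\tfrac18\sum_{n\geq 1}\phi(n)/(2^n-1)^2$ (the factor $\phi(n)$ counting primitive $n$-cycles and $(2^n-1)^2$ essentially reflecting the pair of multipliers at two fixed points of $f^n$).

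The hard part is the third step: extracting the precise arithmetic limit from the Kiwi--Rees formula. Once their enumerative expression is in hand, the passage to the limit is a bookkeeping computation, but one must carefully track which correction terms survive the normalization by $d_{n_1}d_{n_2}$ and ensure that the resulting double series can be rearranged into the single sum over $n$ with summand $\phi(n)/(2^n-1)^2$. The convergence of this sum (guaranteed by $\phi(n)\leq n$ and the exponential denominator) simultaneously justifies the limit interchange.
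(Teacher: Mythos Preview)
Your overall strategy---combine Theorem~\ref{tm:counting} for $d=2$ along a sequence of distinct pairs with the Kiwi--Rees enumeration---is exactly the paper's approach. However, your description of the Kiwi--Rees input is not accurate, and this is where the actual work lies.

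Kiwi and Rees do not compute $N(n_1,n_2)$ in $\mathcal{M}_2$. They work in the \emph{critically marked} moduli space $\mathcal{M}_2^{\cm}$ and compute a quantity $n_{IV}(n,m)$ which counts components where the two marked critical points are attracted to cycles of exact periods \emph{dividing} $n$ and $m$ respectively. Their formula is
\[
n_{IV}(n,m)=\left(\frac{5}{3}\,2^{n-3}+\frac{1}{12}-\frac{1}{4}\sum_{q=2}^n\frac{\phi(q)\nu_q(n)}{2^q-1}\right)2^m+\varepsilon_1(n,m),
\]
with an explicit bound $|\nu_q(n)-2^{n-1}/(2^q-1)|\le\tfrac12$ and $|\varepsilon_1(n,m)|\le 2^n+2^{2\gcd(n,m)}$. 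The constant $\tfrac13$ emerges only after substituting the asymptotic of $\nu_q(n)$ and regrouping the leading $2^{n+m}$ term; it is not visible in the raw formula, and your heuristic about ``the generic $3$-to-$1$ nature of the multiplier map'' is not how it arises. Likewise, the sum $\tfrac18\sum_q\phi(q)/(2^q-1)^2$ comes directly from replacing $\nu_q(n)$ by its main term, not from any M\"obius-inversion rearrangement on your side.

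You then need two further steps that you have not addressed: first, pass from $\mathcal{M}_2^{\cm}$ to $\mathcal{M}_2$ via the degree-$2$ forgetful cover (unramified over disjoint-type components), which gives $n_{IV}(n,n+1)=\sum_{j\mid n,\ k\mid (n+1)} N(j,k)$; second, invert this divisor sum using the B\'ezout bound $N(j,k)\le C\,2^{j+k}$ to isolate $N(n,n+1)$ up to $o(2^{2n+1})$. The paper carries this out along the specific sequence $(n,n+1)$, for which $\gcd(n,n+1)=1$ keeps the error term tame. Your ``bookkeeping computation'' in step three is thus not a rearrangement of a formula you already have for $N(n_1,n_2)$, but the extraction of $N$ from $n_{IV}$ together with the algebraic simplification of the Kiwi--Rees main term.
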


In the proof of Theorem~\ref{tm:counting},
it is crucial that the estimate 
{in} Theorem~\ref{tm:vitessemoyennesalgebraic} involves \emph{only} the DSH-semi-norm $\|\cdot\|_{\mathrm{DSH}}^*$ of the observable. 
Notice {also} that 
the mass of a limit of positive measures is {not greater} than
the limit of the masses, so it could be possible that a proportion of components
is lost passing to the limit as they would accumulate at the boundary 
of the moduli space. Theorem~\ref{tm:counting} states that it is not the case.
The proof of Theorem~\ref{tm:counting} also relies crucially on the fact that 
the multipliers {of attracting {\em cycles}}
parametrize the hyperbolic components  
of disjoint type of $\mathcal{M}_d$.
Though such a parametrization is essentially 
classical, there seems to be 
no available proofs 
in the literature so we  include a proof of it
in Section~\ref{parametrizing_hyperbolic_components}. 
{The} proof relies on the
transversality of periodic {\em critical} orbit relations{, which} we show 
in Section~\ref{Transversality of periodic critical orbit relations}, 
following the argument of Epstein \cite{buffepstein,epstein2} 
(again such a transversality seems classical but we could not find 
a precise statement in the literature). 

There is an analogy between Theorem \ref{tm:counting}
(or Theorem \ref{tm:vitessecenters} below)
and the key works by Briend and Duval 
\cite{briendduval, briendduval2}: before their works, for $k>1$, 
it had been not known whether a holomorphic endomorphism $f$ of $\P^k$ 
of degree $d >2$ had even a single repelling cycle, 
but by establishing the equidistribution of such cycles towards
the equilibrium measure of $f$, they showed that 
there are indeed $\sim d^{kn}$, so, plenty of such cycles of exact period $n$ as $n\to\infty$.

An easy observation 
using Bézout's theorem (cf.\ Subsection \ref{sec:dynatomic})
already gives
\[
\sup_{\un\in(\N^*)^{2d-2}}\frac{N(\un)}{d^{|\un|}}<\infty.
\]
As a consequence of Theorem~\ref{tm:counting}, we also
establish the weak genericity of hyperbolic postcritically finite maps in $\mathcal{M}_d$ (see Theorem~\ref{tm:genericMd} below), which is stronger than Zariski density of them in $\cM_d$. 

~

It {might be worth} stressing that in \cite{distribGV},
we considered {the}
currents of bifurcation $T_c$
of marked critical points {$c$} 
so we were looking at unstable critical dynamics.
Although it seems to be similar, here we study directly the bifurcation current 
$T_\bif$
so the unstability of cycles (see the introduction of \cite{BB3}).
It would not be possible to have {any} ``usable'', for our purpose, locally uniform  approximation of the potential of $T_c$. 
Hyperbolic components where {some} periods {coincide}
is also something that cannot be managed using 
 marked critical points.

We also establish a quantitative equidistribution 
of parameters in hyperbolic components in $\mathcal{M}_d$
of disjoint type, having given multipliers.
For any $\un=(n_1, \dots, n_{2d-2})\in (\N^*)^{2d-2}$ 
{and any $\uw=(w_1,\ldots,w_{2d-2})\in\D^{2d-2}$},
let $\mathrm{C}_{\un,\uw}$ denote the (finite) set of all
conjugacy classes $[f]\in\mathcal{M}_d$ of hyperbolic rational maps 
$f\in\rat_d$ of type $\un$ whose attracting cycle of exact period $n_j$
has the multiplier $w_j$ for any $1\le j\le 2d-2$, and set
\[ 
\mu_{\un,\uw}:= \frac{\# \mathrm{Stab}(\un,\uw)}{d_{|\un|}}\sum_{{[f]}\in \mathrm{C}_{\un,\uw}} \delta_{[f]}.
\]
For simplicity,  we denote $\mathrm{C}_{\un,(0,\ldots,0)}$
and $\mu_{\un,(0,\ldots,0)}$ by $\mathrm{C}_\un$ and $\mu_\un$, respectively,
so that any element in $\mathrm{C}_\un$ is the center of a hyperbolic component
in $\cM_d$ of type $\un$.

The following in particular implies the weak convergence $\mu_{\un,\uw}\to \mu_\bif$
on $\mathcal{M}_d$, which is even new and it was one of our motivations
to give a proof of this convergence.

\begin{Theorem}\label{tm:vitessecenters}
{For any} compact subset $K$ in $\mathcal{M}_d$, 
there exists $C_K>0$ such that 
\begin{enumerate}
 \item for any test function $\Psi \in \mathcal{C}^2(\mathcal{M}_d)$ with support in $K$ and any $\un \in (\N^*)^{2d-2}$,
\[ 
\left| \langle \mu_\un-\mu_\bif, \Psi \rangle  \right|
\leq  C_K\cdot
\max_{1\leq j \leq 2d-2}\left( \frac{\sigma_2(n_j)}{d^{n_j}}
\right)\cdot\|\Psi\|_{\mathcal{C}^2} ,
\]
 \item  for any test function $\Psi \in \mathcal{C}^1(\mathcal{M}_d)$ with support in $K$, any $\un \in (\N^*)^{2d-2}$, and any $\uw=(w_1,\ldots,w_{2d-2})\in\D^{2d-2}$,
\[
\left| \langle \mu_{\un,\uw}-\mu_\bif, \Psi \rangle  \right|
\leq  C_K\cdot
\max_{1\leq j \leq 2d-2}\left(\frac{-1}{d^{n_j}\log|w_j|},
\frac{\sigma_2(n_j)}{d^{n_j}}\right)^{1/2}\cdot \|\Psi\|_{\mathcal{C}^1}.
\]
\end{enumerate} 
\end{Theorem}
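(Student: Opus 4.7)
The plan is to express both $\mu_\un$ and $\mu_{\un,\uw}$ as normalized intersections $d_{|\un|}^{-1}\bigwedge_j[\Per_{n_j}(w_j)]$ on $\cM_d$ and then estimate their distance to the smoothed circle average $T_\un^{2d-2}(\underline{\rho})$ via a local \emph{Haar-vs.-Dirac} comparison on the multiplier polydisc of each hyperbolic component, ultimately reducing to Theorem~\ref{tm:vitessemoyennesalgebraic}.

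I would first verify the identification $\mu_{\un,\uw}=d_{|\un|}^{-1}\bigwedge_j[\Per_{n_j}(w_j)]$ on the whole of $\cM_d$: if $|w_j|<1$ for every $j$, any parameter in $\bigcap_j\Per_{n_j}(w_j)$ carries $2d-2$ distinct attracting cycles, each absorbing a critical orbit, so all $2d-2$ critical points of $f_\lambda$ are exhausted and $f_\lambda$ is forced to be hyperbolic of type $\un$. Via the multiplier parametrization $M_\Omega\colon\Omega\xrightarrow{\sim}\D^{2d-2}$ and the transversality of periodic critical orbit relations (Sections~\ref{Transversality of periodic critical orbit relations} and~\ref{parametrizing_hyperbolic_components}), each point of $\mathrm{C}_{\un,\uw}$ occurs in the intersection with multiplicity $\#\mathrm{Stab}(\un,\uw)$, matching the combinatorial weight in $\mu_{\un,\uw}$. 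Consequently, in each such $\Omega$, $T_\un^{2d-2}(\underline{\rho})$ is nothing but the pushforward by $M_\Omega^{-1}$ of the normalized Haar measure on the torus $\prod_j\{|z_j|=\rho_j\}\subset\D^{2d-2}$.

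Next, I would carry out the Haar-vs.-Dirac comparison. For case~(1), Taylor-expanding $\Psi\circ M_\Omega^{-1}$ at $0\in\D^{2d-2}$ and using $\int e^{ik\theta}\,d\theta=0$ for $k\ne 0$ kills the linear term, yielding $|\langle T_\un^{2d-2}(\underline{\rho})-\mu_\un,\Psi\rangle|\leq C_K\rho^2\|\Psi\|_{\mathcal{C}^2}$ (the sum over components is controlled by the B\'ezout-type bound $N(\un)\lesssim d^{|\un|}$ from Subsection~\ref{sec:dynatomic}). Combining with Theorem~\ref{tm:vitessemoyennesalgebraic}, which gives $|\langle T_\un^{2d-2}(\underline{\rho})-\mu_\bif,\Psi\rangle|\leq C_K\max_j(1+|\log\rho_j|)\sigma_2(n_j)d^{-n_j}\|\Psi\|_{\mathcal{C}^2}$ after bounding $\|\Psi\|_{\mathrm{DSH}}^*$ by $\|\Psi\|_{\mathcal{C}^2}$, and optimizing $\rho$ leads to case~(1). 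For case~(2), the Haar-vs.-Dirac comparison on the torus only yields $|\langle T_\un^{2d-2}(\underline{\rho})-\mu_{\un,\uw},\Psi\rangle|\leq C_K(\max_j\rho_j)\|\Psi\|_{\mathcal{C}^1}$ at the $\mathcal{C}^1$-level; the square-root rate arises from mollifying $\Psi\in\mathcal{C}^1$ to $\Psi_\delta\in\mathcal{C}^2$ with $\|\Psi-\Psi_\delta\|_\infty\lesssim\delta\|\Psi\|_{\mathcal{C}^1}$ and $\|\Psi_\delta\|_{\mathcal{C}^2}\lesssim\delta^{-1}\|\Psi\|_{\mathcal{C}^1}$, applying Theorem~\ref{tm:vitessemoyennesalgebraic} to $\Psi_\delta$, and balancing $\delta$ against $\rho$.

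The main obstacle I foresee is ensuring the Haar-vs.-Dirac comparison holds \emph{uniformly} over the hyperbolic components $\Omega$ of type $\un$ meeting $K$: a priori the $\mathcal{C}^2$-bound on $M_\Omega^{-1}$ depends on the geometry of $\Omega$, and some components can be very thin near the boundary of $\cM_d$. One therefore needs a quantitative version of the parametrization of Section~\ref{parametrizing_hyperbolic_components}, combined with a Koebe-type distortion argument, to control $M_\Omega^{-1}$ on a fixed compact subdisc of $\D^{2d-2}$ uniformly in $\Omega$, together with careful combinatorial bookkeeping when the pairs $(n_j,w_j)$ coincide and make $\mathrm{Stab}(\un,\uw)$ non-trivial.
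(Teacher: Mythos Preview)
Your overall framework is correct and matches the paper: reduce to comparing $\mu_{\un,\uw}$ (or $\mu_\un$) with the circle-averaged measure $T_\un^{2d-2}(\underline{\rho})$, then invoke Theorem~\ref{tm:vitessemoyennesalgebraic}. The identification of $\mu_{\un,\uw}$ with the intersection current via the multiplier biholomorphism and the combinatorics of $\mathrm{Stab}(\un,\uw)$ is also right (this is the content of Lemma~\ref{5.5} and formula~\eqref{pourallervite}).

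However, the step you flag as the main obstacle \emph{is} a genuine gap in your approach, and your suggested fix will not work. Taylor-expanding $\Psi\circ\mathcal{W}_\Omega^{-1}$ requires bounds on the first and second derivatives of $\mathcal{W}_\Omega^{-1}$ that are uniform over all components $\Omega$ of type $\un$. There is no Koebe theorem in dimension $2d-2>1$, and Schwarz--Pick type estimates on $\D^{2d-2}$ only control the derivatives of each coordinate of $\mathcal{W}_\Omega^{-1}$ by the diameter of $\Omega$ in the ambient embedding, which is precisely what you do not know uniformly. In your scheme the per-component error is $C_\Omega\rho^2\|\Psi\|_{\mathcal{C}^2}$ with $C_\Omega$ depending on this unknown geometry, and summing over $N(\un)\asymp d^{|\un|}$ components destroys the estimate.

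The paper avoids this entirely by a different mechanism. It telescopes the difference $T_\un^{2d-2}(\underline{\rho})-\mu_{\un,\uw}$ coordinate by coordinate (\S\ref{sec:reductioncurve}), which reduces everything to a one-dimensional problem: on each slice one is comparing $\lambda_{\rho_j}$ with $\delta_{w_j}$ along an \emph{algebraic curve} $\Lambda_{\sigma,j}(u)\subset\cM_d^\fm$. The uniformity is then supplied not by distortion estimates on $\mathcal{W}_\Omega^{-1}$ but by the Briend--Duval length--area inequality (Lemma~\ref{lm:BriendDuval}): the diameter of $\mathcal{W}^{-1}(\D_{\rho_j})$ inside $\Omega\cap\Lambda_{\sigma,j}(u)$ is controlled by the \emph{area} of that holomorphic disk, and areas sum to $\textup{Area}(\Lambda_{\sigma,j}(u))\leq C\,d^{|\un|-n_j}$ by B\'ezout. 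For case~(1) the linear term cancels by harmonicity (as you say) and one gets $\mathrm{diam}^2\lesssim\textup{Area}$, so summing over $\Omega$ gives the factor $d^{-n_j}$ directly---no optimization in $\rho$ is needed. For case~(2) the estimate is $\mathrm{diam}\lesssim\textup{Area}^{1/2}$, and the square root survives the sum over components via Cauchy--Schwarz against $N(\un)^{1/2}$, producing $(d^{-n_j}/|\log\rho_j|)^{1/2}$; mollification is not used. The paper also lifts to $\cM_d^\fm$ first to have the multiplier biholomorphism available in a clean form.
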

Observe that an interpolation between Banach spaces gives a speed of convergence for all $\mathcal{C}^\alpha$-observables with $0<\alpha \leq 2$ in the case of centers and 
$0<\alpha \leq 1$ in general.

{In \cite{FRL}, 
a result similar to Theorem \ref{tm:vitessecenters} has been}
established for the centers of hyperbolic components
{of the interior of the {\em Multibrot} set in $\C$ of}
the unicritical family $(z^d+\lambda)_{\lambda\in\C}$ 
with a $\mathcal{C}^1$-estimate, {using arithmetic}.
Then in \cite{favregauthier}, 
this was generalized to the moduli space of polynomials of degree $d$ 
{under} the hypothesis that all $n_j$ are distinct,
without a speed of convergence, also using arithmetic geometry. 
Finally, in \cite{distribGV}, 
still in the moduli space of polynomials of degree $d$
{and under the hypothesis that all $n_j$ are distinct}, 
a speed of convergence similar to Theorem~\ref{tm:vitessecenters} 
for $\mathcal{C}^2$-observables was obtained using pluripotential theory 
(see also~\cite{distribGV2} for a similar result using combinatorial continuity 
of critical portraits). In the proofs of all the aforementioned results 
for the moduli space of degree $d$ polynomials, 
the {\em compactness} of the support of the bifurcation measure 
was crucial. This is not the case for $\mathcal{M}_d$ and 
one can find relatively compact hyperbolic components arbitrarily close 
to the boundary of $\mathcal{M}_d$ where continuity estimates fail 
(see e.g.~\cite{Mod2}).
Furthermore, it is not required in Theorem \ref{tm:vitessecenters} that all $n_j$ are distinct.
Notice also that our proof applies verbatim in the moduli space of degree $d$ polynomials. The proof of Theorem~\ref{tm:vitessecenters} is the content of Section~\ref{sec:equicenter}. As an application, we give a lower bound on the number of hyperbolic components of disjoint type in $\mathcal{M}_d$ whose center is in a given open subset intersecting the support of the bifurcation measure (see Theorem~\ref{tm:countinglower}).

Section~\ref{sec:poly} is devoted to the study of the moduli space $\mathcal{P}_d^\cm$ of critically marked degree $d$ polynomials. We give various results similar to those previously proved  in the moduli space $\mathcal{M}_d$ of degree $d$ rational maps. The reader is referred to Section~\ref{sec:poly} for precise statements. Let us just mention two points: first, our techniques allow us to give an asymptotic of the number of hyperbolic components with $(d-1)$ distinct attracting cycles in $\C$ of prescribed periods even when those periods are not distinct, with an explicit exponential error term (see Theorem~\ref{tm:countgoodpoly}). Second, the various rates of convergence which appear do not depend on the support of the test functions which are considered, by compactness of the support of the bifurcation measure in that context.

To finish, let us mention that, as an application of our approximation formula of the Lyapunov exponent, we give a proof of the estimate of the degeneration of the Lyapunov exponent of $f$ as $f\to\partial\rat_d$ along an analytic disk in the spirit of ~\cite{Favre-degeneration} (see~Theorem~\ref{tm:degenerate}).

                                                                                                                     \section{Preliminaries}

\subsection{Currents and DSH functions}\label{secDSH}
We refer to~\cite[Appendix A]{dinhsibony2} for more details on currents and DSH functions. Pick any quasi-projective variety $\Lambda$. Let $\beta$ be the restriction of the ambient Fubini-Study form to $\Lambda$. For any positive closed current $T$ of bidimension $(k,k)$ defined on $\Lambda$ and any Borel set $A \subset \Lambda$, we denote by $\|T\|_A$ the number
\[ 
\|T\|_A :=\int_A T \wedge \beta^k.
\]
This is the \emph{mass} of the current $T$ in $A$. We simply write $\|T\|$ for $\|T\|_\Lambda$.

Let $\Psi$ be  an $(\ell,\ell)$-form in $\Lambda$. We say that $\Psi$ is DSH if we can write $dd^c\Psi=T^+-T^-$  where $T^\pm$ are positive closed currents of finite mass in $\Lambda$. We also set
\[
\|\Psi\|^*_{\mathrm{DSH}}:=\inf\left(\|T^+\|+\|T^-\|\right),
\]
where the infimum is taken over all closed positive currents $T^\pm$ such that $dd^c\Psi=T^+-T^-$ (note that $\|T^+\|=\|T^-\|$ since they are cohomologuous).

This is not exactly the usual DSH norm but just a semi-norm. 
Nevertheless, one has 
$\|\Psi\|_{\mathrm{DSH}}^* \leq \|\Psi\|_{\mathrm{DSH}}$, where $\|\Psi\|_{\mathrm{DSH}}:=\|\Psi\|_{\mathrm{DSH}}^*+\|\Psi\|_{L^1}$. The interest of those DSH-norms lies in the fact that they behave nicely under change of coordinates. 
Furthermore, when $\Psi$ is $\mathcal{C}^2$ with support in a compact set $K$, there is a constant $C>0$ depending only on $K$ such that $\|\Psi\|_{\mathrm{DSH}}\leq C\|\Psi\|_{\mathcal{C}^2}$.

\subsection{Resultant and the space $\rat_d$}
We refer to~\cite{BB1} and~\cite{silverman-spacerat} 
for the content of this paragraph.

\begin{notation}
Let $\pi:\C^2\setminus\{0\}\to\P^1$ be the canonical projection, 
$\|\cdot\|$ be the Hermitian norm on $\C^2$, and $\wedge$ is the wedge
product on $\C^2$.
\end{notation}

A pair $F=(F_1,F_2)\in\C[x,y]_d\times\C[x,y]_d\simeq\C^{2d+2}$ of homogeneous degree $d$ 
polynomials can be identified with a degree $d$ homogeneous polynomial endomorphism of $\C^2$. 
The homogeneous resultant $\mathrm{Res}=\mathrm{Res}_d$ is the unique homogeneous degree $2d$ 
polynomial over $\C$ in $2d+2$ variables such that
$\mathrm{Res}(F)=0$ if and only if $F$ is degenerate, 
i.e. $F^{-1}(\{0\})\neq\{0\}$, and that
$\mathrm{Res}((x^d,y^d))=1$.
We thus identify the space of all degree $d$ homogeneous polynomial 
endomorphisms of $\C^2$ with $\C^{2d+2}\setminus\{\mathrm{Res}=0\}$. 

A rational map $f$ on $\P^1$ of degree $d$ admits 
a (non-degenerate homogeneous polynomial) lift, 
i.e. there exists a degree $d$ homogeneous polynomial 
endomorphism $F:\C^2\to\C^2$ such that $\mathrm{Res}(F)\neq 0$ and that
$\pi\circ F=f\circ\pi$ on $\C^2\setminus\{0\}$. Moreover,
any two homogeneous polynomial endomorphisms $F,G$ of $\C^2$ are lifts of
the same $f$ if and only if there exists $\alpha\in\C^*$ such that $F=\alpha\cdot G$.
Let us denote by $\rat_d$ the set of all degree $d$ rational maps on $\P^1$. 
Since $\mathrm{Res}$ is homogeneous, we can also identify
$\rat_d$ with $\P^{2d+1}\setminus\{\mathrm{Res}=0\}$.
In particular, it is a quasi-projective variety of dimension $2d+1$.

\subsection{The dynamical Green function of a rational map on $\P^1$}\label{sec:defGreen}

In the whole text, we denote by $\omega_{\mathrm{FS}}$ the Fubini-Study 
form on $\P^1$ normalized so that $\|\omega_{\mathrm{FS}}\|=1$ and by $[\cdot,\cdot]$ the chordal metric on $\P^1$, normalized so that $\mathrm{diam}(\P^1)=1$. Recall that, for all $w$, $dd^c_z\log[z,w]=\delta_w-\omega_{\mathrm{FS}}$.

For any $\omega_{\mathrm{FS}}$-psh function $g$ on $\P^1$, i.e. such that
\begin{gather*}
\omega_{\mathrm{FS}}+dd^c g=:\nu_g
\end{gather*}
is probability measures on $\P^1$, we define the $g$-\emph{kernel function} 
$\Phi_g$ by setting
\begin{gather}
\Phi_g(z,w):=\log[z,w]-g(z)-g(w) 
\label{eq:kernel}
\end{gather}
on $\P^1\times\P^1$.
For a probability measure $\nu'$ on $\P^1$, 
set $U_{g,\nu'}:= \int_{\P^1}  \Phi_g(\cdot,w) d\nu'(w)$ on $\P^1$. Then $dd_z^c U_{g,\nu'}=\nu'-\nu_g$, so in the particular case where $\nu'=\nu_g$, 
we deduce that 
\begin{gather*}
U_{g,\nu_g}\equiv I_g:=\int_{\P^1\times\P^1}\Phi_g(\nu_g\times\nu_g)\quad\text{on }\P^1.
\end{gather*}

Pick now $f\in\rat_d$. For all (non-degenerate homogeneous polynomial)
lift $F:\C^2\to\C^2$ of $f$, there exists a H\"older continuous 
$\omega_{\mathrm{FS}}$-psh function $g_F:\P^1\to\R$ such that
\begin{gather*}
\lim_{n\to\infty}\frac{\log\|F^n\|}{d^n}-\log\|\cdot\|=g_F\circ\pi
\end{gather*}
uniformly on $\C^2\setminus\{0\}$, which is
called the \emph{dynamical Green function} of $F$ on $\P^1$. 
Since $F$ is unique up to multiplication by $\alpha\in\C^*$ and
$g_{\alpha\cdot F}=g_F+(\log|\alpha|)/(d-1)$ for any $\alpha\in\C^*$, the positive measure
\[
\omega_{\mathrm{FS}}+dd^cg_F=:\mu_f
\]
is independent of the choice of $F$, and is in fact
the unique maximal entropy measure of $f$ on $\P^1$.
 For later use, we point out the equality
\begin{align*}
I_{g_F}=-\frac{1}{d(d-1)}\log|\mathrm{Res}(F)|,
\end{align*}
which is (a reformulation of) DeMarco's formula on the homogeneous capacity
of the filled-in Julia set of $F$ in $\C^2$.

\begin{definition}
The \emph{dynamical Green function} $g_f$ of $f$ on $\P^1$ is 
the unique $\omega_{\mathrm{FS}}$-psh function on $\P^1$ such that
$\mu_{g_f}=\mu_f$ on $\P^1$ and that $I_{g_f}=0$.
\end{definition}

\begin{remark} \normalfont \normalfont
In particular, $U_{g_f,\mu_f}\equiv I_{g_f}=0$ on $\P^1$. 
Moreover, $g_F=g_f$ for some lift $F$ of $f$,
which is unique up to multiplication by a complex number of modulus one.
\end{remark}

\subsection{The dynatomic and multiplier polynomials}\label{sec:dynatomic}

We refer to \cite[\S 4.1]{Silverman} and to \cite{BB3,BB2,Milnor3} (see also~\cite[\S 6]{favregauthier}) for the details on the dynatomic  and multiplier polynomials and the related topics. 

Pick any $f\in\rat_d$. For every $n\in\N^*$, let
\begin{itemize}
\item $\Fix(f^n)$ be the set of all fixed points of $f^n$ in $\P^1$, and
\item $\Fix^*(f^n)$ the set of all periodic points of $f$ in $\P^1$ having exact period $n$.
\end{itemize} 
The $n$-th \emph{dynatomic polynomial} of a lift $F$ of $f$ 
is a {\itshape homogeneous} polynomial
\[
\Phi_n^*(F,(z_0,z_1)):=\prod_{k:k|n}(F^k(z_0,z_1)\wedge(z_0,z_1))^{\mu(n/k)}
\]
in $z_0,z_1$ of degree $d_n$;
there is a (finite) sequence $(P_j^{(n)})_{j\in\{1,\ldots,d_n\}}$
in $\C^2\setminus\{0\}$ such that we have a factorization
$\Phi_n^*(F,(z_0,z_1))=\prod_{j=1}^{d_n}((z_0,z_1)\wedge P_j^{(n)})$,
and setting $z_j^{(n)}:=\pi(P_j^{(n)})\in\P^1$ for each $j\in\{1,\ldots,d_n\}$,
the sequence $(z_j^{(n)})_{j=1}^{d_n}$ is independent of
the choice of $(P_j^{(n)})_{j\in\{1,\ldots,d_n\}}$ and that of $F$, up to permutation. 

We recall that the set $\{z_j^{(n)}:j\in\{1,\ldots,d_n\}\}$ 
is the disjoint union of $\Fix^*(f^n)$ and the set of all periodic points $z$ of $f$ having exact period $m<n$ and
dividing $n$ and whose multiplier $(f^m)'(z)$ is a $n/m$-th primitive
root of unity, so that $(f^n)'(z)=1$ 
for every $z\in\bigl\{z_j^{(n)}:j\in\{1,\ldots,d_n\}\bigr\}\setminus\Fix^*(f^n)$ and that for every $z\in\Fix^*(f^n)$, 
we have $\#\{j\in\{1,\ldots,d_n\}:z_j^{(n)}=z\}=1$ if $(f^n)'(z)\neq 1$. 
For every $n\in\N^*$, 
the $n$-th {\em multiplier polynomial} of $f$ is the polynomial
\begin{gather}
p_n(f,w):=\left(\prod_{j=1}^{d_n}((f^n)'(z_j^{(n)})-w)\right)^{1/n}\label{eq:multiplier}
\end{gather}
in $w$ of degree $d_n/n$, which is unique up to multiplication in $n$-th roots of
unity.

Let $\Lambda$ be a quasi-projective variety parametrizing 
an algebraic family $(f_\lambda)_{\lambda\in\Lambda}$ 
of degree $d$ rational maps on $\P^1$. 
Then for any $n\in\N^*$, the {\em $n$-th multiplier polynomial} 
$p_n:\Lambda\times\C\longrightarrow\C$ {\em of} $(f_\lambda)_{\lambda\in\Lambda}$
defined by
\begin{gather*}
p_n(\lambda,w):=p_n(f_\lambda,w)
\end{gather*}
is holomorphic, and since $\Lambda$ is a quasi-projective variety, 
this $p_n:\Lambda\times\C\to\C$ is actually a morphism with 
$\deg_w(p_n(\lambda,w))=d_n/n$ for all $\lambda\in\Lambda$ and with
$\deg_\lambda(p_n(\lambda,w))\leq C d_n$ for all $w\in\C$, 
where $C>0$ depends only on the family $(f_\lambda)_{\lambda\in\Lambda}$, see e.g.~\cite[\S 2.2]{BB2}. 
For any $n\in\N^*$ and any $w\in\C$, we set
\[
\Per_n(w):=\{\lambda\in\Lambda\, ; \ p_n(\lambda,w)=0\}
\]
and denote by $[\Per_n(w)]$ the current of integration defined by the zeros
of $p_n(\cdot,w)$ on $\Lambda$. Remark that for all $w\in\C$ and all $n\in\mathbb{N}^*$, we have
\begin{equation}
\frac{1}{d_n}\|[\Per_n(w)]\|\leq C\label{eq:boundedmass}
\end{equation}
by B\'ezout Theorem.

Beware also that, since the existence of a cycle of given period and multiplier is invariant under M\"obius conjugacy, the $n$-th multiplier polynomial 
$p_n:\rat_d\times\C\to\C$ of $\rat_d$
also descends to a regular function $p_n:\cM_d\times\C\to\C$, enjoying the same properties.

\subsection{A parametric version of Przytycki lemma}

For a $\mathcal{C}^1$ map $f:\P^1\rightarrow\P^1$, 
the \emph{chordal derivative} $f^\#$ of $f$ is the non-negative real valued 
continuous function
\[
f^\#(z):= \lim_{y\to z} \frac{[f(z),f(y)]}{[z,y]}
\]
on $\P^1$. For any rational map $f\in\rat_d$, we set 
\[
M(f):=\sup_{\P^1}(f^\#)^2\in]1,+\infty[.
\]
We shall use the following,
which is a direct consequence of \cite[Lemma 3.1]{distribGV} and of the fact that the spherical and the chordal distance are equivalent on $\P^1$. 

\begin{lemma}\label{lm:Przytycki}
There exists a universal constant $0<\kappa<1$ such that for any holomorphic family $(f_\lambda)_{\lambda\in\Lambda}$ of degree $d$ rational maps with a marked critical point $c:\Lambda\to\P^1$ which does not lie persistently in a parabolic basin of $f_\lambda$ and is not persistently periodic, the following holds: for any $n\geq1$ and any $\lambda\in\Lambda$, 
if $f_\lambda^n(c(\lambda))\neq c(\lambda)$, then
\begin{itemize}
\item either $[f^n_\lambda(c(\lambda)),c(\lambda)]\geq \kappa\cdot M(f_\lambda)^{-n}$,
\item or $c(\lambda)$ lies in the immediate basin of an attracting periodic point $z(\lambda)$ of $f_\lambda$ of period dividing $n$,
$[c(\lambda),\J_\lambda]\geq \kappa M(f_\lambda)^{-n}$, and
$[f^n_\lambda(c(\lambda)),c(\lambda)]\geq 2\cdot [z(\lambda),c(\lambda)]$.
\end{itemize}
\end{lemma}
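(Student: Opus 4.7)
The plan is to reduce the claim to the non-parametric Przytycki-type lemma established in \cite[Lemma 3.1]{distribGV} by applying it pointwise at each $\lambda \in \Lambda$. Both alternatives in the conclusion concern only the single map $f_\lambda$ and its critical point $c(\lambda)$, so no genuinely parametric argument is needed; the only thing to verify is that the constant $\kappa$ furnished by such a pointwise application can be chosen independently of $\lambda$ and of the family.

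Concretely, I would first fix an arbitrary $\lambda \in \Lambda$ with $f_\lambda^n(c(\lambda)) \neq c(\lambda)$. The persistence hypotheses --- that $c$ is neither persistently in a parabolic basin nor persistently periodic --- are exactly what is required to apply the non-parametric version to the single map $f_\lambda$ with marked critical point $c(\lambda)$, producing the claimed dichotomy but in terms of the spherical metric $d_{\mathrm{sph}}$ and an analogous quantity $M_{\mathrm{sph}}(f_\lambda)$ built from the spherical derivative; that lemma supplies a universal $\kappa_0 \in (0,1)$. I would then transfer from the spherical to the chordal metric on $\P^1$: the two are uniformly bi-Lipschitz equivalent with universal constants (once the chordal metric is normalized so that $\mathrm{diam}(\P^1) = 1$), and therefore the associated infinitesimal derivatives, and hence $M(f_\lambda)$ and $M_{\mathrm{sph}}(f_\lambda)$, also agree up to a universal multiplicative constant. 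Absorbing all these comparison factors into $\kappa_0$ produces a new universal $\kappa \in (0,1)$ for which both lower bounds $[f_\lambda^n(c(\lambda)), c(\lambda)] \geq \kappa\, M(f_\lambda)^{-n}$ and $[c(\lambda), \J_\lambda] \geq \kappa\, M(f_\lambda)^{-n}$ hold. The final relation $[f_\lambda^n(c(\lambda)), c(\lambda)] \geq 2\,[z(\lambda), c(\lambda)]$ compares two distances in the same metric, hence transfers from its spherical counterpart with the same constant $2$.

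The main (minor) obstacle is bookkeeping: tracking the universal constants through the change of metric, and checking that the sporadic parameters at which $c(\lambda)$ happens to lie in a parabolic basin or be periodic of period not dividing $n$ are already absorbed by the non-parametric dichotomy --- in the parabolic case via a geometric lower bound on the first alternative, and in the periodic case via the standing hypothesis $f_\lambda^n(c(\lambda)) \neq c(\lambda)$. All of the hard analytic work --- namely the univalent inverse-branch and distortion argument underlying Przytycki's original estimate --- is entirely subsumed in the non-parametric statement, so once one grants it for individual maps, the parametric version adds no new content beyond the universality of the constants.
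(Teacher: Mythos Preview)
Your approach matches the paper's: it simply states that the lemma is ``a direct consequence of \cite[Lemma 3.1]{distribGV} and of the fact that the spherical and the chordal distance are equivalent on $\P^1$,'' with no further argument. Two small points are worth flagging.

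First, according to the paper's own introduction, \cite[Lemma 3.1]{distribGV} is already the \emph{parametric} version of Przytycki's lemma (the paper calls it ``a parametric version of a lemma of Przytycki \ldots\ proved by the first and third authors in \cite{distribGV}''). So your reduction to a pointwise statement and the attendant discussion of ``sporadic parameters'' is unnecessary: the cited lemma already carries the persistence hypotheses, and the only work is the metric transfer.

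Second, your justification for the final inequality is not correct as written. You claim that $[f_\lambda^n(c(\lambda)), c(\lambda)] \geq 2\,[z(\lambda), c(\lambda)]$ ``compares two distances in the same metric, hence transfers from its spherical counterpart with the same constant $2$.'' But a bi-Lipschitz change of metric does \emph{not} preserve ratios of distances: if $A\, d_{\mathrm{sph}} \le [\cdot,\cdot] \le B\, d_{\mathrm{sph}}$, then $d_{\mathrm{sph}}(p,q) \ge 2\, d_{\mathrm{sph}}(r,s)$ only yields $[p,q] \ge (2A/B)\,[r,s]$. To recover the stated constant $2$ one must either check that \cite[Lemma 3.1]{distribGV} is already formulated in the chordal metric (so no transfer is needed for this clause), or that it is stated with a constant large enough to survive the comparison. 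This is bookkeeping, as you say, but the argument you gave does not do it.
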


\subsection{A length-area estimate}
The modulus of an annulus $A$ conformally equivalent to 
$A'=\{z\in\C \, ; \ r<|z|<R\}$ with $0< r<R< +\infty$ is defined by 
\[
\textup{mod}(A)=\textup{mod}(A')=\frac{1}{2\pi}\log\left(\frac{R}{r}\right).
\]
We shall use the following classical estimate (\cite[Appendix]{briendduval}).

\begin{lemma}[Briend-Duval]\label{lm:BriendDuval}
There exists a universal constant $\tau>0$ such that for any quasi-projective variety $\Lambda$, any K\"ahler metric $\omega$ on $\Lambda$ and any pair of relatively compact holomorphic disks $D_1\Subset D_2$ in $\Lambda$, we have
\begin{gather*}
(\textup{diam}_\omega(D_1))^2\leq \tau\cdot\frac{\textup{Area}_\omega(D_2)}{\min(1,\textup{mod}(A))},
\end{gather*}	
where $A$ is the annulus $D_2\setminus \overline{D_1}$.
\end{lemma}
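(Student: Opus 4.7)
The plan is to apply the classical Briend--Duval length-area method, as carried out in the appendix of \cite{briendduval}. First, I would uniformize the doubly connected region $A=D_2\setminus\overline{D_1}$ as a standard round annulus $\{r<|z|<1\}$ with $\operatorname{mod}(A)=\frac{1}{2\pi}\log(1/r)$, via a conformal map $\psi:\{r<|z|<1\}\to A$. For each $t\in(r,1)$, the image $\gamma_t:=\psi(\{|z|=t\})$ is a Jordan curve in $A$ separating $D_1$ from $\partial D_2$; let $L(t)$ denote its $\omega$-length.

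Next, the Cauchy--Schwarz inequality in the angular variable yields
\[
L(t)^2=\Big(\int_0^{2\pi}|\psi'(te^{i\theta})|_\omega\, t\, d\theta\Big)^2\leq 2\pi\, t^2\int_0^{2\pi}|\psi'(te^{i\theta})|_\omega^2\, d\theta,
\]
and integrating against $dt/t$ over $(r,1)$, the right-hand side equals $2\pi$ times the $\omega$-area of $\psi(A)=A$, giving
\[
\int_r^1\frac{L(t)^2}{t}\, dt\leq 2\pi\,\operatorname{Area}_\omega(A)\leq 2\pi\,\operatorname{Area}_\omega(D_2).
\]
Since $\int_r^1 dt/t=2\pi\operatorname{mod}(A)$, a mean-value argument (restricted to an inner sub-annulus of modulus~$1$ if $\operatorname{mod}(A)>1$) produces some $t_0\in(r,1)$ such that
\[
L(t_0)^2\leq\frac{\operatorname{Area}_\omega(D_2)}{\min(1,\operatorname{mod}(A))}.
\]

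Finally, I would convert this length bound into the required diameter bound $\operatorname{diam}_\omega(\overline{D_1})\leq C\cdot L(t_0)$ for some universal constant $C>0$, which gives the lemma with $\tau=C^2$. Here $\gamma_0:=\gamma_{t_0}\subset D_2$ bounds, inside the holomorphic disk $D_2$, a closed topological sub-disk containing $\overline{D_1}$; it thus suffices to dominate the $\omega$-diameter of this enclosed holomorphic disk by the $\omega$-length of its boundary, with a constant independent of $\omega$ and $\Lambda$. This last length-to-diameter step is the main technical obstacle: the classical proof parametrizes the enclosed disk by the unit disk and exploits the holomorphic maximum principle after passing to local coordinates, the universality of the constant arising from the fact that one only ever uses the conformal structure on the parametrizing disk together with Cauchy--Schwarz, both insensitive to the particular choice of $\omega$. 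The details are carried out in the Briend--Duval appendix.
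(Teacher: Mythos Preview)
The paper does not prove this lemma; it simply records the classical estimate and cites the appendix of \cite{briendduval}. So there is no detailed argument in the paper to compare against, and your decision to defer to that reference is in line with what the authors do.

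That said, your sketch has a real problem at the last step. You reduce to the claim that for a holomorphic disk in $(\Lambda,\omega)$ with boundary curve $\gamma_{t_0}$, one has $\operatorname{diam}_\omega(\text{enclosed disk})\le C\cdot L_\omega(\gamma_{t_0})$ with $C$ independent of $\omega$. This is false. Take $\Lambda=\C$ with $\omega=(1+M\chi)\,|dz|^2$, where $\chi$ is a bump equal to $1$ on $\{|z|<1/4\}$ and supported in $\{|z|<1/2\}$, and let the enclosed disk be $\{|z|<0.9\}$. Its boundary has $\omega$-length $\approx 1.8\pi$, while $d_\omega(0,1/8)\ge\sqrt{1+M}/8$, so the diameter blows up as $M\to\infty$. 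Your mean-value step may well select such a $t_0$ (any $t_0>1/2$ satisfies $L(t_0)^2\le\operatorname{Area}_\omega(D_2)/\operatorname{mod}(A)$ once $M$ is large), and then the length-to-diameter conversion breaks down. No appeal to the maximum principle or to conformal invariance repairs this, because the inequality you are trying to prove at that stage is simply not true for arbitrary K\"ahler metrics.

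The classical argument avoids this detour entirely. Rather than first producing a short separating circle and then trying to bound the diameter by its length, one fixes two points $p=\sigma(a)$, $q=\sigma(b)$ in $D_1$ and applies the length--area method directly to a one-parameter family of arcs in $\Delta$ joining $a$ to $b$ (after a M\"obius normalization this becomes a standard extremal-length computation). Cauchy--Schwarz then produces a single arc whose $\omega$-image has length$^2$ bounded by a universal multiple of $\operatorname{Area}_\omega(D_2)/\min(1,\operatorname{mod}(A))$, and this arc already connects $p$ to $q$, so the diameter bound follows immediately. In other words: use the same length--area mechanism, but foliate by connecting curves rather than separating circles.
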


\section{Quantitative approximation of the Lyapunov exponent}\label{sec:lyap}

Our precise result here can be stated as follows.
This result relies on the combination of the arguments used in~\cite{okuyama:speed} as developed in Lemma~\ref{reductiontoattr} below and of the lemma ``\`a la Przytycki'' proved in~\cite{distribGV}.
The {\em locally uniform} speed of convergence obtained here is not as fast as the pointwise one obtained in~\cite{okuyama:speed}. This is due to our need to control the dependence of the constants on $f\in\rat_d$ in the right-hand side. Here we obtain a continuous dependence.

\begin{theorem}\label{tm:approx1}
There exists $A\geq1$ depending only on $d$ 
such that for any $r\in]0,1]$, any $f\in\rat_d$, and any $n\in\N^*$, we have
\begin{eqnarray*}
\left|\frac{1}{d_n}\int_0^{2\pi}\log\left|p_n(f,re^{i\theta})\right|\frac{\mathrm{d}\theta}{2\pi}-L(f)\right| \leq A \left(C([f])+ |\log r|\right)\frac{\sigma_2(n)}{d^n},
\end{eqnarray*}
where $C([f])=\inf\left\{\log\left(\sup_{\P^1}f_1^\#\right)+\sup_{\P^1}|g_{f_1}|\right\}$, where the infimum is taken over all $f_1\in[f]$ and $g_{f_1}$ is the dynamical Green function of $f_1$ normalized as in \S\ref{sec:defGreen}.
\end{theorem}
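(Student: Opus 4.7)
The plan is to reformulate the left-hand side as a Birkhoff-type average over multipliers of $n$-periodic points via Jensen's formula, handle the main contribution by a locally uniform version of the pointwise approximation of $L(f)$ from \cite{okuyama:speed}, and control the residual attracting contributions via Lemma~\ref{lm:Przytycki}. Writing $w_j^{(n)}:=(f^n)'(z_j^{(n)})$, one has $|p_n(f,w)|^n=\prod_{j=1}^{d_n}|w_j^{(n)}-w|$ from \eqref{eq:multiplier}, so averaging over $|w|=r$ gives
\[
\int_0^{2\pi}\log|p_n(f,re^{i\theta})|\frac{d\theta}{2\pi}=\frac{1}{n}\sum_{j=1}^{d_n}\max\bigl(\log|w_j^{(n)}|,\log r\bigr).
\]
Since $r\le 1$ and $\log^+|w|=\max(\log|w|,\log r)$ whenever $|w|\ge 1$, after dividing by $d_n$ this splits as
\[
\frac{1}{d_n n}\sum_{j=1}^{d_n}\log^+|w_j^{(n)}|\;+\;\frac{1}{d_n n}\sum_{|w_j^{(n)}|<1}\max\bigl(\log|w_j^{(n)}|,\log r\bigr).
\]

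The first sum is the Birkhoff-type average of $\log^+$ of multipliers of $n$-periodic points (the parabolic ``phantom'' points $z_j^{(n)}\not\in\Fix^*(f^n)$ carry multiplier $1$ and contribute nothing). It is precisely the quantity estimated in \cite{okuyama:speed}, where the approximation of $L(f)$ is obtained by expressing $L(f)$ through the kernel function $\Phi_{g_f}$ from \eqref{eq:kernel} and DeMarco's formula $I_{g_F}=-\log|\mathrm{Res}(F)|/(d(d-1))$, the discrepancy being controlled by the regularity of $g_f$ and by distortion bounds along periodic orbits. The content of Lemma~\ref{reductiontoattr} is to revisit that proof while keeping an explicit bookkeeping of the constants, so that the pointwise constant of \cite{okuyama:speed} is replaced by the M\"obius-invariant $C([f])$: the distortion inputs are bounded by $\sup_{\P^1}f_1^\#$ and the kernel/energy inputs by $\sup_{\P^1}|g_{f_1}|$, both invariant after minimization on the $\PSL_2(\C)$-orbit of $f$.

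The second sum is supported on attracting periodic points of $f$: neutral multipliers give $\max(0,\log r)=0$ since $r\le 1$, and the parabolic phantom points carry multiplier $1$. By Shishikura's bound, $f$ has at most $2d-2$ attracting cycles, and each such cycle of exact period $n$ contributes $n$ identical terms $\max(\log|\mu|,\log r)$; hence the second sum is bounded by $(2d-2)\bigl(\max_\mu|\log|\mu||+|\log r|\bigr)/d_n$. If the cycle is super-attracting ($\mu=0$) then $\max(\log|\mu|,\log r)=\log r$ and its contribution is $O(|\log r|/d^n)$. Otherwise, the attracted critical point $c$ is not periodic, so Lemma~\ref{lm:Przytycki} applies to $c$ and produces, in either alternative, a lower bound of the form $[f^n(c),c]\ge\kappa M(f)^{-n}$; combined with standard distortion estimates near the cycle this yields $|\log|\mu||=O(n\log M(f))$. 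Using $d_n\asymp d^n$ and $\log M(f)\le 2C([f])$, this tail is $O\bigl((C([f])+|\log r|)n/d^n\bigr)$, which is absorbed into the announced bound $(C([f])+|\log r|)\sigma_2(n)/d^n$.

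The main obstacle is the main-term step: Okuyama's pointwise argument in \cite{okuyama:speed} must be carefully revisited to exhibit an explicit \emph{continuous} dependence of every constant on $f$ through $C([f])$. Concretely, the distortion bounds along $n$-periodic orbits, the regularity estimates for the dynamical Green function $g_f$, and any length-area input (e.g.\ Lemma~\ref{lm:BriendDuval}) must be executed with constants that factor through $\sup_{\P^1}f_1^\#$ and $\sup_{\P^1}|g_{f_1}|$ in a manner manifestly invariant under $\PSL_2(\C)$-conjugacy. This extra uniformity is exactly the source of the mild loss compared to the sharp pointwise speed in \cite{okuyama:speed}.
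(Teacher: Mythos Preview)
Your decomposition $L_n^r(f)=\text{(A)}+\text{(B)}$ with (A) the $\log^+$ sum and (B) the attracting tail is exactly the split $L_n^r=L_n^1+(L_n^r-L_n^1)$, which is also how the paper proceeds. However, you have swapped the roles of the two pieces.

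The attracting tail (B) is elementary and does \emph{not} need the Przytycki lemma. For $|\mu|<1$ and $r\le 1$ one has $|\max(\log|\mu|,\log r)|=\min(|\log|\mu||,|\log r|)\le|\log r|$, so the at most $2d-2$ attracting cycles of exact period $n$ contribute at most $(2d-2)|\log r|/d_n$. Your Przytycki argument for (B) is in fact incorrect on two counts: the second alternative of Lemma~\ref{lm:Przytycki} does \emph{not} give $[f^n(c),c]\ge\kappa M(f)^{-n}$ (it only gives $[f^n(c),c]\ge 2[z,c]$, and $[z,c]$ can be arbitrarily small); and even a lower bound on $[f^n(c),c]$ cannot yield $|\log|\mu||=O(n\log M(f))$, since a critical point may sit far from the cycle in the basin while the multiplier is arbitrarily close to~$0$.

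Conversely, the main term (A) is where Lemma~\ref{lm:Przytycki} is genuinely required, and this is more than ``bookkeeping the constants of \cite{okuyama:speed}''. Via M\"obius inversion and Lemma~\ref{th:error}, the error $\text{(A)}-L(f)$ is governed by the quantities
\[
u_{m,m}(f,1)=\frac{1}{d^m+1}\Bigl(\sum_{c\in\mathrm{Crit}(f)}\log[f^m(c),c]-\sum_{z\in\Fix(f^m),\ |(f^m)'(z)|<1}\frac{1}{m}\log|(f^m)'(z)|\Bigr),
\]
and neither summand is separately bounded by $C([f])\cdot m^2$: when a multiplier is very small, $\log[f^m(c),c]$ and the attracting sum can both be arbitrarily large in modulus. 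The paper's key step (part (iii) of the proof) uses the Przytycki dichotomy, together with Lemma~\ref{th:lower} and Lemma~\ref{lm:inegtriangular}, to show that these two divergent contributions \emph{cancel} up to $O(m^2\log M(f))$, by comparing $\log[f^m(c),c]$ not to the multiplier directly but to $\sum_{z\ \text{attr.}}\log[z,c]$. That cancellation is the heart of the uniform estimate, and it is absent from your sketch of (A).

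Finally, Lemma~\ref{lm:BriendDuval} (Briend--Duval) plays no role in this theorem; it is used only in Section~\ref{sec:equicenter} for the diameter estimates on hyperbolic components.
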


Of course, as the left-hand side of the inequality is invariant under M\"obius conjugacy, it is sufficient to prove that for any $0<r\leq 1$, any $n\in\mathbb{N}^*$ and any $f\in\rat_d$, we have
\[\left|\frac{1}{d_n}\int_0^{2\pi}\log\left|p_n(f,re^{i\theta})\right|\frac{\mathrm{d}\theta}{2\pi}-L(f)\right| \leq A \left(\log\left(\sup_{\P^1}f^\#\right)+\sup_{\P^1}|g_{f}|+ |\log r|\right)\frac{\sigma_2(n)}{d^n}\]
for some constant $A$ which depends only on $d$.

So we pick $f\in\rat_d$.
In the following, the sums over subsets in $\mathrm{Crit}(f)$,
$\Fix(f^n)$, or $\Fix^*(f^n)$ 
take into account the multiplicities of their elements.
For any $n\in\N^*$, the cardinality of $\Fix(f^n)$ and that of $\Fix^*(f^n)$
are $d^n+1$ and $d_n$, respectively, taking into account of the multiplicity
of each element of them as a fixed point of $f^n$.

\subsection{Initial dynamical estimates}

Recall that, by \cite[Lemma 2.4]{okuyama:speed} , we have
\begin{gather}
\log(f^\#)
=L(f)+\sum_{c\in \mathrm{Crit}(f)}\Phi_{g_f}(\cdot,c)+2(g_f\circ f-g_f)\ \text{on }\ \P^1~.\label{eq:factor}
\end{gather}
This formula plays a key role in the proofs of Lemma \ref{th:lower} and \ref{th:error}.

\begin{lemma}\label{th:lower}
Assume that $f$ has no super-attracting cycles. Then for any $n\in\N^*$ 
and any $z\in\Fix(f^n)$, we have
\begin{eqnarray*}
\frac{1}{n}\left|\sum_{c\in \mathrm{Crit}(f)}\sum_{j=0}^{n-1}\log[f^j(z),c]
-\log|(f^n)'(z)|\right|\leq B_1(f),
\end{eqnarray*}
where $B_1(f):=L(f)+2(2d-2)\sup_{\P^1}|g_f|$.
\end{lemma}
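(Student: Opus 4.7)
\textbf{Proof plan for Lemma \ref{th:lower}.}
The strategy is to apply the factorization formula \eqref{eq:factor} for $\log f^\#$ along the orbit of a periodic point $z$ and exploit the telescoping of the coboundary term $g_f \circ f - g_f$.

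First, I would observe that for $z\in\Fix(f^n)$, the chordal and Euclidean derivatives of $f^n$ at $z$ coincide: in an affine chart, $f^\#(y)=|f'(y)|(1+|y|^2)/(1+|f(y)|^2)$, so the chain rule gives $(f^n)^\#(z)=|(f^n)'(z)|\cdot(1+|z|^2)/(1+|f^n(z)|^2)=|(f^n)'(z)|$ because $f^n(z)=z$. Thus
\[
\log|(f^n)'(z)| \;=\; \sum_{j=0}^{n-1}\log f^\#(f^j(z)).
\]
The hypothesis that $f$ has no super-attracting cycle guarantees that no $f^j(z)$ coincides with a critical point of $f$, so that all of the quantities involved are finite.

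Next, I would substitute \eqref{eq:factor} termwise into this sum. Since $\Phi_{g_f}(y,c)=\log[y,c]-g_f(y)-g_f(c)$, this gives
\[
\log|(f^n)'(z)| = nL(f) + \sum_{c\in\mathrm{Crit}(f)}\sum_{j=0}^{n-1}\log[f^j(z),c] - \sum_{c\in\mathrm{Crit}(f)}\sum_{j=0}^{n-1}\bigl(g_f(f^j(z))+g_f(c)\bigr) + 2\sum_{j=0}^{n-1}\bigl(g_f(f^{j+1}(z))-g_f(f^j(z))\bigr).
\]
The last sum telescopes to $2(g_f(f^n(z))-g_f(z))=0$ since $f^n(z)=z$, which is the key simplification.

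Rearranging and taking absolute values, after division by $n$ we obtain
\[
\frac{1}{n}\left|\sum_{c}\sum_{j=0}^{n-1}\log[f^j(z),c] - \log|(f^n)'(z)|\right| \leq L(f) + \frac{1}{n}\sum_{c\in\mathrm{Crit}(f)}\sum_{j=0}^{n-1}\bigl|g_f(f^j(z))+g_f(c)\bigr|.
\]
Since $\#\mathrm{Crit}(f)=2d-2$ (counted with multiplicity) and each term $|g_f(f^j(z))+g_f(c)|$ is bounded by $2\sup_{\P^1}|g_f|$, the double sum is at most $n(2d-2)\cdot 2\sup_{\P^1}|g_f|$, yielding the required bound $B_1(f)=L(f)+2(2d-2)\sup_{\P^1}|g_f|$. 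There is no real obstacle here; the only subtle point is the initial chordal/Euclidean identification, which relies crucially on $z$ being a fixed point of $f^n$, and the use of the no-super-attracting hypothesis to keep the logarithmic chordal distances finite.
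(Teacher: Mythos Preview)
Your proof is correct. Both your argument and the paper's arrive at the same exact identity
\[
\frac{1}{n}\log|(f^n)'(z)| = L(f) + \sum_{c\in\mathrm{Crit}(f)}\frac{1}{n}\sum_{j=0}^{n-1}\Phi_{g_f}(f^j(z),c),
\]
after which the bound follows immediately from $|\Phi_{g_f}(y,c)-\log[y,c]|\le 2\sup_{\P^1}|g_f|$ and $\#\mathrm{Crit}(f)=2d-2$. The difference lies in how this identity is reached. The paper applies \eqref{eq:factor} directly to $f^n$, decomposes $\mathrm{Crit}(f^n)$ as $\bigcup_{j<n}(f^j)^{-1}(\mathrm{Crit}(f))$, and then invokes the pullback formula $\int_{\P^1}\Phi_{g_f}(\cdot,w)\,(f^*\delta_a)(w)=\Phi_{g_f}(f(\cdot),a)$ from \cite{OkuyamaStawiska} to simplify. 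You instead use the chain rule $(f^n)^\#(z)=\prod_{j<n}f^\#(f^j(z))$ (valid because the metric factors $(1+|f^j(z)|^2)/(1+|f^{j+1}(z)|^2)$ telescope for $f^n(z)=z$) and apply \eqref{eq:factor} for $f$ at each orbit point, so that the coboundary $2(g_f\circ f-g_f)$ telescopes to zero. Your route is slightly more elementary in that it bypasses the auxiliary pullback identity; the paper's route, on the other hand, is parallel to its proof of Lemma~\ref{th:error}, which uses the companion formula from the same reference.
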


\begin{proof}
By \eqref{eq:factor} applied to $f^n$, we have
\begin{multline*}
\log((f^n)^\#)
=L(f^n)+\sum_{\tilde{c}\in \mathrm{Crit}(f^n)}\Phi_{g_{f^n}}(\cdot,\tilde{c})
+2(g_{f^n}\circ f^n-g_{f^n})\\
=n\cdot L(f)
+\sum_{c\in \mathrm{Crit}(f)}\Biggl(\sum_{j=0}^{n-1}\int_{\P^1}\Phi_{g_f}(\cdot,w)\,((f^j)^*\delta_c)(w)\Biggr)
+2(g_f\circ f^n-g_f)
\end{multline*}
on $\P^1$. By~\cite[Lemma 3.4]{OkuyamaStawiska}, for every $a\in\P^1$,
\begin{gather*}
\int_{\P^1}\Phi_{g_f}(\cdot,w)\,(f^*\delta_a)(w)=\Phi_{g_f}(f(\cdot),a)
\end{gather*}
on $\P^1$. For every $z\in\Fix(f^n)$, in particular, we  have
\begin{gather*}
\frac{1}{n}\log|(f^n)'(z)|
=L(f)+\sum_{c\in \mathrm{Crit}(f)}\frac{1}{n}\sum_{j=0}^{n-1}\Phi_{g_f}(f^j(z),c),
\end{gather*}
which with the definition \eqref{eq:kernel} of the $g_f$-kernel function 
$\Phi_{g_f}$ completes the proof.
\end{proof}

For any $n\in\N^*$ and any $0<r\leq 1$, we set
\begin{gather*}
L_n^r(f) := \frac{1}{d_n}\int_0^{2\pi}\log|p_n(f,re^{i\theta})|\frac{\mathrm{d}\theta}{2\pi}. 
\end{gather*}
If $f$ has no super-attracting cycles, for any $m,n\in\N^*$ with $m|n$ and any $0<r\leq 1$, we set
\begin{align*}
u_{m,n}(f,r):=
&\frac{1}{d^m+1}\Biggl(\sum_{c\in \mathrm{Crit}(f)}\log[f^m(c),c]
-\sum_{z\in\Fix(f^m) \atop |(f^n)'(z)|<r}\frac{1}{m}\log|(f^m)'(z)|\Biggr)\\
=&u_{m,m}(f,r^{m/n}).
\end{align*}

\begin{lemma}\label{th:error}
If $f$ has no super-attracting cycles, for any $r\in]0,1]$ and any $n\in\N^*$,
\begin{align*}
\epsilon_n(f,r)
:=\frac{1}{n(d^n+1)}\sum_{z\in\Fix(f^n) \atop |(f^n)'(z)|\geq r}\log|(f^n)'(z)|- L(f)
=u_{n,n}(f,r)+\epsilon_n'(f),
\end{align*}
where $(d^n+1)|\epsilon_n'(f)|\leq 2(2d-2)\sup_{\P^1}|g_f|$.
\end{lemma}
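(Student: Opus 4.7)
The plan is to isolate the $r$-independent part of $\epsilon_n(f,r)-u_{n,n}(f,r)$. Expanding the definitions and regrouping, the partial sums over $\{z\in\Fix(f^n):|(f^n)'(z)|\geq r\}$ and $\{z\in\Fix(f^n):|(f^n)'(z)|< r\}$ reassemble into the full sum over $\Fix(f^n)$, so the claim reduces to the $r$-independent estimate
\[
(d^n+1)|\epsilon_n'(f)|=\left|\tfrac{1}{n}\sum_{z\in\Fix(f^n)}\log|(f^n)'(z)|-(d^n+1)L(f)-\sum_{c\in\mathrm{Crit}(f)}\log[f^n(c),c]\right|\leq 2(2d-2)\sup_{\P^1}|g_f|.
\]

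I would then feed in the pointwise identity obtained in the proof of Lemma~\ref{th:lower}, namely $\tfrac{1}{n}\log|(f^n)'(z)|=L(f)+\sum_{c}\tfrac{1}{n}\sum_{j=0}^{n-1}\Phi_{g_f}(f^j(z),c)$ for $z\in\Fix(f^n)$, and sum it over $z$. For every $j$, the iterate $f^j$ permutes $\Fix(f^n)$ (with multiplicities), so $\sum_z\Phi_{g_f}(f^j(z),c)=\sum_z\Phi_{g_f}(z,c)$ and the inner $j$-average collapses. This gives
\[
\tfrac{1}{n}\sum_{z\in\Fix(f^n)}\log|(f^n)'(z)|=(d^n+1)L(f)+\sum_{c\in\mathrm{Crit}(f)}\sum_{z\in\Fix(f^n)}\Phi_{g_f}(z,c),
\]
reducing the task to showing, for each critical point $c$, that $|\sum_{z\in\Fix(f^n)}\Phi_{g_f}(z,c)-\log[f^n(c),c]|\leq 2\sup_{\P^1}|g_f|$.

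To treat this, fix the lift $F$ with $g_F=g_f$ (equivalently $|\mathrm{Res}(F)|=1$), consider the homogeneous polynomial $\Psi_n(z):=F^n(z)\wedge z$ of degree $d^n+1$, and factor $\Psi_n(z)=\prod_i(z\wedge P_i)$ with the lifts $P_i$ of the $z_i\in\Fix(f^n)$ furnished by this factorization. Taking $\log|\cdot|$ of both sides at a unit lift $\tilde c$ and converting wedges to chordal distances via $\log|x\wedge y|=\log[\pi x,\pi y]+\log\|x\|+\log\|y\|$ gives $\log[f^n(c),c]+\log\|F^n(\tilde c)\|=\sum_i\log[c,z_i]+\sum_i\log\|P_i\|$. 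The exact functional equation $\log\|F^n(z)\|=d^n(\log\|z\|+g_f(\pi z))-g_f(\pi F^n(z))$ (obtained by iterating the normalization $\log\|F\|=d(\log\|\cdot\|+g_f\circ\pi)-g_f\circ f\circ\pi$) then provides $\log\|F^n(\tilde c)\|=d^n g_f(c)-g_f(f^n(c))$ and, applied at each $P_i$ where $F^n(P_i)=\alpha_i P_i$, the exact equality $\log\|P_i\|=\log|\alpha_i|/(d^n-1)-g_f(z_i)$. Substituting everything into $\Phi_{g_f}(z,c)=\log[z,c]-g_f(z)-g_f(c)$, the $g_f(z_i)$ contributions cancel and one obtains the clean identity
\[
\sum_{z\in\Fix(f^n)}\Phi_{g_f}(z,c)-\log[f^n(c),c]=-g_f(c)-g_f(f^n(c))-\frac{\sum_i\log|\alpha_i|}{d^n-1}.
\]

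The first two terms together contribute at most $2\sup|g_f|$, so it remains to show $\sum_i\log|\alpha_i|=0$. For this, I would interpret $\lambda\mapsto\mathrm{Res}(F_1^n-\lambda z_0,\,F_2^n-\lambda z_1)$ as a polynomial in $\lambda$ whose roots (with multiplicities) are exactly the $\alpha_i$ and whose value at $\lambda=0$ is $\pm\mathrm{Res}(F^n)$, yielding $|\prod_i\alpha_i|=|\mathrm{Res}(F^n)|$; since $g_{F^n}=g_F$, DeMarco's identity $I_{g_F}=-\log|\mathrm{Res}(F)|/(d(d-1))$ together with $I_{g_F}=0$ forces $|\mathrm{Res}(F^n)|=1$. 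The main obstacle is this very last step: everything else is algebraic manipulation of the exact functional equation for $\log\|F^n\|$, whereas the vanishing $\sum_i\log|\alpha_i|=0$ genuinely requires the resultant-theoretic realization of the $\alpha_i$ combined with the DeMarco normalization to avoid a spurious $d^n$-sized contribution.
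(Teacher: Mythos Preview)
Your reduction to showing $\bigl|\sum_{z\in\Fix(f^n)}\Phi_{g_f}(z,c)-\log[f^n(c),c]\bigr|\le 2\sup_{\P^1}|g_f|$ is correct and matches the paper's. The paper arrives there slightly more directly: it integrates \eqref{eq:factor} against $\mu_n=\sum_{z\in\Fix(f^n)}\delta_z$ and then invokes \cite[Lemma~3.5]{OkuyamaStawiska}, which is precisely the identity
\[
\sum_{z\in\Fix(f^n)}\Phi_{g_f}(z,a)=\Phi_{g_f}(f^n(a),a)=\log[f^n(a),a]-g_f(f^n(a))-g_f(a),
\]
so the bound is immediate. Your lift computation is in effect a self-contained re-derivation of that lemma, which is a legitimate and more elementary route; it correctly reduces everything to the single scalar identity $\sum_i\log|\alpha_i|=0$.

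The gap is in your justification of this last identity. The expressions $F_1^n-\lambda z_0$ and $F_2^n-\lambda z_1$ are not homogeneous in $(z_0,z_1)$ for $\lambda\neq0$, so your resultant in $\lambda$ is not well-defined as a homogeneous resultant. If one dehomogenizes (say $z_1=1$), the resulting polynomial in $\lambda$ has degree $\le d^n$ rather than $d^n+1$, misses any fixed point at $\infty$, and its roots are the eigenvalues of $F^n$ at the lifts $(z_i,1)$, namely $\alpha_i\,P_{i,1}^{\,1-d^n}$, not the $\alpha_i$ attached to your factorization. For instance with $F=(z_0^2,z_1^2)$ one gets $\mathrm{Res}_t(t^2-\lambda t,\,1-\lambda)=(1-\lambda)^2$, whereas the $\alpha_i$ are $1,-1,1$.

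The conclusion $|\prod_i\alpha_i|=|\mathrm{Res}(F^n)|$ is nonetheless correct, and your argument is easily repaired. From $F_1^n(P_i)=\alpha_iP_{i,0}$ one has $\prod_i\alpha_i=\prod_iF_1^n(P_i)\big/\prod_iP_{i,0}$. Using the product formula $\mathrm{Res}\bigl(\prod_i(z\wedge P_i),B\bigr)=\prod_iB(P_i)$, the numerator is $\mathrm{Res}(\Psi_n,F_1^n)=\mathrm{Res}(-z_0F_2^n,F_1^n)=\pm F_1^n(0,1)\,\mathrm{Res}(F_1^n,F_2^n)$, while $\prod_iP_{i,0}=(-1)^{d^n+1}\Psi_n(0,1)=\pm F_1^n(0,1)$. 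The factors $F_1^n(0,1)$ cancel (work with $F_2^n$ instead, or argue by continuity, when $F_1^n(0,1)=0$), giving $\prod_i\alpha_i=\pm\mathrm{Res}(F^n)$. Since $g_{F^n}=g_F=g_f$ forces $|\mathrm{Res}(F^n)|=1$ by DeMarco's formula, $\sum_i\log|\alpha_i|=0$ follows and your proof is complete.
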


\begin{proof}
Pick $r\in]0,1]$ and $n\geq1$, and set $\mu_n:=\sum_{z\in\Fix(f^n)}\delta_z $. Since $(f^n)^\#(z)=|(f^n)'(z)|$ for any $z\in\Fix(f^n)$, integrating the equation \eqref{eq:factor} against $\mu_n$ gives
\begin{eqnarray*}
\frac{1}{n}\int_{\P^1}\log|(f^n)'|\,\mu_n-(d^n+1) L(f) & = & \int_{\P^1}\log(f^\#)\,\mu_n-(d^n+1) L(f)\\
& = & \sum_{c\in \mathrm{Crit}(f)}\int_{\P^1}\Phi_{g_f}(c,\cdot)\,\mu_n~.
\end{eqnarray*}
This may be rewritten
\begin{eqnarray*}
(d^n+1)\epsilon_n(f,r)&=&
\frac{1}{n}\int_{\{|(f^n)'|\ge r\}}\log|(f^n)'|\,\mu_n-(d^n+1) L(f) \\
& = & \sum_{c\in \mathrm{Crit}(f)}\int_{\P^1}\Phi_{g_f}(c,\cdot)\,\mu_n
-\int_{\{|(f^n)'|<r\}}\frac{1}{n}\log|(f^n)'|\,\mu_n.
\end{eqnarray*}
Using again that $(f^n)^\#(z)=|(f^n)'(z)|$ for any $z\in\Fix(f^n)$ and, by~\cite[Lemma 3.5]{OkuyamaStawiska}, 
\begin{gather*}
\int_{\P^1}\Phi_{g_f}(a,\cdot)\,\mu_n=\Phi_{g_f}(f^n(a),a)
\end{gather*}
for every $a\in\P^1$, the definition 
\eqref{eq:kernel} of the $g_f$-kernel function $\Phi_{g_f}$ completes the proof.
\end{proof}

\subsection{Reduction to the critical dynamics}

\begin{lemma}\label{reductiontoattr}
If $f$ has no super-attracting cycles, for any $n\in\N^*$ and any $r\in]0,1]$,
\[
\left|L_n^r(f)-L(f)-\frac{1}{d_n}\sum_{m|n}
\mu\left(\frac{n}{m}\right)(d^m+1)u_{m,n}(f,r)\right|\leq B(f,r)\frac{\sigma_{0}(n)}{d_n},
\]
where $B(f,r):=(2d-2)(2\sup_{\P^1}|g_f|+|\log r|)$.
\end{lemma}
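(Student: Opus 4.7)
The plan is to reduce to the per‑divisor estimate of Lemma~\ref{th:error} by applying Möbius inversion to the dynatomic factorisation of the multiplier polynomial, computing the circle integral with Jensen's formula, and then controlling two small error terms.

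First, I would translate $\Phi_n^*(F,z)=\prod_{k\mid n}(F^k(z)\wedge z)^{\mu(n/k)}$ into the corresponding multiplicative identity for $p_n$:
\[
\bigl(p_n(f,w)\bigr)^n=\prod_{k\mid n}\biggl(\prod_{z\in\Fix(f^k)}\bigl((f^n)'(z)-w\bigr)\biggr)^{\mu(n/k)},
\]
valid as an identity of polynomials in $w$ (using $(f^n)'(z)=((f^k)'(z))^{n/k}$ for $z\in\Fix(f^k)$). Taking $\frac1n\log|\cdot|$, integrating against $w=re^{i\theta}$ over $[0,2\pi]$, and applying Jensen's identity $\int_0^{2\pi}\log|a-re^{i\theta}|\frac{\mathrm{d}\theta}{2\pi}=\max\{\log|a|,\log r\}$ gives
\[
d_n\,L_n^r(f)=\sum_{k\mid n}\mu(n/k)\sum_{z\in\Fix(f^k)}\frac{1}{n}\max\bigl\{(n/k)\log|(f^k)'(z)|,\,\log r\bigr\}.
\]

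Next, for each divisor $k\mid n$, I would split the inner sum according to whether $|(f^n)'(z)|=|(f^k)'(z)|^{n/k}$ is at least $r$ or strictly less. The ``$\ge r$'' piece, by Lemma~\ref{th:error} applied with $n$ replaced by $k$ and $r$ replaced by $r^{k/n}$ (noting $u_{k,k}(f,r^{k/n})=u_{k,n}(f,r)$ by definition), contributes exactly $(d^k+1)\bigl(L(f)+u_{k,n}(f,r)+\epsilon_k'(f)\bigr)$; the ``$<r$'' piece contributes $A_k(\log r)/n$ with $A_k:=\#\{z\in\Fix(f^k):|(f^n)'(z)|<r\}$. Summing over $k\mid n$ and using $\sum_{k\mid n}\mu(n/k)(d^k+1)=d_n$ yields
\[
L_n^r(f)-L(f)-\frac{1}{d_n}\sum_{k\mid n}\mu(n/k)(d^k+1)u_{k,n}(f,r)=E_1+E_2,
\]
with $E_1:=\frac{1}{d_n}\sum_{k\mid n}\mu(n/k)(d^k+1)\epsilon_k'(f)$ and $E_2:=\frac{\log r}{n\,d_n}\sum_{k\mid n}\mu(n/k)A_k$.

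Finally, I would bound the two errors. The pointwise estimate $(d^k+1)|\epsilon_k'(f)|\le 2(2d-2)\sup_{\P^1}|g_f|$ from Lemma~\ref{th:error} and $|\mu|\le 1$ immediately give $|E_1|\le 2(2d-2)\sup_{\P^1}|g_f|\,\sigma_0(n)/d_n$. For $E_2$, the stratification $\Fix(f^k)=\bigsqcup_{j\mid k}\Fix^*(f^j)$ gives $A_k=\sum_{j\mid k}A_j^*$ with $A_j^*:=\#\{z\in\Fix^*(f^j):|(f^n)'(z)|<r\}$; Möbius inversion then collapses $\sum_{k\mid n}\mu(n/k)A_k$ to $A_n^*$, which is bounded by the number of strictly attracting points in $\Fix^*(f^n)$, hence by $n(2d-2)$ via Fatou--Shishikura. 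Thus $|E_2|\le (2d-2)|\log r|/d_n\le (2d-2)|\log r|\sigma_0(n)/d_n$, and adding the two estimates produces $B(f,r)\sigma_0(n)/d_n$, as required. The main subtlety is establishing the Möbius identity for $(p_n(f,w))^n$ cleanly in the presence of possible parabolic ``extra'' roots of $\Phi_n^*$; the multiplicities work out because such roots satisfy $(f^n)'(z)=1$ and contribute $(1-w)^{\text{matching power}}$ to both sides.
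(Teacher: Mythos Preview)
Your proof is correct and follows essentially the same route as the paper: both compute $L_n^r(f)$ via M\"obius inversion on the dynatomic factorisation, apply Jensen's formula, invoke Lemma~\ref{th:error} per divisor, and then bound the two residual error terms. The only notable difference is your treatment of $E_2$: the paper bounds the $|\log r|$ contribution divisor-by-divisor using $A_m\le(2d-2)m$, whereas you apply a second M\"obius inversion to collapse $\sum_{k\mid n}\mu(n/k)A_k$ to $A_n^*\le n(2d-2)$, which in fact yields the slightly sharper $|E_2|\le(2d-2)|\log r|/d_n$ before you weaken it by inserting $\sigma_0(n)\ge1$.
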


\begin{proof}
Pick $r\in]0,1]$ and $n\geq1$. By the definition of $d_n$, we have
\begin{gather*}
1=\frac{1}{d_n}\sum_{m|n}\mu\left(\frac{n}{m}\right)(d^m+1).
\end{gather*}
For any $m\in\N^*$ dividing $n$ and
any $z\in\Fix(f^m)$, we have $(f^n)'(z)=(f^m)'(z)^{n/m}$ by the chain rule, 
and have
\[
\left|\sum_{z\in\Fix(f^m) \atop |(f^n)'(z)|\geq r}\log|(f^m)'(z)|-\sum_{z\in\Fix(f^m)}\log\max\left\{|(f^m)'(z)|,r^{m/n}\right\}\right|\leq m(2d-2)|\log r|
\]
since the number of attracting periodic points of period dividing $m$ of $f$ is at most $(2d-2)m$.
Recalling the definition \eqref{eq:multiplier} of $p_n$, we have
\begin{eqnarray*}
L_n^r(f) 
& = & \frac{1}{{n}d_n}\sum_{j=1}^{d_n}\int_0^{2\pi}\log\left|{(f^n)'(z_j^{(n)})}-re^{i\theta}\right|\frac{\mathrm{d}\theta}{2\pi}\\
& = & \frac{1}{nd_n} \sum_{z\in\Fix^*(f^n)}\log\max\left\{|(f^n)'(z)|,r\right\}\\
&= & \frac{1}{nd_n}\sum_{m|n}\mu\left(\frac{n}{m}\right)
\sum_{z\in\Fix(f^m)}\frac{n}{m}\log\max\left\{|(f^m)'(z)|,r^{m/n}\right\}\\
& = &\frac{1}{d_n}\sum_{m|n}
\mu\left(\frac{n}{m}\right)\frac{1}{m}\sum_{z\in\Fix(f^m)}\log\max\left\{|(f^m)'(z)|,r^{m/n}\right\},
\end{eqnarray*}
where the third equality is by the M\"obius inversion.
Hence recalling the definition of $\sigma_0(n)$,  by Lemma~\ref{th:error}, we have
\begin{multline*}
\left|L_n^r(f)-L(f)-\frac{1}{d_n}\sum_{m|n}\mu\left(\frac{n}{m}\right)(d^m+1)u_{m,n}(f,r)\right|\\
\le\frac{1}{d_n}\sum_{m|n}
(d^m+1)|\epsilon_m(f,r^{m/n})-u_{m,m}(f,r^{m/n})|
+\frac{(2d-2)|\log r|\cdot\sigma_0(n)}{d_n}\\
\le B(f,r)\frac{\sigma_0(n)}{d_n},
\end{multline*}
which ends the proof.
\end{proof}

\subsection{Quantitative approximation of the Lyapunov exponent}

\begin{lemma}\label{lm:inegtriangular}
For any $n\in\N^*$, any $z\in\Fix(f^n)$, and any $c\in \mathrm{Crit}(f)$,
\[
[f^n(c),c]\leq 2\cdot M(f)^n\cdot [c,z].
\]
\end{lemma}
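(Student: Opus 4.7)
The plan is to use the triangle inequality (exploiting that $z$ is a fixed point of $f^n$) together with a Lipschitz estimate for $f^n$ in the chordal metric, and then close by an elementary arithmetic using the lower bound $M(f)\geq d\geq 2$.

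First I would write
\[
[f^n(c),c]\leq [f^n(c),f^n(z)]+[f^n(z),c]=[f^n(c),f^n(z)]+[c,z],
\]
the equality being due to $z\in\Fix(f^n)$, so it suffices to bound the middle quantity $[f^n(c),f^n(z)]$.

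Next I would pass to the Fubini-Study (spherical) metric $\sigma$ on $\P^1$, for which the chordal derivative $f^\#$ coincides with the norm of the differential. With the normalization $\mathrm{diam}(\P^1)=1$ the chordal and spherical metrics are equivalent, explicitly $2[\cdot,\cdot]\leq\sigma(\cdot,\cdot)\leq\pi[\cdot,\cdot]$. The chain rule $(f^n)^\#(\zeta)=\prod_{j=0}^{n-1}f^\#(f^j(\zeta))$ yields $\sup_{\P^1}(f^n)^\#\leq(\sup_{\P^1}f^\#)^n=M(f)^{n/2}$, and integrating along a spherical geodesic from $c$ to $z$ gives
\[
[f^n(c),f^n(z)]\leq\frac{1}{2}\sigma(f^n(c),f^n(z))\leq\frac{1}{2}M(f)^{n/2}\sigma(c,z)\leq\frac{\pi}{2}M(f)^{n/2}[c,z].
\]

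Combining the two inequalities reduces the lemma to the arithmetic check $\frac{\pi}{2}M(f)^{n/2}+1\leq 2M(f)^n$. This holds because $M(f)\geq d\geq 2$: the area formula gives $\int_{\P^1}(f^\#)^2\omega_{\mathrm{FS}}=\int_{\P^1}f^*\omega_{\mathrm{FS}}=d$, which together with $\|\omega_{\mathrm{FS}}\|=1$ forces $\sup(f^\#)^2\geq d$. Setting $Y:=M(f)^{n/2}\geq\sqrt{2}$, the inequality $2Y^2\geq\frac{\pi}{2}Y+1$ then follows by a one-line computation. There is no substantial conceptual obstacle; the only care needed is in tracking the conversion constants between the chordal and spherical metrics so that the universal constant $2$ in the statement suffices.
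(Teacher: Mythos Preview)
Your proof is correct and follows the same strategy as the paper: triangle inequality through the fixed point $z$, plus a Lipschitz bound for $f^n$. The paper's version is a touch more direct: it simply uses that $f$ is $M_1:=\sup_{\P^1}f^\#$-Lipschitz for the chordal metric (a fact that holds because $M_1>1$), obtaining $[f^n(c),c]\leq(M_1^n+1)[c,z]\leq 2M_1^{2n}[c,z]=2M(f)^n[c,z]$ without the detour through the spherical metric or the area-formula bound $M(f)\geq d$.
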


\begin{proof}
Let $M_1:=\sup_{\P^1}f^\#>1$. It is clear that the map $f$ is $M_1$-Lipschitz in the chordal metric $[\cdot,\cdot]$. If $f^n(z)=z$, we have
\[ [f^n(c),c]\leq [f^n(c),z] +[c,z]\leq (M_1^n+1)\cdot [c,z]\]
and the conclusion follows since $M_1>1$.
\end{proof}

\begin{proof}[Proof of Theorem~\ref{tm:approx1}]
As there is no persistent parabolic and super-attracting cycle in $\rat_d$, the set $X$ of all elements in $\rat_d$ having neither
super-attracting nor parabolic cycles and no multiple critical points 
is the complement of a pluripolar subset in $\rat_d$, so $X$ is dense in $\rat_d$. 
Pick $f\in X$, $n\in\N^*$, and $r\in]0,1]$. 

(i) For any $m\in\N^*$ dividing $n$, recalling the definition of
$u_{m,n}(f,r)$,
we note that
\begin{eqnarray*}
u_{m,n}(f,r)=u_{m,n}(f,1) +\frac{1}{d^m+1}\sum_{z\in\Fix(f^m) \atop r\leq |(f^n)'(z)|<1}\frac{1}{m}\log|(f^m)'(z)|.
\end{eqnarray*}
Moreover, $f$ has at most $(2d-2)m$ attracting points of period dividing $m$, so that
\[
\left|\sum_{z\in\Fix(f^m) \atop r\leq |(f^n)'(z)|<1}\frac{1}{m}\log|(f^m)'(z)|\right|\leq \frac{(2d-2)m}{n}|\log r|
\le(2d-2)|\log r|,
\]
since $1>|(f^m)'(z)|=|(f^n)'(z)|^{m/n}\geq r^{m/n}$ for any $z\in \Fix(f^m)$, by the chain rule. Hence, recalling the definition of $\sigma_0$, we have
\[
\left|\frac{1}{d_n}\sum_{m|n}
\left(\frac{n}{m}\right)(d^m+1)\left(u_{m,n}(f,r)-u_{m,n}(f,1)\right)\right|\leq(2d-2)|\log r|\cdot\frac{\sigma_0(n)}{d_n}.
\]

(ii) For any $m\in\N^*$ dividing $n$, we have
\begin{eqnarray*}
\sum_{z\in \Fix(f^m) \atop |(f^n)'(z)|<1}\sum_{j=0}^{m-1}\log[f^j(z),c] 
& = & m\cdot\sum_{z\in \Fix(f^m) \atop |(f^m)'(z)|<1}\log[z,c].
\end{eqnarray*}
Recalling the definition of $u_{m,n}(f,r)$ and 
applying Lemma~\ref{th:lower} to each such $z\in \Fix(f^m)$ that $|(f^n)'(z)|<1$, 
we have
\begin{multline*}
\left|(d^m+1)u_{m,n}(f,1) -\sum_{c\in \mathrm{Crit}(f)}\Biggl(\log[f^m(c),c]
-\sum_{z\in \Fix(f^m) \atop |(f^m)'(z)|<1}\log[z,c]\Biggr)\right|\\
=\left|\sum_{z\in\Fix(f^m) \atop |(f^n)'(z)|<r}\frac{1}{m}\Biggl(
\sum_{c\in \mathrm{Crit}(f)}\sum_{j=0}^{m-1}\log[f^j(z),c]-\log|(f^m)'(z)|\Biggr)\right|
\leq  (2d-2)m\cdot B_1(f),
\end{multline*}
where the last inequality holds 
since $f$ has at most $(2d-2)m$ attracting points of period dividing $m$. 
Recalling the definition of $\sigma_1(n)$, we have
\begin{multline*}
\Biggl|\frac{1}{d_n}\sum_{m|n}\mu\left(\frac{n}{m}\right)(d^m+1)u_{m,n}(f,1)\\
\quad\quad -\frac{1}{d_n}\sum_{m|n}\mu\left(\frac{n}{m}\right)\sum_{c\in \mathrm{Crit}(f)}\Biggl(\log[f^m(c),c]
-\sum_{z\in \Fix(f^m) \atop |(f^m)'(z)|<1}\log[z,c]\Biggr)\Biggr|
\le(2d-2)B_1(f)\frac{\sigma_1(n)}{d_n}.
\end{multline*}
We finally reduced the proof of Theorem \ref{tm:approx1} to estimating
\[
\delta_n(f)=\delta_n(f,1):=\frac{1}{d_n}\sum_{m|n}\mu\left(\frac{n}{m}\right)\sum_{c\in \mathrm{Crit}(f)}\Biggl(\log[f^m(c),c]
-\sum_{z\in \Fix(f^m) \atop |(f^m)'(z)|<1}\log[z,c]\Biggr).
\]

(iii) We claim that for any $c\in \mathrm{Crit}(f)$, 
\[
\left|\log[f^m(c),c]-\sum_{z\in \Fix(f^m)\atop |(f^m)'(z)|<1}\log[z,c] \right|\leq 2(2d-2)m^2\left(\log M(f)-\log\left(\frac{\kappa}{2}\right)\right),
\]
where $\kappa\in(0,1)$ is the absolute constant appearing in Lemma \ref{lm:Przytycki}; pick $c\in \mathrm{Crit}(f)$ and $m\in\N^*$ dividing $n$. 
Recall that $\sup_{z,w\in \P^1}[z,w]\leq 1$. 
Assume first that
$\kappa\cdot M(f)^{-m}\leq [f^m(c),c]$. Then by Lemma~\ref{lm:inegtriangular}, 
we deduce that for any $z\in\Fix(f^m)$,
\begin{gather*}
\kappa\cdot 2^{-1}M(f)^{-2m}\leq 2^{-1}M(f)^{-m} [f^m(c),c]\leq [z,c],
\end{gather*}
so that since $f$ has at most $(2d-2)m$ attracting periodic points of period $m$,
we have
\begin{align*}
-m\log M(f)+\log\kappa\leq \log[f^m(c),c]-\sum_{z\in \Fix(f^m) \atop |(f^m)'(z)|<1}\log[z,c],
\end{align*}
and
\begin{align*}
 \log[f^m(c),c]-\sum_{z\in \Fix(f^m) \atop |(f^m)'(z)|<1}\log[z,c] & \leq  (2d-2)m\left(2m\log M(f)-\log\left(\frac{\kappa}{2}\right)\right)\\
& \leq  2(2d-2)m^2\left(\log M(f)-\log\left(\frac{\kappa}{2}\right)\right).
\end{align*}
Assume next that $\kappa\cdot M(f)^{-m}> [f^m(c),c]$. 
By Lemma~\ref{lm:Przytycki} applied to the trivial family $(f)$ and 
its (constant) marked critical point $c$
(recall that the constant $\kappa$ given by Lemma~\ref{lm:Przytycki} depends only on $d$),
$c$ belongs to the immediate basin of an attracting periodic point $z_0$ of $f$ of period $k$ dividing $m$, and we have
\[
2[f^{m}(c),c]\geq[z_0,c]\quad\text{and}\quad[c,\J_f]\geq \kappa M(f)^{-m}.
\]
Hence we have $-\log2\leq \log[f^m(c),c]-\log[z_0,c]$, so that
\begin{gather*}
-\log 2\le\log[f^m(c),c]-\sum_{z\in \Fix(f^m)\atop |(f^m)'(z)|<1}\log[z,c].
\end{gather*}
Noting that any attracting point $z\in\Fix(f^m)\setminus\{z_0\}$ lies in a Fatou component of $f$ which does not contain $z_0$, we also have
$1\geq[z,c]\geq [c,\J_f]\geq \kappa M(f)^{-m}$
for every such $z\in\Fix(f^m)\setminus\{z_0\}$ that $|(f^m)(z)|<1$.  Moreover, by Lemma~\ref{lm:inegtriangular}, $[f^m(c),c]\le M(f)^m[z_0,c]$. Hence,
since $f$ has at most $(2d-2)m$ attracting periodic points of period dividing $m$,
we have
\begin{eqnarray*}
\log[f^m(c),c]-\sum_{z\in \Fix(f^m)\atop |(f^m)'(z)|<1}\log[z,c] 
&\leq& (2d-2)m\cdot \left(m\log M(f)-\log\kappa\right)\\
&\leq& (2d-2)m^2\left(\log M(f)-\log\kappa\right).
\end{eqnarray*}
Hence the claim holds.

Since $f$ has exactly $2d-2$ critical points taking into account their 
multiplicities, letting $C_2:=2(2d-2)^2\max\left\{1,|\log(\kappa/2)|\right\}$, we have
\[
\left|\delta_n(f)\right| \leq C_2\cdot\left(\log M(f)+1\right)\frac{\sigma_2(n)}{d_n},
\]
 by the definition of $\sigma_2(n)$. 

(iv) Recall that
\[\frac{1}{2}\log d\leq L(f)=\int_{\P^1}\log(f^\#)\,\mu_f\leq \log\left(\sup_{\P^1}f^\#\right)\]
and that 
\begin{eqnarray*}
d_n & = & \sum_{m | n}\mu\left(\frac{n}{m}\right)(d^m+1)\\
& \geq & \sum_{m | n}\mu\left(\frac{n}{m}\right)d^m\geq \left(1-d^{-1}\right) d^n,
\end{eqnarray*}
by definition of $d_n$. Hence, all the above intermediate estimates yield
\begin{eqnarray}
\left|\frac{1}{d_n}\int_0^{2\pi}\log\left|p_n(f,re^{i\theta})\right|\frac{\mathrm{d}\theta}{2\pi}-L(f)\right| \leq B_3(f,r)\frac{\sigma_2(n)}{d^n}
\label{almostdone}
\end{eqnarray}
for any $f\in X$, where
\[
B_3(f,r)=C_3 \cdot \left(\sup_{\P^1}|g_f|+\log\left(\sup_{\P^1}f^\#\right)+|\log r|\right)
\]
for some constant $C_3>0$ that depends only on $d$. 
Since both sides of \eqref{almostdone} depend continuously on $f\in \rat_d$
and $X$ is dense in $\rat_d$, the above estimate \eqref{almostdone} 
still holds for any $f\in\rat_d$. 
\end{proof}

\subsection{Application: degeneration of the Lyapunov exponent}

Consider a holomorphic family $(f_t)_{t\in\D^*}$ of degree $d>1$ rational maps parameterized by the punctured unit disk, and assume it extends to a meromorphic family over $\D$, i.e. $f_t\in\mathcal{O}(\D)[t^{-1}](z)$.

\begin{theorem}\label{tm:degenerate}
There exists a non-negative $\alpha\in\R$ such that, as $t\to 0$,
\[
L(f_t)=\alpha\cdot\log|t|^{-1}+o(\log|t|^{-1}).
\]
\end{theorem}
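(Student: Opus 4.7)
The plan is to exploit the locally uniform approximation in Theorem~\ref{tm:approx1} by combining it with a direct analysis of how the averages $L_n^1(f_t)$ degenerate at $t=0$ when the multipliers are algebraic functions of $t$, and then to let $n\to\infty$ to extract the coefficient $\alpha$.

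\medskip

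\textbf{Step 1.} By Jensen's formula applied to the polynomial $p_n(f_t,\cdot)^n$, whose roots are the $d_n$ multipliers $\mu_{n,j}(t):=(f_t^n)'(z_j^{(n)})$, one has
\[
L_n^1(f_t)=\frac{1}{d_n}\int_0^{2\pi}\log|p_n(f_t,e^{i\theta})|\frac{\mathrm{d}\theta}{2\pi}=\frac{1}{nd_n}\sum_{j=1}^{d_n}\log^+|\mu_{n,j}(t)|.
\]
Since $(f_t)_{t\in\D^*}$ extends meromorphically to $\D$, the coefficients of $p_n(f_t,w)^n$ as polynomials in $w$ are meromorphic in $t$, so after passing to a finite branched cover $s\mapsto s^N=t$ each $\mu_{n,j}$ becomes meromorphic in $s$. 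Writing its order at $s=0$ and dividing by $N$ yields rationals $\ell_{n,j}\in\Q_{\ge 0}$ with $\log^+|\mu_{n,j}(t)|=\ell_{n,j}\log|t|^{-1}+O(1)$. Setting $\alpha_n:=\frac{1}{nd_n}\sum_j\ell_{n,j}\ge 0$, we conclude
\[
L_n^1(f_t)=\alpha_n\log|t|^{-1}+O_n(1) \quad\text{as }t\to 0.
\]

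\medskip

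\textbf{Step 2.} The key technical point is to show $C([f_t])=O(\log|t|^{-1})$ as $t\to 0$. After clearing denominators, select a holomorphic family of lifts $F_t:\C^2\to\C^2$ on $\D\times\C^2$ with $F_0\not\equiv 0$; then $\mathrm{Res}(F_t)=t^{N'}u(t)$ with $u(0)\neq 0$, and DeMarco's identity $I_{g_{F_t}}=-\frac{1}{d(d-1)}\log|\mathrm{Res}(F_t)|$ gives $I_{g_{F_t}}=\frac{N'}{d(d-1)}\log|t|^{-1}+O(1)$. Since $g_{f_t}=g_{F_t}-I_{g_{F_t}}$ is the oscillation of $g_{F_t}$, using the locally uniform bound $g_{F_t}-\log\|\cdot\|=O(1)$ on compact subsets of $\C^2\setminus\{0\}$ together with a polynomial control of $\sup f_t^\#$ in terms of the coefficients of $F_t$ (after a holomorphic Möbius normalization if needed), both $\sup|g_{f_{1,t}}|$ and $\log\sup f_{1,t}^\#$ grow at most like $\log|t|^{-1}$ for a suitable representative $f_{1,t}\in[f_t]$.

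\medskip

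\textbf{Step 3.} Plugging Steps 1 and 2 into Theorem~\ref{tm:approx1} with $r=1$:
\[
L(f_t)=\alpha_n\log|t|^{-1}+O_n(1)+O\!\left(\frac{\sigma_2(n)}{d^n}\log|t|^{-1}\right).
\]
Dividing by $\log|t|^{-1}$ and letting $t\to 0$,
\[
\limsup_{t\to 0}\frac{L(f_t)}{\log|t|^{-1}}-\liminf_{t\to 0}\frac{L(f_t)}{\log|t|^{-1}}=O\!\left(\frac{\sigma_2(n)}{d^n}\right).
\]
As $\sigma_2(n)/d^n\to 0$, the limit $\alpha:=\lim_{t\to 0}L(f_t)/\log|t|^{-1}$ exists; nonnegativity follows from $L(f_t)\ge\frac12\log d>0$; the same argument shows $\alpha_n\to\alpha$. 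This is precisely $L(f_t)=\alpha\log|t|^{-1}+o(\log|t|^{-1})$.

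\medskip

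\textbf{Main obstacle.} Step~2 is where all the work lies: the bound $I_{g_{F_t}}=O(\log|t|^{-1})$ is immediate from the resultant formula, but controlling the \emph{oscillation} $\sup|g_{F_t}-I_{g_{F_t}}|$ by the same quantity requires either uniform Hölder estimates on $g_{F_t}$ along the degenerating family or a careful choice of holomorphic Möbius representatives (possibly after a further branched cover of $\D$) so that the resulting lift stays in a compact region of $\C^{2d+2}$ modulo the factor $t^{a}$. The remaining steps are essentially formal once this bound is in hand.
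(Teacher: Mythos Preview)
Your strategy is the paper's strategy: approximate $L(f_t)$ by $L_n^1(f_t)$ via Theorem~\ref{tm:approx1}, show $L_n^1(f_t)=\alpha_n\log|t|^{-1}+O_n(1)$, verify $C([f_t])=O(\log|t|^{-1})$, then let $n\to\infty$. Step~3 is exactly the paper's conclusion.

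Two comments. First, Step~1 is correct but over-engineered: you do not need Puiseux expansions of individual multipliers. Since the coefficients of $p_n(f_t,w)$ (as a polynomial in $w$) are meromorphic in $t$ with $w$-leading coefficient $\pm 1$, one simply writes $p_n(t,w)=t^{-N_n}h_n(t,w)$ with $h_n$ holomorphic on $\D\times\C$ and $h_n(0,\cdot)\not\equiv 0$; then $L_n^1(f_t)=\frac{N_n}{d_n}\log|t|^{-1}+O_n(1)$ directly, with $\alpha_n=N_n/d_n$. No branched cover is needed.

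Second, and more important, your Step~2 is the genuine gap. The sentence ``using the locally uniform bound $g_{F_t}-\log\|\cdot\|=O(1)$ on compact subsets of $\C^2\setminus\{0\}$'' is exactly what fails: this bound is uniform in $p$ but \emph{not} in $t$ as $t\to 0$. Neither H\"older estimates on $g_{F_t}$ nor a M\"obius normalization resolve this. What is needed (and what the paper proves as Lemma~\ref{lm:atinfinity}) is the two-sided estimate
\[
\frac{1}{C}\,|t|^{\beta}\ \le\ \frac{\|F_t(p)\|}{\|p\|^d}\ \le\ C
\]
for all $p\in\C^2\setminus\{0\}$ and $t\in\D^*$, which comes from the resultant identities (Sylvester's relations) expressing $\mathrm{Res}(F_t)\cdot x^{2d-1}$ and $\mathrm{Res}(F_t)\cdot y^{2d-1}$ as polynomial combinations of $F_{1,t},F_{2,t}$. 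From this, writing $u_t(z)=\log(\|F_t(p)\|/\|p\|^d)$ and $g_{F_t}=\sum_{k\ge 0}d^{-k-1}\,u_t\circ f_t^k$, one gets $\sup_{\P^1}|g_{F_t}|\le\frac{1}{d-1}(\beta\log|t|^{-1}+\log C)$; combined with $g_{f_t}=g_{F_t}+\frac{1}{2d(d-1)}\log|\mathrm{Res}(F_t)|$ (note the factor $\tfrac12$, not $1$), this yields the required bound on $\sup|g_{f_t}|$. The bound on $\log\sup f_t^\#$ follows from the explicit formula $f_t^\#(z)=\frac{1}{d}|\det DF_t(p)|\,\|p\|^2/\|F_t(p)\|^2$ together with the lower bound above. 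Once you replace your Step~2 by this argument, the proof is complete.
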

This is a special case of \cite[Theorem C]{Favre-degeneration} and can also be obtained as the combination of~\cite[Proposition 3.1]{DeMarco-intersection} and~\cite[Theorem 1.4]{DeMarco2}. We provide here a simple proof as an application of Theorem~\ref{tm:approx1}.

\begin{proof}
We can write
$p_n(t,w)=t^{-N_n}h_n(t,w)$, where $h_n:\D\times\C\to\C$ 
is holomorphic and $N_n\in\mathbb{N}$. 
We rely on the following key lemma.

\begin{lemma}\label{lm:atinfinity}
There exist $C_1,C_2>0$ such that for any $t\in\D^*_{1/2}$,
\[
\sup_{z\in\mathbb{P}^1}\max\left\{\left|g_{f_t}(z)\right|,\log(f_t^\#(z))\right\} \leq C_1\log|t|^{-1}+C_2.
\]
\end{lemma}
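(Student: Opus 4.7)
\textbf{Proof plan for Lemma \ref{lm:atinfinity}.} The plan is to fix a lift $H_t:\C^2\to\C^2$ of $f_t$ whose coefficients extend holomorphically to $t\in\D$, and to reduce every relevant quantity to $\log|\mathrm{Res}(H_t)|$, which blows up at most like $k\log|t|^{-1}$ for some finite integer $k\geq 0$. Such a lift exists: starting from any meromorphic lift $F_t$, pick the minimal $N\in\Z$ so that $H_t:=t^N F_t$ has coefficients holomorphic on $\D$ and not all vanishing at $t=0$. Then $\mathrm{Res}(H_t)$ is holomorphic and not identically zero on $\D$, so has a finite vanishing order $k\geq 0$ at $0$, yielding
\[
-k\log|t|^{-1}+O(1)\;\leq\;\log|\mathrm{Res}(H_t)|\;\leq\;O(1)\quad\text{on }\D^*_{1/2},
\]
and $\|H_t\|$ remains uniformly bounded on $\D^*_{1/2}$.

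First, I would show that $|g_{H_t}|$ is $O(\log|t|^{-1})$ uniformly on $\P^1$. The upper bound $\sup_{\P^1}g_{H_t}\leq C$ follows from the definition $g_{H_t}(\pi(v))+\log\|v\|=\lim_n d^{-n}\log\|H_t^n(v)\|$, the submultiplicative bound $\|H_t(w)\|\leq \|H_t\|\cdot\|w\|^d$, and the uniform boundedness of $\|H_t\|$. For the lower bound, I would combine the functional relation $d\,G_{H_t}(v)=G_{H_t}(H_t(v))$ (where $G_{H_t}=g_{H_t}\circ\pi+\log\|\cdot\|$) with the upper bound to get
\[
g_{H_t}(\pi(v))\;\geq\;\tfrac{1}{d}\log\|H_t(v)\|-\tfrac{C}{d}\quad\text{for }\|v\|=1,
\]
and invoke the classical quantitative Nullstellensatz inequality
\[
\inf_{\|v\|=1}\|H_t(v)\|\;\geq\;c_d\,\frac{|\mathrm{Res}(H_t)|}{\|H_t\|^{2d-1}},
\]
whose right-hand side is $\gtrsim |t|^k$ by the previous step.

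Second, to conclude for $g_{f_t}$, I would use DeMarco's formula $I_{g_{H_t}}=-\log|\mathrm{Res}(H_t)|/(d(d-1))$ together with $I_{g_{f_t}}=0$ to obtain the explicit relation $g_{f_t}=g_{H_t}-\log|\mathrm{Res}(H_t)|/(2d(d-1))$, both terms being $O(\log|t|^{-1})$. For $\log f_t^\#$, I would use the lift-invariant identity
\[
f_t^\#(\pi(v))=\frac{|\mathrm{Jac}\,H_t(v)|\,\|v\|^2}{d\,\|H_t(v)\|^2}
\]
(a direct consequence of Euler's relation applied to the chordal derivative formula): for $\|v\|=1$ the numerator is uniformly bounded above since the coefficients of $\mathrm{Jac}\,H_t$ are holomorphic on $\D$, and the lower bound on $\|H_t(v)\|^2$ from the previous step gives $\log f_t^\#\leq C_1\log|t|^{-1}+C_2$.

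The main obstacle is the quantitative Nullstellensatz lower bound on $\inf_{\|v\|=1}\|H_t(v)\|$ in terms of $|\mathrm{Res}(H_t)|$; it is classical (the exact exponent $2d-1$ of $\|H_t\|$ being immaterial for us, only the dependence on $|\mathrm{Res}(H_t)|$ matters) but would have to be proved or cited precisely. Once this input is in hand, all remaining steps are routine manipulations of identities already recalled in Section~\ref{sec:defGreen}.
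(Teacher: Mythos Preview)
Your proposal is correct and follows essentially the same route as the paper: the paper cites the two-sided bound $C^{-1}|t|^\beta\le\|F_t(p)\|/\|p\|^d\le C$ from \cite{DeMarco-intersection,Favre-degeneration} (your quantitative Nullstellensatz), bounds $|g_{F_t}|$ via the telescoping series $g_{F_t}=\sum_{n\ge 0} d^{-n-1}u_t\circ f_t^n$ with $u_t(z)=\log(\|F_t(p)\|/\|p\|^d)$, corrects to $g_{f_t}$ via DeMarco's resultant formula, and handles $f_t^\#$ through the same Jacobian identity you wrote. One small slip: combining the functional relation with the \emph{upper} bound $g_{H_t}\le C$ yields another upper bound, not the lower bound $g_{H_t}(\pi(v))\ge\tfrac{1}{d}\log\|H_t(v)\|-\tfrac{C}{d}$ you claim; this is fixed immediately either by the series representation (as the paper does) or by the bootstrap $m\ge(m-B)/d$ with $m=\inf_{\P^1}g_{H_t}$, which is finite since $g_{H_t}$ is continuous on the compact $\P^1$.
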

Once Lemma~\ref{lm:atinfinity} is at our disposal, by Theorem~\ref{tm:approx1}, 
there is $C>0$ such that for any $n\in\N^*$ and any $t\in\D^*_{1/2}$,
\[
\left| L(f_t)-\frac{1}{d_n}\int_0^{2\pi}\log|p_n(f_t,e^{i\theta})|\frac{\mathrm{d}\theta}{2\pi}\right|\leq C\left(C_1\log|t|^{-1}+C_2\right)\cdot \frac{\sigma_2(n)}{d^n},
\]
so that dividing both sides by $\log|t|^{-1}$ and making $t\to0$, 
there is $C'>0$ such that for all $n\in\N^*$
\[
\frac{N_n}{d_n}-C'\frac{\sigma_2(n)}{d^n}\leq \liminf_{t\to0}\frac{L(f_t)}{\log|t|^{-1}}\leq \limsup_{t\to0}\frac{L(f_t)}{\log|t|^{-1}}\leq \frac{N_n}{d_n}+C'\frac{\sigma_2(n)}{d^n},
\]
and then making $n\to\infty$, we get
\[
\lim_{t\to0}\frac{L(f_t)}{\log|t|^{-1}}=\lim_{n\to\infty}\frac{N_n}{d_n}=:\alpha\geq0.
\]
This concludes the proof. 
\end{proof}

\begin{proof}[Proof of Lemma~\ref{lm:atinfinity}]
There is a meromorphic family $(F_t)_{t\in\D}$ of homogeneous polynomial
endomorphisms of $\C^2$ such that for every $t\in\D^*$, $F_t$ is a lift of $f_t$
and that the holomorphic function $t\mapsto \mathrm{Res}(F_t)$ on $\D$ 
vanishes only at $t=0$.
According to \cite[Lemma 3.3]{DeMarco-intersection} (or \cite[Proposition 4.4]{Favre-degeneration}), there exist constants $C\geq1$ and $\beta>0$ such that
\begin{eqnarray}
\frac{1}{C}|t|^{\beta}\leq \frac{\|F_t(p)\|}{\|p\|^d}\leq C\label{eq:degenerate}
\end{eqnarray}
for any $p\in\C^2\setminus\{0\}$ and any $t\in\D^*$.

For any $t\in\D^*$, set $u_t(z):=\log(\|F_t(p)\|/\|p\|^d)$ 
on $\P^1$, where $p\in\pi^{-1}(z)$. The function $u_t$ on $\P^1$ is well-defined 
by the homogeneity of $F_t$. Recalling the definition of $g_{F_t}$, we have
$g_{F_t}(z)=\sum_{n=0}^\infty(u_t\circ f_t^n(z))/d^{n+1}$
uniformly on $\P^1$, so that by \eqref{eq:degenerate},
\[
\sup_{z\in\mathbb{P}^1}\left|g_{F_t}(z)\right|\leq \frac{1}{d-1}\sup_{z\in\mathbb{P}^1}\left|u_t(z)\right|\leq \frac{1}{d-1}\left(\beta\log|t|^{-1}+\log C\right).
\]
Recalling the definition of $I_{g_{F_t}}$ and 
the formula $I_{g_{F_t}}=-(\log|\mathrm{Res}(F_t)|)/(d(d-1))$,
we also have $g_{f_t}=g_{F_t}+(\log|\mathrm{Res}(F_t)|)/(2d(d-1))$ on $\P^1$
for every $t\in\D^*$.
Hence we obtain the desired upper bound of $\sup_{z\in\mathbb{P}^1}|g_{f_t}(z)|$
since $t\mapsto\mathrm{Res}(F_t)$ is a holomorphic function of $\D$ 
vanishing only at $t=0$.

To conclude the proof, we use the same strategy for giving an upper bound for $\log\sup_{z\in\mathbb{P}^1}f_t^\#(z)$. Recall the following formula
\[
f_t^\#(z)=\frac{1}{d}\left|\det DF_t(p)\right|\frac{\|p\|^2}{\|F_t(p)\|^2}
\]
on $\P^1$, where $p\in\pi^{-1}(z)$ (see, e.g.,~\cite[Theorem~4.3]{Jonsson-lyap}).
Hence by \eqref{eq:degenerate}, we have
\[
\log(f_t^\#(z))\leq \log\frac{\left|\det DF_t(p)\right|}{d\cdot \|p\|^{2d-2}}
+2\left(\beta\log|t|^{-1}+\log C\right).
\]
Write now $F_t=(P_t,Q_t)$ where $P_t(z,w)=\sum\limits_{j=0}^d a_j(t)z^jw^{d-j}$ and $Q_t(z,w)=\sum\limits_{j=0}^d b_j(t)z^j w^{d-j}$ with $a_j(t)=t^{-\gamma}\tilde{a}_j(t)$ and $b_j(t)=t^{-\gamma}\tilde{b}_j(t)$ for some $\gamma\in\N^*$ and some $\tilde{a},\tilde{b}\in\mathcal{O}(\D)$.
In particular, there exists a constant $C'\geq1$ such that for all $t\in\D(0,\frac{1}{2})$ and all $0\leq j\leq d$, we have $\max\{|\tilde{a}_j(t)|,|\tilde b_j(t)|\}\leq C'$, and
\begin{align*}
|\det DF_t(p)| & = \left|\frac{\partial P_t}{\partial z}(p)\frac{\partial Q_t}{\partial w}(p)-\frac{\partial P_t}{\partial w}(p)\frac{\partial Q_t}{\partial z}(p)\right|\\
& \leq 2d^2|t|^{-2\gamma}\sum_{j,\ell=0}^d|\tilde{a}_j(t)\tilde{b}_\ell(t)|\cdot\|p\|^{2(d-1)}\\
& \leq 2d^4\cdot{C'}^2\|p\|^{2(d-1)}\cdot |t|^{-2\gamma}.
\end{align*}
This gives $\log(f_t^\#(z))\leq 2(\beta+\gamma)\log|t|^{-1}+\log C''$ for some constant $C''\geq1$, which ends the proof.
\end{proof}

\section{Equidistribution towards the bifurcation currents}\label{sec:main}

Let $\Lambda$ be a quasi-projective variety such that, either $\Lambda \subset \cM_d$, or parametrizing an algebraic family $(f_\lambda)_{\lambda\in\Lambda}$ of degree $d$ rational maps on $\P^1$.
\subsection{The proof of Theorem~\ref{tm:vitessemoyennesalgebraic}}

Pick any compact subset $K$ in $\Lambda$, and set
$C_1(K):=
\sup_{\lambda\in K}
C([f_\lambda])\geq\frac{1}{2}\log d$, where $C([f_\lambda])$ is given by Theorem~\ref{tm:approx1}.
We remark that for every $n\in\N^*$ and every $\rho\in]0,1]$,
\begin{gather*}
T_n^1(\rho):=\frac{1}{d_n}\int_0^{2\pi}[\Per_n(\rho e^{i\theta})]\frac{\mathrm{d}\theta}{2\pi}=dd^c\left(\frac{1}{d_n}\int_0^{2\pi}\log|p_n(\lambda,\rho e^{i\theta})|\frac{\mathrm{d}\theta}{2\pi}\right).
\end{gather*}
 Pick any $1\leq p\leq \min\{m,2d-2\}$, 
any $\un=(n_1,\ldots,n_p)\in(\N^*)^p$, and 
any $\underline{\rho}=(\rho_1,\ldots,\rho_p)\in]0,1]^p$. 

Assume first that $p=1$, i.e. $\un=n\in\N^*$ and $\underline{\rho}=\rho\in]0,1]$,  
and pick any continuous DSH $(m-1,m-1)$-form $\Psi$ on $\Lambda$
supported in $K$.
By definition, we can write $dd^c \Psi= T^+-T^-$, where $T^\pm$ are positive 
measures of finite masses on $\Lambda$.
By Stokes and Theorem~\ref{tm:approx1},
we have
\begin{align*}
\left|\left\langle T_n^1(\rho)-T_\bif,\Psi\right\rangle\right| 
& =  \left|\int_K\left(\frac{1}{d_n}\int_0^{2\pi}\log|p_n(\lambda,\rho e^{i\theta})|\frac{\mathrm{d}\theta}{2\pi}-L(\lambda)\right) dd^c\Psi\right|\\
& \leq \int_K\left|\frac{1}{d_n}\int_0^{2\pi}\log|p_n(\lambda,\rho e^{i\theta})|\frac{\mathrm{d}\theta}{2\pi}-L(\lambda)\right|(T^+ + T^-) \\
& \leq 
\left(AC_1(K) \left(1+\left|\log \rho\right|\right)\frac{\sigma_2(n)}{d^n}\right)
(\|T^+\|+\|T^-\|),
\end{align*}
which completes the proof of Theorem \ref{tm:vitessemoyennesalgebraic} in this case 
by the definition of $\|\Psi\|^*_{\mathrm{DSH}}$.

We now assume that 
$2\leq p\leq \min\{m,2d-2\}$. 
Setting 
$S_j=S_j(\un,\underline{\rho}):=
(\bigwedge_{1\le\ell<j}T_\bif)\wedge(\bigwedge_{j<k\le p}T_{n_k}(\rho_k))$ for any $1\le j\le p$, which is a positive closed current of bidegree
$(p-1,p-1)$ on $\Lambda$,
we have
\begin{eqnarray}
T_\un^p(\underline{\rho})-T_\bif^p 
& = & \sum_{j=1}^pS_j\wedge(T_{n_j}(\rho_j)-T_\bif).\label{eq:decomposition}
\end{eqnarray}
Pick any continuous DSH $(m-p,m-p)$-form $\Psi$ on $\Lambda$
supported in $K$, and write $dd^c \Psi= T^+-T^-$ where $T^\pm$ are 
positive closed $(m-p+1,m-p+1)$ currents of finite masses 
on $\Lambda$.
Then by Stokes formula, we have
\begin{eqnarray*}
\left\langle T_\un^p(\underline{\rho})-T_\bif^p,\Psi\right\rangle 
&=&\sum_{j=1}^p\left\langle S_j\wedge(T_{n_j}^1(\rho_j)-T_\bif),\Psi\right\rangle\\
& = & \sum_{j=1}^p\int_K\left(\frac{1}{d_n}\int_0^{2\pi}\log|p_n(\lambda,\rho e^{i\theta})|\frac{\mathrm{d}\theta}{2\pi}-L(\lambda)\right) S_j\wedge dd^c\Psi.
\end{eqnarray*}
Since the masses can be computed in cohomology, 
by \eqref{eq:boundedmass},
there is $C_2>0$ independent of $K$, $\Psi$ and $T^{\pm}$, such that
for every $1\le j\le p$,
$\int_\Lambda\ S_j\wedge (T^++T^-) \leq C_2 (\|T^+\|+\|T^-\|)$.
Then by Theorem~\ref{tm:approx1}, we have
\begin{align*}
&\left|
\int_K\left(\frac{1}{d_n}\int_0^{2\pi}\log|p_n(\lambda,\rho e^{i\theta})|\frac{\mathrm{d}\theta}{2\pi}-L(\lambda)\right) S_j\wedge dd^c\Psi\right| \\
\leq & 
\left(AC_1(K)\left(1+|\log \rho_j|\right)\frac{\sigma_2(n_j)}{d^{n_j}}\right)\int_\Lambda
S_j\wedge (T^++T^-)\\
\leq & 
AC_1(K)C_2\left(1+|\log \rho_j|\right)\frac{\sigma_2(n_j)}{d^{n_j}}(\|T^+\|+\|T^-\|)
\end{align*}
for any $1\le j\le p$, which ends the proof of Theorem \ref{tm:vitessemoyennesalgebraic}. \qed

\begin{remark} \normalfont \normalfont
As in \cite{BB3}, we deduce from Theorem~\ref{tm:vitessemoyennesalgebraic} the density in the support of $T_\bif^p$ of parameters having $p$ distinct neutral cycles. We can actually give a more precise statement: taking any sequence of $p$-tuple of integers $(\un_k)$ in $(\N^*)^p$ such that $\min_j n_{j,k} \to \infty$, 
we have that the set of parameters $\lambda$
such that $f_\lambda$ has $p$ distinct neutral cycles of exact periods $n_{1,k}, \dots, n_{p,k}$ for some $k\in\N^*$  are dense in the support of $T_\bif^p$.
\end{remark}

\subsection{The proof of Corollary~\ref{tm:outsideluripolar}}\label{sec:PB}
Pick $1\leq p\leq \min\{m,2d-2\}$. We recall some basics on PB measures.
For each $\rho>0$, let $\lambda_{\mathbb{S}_\rho}$
the Lebesgue probability measure on the circle $\mathbb{S}_\rho$. Let $\theta:\R^+ \to \R^+$ be a smooth function with compact support in $]0,1[$ such that $\int_0^1 \theta=1$.
We consider the smooth measure $\tilde{\nu}$ defined as 
\[\tilde{\nu}:= \bigotimes_{1\leq j\leq p} \int_0^1 \lambda_{\mathbb{S}_{\rho_j}} \theta(\rho_j) d\rho_j. \]
We say that a probability measure $\nu$ on $(\P^1)^p$ is PB 
(or has bounded potential) if there exists a constant $C \ge 0$ 
such that 
\[ 
|\langle \nu-\tilde{\nu},
\varphi \rangle | \leq  C \|\varphi \|_{\mathrm{DSH}}^* 
\]
for all $\varphi$ which is DSH on $(\P^1)^p$, and then
let $C_\nu\ge 0$ be the minimal $C\ge 0$ satisfying the above inequality for every $\varphi$.
For example, $\tilde{\nu}$ is PB on $(\P^1)^p$,
and $\lambda_{\mathbb{S}_\rho}$ is PB on $\P^1$. We claim that the positive closed $(p,p)$-current 
\[
T_{\un}^p(\nu):=\frac{1}{d_{|\un|}}\int_{(\P^1)^p}
\bigwedge_{j=1}^p[\Per_{n_j}(w_j)]\nu(w_1,\cdots,w_p)
\]
on $\Lambda$ is well-defined
for any PB measure $\nu$ on $(\P^1)^p$ 
and any $\un=(n_1,\ldots,n_p)\in(\N^*)^p$. Indeed,
the set of all $\uw=(w_1,\ldots,w_{p})\in\C^{p}$ such that  
$\bigcap_{i=1}^{p}\Per_{n_{i}}(w_i)$ is not of pure codimension $p$ in $\Lambda$ 
is analytic. Hence for any $\uw=(w_1,\ldots,w_{p})\in\C^{p}$
except for a pluripolar subset and any $\un=(n_1,\ldots,n_p)\in(\N^*)^p$,
the current $\bigwedge_{i=1}^{p}[\Per_{n_{i}}(w_i)]$ on $\Lambda$
is well defined. So in particular, since PB measures on $(\P^1)^p$
give no mass to pluripolar sets, the current $T_{\un}^p(\nu)$ is also well defined.

Observe that $T_{\un}^p(\tilde{\nu})$ give no mass to pluripolar sets (hence analytic sets) since it has bounded potentials. Arguing as above shows that for any $\uw=(w_1,\ldots,w_{p})\in\C^{p}$ outside 
a pluripolar subset and any $\un=(n_1,\ldots,n_p)\in(\N^*)^p$, the current $\bigwedge_{i=1}^{p}[\Per_{n_{i}}(w_i)]$ is well-defined. So for any  PB measures on $(\P^1)^p$, the current $T_{\un}^p(\nu)$ gives no mass to analytic sets. 

Here is another description of $(d_{|\un|})^{-1}\bigwedge_{i=1}^{p}[\Per_{n_i}(w_i)]$ and
$T^p_{\un}(\nu)$;
let $\Gamma_{\un}$ be the analytic set of dimension $m$ in $\Lambda\times(\P^1)^p$
defined as
\begin{gather*}
\Gamma_{\un}:=\{(\lambda,(z_1,\ldots,z_p))\in\Lambda\times(\P^1)^p:
z_j \in \mathrm{Fix}^*(f^{n_j}_\lambda)\text{ for every }1\le j\le p\}.
\end{gather*}
Let $F_\un:\Gamma_\un  \to (\P^1)^p$ be a holomorphic map defined by
\begin{gather*}
F_\un (\lambda, z_1,\dots,z_p)=((f_\lambda^{n_1})'(z_1), \dots, (f_\lambda^{n_p})'(z_p)), 
\end{gather*}
and $\mathcal{P} : \Gamma_{\un} \to \Lambda$ be the restriction to $\Gamma_{\un}$
of the projection $\Lambda\times(\P^1)^p\to\Lambda$. Consider $\pi:\widetilde{\Gamma_{\un}}\to \Gamma_{\un}$ a desingularization of $\Gamma_{\un}$. The map $\widetilde{F_\un}:= F_\un \circ \pi$ is holomorphic and the map $\widetilde{\mathcal{P}}:=\mathcal{P} \circ \pi$ is an analytic map. If $\nu$ is a smooth PB measure in $(\P^1)^p$, then:
 \begin{equation}
 T^p_{\un}(\nu) = \frac{1}{\prod_j n_j d_{n_j}} \widetilde{\mathcal{P}}_*\left( \left({\widetilde{F_\un}}|_{\tilde\Gamma_{\un}}\right)^*(\nu)\right)\quad\text{on }\Lambda.
 \end{equation}   
Indeed, observe that, when testing against a smooth form, there is always one term that is smooth when computing the pull-back and push-forward. 

\begin{theorem}\label{tm:vitessemoyennesPB}
Let $\Lambda$ be a quasi-projective variety such that, either $\Lambda \subset \cM_d$, or parametrizing an algebraic family $(f_\lambda)_{\lambda\in\Lambda}$ of degree $d$ rational maps on $\P^1$.
Then for any compact subset $K$ in $\Lambda$, 
there exists $C(K)>0$ such that for any $1\leq p\leq \min\{m,2d-2\}$, 
any $\un=(n_1,\ldots,n_p)\in(\N^*)^p$,
any PB measure $\nu$ in $(\P^1)^p$, 
and any continuous $\mathrm{DSH}$-form $\Psi$ of bidegree $(m-p,m-p)$ on $\Lambda$
supported in $K$, we have
\[
\left|\left\langle T_{\un}^p(\nu)-T_\bif^p,\Psi\right\rangle\right|\leq C(K)\cdot\left(1+ C_\nu \right)\max_{1\le j\le p}\left(\frac{1}{n_j}\right)\|\Psi\|_{\mathrm{DSH}}.
\]
\end{theorem}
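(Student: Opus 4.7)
The strategy is to use the specific smooth PB measure $\tilde\nu$ introduced just above as an intermediate step and split $T^p_\un(\nu)-T^p_\bif=(T^p_\un(\tilde\nu)-T^p_\bif)+(T^p_\un(\nu)-T^p_\un(\tilde\nu))$. The first piece is estimated by averaging the inequality of Theorem~\ref{tm:vitessemoyennesalgebraic} against $\prod_j\theta(\rho_j)\,d\rho_j$: because $\theta$ is compactly supported in $]0,1[$, the factor $(1+|\log\rho_j|)$ is bounded by a constant depending only on $\theta$, so we obtain the stronger control $|\langle T^p_\un(\tilde\nu)-T^p_\bif,\Psi\rangle|\leq C(K)\max_j(\sigma_2(n_j)/d^{n_j})\|\Psi\|_{\mathrm{DSH}}^*$, which is absorbed into the announced $\max_j(1/n_j)$ bound.

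For the second piece, I exploit the pushforward description~(13) valid for $\eta=\nu$ and $\eta=\tilde\nu$ (both PB, hence giving no mass to the indeterminacy locus), together with adjunction, to rewrite $\langle T^p_\un(\nu)-T^p_\un(\tilde\nu),\Psi\rangle=\frac{1}{\prod_j n_j d_{n_j}}\langle\nu-\tilde\nu,\varphi_\Psi\rangle$, where $\varphi_\Psi:=(\widetilde{F_\un})_{*}\widetilde{\mathcal{P}}^{*}\Psi$ is a $(0,0)$-current on $(\P^1)^p$. Since $\nu$ is PB with constant $C_\nu$, this quantity is at most $(C_\nu/\prod_j n_j d_{n_j})\cdot\|\varphi_\Psi\|_{\mathrm{DSH}((\P^1)^p)}^{*}$, and the remaining task is to bound the DSH-seminorm of $\varphi_\Psi$.

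Writing $dd^c\Psi=T^+-T^-$ with $T^{\pm}$ positive closed of bidegree $(m-p+1,m-p+1)$, one has $dd^c\varphi_\Psi=(\widetilde{F_\un})_{*}\widetilde{\mathcal{P}}^{*}T^+-(\widetilde{F_\un})_{*}\widetilde{\mathcal{P}}^{*}T^-$, two positive closed $(1,1)$-currents on $(\P^1)^p$. I compute their masses in cohomology via $\Omega^{p-1}=(p-1)!\sum_j\bigwedge_{k\neq j}\omega_k$ and apply the projection formula twice to get $\|(\widetilde{F_\un})_{*}\widetilde{\mathcal{P}}^{*}T^{\pm}\|=(p-1)!\sum_j\int_{\Lambda}T^{\pm}\wedge S_j$ where $S_j:=\widetilde{\mathcal{P}}_{*}\bigwedge_{k\neq j}\widetilde{F_\un}^{*}\omega_k$. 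Integrating out the $z_j$-fibre of $\widetilde{\mathcal{P}}$ (of cardinality $d_{n_j}$, since $\omega_k$ for $k\neq j$ do not depend on $z_j$) and invoking~(13) for the reduced tuple $\un_{\hat\jmath}$ identifies $S_j=d_{n_j}\prod_{k\neq j}(n_k d_{n_k})\cdot T^{p-1}_{\un_{\hat\jmath}}(\omega^{\otimes(p-1)})$, whose mass on a fixed neighbourhood of $K$ is bounded uniformly in $\un_{\hat\jmath}$ by the B\'ezout-type estimate~\eqref{eq:boundedmass} applied factor by factor and Fubini. Combining, $\|\varphi_\Psi\|_{\mathrm{DSH}}^{*}\leq C_K'(\|T^+\|+\|T^-\|)\cdot\prod_k n_k d_{n_k}\cdot\max_j(1/n_j)$; after dividing by $\prod_k n_k d_{n_k}$ the second piece is bounded by $C_K'C_\nu\max_j(1/n_j)\|\Psi\|_{\mathrm{DSH}}$, and the theorem follows by summing both pieces.

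The main technical obstacle will be to justify the intersections $T^{\pm}\wedge S_j$ on the (possibly non-compact) quasi-projective $\Lambda$ and obtain a uniform cohomological control of $S_j$ on a neighbourhood of $K$. I handle this by working in a fixed smooth projective compactification $\bar\Lambda$ and using that the current $T^1_{n_k}(\omega)$ has cohomology class of bounded norm (as a direct consequence of $\deg_\lambda p_{n_k}\leq C\,d_{n_k}$), so that by iteration of Fubini the cohomology class of $T^{p-1}_{\un_{\hat\jmath}}(\omega^{\otimes(p-1)})$ is bounded uniformly in $\un_{\hat\jmath}$, hence so is its mass on any fixed compact.
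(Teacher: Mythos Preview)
Your argument is correct and follows the same route as the paper: split via $T^p_\un(\tilde\nu)$, handle the first piece by Theorem~\ref{tm:vitessemoyennesalgebraic}, and for the second piece bound the DSH-seminorm of $(\widetilde{F_\un})_*\widetilde{\mathcal P}^*\Psi$ by computing in cohomology the mass of $\widetilde{\mathcal P}_*\widetilde{F_\un}^*\bigl(\bigwedge_{k\neq j}\omega_k\bigr)$ --- your identification of this current with $d_{n_j}\prod_{k\neq j}(n_kd_{n_k})\,T^{p-1}_{\un_{\hat\jmath}}(\omega^{\otimes(p-1)})$ is just a clean repackaging of the paper's direct degree count of the preimage of a generic line under $\widetilde{F_\un}$. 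The one point you elide and the paper spells out is that the adjunction and formula~(13) are first established for \emph{smooth} $\nu$ and $\Psi$ (so that pull-back and push-forward are unambiguous), and the extension to general PB $\nu$ and continuous DSH $\Psi$ is obtained at the end by a standard regularization argument citing~\cite{DinhSibonysuper}.
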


\begin{proof}
Pick $1\leq p\leq \min\{m,2d-2\}$ and a PB measure $\nu$ on $(\P^1)^p$. Consider first the case where $\nu$ is smooth. 
Pick $\un=(n_1,\ldots,n_p)\in(\N^*)^p$ and 
a smooth DSH form $\Psi$ of bidegree $(m-p,m-p)$ on $\Lambda$
with compact support in $K$.
By Theorem~\ref{tm:vitessemoyennesalgebraic} and our choice of $\tilde{\nu}$,
there is $C(K)>0$ depending only on $K$ such that
\[
\left|\left\langle T_{\un}^p(\tilde{\nu})-T_\bif^p,\Psi\right\rangle\right|
\leq C(K)\max_{1\leq j\leq p}\left(\frac{\sigma_2(n_j)}{d^{n_j}}\right)\|\Psi\|_{\mathrm{DSH}},
\]
and we will show that
\[
\left|\left\langle T_{\un}^p(\tilde{\nu})-T_{\un}^p(\nu),\Psi\right\rangle\right|
\leq C(K)C_\nu \max_{1\leq j\leq p}\left(\frac{1}{n_j}\right)\|\Psi\|_{\mathrm{DSH}}.
\]
By the above description of $T_{\un}^p(\nu)$ and
the definition of PB measures, we have
\begin{gather*}
 |\left\langle T_\un^p(\tilde{\nu})-T^p_{\un}(\nu),\Psi\right\rangle| 
\leq  C_\nu \left\| \frac{1}{\prod_j n_j d_{n_j}}\left(\widetilde{F_\un}\right)_*(\widetilde{\mathcal{P}}^*(\Psi)) \right\|_{\mathrm{DSH}}^*.
\end{gather*}
As taking $dd^c$ commutes with taking pull-pack or push-forward, 
writing as $dd^c \Psi=T^+-T^-$, 
where $T^\pm$ are smooth (because $\Psi$ is smooth) positive closed currents of bidegree $(m-p+1,m-p+1)$
of finite masses on $\Lambda$,
one simply has to estimate the mass
$\|(\widetilde{F})_*(\widetilde{\mathcal{P}}^*(T^\pm))\|$. 
Computing those masses can be done in cohomology testing against
$\bigwedge_{i\neq j} \omega_i$ for all $1 \leq j \leq p$, where $\omega_i$ is the Fubini Study form on the $i$-th factor of $ (\P^1)^p$. By duality,
this is the same as controlling:
	\begin{align*}
	\left\langle  \frac{1}{\prod\limits_{j=1}^p n_j d_{n_j}}\widetilde{\mathcal{P}}_*\left( \left(\widetilde{{F_\un}}\right)^*\left(\bigwedge_{i\neq j} \omega_i \right)\right)  ,\  T^\pm \right\rangle
	\end{align*}
for any $j$. Finally, for any $j$, one has to control the mass:
	\[ \left\| \frac{1}{\prod\limits_{j=1}^p n_j d_{n_j}}\widetilde{\mathcal{P}}_*\left( \left(\widetilde{{F_\un}}\right)^*\left(\bigwedge_{i\neq j} \omega_i \right)\right)\right\|. \]
By symmetry, consider the case where $j=p$. Let $\un'=(n_1,\dots,n_{p-1})$ and consider the associated map $\widetilde{F_{\un'}}$. Now take a generic point $(z^0_i)_{i\leq p-1} \in \left(\P^1 \right)^{p-1}$ and consider the line $L := \{z=(z_1,\ldots,z_p) \in \left(\P^1 \right)^p, \ \forall i\leq p-1, \ z_i=z_i^0\}$.  Then the degree of the preimage of this line under $\widetilde{F_{\un}}$ is $d_{n_p}$ times the degree of the preimage of the point $(z^0_i)_{i\leq p-1}$ under $\widetilde{F_{\un'}}$. So pushing-forward, we see that
		\[ \left\|  \frac{1}{\prod\limits_{j=1}^p n_j d_{n_j}}\widetilde{\mathcal{P}}_*\left( \left(\widetilde{{F_\un}}\right)^*\left(\bigwedge_{i\neq p} \omega_i \right)\right)\right\| \leq C \frac{1}{n_p},\]
for some constant $C\geq0$ that does not depend on $\un$.

In particular, we deduce that 
 \[\left\| \frac{1}{\prod\limits_{j=1}^p n_j d_{n_j}}\left(\widetilde{F_\un}\right)_*(\widetilde{\mathcal{P}}^*(\Psi)) \right\|_{\mathrm{DSH}}^*  \leq C \times \max_{1\le j\le p}\left(\frac{1}{n_j}\right)\|\Psi\|_{\mathrm{DSH}}, \]
where $C\geq0$ is (another) constant that does not depends on $\un$, which implies the wanted result for $\Psi$ and $\nu$ smooth. By a regularization argument \cite{DinhSibonysuper}, the result follows for $\Psi$ continuous, replacing $C(K)$ by a constant given by a (small) larger neighborhood of $K$. Finally, we extend the result to any PB measure $\nu$ using again an approximation of $\nu$.
\end{proof}

\begin{remark} \normalfont \normalfont 
The order $O(\max_j(n_j^{-1}))$ as $\min_j(n_j)\to\infty$
in the right-hand side is sharp. 
Indeed, for the quadratic polynomials family $(z^2+\lambda)_{\lambda\in\C}$, 
it has been shown in \cite{multipliers} that 
the sequence $(2^{-n+1}[\Per(n,e^{2n})])_{n}$ (recall $2_n\sim 2^n$
as $n\to\infty$)
of measures on $\C$
converges to $dd^c\max\{g,4-2 \log 2\}$, where $g$ is the Green function of the Mandelbrot set. Since $4-2 \log 2>0$, 
this measure is not proportional to $\mu_\bif$. 
On the other hand, if $\nu_n=\lambda_{\mathbb{S}_{e^{2n}}}$ we have
$C_{\nu_n}=O(n)$ as $n\to\infty$, 
where $\lambda_{\mathbb{S}_{e^{2n}}}$ is the probability Lebesgue measure 
on the circle of center $0$ and radius $e^{2n}$ in $\C$, which is PB. 
So one cannot improve the order $O(n^{-1})$ as $n\to\infty$
in the right-hand side for this family; otherwise,
$2^{-n}[\Per(n,e^{2n})]$ would tend to $\mu_\bif$ as $n\to\infty$.
\end{remark}

\begin{proof}[Proof of Corollary~$\ref{tm:outsideluripolar}$]
Pick an integer $1\leq p\leq \min\{\dim\Lambda,2d-2\}$ and 
a sequence $(\un_k)_{k\in\N^*}$ in $(\N^*)^p$,
$\un_k=(n_{1,k},\ldots,n_{p,k})$, 
such that $\sum_{k\in\N^*}\max_j(n_{j,k}^{-1})<\infty$.
Pick a compact subset $K$ of $\Lambda$, and recall that we denote by $\beta$ 
the K\"ahler form on $\Lambda$.
Take $\Psi$ a smooth form with support in $K$, then $ \|\Psi \|_{\mathcal{C}^2}\cdot\beta^{m-p+1} \pm dd^c \Psi \geq 0$. We have seen in the above proof that for any smooth $PB$ measure $\nu$, we have:
\[
\left|\left\langle T_{\un}^p(\tilde{\nu})-T_{\un}^p(\nu),\Psi\right\rangle\right|
\leq  \|\Psi \|_{\mathcal{C}^2}\cdot \left|\langle U_\nu,  \frac{1}{\prod_j n_j d_{n_j}}\left(\widetilde{F_\un}\right)_*(\widetilde{\mathcal{P}}^*(\beta^{m-p+1})) \rangle \right |
\]	
where $ U_\nu$ denote the negative Green quasi-potential of $\nu$: $\nu +dd^c U_\nu=\tilde{\nu}$.   In particular, this extend to any measure $\delta_\uw$ for $\uw \in \C^p$ outside a pluripolar set (we let $U_\uw$ denote the negative Green quasi-potential of $\delta_\uw$, i.e. satisfying $\delta_\uw +dd^c U_\uw=\tilde{\delta_\uw}$).
 Observe that, as shown in the above proof,
the positive closed $(1,1)$-current
\[   \frac{1}{\prod_j n_j d_{n_j}}\left(\widetilde{F_\un}\right)_*(\widetilde{\mathcal{P}}^*(\beta^{m-p+1})) 
\]
on $(\P^1)^p$ has mass $O(\max_j(n_{j}^{-1}))$ as $\min_j n_j\to\infty$,
so that the positive closed $(1,1)$-current 
\[  
T_{\mathrm{test}}:=  \sum_{k=1}^\infty \frac{1}{\prod_j n_{j,k} d_{n_{j,k}}}\left(\widetilde{F_{\un_k}}\right)_*(\widetilde{\mathcal{P}}^*(\beta^{m-p+1}))
\]
on $(\P^1)^p$ has finite mass. In particular, the set of  all
$\uw \in \C^p$ for which
the negative Green quasi-potential $U_\uw$ of $\delta_\uw$,
satisfies $\langle U_\uw ,  T_{\mathrm{test}} \rangle =-\infty$ is pluripolar. 

Hence for any $\uw\in\C^p$ except for a  pluripolar subset 
and for any $\mathcal{C}^2$-test form $\Psi$ {of bidegree $(m-p,m-p)$ 
on $\Lambda$} with support in $K$, we have
\begin{align*}
\left| \left\langle T_{\un_k}^p(\tilde{\nu})-\frac{1}{d_{|\un_k|}}\bigwedge_{i=1}^{p}[\Per_{n_{i,k}}(w_i)],\Psi\right\rangle \right|  
\leq  
\left| \left\langle U_\uw,    \frac{\|\Psi\|_{C^2}}{\prod_j n_{j,k} d_{n_{j,k}}}\left(\widetilde{F_{\un_k}}\right)_*(\widetilde{\mathcal{P}}^*(\beta^{m-p+1}))\right\rangle \right|
\end{align*}
goes to $0$ as $k\to\infty$ and the conclusion follows.
\end{proof}

\section{Transversality of periodic critical orbit relations}\label{Transversality of periodic critical orbit relations}

\subsection{Infinitesimal deformations of rational maps}
Pick $f\in\rat_d$. The orbit 
\[
\mathcal{O}(f):=\{\phi^{-1}\circ f\circ \phi\in\rat_d: \phi\in\PSL_2(\C)\},
\]
of $f$ under the conjugacy action of $\PSL_2(\C)$ on $\rat_d$ is a $3$ dimensional complex analytic submanifold in $\rat_d$.

A tangent vector to $\rat_d$ at $f$ is an equivalence class of holomorphic
maps $\phi : \D \to \rat_d$ such that $\phi(0) = f$ under the relation 
$\phi\sim \psi$ iff $\phi'(0) = \psi'(0)$. 
The vector space of all tangent vectors at $f$ is denoted by $ T_f\rat_d$.
A tangent vector $\zeta\in T_f\rat_d$ can be identified to a 
section of the line bundle $f^*(T\P^1)$, where $T\P^1$ denotes 
the {holomorphic} tangent bundle on $\P^1$.
Concretely, on any local chart $U\subset\P^1$, 
$\zeta$ is a map  $z\in U \mapsto \zeta(z) \in T_{f(z)} \P^1$ and we shall view it as
a holomorphic function on $U$ by identifying the fiber $T_{f(z)}\P^1$ with $\C$.
To any tangent vector $\zeta\in T_f\rat_d$, we may thus attach a rational
vector field on $\P^1$ 
\begin{gather*}
\eta_\zeta(z)\pe  -D_zf^{-1} \cdot \zeta(z) \in  T_{z} \P^1,
\end{gather*}
whose poles are in $\mathrm{Crit}(f)$.
If $f$ has only simple critical points, then no poles of $\eta_\zeta$ are
of order more than $1$.

If $f$ is postcritically finite, i.e., the postcritical set
\begin{gather*}
\mathcal{P}(f):=\bigcup_{n\in\N^*}f^n(\mathrm{Crit}(f))
\end{gather*}
of $f$ is a finite subset in $\P^1$, then
we denote by $\mathcal{T}(\mathcal{P}(f))$ 
the vector field on $\mathcal{P}(f)$,
and a vector field $\tau\in\mathcal{T}(\mathcal{P}(f))$ is said to be \emph{guided} by $\zeta\in T_f\rat_d$ if
\begin{gather*}
\tau=f^*\tau+\eta_\zeta\quad\text{on }\mathcal{P}(f)\text{ and}\\
\tau\circ f=\zeta\quad\text{on }\mathrm{Crit}(f).
\end{gather*}

For the sequel, we will rely on the following crucial result 
(see \cite{buffepstein,favregauthier}). 

\begin{proposition}[Buff-Epstein]
If $f\in\rat_d$ is postcritically finite 
and neither is conjugate to $z^{\pm 2}$ nor is a Latt\`es map, 
then a tangent vector 
$\zeta\in T_f\rat_d$ is tangent to $\mathcal{O}(f)$ if and only if 
there is a vector field $\tau\in\mathcal{T}(\mathcal{P}(f))$ guided by $\zeta$.\label{prop:BE}
\end{proposition}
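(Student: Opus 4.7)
My plan is to prove the two implications separately; the first is a direct computation, the second is the real content.

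For the ``tangent implies guidable'' direction, any $\zeta$ tangent to $\mathcal{O}(f)$ is by definition the derivative at $t=0$ of $t\mapsto\phi_t^{-1}\circ f\circ\phi_t$ for some holomorphic curve $\phi_t\in\PSL_2(\C)$ with $\phi_0=\mathrm{id}$. Writing $\phi_t=\mathrm{id}+t\xi+O(t^2)$ with $\xi$ a global holomorphic vector field on $\P^1$ (i.e.\ $\xi\in\mathfrak{sl}_2(\C)$), a short differentiation yields $\zeta=(Df)\cdot\xi-\xi\circ f$, which in terms of $\eta_\zeta=-Df^{-1}\cdot\zeta$ reads $\eta_\zeta=f^*\xi-\xi$. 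I then set $\tau:=-\xi|_{\mathcal{P}(f)}\in\mathcal{T}(\mathcal{P}(f))$. Substituting gives $f^*\tau+\eta_\zeta=-f^*\xi+(f^*\xi-\xi)=-\xi=\tau$ on $\mathcal{P}(f)$, and at each $c\in\mathrm{Crit}(f)$ the identity $D_c f=0$ forces $\zeta(c)=-\xi(f(c))=\tau(f(c))$. Hence $\tau$ is guided by $\zeta$.

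For the converse I would follow Epstein's ``transversality via quadratic differentials'' strategy. Let $Q(E)$ be the finite-dimensional space of meromorphic quadratic differentials on $\P^1$ with at most simple poles supported in a finite forward-invariant set $E\supset\mathcal{P}(f)$, and consider Thurston's pushforward $f_\sharp:Q(f^{-1}E)\to Q(E)$ obtained by summing over the $d$ inverse branches of $f$ (with ramification taken into account). There is a natural pairing between $T_f\rat_d$ and $Q(\mathcal{P}(f))$ given by local residues at post-critical and critical points, and in this pairing the annihilator of $T_f\mathcal{O}(f)$ is identified with the image of $\mathrm{id}-f_\sharp$; this is the infinitesimal form of Thurston's rigidity. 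The plan is then to show that the existence of a guiding $\tau$ for $\zeta$ on $\mathcal{P}(f)$ is equivalent, after an integration-by-parts / residue computation around $\mathrm{Crit}(f)\cup\mathcal{P}(f)$, to the statement $\langle\zeta,(\mathrm{id}-f_\sharp)q\rangle=0$ for every $q\in Q(f^{-1}\mathcal{P}(f))$: the cocycle $\tau-f^*\tau=\eta_\zeta$ is exactly the data that records, against any test quadratic differential, the obstruction for $\zeta$ to be infinitesimally conjugate to zero.

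The hypothesis that $f$ is neither $z^{\pm 2}$ (up to M\"obius conjugacy) nor a Latt\`es map enters precisely here as the Thurston/McMullen non-degeneracy assumption: for all other postcritically finite maps $\mathrm{id}-f_\sharp$ is injective on the relevant finite-dimensional quotient, so the residue pairing is perfect. Injectivity then lets one solve for a global $\xi\in\mathfrak{sl}_2(\C)$ whose restriction to $\mathcal{P}(f)$ equals $-\tau$ and which realizes $\zeta=(Df)\xi-\xi\circ f$, placing $\zeta$ in $T_f\mathcal{O}(f)$. The main obstacle in making this argument self-contained is the careful bookkeeping of residues at ramified points where $\mathrm{Crit}(f)\cap\mathcal{P}(f)\neq\emptyset$ and the identification of the exact finite-dimensional quotients on which $\mathrm{id}-f_\sharp$ must be shown injective; this is the point at which one invokes the detailed analysis of Buff--Epstein, and it is also why the two exceptional families must be excluded.
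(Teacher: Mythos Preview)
The paper does not prove this proposition: it is stated as a cited result (``For the sequel, we will rely on the following crucial result (see \cite{buffepstein,favregauthier})'') and then used as a black box in the proof of Theorem~\ref{tm:trans1}. So there is no ``paper's own proof'' to compare against.

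That said, your sketch is accurate in spirit. The easy direction is correct and complete: your computation $\zeta=(Df)\xi-\xi\circ f$, $\eta_\zeta=f^*\xi-\xi$, $\tau:=-\xi|_{\mathcal{P}(f)}$ verifies both guiding conditions exactly as required. For the hard direction you have correctly identified the architecture of Epstein's argument --- the pairing with meromorphic quadratic differentials with simple poles on $\mathcal{P}(f)$, the Thurston pushforward $f_\sharp$, and the injectivity of $\mathrm{id}-f_\sharp$ away from the exceptional cases --- but as you yourself acknowledge, the actual work (the residue bookkeeping at ramified postcritical points and the precise identification of the finite-dimensional spaces involved) is deferred to the original references. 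This is not a self-contained proof but a faithful roadmap to one; given that the paper itself only cites the result, that is entirely appropriate here.
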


\subsection{A transversality of periodic critical orbit relations}

Let $f\in\rat_d$ be 
postcritically finite and hyperbolic of disjoint type, and
let $c_1,\ldots,c_{2d-2}$ be $2d-2$ distinct critical points of $f$.
For any $1\leq i\leq 2d-2$, 
there is $p_i\in\N^*$ such that $c_i\in\Fix^*(f^{p_i})$, and
there is an open neighborhood $U$ of $f$ in $\rat_d$ small enough so that
$c_1,\ldots,c_{2d-2}$ can be followed holomorphically on $U$,
that is for any $1\leq i\leq 2d-2$, 
there is a holomorphic map $c_i:U\to\P^1$ such that 
$c_i(f)=c_i$ and that $c_i(g)\in\mathrm{Crit}(g)$ for every $g\in U$.

We can choose an atlas of $\P^1$ such that there is an affine chart of $\P^1$
containing $c_1(g),\ldots,c_{2d-2}(g)$ for every $g\in U$, 
and define a map $\mathcal{V}:U\longrightarrow\C^{2d-2}$ by
\begin{gather*}
\mathcal{V}(g):=
\bigl(g^{p_1}(c_1(g))-c_1(g),\ldots,g^{p_{2d-2}}(c_{2d-2}(g))-c_{2d-2}(g)\bigr),
\quad g\in U.
\end{gather*}

We will need the following.

\begin{theorem}\label{tm:trans1}
Let $f\in\rat_d$ be 
postcritically finite and hyperbolic of disjoint type.
If $f$ is not conjugate to $z^{\pm 2}$, then
the linear map $D_f\mathcal{V}:T_f\rat_d\to T_{0}\C^{2d-2}$ 
is surjective and $\ker(D_f\mathcal{V})=T_f\mathcal{O}(f)$.
\end{theorem}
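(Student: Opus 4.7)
The plan is to invoke the Buff-Epstein criterion (Proposition~\ref{prop:BE}), together with a dimension count. First, I would note that $\mathcal{V}$ is $\PSL_2(\C)$-equivariant: for $\phi\in\PSL_2(\C)$ the critical points of $\phi^{-1}\circ f\circ\phi$ are $\phi^{-1}(c_i(f))$ and the relation $f^{p_i}(c_i)=c_i$ is preserved by conjugation, so $\mathcal{V}\equiv 0$ on $\mathcal{O}(f)$. This gives $T_f\mathcal{O}(f)\subset\ker D_f\mathcal{V}$. Since $\dim T_f\rat_d=2d+1$, $\dim T_f\mathcal{O}(f)=3$, and $\dim T_0\C^{2d-2}=2d-2$, it will suffice to prove the reverse inclusion $\ker D_f\mathcal{V}\subset T_f\mathcal{O}(f)$; the surjectivity of $D_f\mathcal{V}$ then follows by rank-nullity.

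Fix $\zeta\in\ker D_f\mathcal{V}$. The map $f$ is postcritically finite, is not conjugate to $z^{\pm 2}$ by hypothesis, and is not a Latt\`es map (Latt\`es maps are not hyperbolic, while $f$ is hyperbolic of disjoint type), so Proposition~\ref{prop:BE} applies: it is enough to construct a vector field $\tau\in\mathcal{T}(\mathcal{P}(f))$ guided by $\zeta$. Since $f$ is PCF and hyperbolic of disjoint type, every $c_i$ is a super-attracting periodic point of exact period $p_i$, all critical points of $f$ are simple, and the postcritical set decomposes as the disjoint union $\mathcal{P}(f)=\bigsqcup_{i=1}^{2d-2}\{c_i,f(c_i),\ldots,f^{p_i-1}(c_i)\}$ of the $2d-2$ super-attracting cycles.

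On each cycle I would build $\tau$ by direct recursion. Write $\dot{c}_i:=\partial_t c_i(f+t\zeta)|_{t=0}$ for the infinitesimal displacement of $c_i$ along $\zeta$, set $\tau(c_i):=\dot{c}_i$, and define
\[
\tau(f^{k+1}(c_i)):=f'(f^k(c_i))\cdot\tau(f^k(c_i))+\zeta(f^k(c_i)),\qquad 0\le k\le p_i-2.
\]
The critical-point guiding condition $\tau\circ f=\zeta$ on $\mathrm{Crit}(f)$ then comes for free at the step $k=0$: since $f'(c_i)=0$ the initial value $\tau(c_i)$ is annihilated and one obtains $\tau(f(c_i))=\zeta(c_i)$, independently of the choice of $\tau(c_i)$. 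Unwinding the recursion yields the closed form
\[
\tau(f^{p_i}(c_i))=\sum_{k=0}^{p_i-1}(f^{p_i-1-k})'(f^{k+1}(c_i))\cdot\zeta(f^k(c_i)).
\]

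The only, and main, obstacle is to check that $\tau$ is well defined on each cycle, namely that the recursively computed value at $f^{p_i}(c_i)=c_i$ agrees with the prescribed value $\dot{c}_i$. I would verify this by a direct computation of the $i$-th component of $D_f\mathcal{V}(\zeta)$: differentiating $g\mapsto g^{p_i}(c_i(g))-c_i(g)$ at $g=f$ and once more using $f'(c_i)=0$ to kill the $(f^{p_i})'(c_i)\dot{c}_i$ contribution gives
\[
D_f\mathcal{V}(\zeta)_i=\sum_{k=0}^{p_i-1}(f^{p_i-1-k})'(f^{k+1}(c_i))\cdot\zeta(f^k(c_i))-\dot{c}_i.
\]
Thus the cycle-closing identity $\tau(f^{p_i}(c_i))=\dot{c}_i$ is exactly the vanishing of $D_f\mathcal{V}(\zeta)_i$, which holds by assumption. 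Therefore $\tau$ is well defined on $\mathcal{P}(f)$ and guides $\zeta$, so Proposition~\ref{prop:BE} delivers $\zeta\in T_f\mathcal{O}(f)$, finishing the proof.
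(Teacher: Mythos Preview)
Your approach is the same as the paper's: both invoke Proposition~\ref{prop:BE} and construct the guiding vector field $\tau$ from the first-variation recursion $\dot v_{k+1}=f'(v_k)\,\dot v_k+\zeta(v_k)$ along each critical orbit. Your use of the disjoint-type hypothesis to reduce the well-definedness of $\tau$ to a single cycle-closing identity on each orbit is actually cleaner than the paper's Lemma~\ref{lmdirect}, which is written for possibly overlapping orbits (a case that cannot occur here).

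There is, however, one genuine gap. You check the second guiding condition $\tau\circ f=\zeta$ on $\mathrm{Crit}(f)$, and your recursion yields the first condition $\tau=f^*\tau+\eta_\zeta$ at every \emph{non-critical} point of $\mathcal P(f)$ (multiply by $f'$). But each $c_i$ also lies in $\mathcal P(f)$, and there both $f^*\tau$ and $\eta_\zeta$ have a simple pole; the recursion step at $k=0$ is the desired relation multiplied by $f'(c_i)=0$, hence says nothing about $\tau(c_i)$. One must still show that the singularity of $f^*\tau+\eta_\zeta$ at $c_i$ is removable with value $\tau(c_i)$. The paper does this by a local-coordinate computation; equivalently, with any holomorphic extension of $\tau$ near $f(c_i)$ one has $\tau(f(z))-\zeta(z)=-\zeta'(c_i)(z-c_i)+O((z-c_i)^2)$ and $f'(z)=f''(c_i)(z-c_i)+O((z-c_i)^2)$, so
\[
(f'(z))^{-1}\bigl(\tau(f(z))-\zeta(z)\bigr)\;\longrightarrow\;-\frac{\zeta'(c_i)}{f''(c_i)}=\dot c_i\qquad(z\to c_i),
\]
the last equality coming from differentiating $f_t'(c_i(t))=0$ at $t=0$. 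Since you set $\tau(c_i)=\dot c_i$, the condition does hold, but this step must be included.
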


Though this result seems folklore, we could not find it in the above form
in the literature. 
We provide here a proof for the sake of completeness,
which is
very much inspired by \cite{buffepstein,epstein2} 
(see also~\cite{favregauthier}).

\subsection{The proof of Theorem~\ref{tm:trans1}}

From now on, we write
$$\dot u:=\displaystyle\left. \frac{du_t}{dt}\right|_{t=0}$$
for any holomorphic map $t\mapsto u_t$ defined on a disk $\D$.

\begin{proof}[Proof of Theorem \ref{tm:trans1}]
Under our assumption, the postcritically finite map $f$ is neither a Latt\`es map, nor conjugate to $z^{\pm 2}$. Let us pick $\zeta\in\ker(D_f\mathcal{V})$, and choose a holomorphic disc $t\mapsto f_t\in\rat_d$ with $f_0=f$ and such that $\dot f=\zeta$. We shall use Proposition \ref{prop:BE} and build a vector field $\tau\in\mathcal{T}(\mathcal{P}(f))$ which is guided by $\zeta$. Then counting dimensions will complete the proof.

For any $n\in\N$ and any $1\leq i\leq 2d-2$,
we set $c_i(t)=c_i(f_t)$,
\begin{center}
$v_{n,i}(t):= f_t^n(c_{i}(t))$,
\end{center}
$c_i := c_{i}(0)$, and $v_{n,i} := v_{n,i}(0)$. 
It is clear that for any $n\ge 0$, we have
\begin{eqnarray}
\dot v_{n+1,i}=\zeta(v_{n,i})+D_{v_{n,i}}f\cdot\dot v_{n,i}~.
\label{eq:iterate}
\end{eqnarray}
We shall deduce the following from this equation.

\begin{lemma}\label{lmdirect}
For all $n,m \in\N^*$, and all $1\leq i,j\leq 2d-2$, if $v_{n,i} = v_{m,j}$, then $\dot v_{n,i} = \dot v_{m,j}$.
\end{lemma}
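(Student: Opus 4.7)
The plan is to split the argument into two cases according to whether $i=j$ or not. Since $f$ is hyperbolic of disjoint type and each critical point $c_i$ lies on the attracting cycle $\mathcal{O}_i:=\{v_{0,i},\ldots,v_{p_i-1,i}\}$ of exact period $p_i$, these cycles are pairwise disjoint. Because each $v_{n,i}$ belongs to $\mathcal{O}_i$, the equation $v_{n,i}=v_{m,j}$ with $i\neq j$ is impossible, and the conclusion holds vacuously in that case.

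It remains to handle $i=j$, in which case $v_{n,i}=v_{m,i}$ forces $n\equiv m\pmod{p_i}$. I would then show that the sequence $n\mapsto \dot v_{n,i}$ is itself $p_i$-periodic. The base step is exactly the content of the hypothesis $\zeta\in\ker(D_f\mathcal{V})$: the $i$-th component of $D_f\mathcal{V}(\zeta)$ is $\dot v_{p_i,i}-\dot c_i$, which equals $\dot v_{p_i,i}-\dot v_{0,i}$ and vanishes by assumption. For the inductive step, assuming $\dot v_{n+p_i,i}=\dot v_{n,i}$, using $v_{n+p_i,i}=v_{n,i}$ together with the recursion \eqref{eq:iterate} gives
\[
\dot v_{n+p_i+1,i}=\zeta(v_{n+p_i,i})+D_{v_{n+p_i,i}}f\cdot\dot v_{n+p_i,i}=\zeta(v_{n,i})+D_{v_{n,i}}f\cdot\dot v_{n,i}=\dot v_{n+1,i},
\]
so the periodicity propagates and $\dot v_{n,i}=\dot v_{m,i}$ whenever $n\equiv m\pmod{p_i}$.

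There is no real obstacle here: the entire content is encoded in the disjointness of cycles (a direct consequence of being of disjoint type) combined with the vanishing condition defining $\ker(D_f\mathcal{V})$ and the recursion \eqref{eq:iterate}. The one point worth flagging is that the argument uses in an essential way that each $c_i$ is genuinely periodic, not merely preperiodic, so that $\mathcal{O}_i$ is a finite cycle and the defining equation of $\mathcal{V}$ is a closing condition at period exactly $p_i$; this is guaranteed by the assumption that $f$ is postcritically finite and hyperbolic of disjoint type.
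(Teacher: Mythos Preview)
Your proof is correct and rests on the same core ingredients as the paper's: the hypothesis $\zeta\in\ker(D_f\mathcal{V})$ gives $\dot v_{p_i,i}=\dot v_{0,i}$, and the recursion \eqref{eq:iterate} propagates this to full $p_i$-periodicity of $n\mapsto\dot v_{n,i}$. Your handling of the case $i\neq j$ is in fact cleaner than the paper's: you correctly observe that in the disjoint-type setting the $2d-2$ super-attracting cycles are pairwise disjoint (by pigeonhole, each of the $2d-2$ cycles carries exactly one of the $2d-2$ simple critical points), so the hypothesis $v_{n,i}=v_{m,j}$ with $i\neq j$ is vacuous. The paper instead treats this case non-trivially, assuming $v_{k,i}=c_j$ and showing $\dot v_{k+m,i}=\dot v_{m,j}$ via the recursion; that extra argument would be needed for more general postcritically finite hyperbolic maps where several critical points may lie on the same cycle, but it is superfluous under the standing disjoint-type assumption of Theorem~\ref{tm:trans1}.
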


Taking this result for granted , we continue to define a vector field 
$\tau$ on $\mathcal{P}(f)$ that is guided by $\zeta$.
For any $x\in \mathcal{P}(f)$, 
we set $\tau(x) := \dot v_{n,i}$ for some $1\leq i\leq 2d-2$
and some $n\in\N$ such that $x = v_{n,i}$.
The previous lemma shows that $\tau$ is well-defined at $x$.
It remains to check that $\tau$ is guided by $\zeta$. The equality $\tau (f(c_i)) = \zeta (c_i)$
follows from the definition of $\tau$ and \eqref{eq:iterate}. When $x=v_{n,i}$ is not a critical point, then multiplying \eqref{eq:iterate} 
by $D_xf^{-1}$ gives $\tau = f^*\tau + \eta_\zeta$ at $x$.

When $x = c_i$ is a critical point, since $x$ is a simple critical point, we may choose coordinates $z$ at $c_i$ and $w$ at $f(c_i)$ such that
$w = f_t(z) = z^2+ t (a + O(z)) + O(t^2)$. Since we may follow the critical point for $|t|$ small, we may also suppose that $c_{i}(t) = 0$ for all $t$
so that $f_t(z) = z^2 + t (a + O(z^2)) + O(t^2)$.
We thus obtain 
$\zeta (z) = ( a + O(z)) \frac{\partial}{\partial w}$, and
$\eta_\zeta(z) = ( -\frac{a}{2z} + O(z))\frac{\partial}{\partial z}$.
Observe that in our coordinates we have 
$\tau(c_i) =\dot c_{i} = 0$, and 
$\tau(P(c_i)) = \left. \frac{d}{dt}\right|_{t=0} f_t(c_{i}(t)) =a\,\frac{\partial}{\partial z}$.
We may thus extend $\tau$ locally at $c_i$ and $P(c_i)$ holomorphically 
by setting
$\tau (z) \equiv 0$ and $\tau(w) \equiv a$.
It follows that 
$$
f^*\tau (z) + \eta_\zeta (z) - \tau (z) 
= 
\frac{a}{f'(z)} \,\frac{\partial}{\partial z}+ \left( -\frac{a}{2z} + O(z)\right) \frac{\partial}{\partial z} - 0 = O(z) \frac{\partial}{\partial z}
~.
$$
It follows that 
$f^*\tau + \eta_\zeta = \tau $ at any critical point, which concludes the proof.
\end{proof}

\begin{proof}[Proof of Lemma~\ref{lmdirect}]
First, fix $i$. To simplify notation
we write $v_k,c,p$ instead of $v_{k,i}, c_i,p_i$ respectively.
Recall that $p$ is a multiple of the exact period of $v_0 = c$. 
For any $l \ge 1$, iterating the assertion \eqref{eq:iterate} and using the fact that $D(f^p)$ is vanishing at all points of the cycle and that $v_{k+p} = v_k$ for all $k\ge 0$, 
give
\begin{eqnarray*}
\dot v_{lp} 
\!\!& = &\!\!
\zeta(v_{lp -1}) + D_{v_{lp -1}}f \cdot  \zeta(v_{lp -2}) + \cdots + D_{v_{(l-1)p +1}}\!f^{p-1} \cdot  \zeta(v_{(l-1)p}) + D_{v_{(l-1)p}}f^{p} \cdot  \dot v_{(l-1)p}
\\
\!\!& = &\!\!
\zeta(v_{lp -1}) + D_{v_{lp -1}}f \cdot  \zeta(v_{lp -2}) + \cdots + D_{v_{(l-1)p+1}}f \cdot  \zeta(v_{(l-1)p}) 
\\
\!\!& = &\!\!
\zeta(v_{p -1}) + D_{v_{p -1}}f\cdot  \zeta(v_{p -2}) + \cdots + D_{v_{1}}f \cdot  \zeta(v_0) 
= 
\dot v_{p} 
\end{eqnarray*}
Since $\zeta\in\ker(D_f\mathcal{V})$, we also have $\dot v_{0}-\dot v_{p} = \dot c-\dot v_{p}=D_f\mathcal{V}\cdot \zeta=0~,$ whence $\dot v_{lp} = \dot v_{0}$ for all $l\ge 1$ .
Again by~\eqref{eq:iterate} we get 
$$\dot v_{lp+1} = \zeta (v_{lq}) + D_{v_{lp}} f\cdot v_{lp} = 
\zeta (v_{0}) + D_{v_{0}} f \cdot v_{0} = 
\dot v_{1}~.$$
An immediate induction on $k\ge 0$ then proves $\dot v_{lp + k} = \dot v_{k}$ for all $l\ge 0$.
This proves the lemma whenever $i=j$. 

We now consider the case when $v_{k,i}=c_j$ for some $j\neq i$ and some $k\ge1$. Observe that, up to taking the exact period instead of $p_i$ and $p_j$, we may assume $p \pe p_i = p_j$.
Since $f$ has simple critical points, we may assume that $1\le k\le p-1$, and hence $k$ is uniquely determined. 
Using \eqref{eq:iterate}, we find
$$\dot v_{k+1,i}= \zeta(v_{k,i})+D_{v_{k,i}}f\cdot \dot v_{k,i}=\zeta(c_j) = \dot v_{1,j}~.$$
Again by \eqref{eq:iterate} and an easy induction, it follows that
$\dot v_{k+m,i}= \dot v_{m,j}$ for all $m\ge 1$.

Finally, suppose $v_{n,i} = v_{m,j}$. Up to permuting $i$ and $j$ we may assume that
$n = k+ m  + pl$ for some $l\ge 0$, and we find
\[\dot v_{n,i}= 
\dot v_{k+m+pl,i}=
\dot v_{k+m+p,i}=\dot v_{m+p,j} = 
\dot v_{m,j}~.\]
This concludes the proof.
\end{proof}

\subsection{Application to the space $\rat_d^0$}

Denote by $\rat_d^0$ the space of degree $d$ rational maps on $\P^1$
fixing $0$, $1$ and $\infty$. To be more precise, let us parametrize $\rat_d$ by
\[f([z:t]) = \left[\sum_{i=0}^d a_i z^it^{d-i}:\sum_{i=0}^d b_iz^it^{d-i}\right], \ [z:t]\in\P^1,\]
with $[a_d:\cdots:a_0:b_d:\cdots:b_0] \in \P^{2d+1}\setminus\{\textup{Res}=0\}$. The space $\rat_d^0$ is then determined by the equations 
$b_d =0, a_0 = 0,\sum_i a_i=\sum_j b_j$ 
and is thus clearly a smooth subvariety of $\rat_d$ of pure dimension $2d-2$.

\begin{lemma}\label{lm:transorbit}
The complex submanifolds 
$\rat_d^0$ and $\mathcal{O}(f)$ in $\rat_d$
intersect transversely at 
any $f\in\rat_d^0$.
\end{lemma}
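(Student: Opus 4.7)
The plan is to establish transversality by a dimension count combined with a computation showing that $T_f\rat_d^0 \cap T_f\mathcal{O}(f)=\{0\}$. The subvariety $\rat_d^0$ is cut out in $\P^{2d+1}\setminus\{\mathrm{Res}=0\}$ by three linear equations and is smooth of pure dimension $2d-2$; since $\mathrm{Aut}(f)\subset\PSL_2(\C)$ is finite for any $f\in\rat_d$ with $d\geq 2$, the orbit $\mathcal{O}(f)$ is a complex submanifold of pure dimension $3$. These dimensions sum to $2d+1=\dim\rat_d$, so transversality at $f$ is equivalent to the triviality of $T_f\rat_d^0 \cap T_f\mathcal{O}(f)$.

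Next I would identify both tangent spaces explicitly. Since $\rat_d^0$ is defined by $f(0)=0$, $f(1)=1$, $f(\infty)=\infty$, differentiating these scalar equations yields
\[
T_f\rat_d^0=\bigl\{\zeta\in T_f\rat_d:\zeta(0)=\zeta(1)=\zeta(\infty)=0\bigr\}.
\]
For $T_f\mathcal{O}(f)$, I would differentiate a path $t\mapsto \phi_t^{-1}\circ f\circ\phi_t$ through $f$ with $\phi_0=\mathrm{id}$ and $\dot\phi_0=\psi\in\mathfrak{sl}_2(\C)$ (seen as a globally holomorphic vector field on $\P^1$); a direct computation gives the tangent vector $\zeta_\psi := f'\cdot\psi-\psi\circ f$, and the finiteness of $\mathrm{Aut}(f)$ makes $\psi\mapsto\zeta_\psi$ injective, so $T_f\mathcal{O}(f)=\{\zeta_\psi:\psi\in\mathfrak{sl}_2(\C)\}$ is three-dimensional.

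To conclude, suppose $\zeta_\psi$ lies in the intersection. Evaluating at each fixed point $z_0\in\{0,1,\infty\}$ of $f$ gives
\[
0=\zeta_\psi(z_0)=(f'(z_0)-1)\,\psi(z_0).
\]
When none of the multipliers $f'(0)$, $f'(1)$, $f'(\infty)$ equals $1$, this forces $\psi$ to vanish at three distinct points of $\P^1$; since any element of $\mathfrak{sl}_2(\C)$ is a polynomial vector field of degree at most two in an affine chart, such a $\psi$ must be identically zero, whence $\zeta_\psi=0$. The hard part will be the degenerate case where one of $\{0,1,\infty\}$ happens to be a parabolic fixed point of $f$: at such a $z_0$ the relation $\zeta_\psi(z_0)=0$ imposes no first-order constraint on $\psi(z_0)$, so one must appeal to an additional argument, for instance invoking that in the setting where this lemma is applied the remaining two vanishing conditions together with the structure of $\rat_d^0$ already determine $\psi$, or reducing to the generic non-parabolic case by an appropriate continuity argument.
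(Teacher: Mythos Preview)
Your approach coincides with the paper's: both reduce to showing $T_f\rat_d^0\cap T_f\mathcal{O}(f)=\{0\}$ by evaluating the orbit tangent vector at the three marked fixed points. The paper phrases this slightly differently (it writes $m_t\in\PSL_2(\C)$ explicitly and tracks the fixed points $m_t^{-1}(0),m_t^{-1}(1),m_t^{-1}(\infty)$ of $f_t=m_t^{-1}\circ f\circ m_t$), but the underlying computation is the same, and in the non-parabolic case both arguments go through.

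You are right to flag the parabolic case as a genuine gap. In fact, the lemma is \emph{false} as stated. Take $d=3$ and $f(z)=z+z^2(z-1)\in\rat_3^0$, which is parabolic at $0$. For $\psi(z)=(1-z)\partial_z$ one computes
\[
\zeta_\psi(z)=f'(z)(1-z)-(1-f(z))=-2z(z-1)^2,
\]
which vanishes at $0,1,\infty$ (the vanishing at $\infty$ follows since $|\zeta_\psi(z)|/|f(z)|^2\sim 2|z|^{-3}\to0$). Thus $0\neq\zeta_\psi\in T_f\rat_d^0\cap T_f\mathcal{O}(f)$, so the intersection is not transverse at this $f$. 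The paper's proof has the same gap: its assertion that ``there are fixed points of $f_t$ of the form $x_t=O(t^2)$'' implicitly uses simplicity of the fixed point, and in the example above the fixed point of $f_t$ near $0$ is $m_t^{-1}(0)=-t+O(t^2)$, not $O(t^2)$. None of this affects the paper's applications, since Lemma~\ref{lm:transorbit} is only invoked (via Corollary~\ref{tm:transversality-ratd0}) for hyperbolic $f$, where no fixed point is parabolic and your argument is complete.
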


\begin{proof}
Let $\zeta\in T_f \mathcal{O}(f)\cap T_f\rat_d^0$. Then there exists a holomorphic germ $m_t\in\textup{Aut}(\P^1)$ centered at $m_0=\id$ such that $f_t=m_t^{-1}\circ f\circ m_t$ and $\zeta=\dot f$. Moreover, since $(f_t)_t$ is tangent to $\rat_d^0$, there are fixed points of $f_t$ of the form $x_t=O(t^2)$, $y_t=1+O(t^2)$ and $z_t$ with $z_t=1/O(t^2)$.

Writing $m_t(z)=(a_tz+b_t)/(c_tz+d_t)$ with $a_td_t-c_tb_t=1$, we get $x_t=-b_t/a_t$, $y_t=(b_t-d_t)/(c_t-a_t)$ and $z_t=-d_t/c_t$. As $m_0=\id$, we have $a_t=1+\alpha t+O(t^2)$, $b_t=\beta t+O(t^2)$, $c_t=\gamma t+O(t^2)$ and $d_t=1+\delta t+O(t^2)$.

We thus get $-\beta t +O(t^2)=O(t^2)$, $-\gamma t+O(t^2)=O(t^2)$, $1+(\alpha+\delta) t+O(t^2)=1$ and $1+(\delta -\beta +\gamma-\alpha)t+O(t^2)=1+O(t^2)$, whence $\alpha=\beta=\gamma=\delta=0$. As a consequence, $m_t=\id+O(t^2)$ and $m_t^{-1}=\id+O(t^2)$. Finally, differentiating $f_t=m_t^{-1}\circ f\circ m_t$ with respect to $t$ and evaluating at $t=0$ gives
$$\zeta=Df\cdot \dot m=Df\cdot 0=0~.$$
This proves $T_f\mathcal{O}(f) \cap T_f\rat_d^0 = (0)$.
\end{proof}

As above, we pick $f\in\rat_d^0$ which is hyperbolic and postcritically finite with simple critical points $c_1,\ldots,c_{2d-2}$. We also assume that for $1\leq i\leq 2d-2$, there exists $p_i\geq1$ such that $c_i\in\Fix^*(f^{p_i})$. Let $U\subset\rat_d^0$ be a neighborhood of $f$ in which $c_i$ can be followed holomorphically as a critical point $c_i(g)$ of $g$ for all $i$. We can choose an atlas of $\P^1$ such that
there is an affine chart containing $c_1(g),\ldots,c_{2d-2}(g)$
 for every $g\in U$. 
 We let
\[\mathcal{V}(g):=\left(g^{p_1}(c_1(g))-c_1(g),\ldots,g^{p_{2d-2}}(c_{2d-2}(g))-c_{2d-2}(g)\right), \ g\in U.\]
From Theorem~\ref{tm:trans1} and Lemma~\ref{lm:transorbit}, we  directly get the following.

\begin{corollary}\label{tm:transversality-ratd0}
Pick any postcritically finite and hyperbolic $f\in\rat_d^0$
of disjoint type. The map
$\mathcal{V}_f:U\to\C^{2d-2}$ is a local biholomorphism at $f$.
\end{corollary}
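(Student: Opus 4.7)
The plan is to prove that the differential $D_f\mathcal{V}_f:T_f\rat_d^0\to T_0\C^{2d-2}$ is a linear isomorphism and then invoke the holomorphic inverse function theorem. Since the slice $\rat_d^0$ is a smooth subvariety of $\rat_d$ of pure dimension $2d-2$ (cut out by the three affine conditions $a_0=0$, $b_d=0$, $\sum_i a_i=\sum_j b_j$ on $\rat_d\subset\P^{2d+1}\setminus\{\textup{Res}=0\}$) and the target $\C^{2d-2}$ has the same dimension, it will suffice to establish injectivity of $D_f\mathcal{V}_f$.

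To do so, I would view $\mathcal{V}_f$ as the restriction to (a possibly shrunken) $U\subset\rat_d^0$ of the ambient map $\mathcal{V}:\widetilde U\to\C^{2d-2}$ defined on an open neighborhood $\widetilde U$ of $f$ in $\rat_d$ that was used in the statement of Theorem~\ref{tm:trans1}. Consequently, $D_f\mathcal{V}_f$ is the restriction of $D_f\mathcal{V}$ to the subspace $T_f\rat_d^0\subset T_f\rat_d$. Theorem~\ref{tm:trans1} identifies the kernel of $D_f\mathcal{V}$ with $T_f\mathcal{O}(f)$, while Lemma~\ref{lm:transorbit} yields the transversality $T_f\rat_d^0\cap T_f\mathcal{O}(f)=(0)$. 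Combining the two gives
\[
\ker(D_f\mathcal{V}_f)=T_f\rat_d^0\cap\ker(D_f\mathcal{V})=T_f\rat_d^0\cap T_f\mathcal{O}(f)=(0),
\]
so $D_f\mathcal{V}_f$ is injective. The dimension count above then forces $D_f\mathcal{V}_f$ to be an isomorphism, and the holomorphic inverse function theorem concludes that $\mathcal{V}_f$ is a local biholomorphism at $f$.

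There is essentially no hidden obstacle here: Theorem~\ref{tm:trans1} carries all the infinitesimal-deformation content (via the Buff--Epstein description of tangent vectors tangent to $\mathcal{O}(f)$), Lemma~\ref{lm:transorbit} carries the transversality of the slice $\rat_d^0$ with $\PSL_2(\C)$-orbits, and the corollary itself is then just linear algebra plus the inverse function theorem. The only caveat worth flagging is that Theorem~\ref{tm:trans1} excludes conjugates of $z^{\pm 2}$, which is a concern only when $d=2$; in that single exceptional conjugacy class one can check the statement directly in normal-form coordinates, but it does not affect the use of the corollary in the generic setting.
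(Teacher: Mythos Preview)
Your argument is correct and is exactly the approach the paper intends: the paper's own ``proof'' is the single sentence that the corollary follows directly from Theorem~\ref{tm:trans1} and Lemma~\ref{lm:transorbit}, and you have simply spelled out the underlying linear algebra (restriction of $D_f\mathcal{V}$ to $T_f\rat_d^0$, kernel identification, dimension count, inverse function theorem). Your flag about the $z^{\pm 2}$ exclusion in Theorem~\ref{tm:trans1} is apt and is a detail the paper leaves implicit.
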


\section{Counting the centers of hyperbolic components of disjoint type in $\mathcal{M}_d$}

\subsection{The marked moduli spaces $\mathcal{M}_d^\fm$ and $\mathcal{M}_d^\tm$}\label{sec:moduli}

We follow closely ~\cite[Section 9]{Milnor-hyperbolic}.

A {\em fixed marked} degree $d$ rational map on $\P^1$ 
is a $(d+2)$-tuple $(f,x_1,\ldots,x_{d+1})$ where $f\in\rat_d$, and 
$(x_1,\ldots,x_{d+1})\in(\P^1)^{d+1}$ 
is a $(d+1)$-tuple of all the fixed points of $f$, taking into account
their multiplicities. The space $\rat_d^\fm$
of all degree $d$ fixed marked rational maps is a smooth quasi-projective subvariety 
of dimension $2d+1$ in $\rat_d\times(\P^1)^{d+1}$.
The conjugacy action of $\mathrm{PSL}_2(\C)$ on $\rat_d$ canonically
extends to $\rat_d^\fm$,
and the moduli space $\mathcal{M}_d^\fm$ of degree $d$ is
the orbit space of $\PSL_2(\C)$ in $\rat_d^\fm$.

The space $\mathcal{M}_d^\fm$ is an irreducible quasi-projective variety of dimension $2d-2$ and its singular points are contained in the subvariety of $\mathcal{M}_d^\fm$ consisting of all classes $[(f,x_1,\ldots,x_{d+1})]$ such that $\#\{x_1,\ldots,x_{d+1}\}\le 2$.
In particular, $\mathcal{M}_d^\fm$  is smooth at any class $[(f,x_1,\ldots,x_{d+1})]$ such that $f$ is hyperbolic.
Let $\rat_d^{0,\fm}$ be the space of all fixed marked degree $d$ rational maps $(f,x_1,\ldots,x_{d+1})$ such that
$x_1=0$, $x_2=1$ and $x_3=\infty$. 
It is clear that $\rat_d^{0,\fm}$ is smooth and quasi-projective. 
Any smooth point in $\mathcal{M}_d^\fm$ admits a unique representative in $\rat_d^{0,\fm}$ since an element in $\PSL_2(\C)$ fixes three distinct 
points if and only if it is the identity.

A \emph{totally marked} degree $d$ rational map on $\P^1$ is a $3d$-tuple 
$(f,x_1,\ldots,x_{d+1},c_1,\ldots,c_{2d-2})$ 
where $(f,x_1,\ldots,x_{d+1})\in\rat_d^\fm$,
and $(c_1,\ldots,c_{2d-2})\in(\P^1)^{2d-2}$ 
is a $(2d-2)$-tuples of all critical points of $f$,
taking into account their multiplicities. 
The space $\rat_d^\tm$ of all degree $d$ totally marked rational maps 
is a smooth quasi-projective subvariety of dimension $2d+1$
in $\rat_d^\fm\times(\P^1)^{2d-2}\subset\rat_d\times(\P^1)^{3d-1}$. 
The conjugacy action of $\mathrm{PSL}_2(\C)$ on $\rat_d$ canonically
extends to $\rat_d^\tm$, and the moduli space $\mathcal{M}_d^\tm$ is
the orbit space of $\PSL_2(\C)$ in $\rat_d^\tm$.

The space $\mathcal{M}_d^\tm$ is an irreducible quasi-projective variety of dimension $2d-2$ and its singular points are contained in the subvariety of $\mathcal{M}_d^\tm$ consisting of all classes 
$[(f,x_1,\ldots,x_{d+1},c_1,\ldots,c_{2d-2})]$ such that
either $\#\{x_1,\ldots,x_{d+1}\}\le 2$ or $\#\{c_1,\ldots,c_{2d-2}\}<2d-2$.
In particular, $\mathcal{M}_d^\tm$ is smooth 
at any class $[(f,x_1,\ldots,x_{d+1},c_1,\ldots,c_{2d-2})]$ such that
$f$ is hyperbolic \emph{of disjoint type}.
Let $\rat_d^{0,\tm}$ be the space of all fixed marked 
degree $d$ rational maps $(f,x_1,\ldots,x_{d+1},c_1,\ldots,c_{2d-2})$ 
such that $x_1=0$, $x_2=1$ and $x_3=\infty$. 
It is clear that $\rat_d^{0,\tm}$ is smooth and quasi-projective. 
Any smooth point in $\mathcal{M}_d^\tm$ 
admits a unique representative in $\rat_d^{0,\tm}$ 
since an element in $\PSL_2(\C)$ fixes three distinct 
points if and only if it is the identity.

We note that the same construction of $\mathcal{L}$, $\mu_\bif$, $\Per_n(w)$, and $T_{\un}^p(\underline{\rho})$
as on $\cM_d$ works on $\cM_d^\fm$ and $\cM_d^\tm$.

\subsection{Parametrizing hyperbolic components of $\mathcal{M}_d^\fm$ of disjoint type}\label{parametrizing_hyperbolic_components}

A hyperbolic component $\Omega$ in $\mathcal{M}_d^\fm$ is simply connected 
and contains a {\em center}, which is by definition
the unique point $[(f,x_1,\ldots,x_{d+1})]$ such that $\#\mathcal{P}(f)<\infty$,
if $\mathcal{J}_f$ is connected for any $[(f,x_1,\ldots,x_{d+1})]\in\Omega$ 
by \cite[Theorem~9.3]{Milnor-hyperbolic}.

For any $\un=(n_1,\ldots,n_{2d-2})\in(\N^*)^{2d-2}$ and any hyperbolic component $\Omega$ in $\mathcal{M}_d^\fm$
of type $\un$,
$\J_f$ is connected for any $[(f,x_1,\ldots,x_{d+1})]\in\Omega$, since all Fatou components of $f$ are then topological disks by \cite[Proposition p.231]{Prz-connectedness}. In particular $\Omega$ has a center. 
We will also use the following in the sequel.

\begin{lemma}\label{lm:markingcrit}
For any $\un=(n_1,\ldots,n_{2d-2})\in(\N^*)^{2d-2}$,
any hyperbolic component $\Omega$ in $\mathcal{M}_d^\fm$ of type $\un$
is simply connected and the critical points are marked throughout $\Omega$. More precisely, there are a holomorphic family $(f_\lambda)_{\lambda\in\Omega}$ of degree $d$ rational maps and
markings $x_1,\ldots,x_{d+1},c_1,\ldots,c_{2d-2}:\Omega\to\P^1$
of all fixed points and all critical points of $(f_\lambda)_{\lambda\in\Omega}$,
such that
$\lambda=[(f_\lambda,x_1(\lambda),\ldots,x_{d+1}(\lambda))]$
for any $\lambda\in\Omega$.
\end{lemma}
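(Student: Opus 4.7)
The plan is to reduce the simple connectedness to classical results of Przytycki and Milnor, and then to trivialize the natural étale coverings parametrizing fixed and critical points over $\Omega$.

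First I would establish that $\J_f$ is connected for every class $[(f,x_1,\ldots,x_{d+1})]\in\Omega$. As $f$ is of type $\un$, it admits $2d-2$ distinct attracting cycles; each of their immediate basins must contain at least one critical point by Fatou's theorem, and since $f$ has at most $2d-2$ critical points counted with multiplicity, each immediate basin contains exactly one, which is moreover simple. In particular every critical point of $f$ lies in an attracting basin, so by the cited result of Przytycki~\cite[Proposition p.231]{Prz-connectedness} every Fatou component of $f$ is a topological disk, whence $\J_f$ is connected. Milnor's theorem~\cite[Theorem~9.3]{Milnor-hyperbolic} then yields that $\Omega$ is simply connected (and admits a unique postcritically finite center).

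Second, since hyperbolicity rules out fixed points with multiplier $1$, every representative $f$ of a class in $\Omega$ has $d+1$ simple, pairwise distinct fixed points. By the smoothness criterion recalled in \S\ref{sec:moduli}, $\Omega$ is entirely contained in the smooth locus of $\mathcal{M}_d^\fm$. The normalization $x_1=0$, $x_2=1$, $x_3=\infty$ then provides, for each $\lambda\in\Omega$, a unique representative in $\rat_d^{0,\fm}$, yielding a holomorphic section $\Omega\hookrightarrow\rat_d^{0,\fm}$. This section defines simultaneously the holomorphic family $(f_\lambda)_{\lambda\in\Omega}$ and the holomorphic markings $x_1,\ldots,x_{d+1}:\Omega\to\P^1$ of the fixed points satisfying $\lambda=[(f_\lambda,x_1(\lambda),\ldots,x_{d+1}(\lambda))]$.

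To mark the critical points I would introduce the analytic incidence variety
$$\mathcal{C}:=\bigl\{(\lambda,c)\in\Omega\times\P^1 : c\in\mathrm{Crit}(f_\lambda)\bigr\}.$$
By the first paragraph, every fiber of the proper first projection $\mathrm{pr}_1:\mathcal{C}\to\Omega$ consists of $2d-2$ distinct simple points, so this projection is an unramified finite holomorphic cover of degree $2d-2$. The simple connectedness of $\Omega$ forces each connected component of $\mathcal{C}$ to be biholomorphic to $\Omega$ via $\mathrm{pr}_1$, and the $2d-2$ resulting sections furnish the desired holomorphic markings $c_1,\ldots,c_{2d-2}:\Omega\to\P^1$. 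The main obstacle is really the simple connectedness of $\Omega$, which is not elementary in this generality; once it is granted by the Przytycki--Milnor chain, the family and the markings are essentially formal consequences of the smoothness of $\mathcal{M}_d^\fm$ on the hyperbolic locus and of the triviality of finite étale covers of simply connected manifolds.
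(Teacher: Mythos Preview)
Your proof is correct and follows the same overall strategy as the paper: Przytycki's connectedness criterion plus Milnor's theorem for simple connectedness, then triviality of an \'etale cover over a simply connected base to obtain the markings. The packaging differs slightly: the paper passes through the totally marked moduli space $\mathcal{M}_d^\tm$, observing that the natural cover $\tau:\mathcal{M}_d^\tm\to\mathcal{M}_d^\fm$ is unramified over $\Omega$ (disjoint type forces simple critical points), hence a biholomorphism on each sheet by simple connectedness, and then uses the section $\rat_d^{0,\tm}\to\mathcal{M}_d^\tm$ to read off $(f_\lambda,x_i,c_j)$ all at once. You instead first take the section $\Omega\hookrightarrow\rat_d^{0,\fm}$ to get $(f_\lambda,x_i)$, and then trivialize the critical-point incidence variety $\mathcal{C}\to\Omega$ separately. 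These are equivalent---your $\mathcal{C}$ is essentially one factor of the fiber product defining $\mathcal{M}_d^\tm$ over $\mathcal{M}_d^\fm$---and your route has the minor advantage of not invoking the auxiliary space $\mathcal{M}_d^\tm$ at all.
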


\begin{proof}
We have already seen that $\Omega$ is simply connected.
Let $\tau:\mathcal{M}_d^\tm\to\mathcal{M}_d^\fm$
be the natural finite branched cover.
For any component $\tilde{\Omega}$ of $\tau^{-1}(\Omega)$, 
$\tilde{\Omega}$ is a hyperbolic component of disjoint type in $\mathcal{M}_d^\tm$
and $\tau|_{\tilde{\Omega}}:\tilde{\Omega}\to\Omega$ is an unramified cover, so
is a biholomorphism, since $\Omega$ is simply connected.

Let $p:\rat_d^{0,\tm}\to\mathcal{M}_d^\tm$ 
be the restriction of the canonical projection
$\rat_d^\tm\to\mathcal{M}_d^\tm$ to $\rat_d^{0,\tm}$,
so that $p$ is a biholomorphism to its image, which contains $\tilde{\Omega}$,
and set $\hat{\Omega}:=p^{-1}(\tilde{\Omega})$. Then
$p|_{\hat{\Omega}}:\hat{\Omega}\to\tilde{\Omega}$ is also a biholomorphism.
We can define the holomorphic maps $f_\cdot,x_1,\ldots,x_{d+1},c_1,\ldots,c_{2d-2}$
by $(f_\lambda,x_1(\lambda),\ldots,x_{d+1}(\lambda),c_1(\lambda),\ldots,c_{2d-2}(\lambda)):=(p\circ\tau)|_{\hat{\Omega}}^{-1}(\lambda)$, $\lambda\in\Omega$.
\end{proof}

Pick any hyperbolic component $\Omega$ in $\mathcal{M}_d^\fm$ of 
type $\un=(n_1,\ldots,n_{2d-2})\in(\N^*)^{2d-2}$,
and choose a holomorphic family $(f_\lambda)_{\lambda\in\Omega}$ 
of degree $d$ rational maps and
markings $x_1,\ldots,x_{d+1},c_1,\ldots,c_{2d-2}:\Omega\to\P^1$
of all fixed points and critical points of $(f_\lambda)_{\lambda\in\Omega}$,
taking into account their multiplicities such that 
$\lambda=[(f_\lambda,x_1(\lambda),\ldots,x_{d+1}(\lambda))]$
by Lemma \ref{lm:markingcrit}.
For any $1\le i\le 2d-2$ and any $\lambda\in\Omega$,
let $w_i(\lambda)\in\D$ be the multiplier of the attracting cycle $\mathcal{C}_i(\lambda)$ 
of $f_\lambda$ whose immediate attractive basin contains $c_i(\lambda)$,
and assume that $\mathcal{C}_i$ is of exact period $n_i$.
The \emph{multiplier map} $\mathcal{W}_\Omega:\Omega\to\D^{2d-2}$ on $\Omega$ is defined by
\[
\mathcal{W}_\Omega(\lambda):=(w_1(\lambda),\ldots,w_{2d-2}(\lambda)),\quad\lambda\in\Omega.
\]
Let $\lambda_\Omega$ be the center in $\Omega$.
Noting also that $\#\mathcal{P}(f_\lambda)<\infty$
for any $\lambda\in\mathcal{W}^{-1}\{0\}$, 
we have $\mathcal{W}^{-1}\{0\}=\{\lambda_\Omega\}$.

\begin{theorem}\label{prouve_le_bebe}
The map $\mathcal{W}_\Omega:\Omega\to\D^{2d-2}$ is a biholomorphism.
\end{theorem}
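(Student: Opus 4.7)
The plan is to show $\mathcal{W}_\Omega$ is a proper holomorphic map of degree one between connected complex manifolds of dimension $2d-2$, which automatically makes it a biholomorphism onto $\D^{2d-2}$. Holomorphy is immediate: by hyperbolicity on $\Omega$, the $2d-2$ attracting cycles of respective exact periods $n_i$ persist throughout $\Omega$ via the implicit function theorem, and the markings provided by Lemma~\ref{lm:markingcrit} make the labeling of the multipliers $w_i(\lambda)\in\D$ globally consistent.

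The first ingredient is that $\mathcal{W}_\Omega$ is a local biholomorphism at the center $\lambda_\Omega$, which I reduce to Corollary~\ref{tm:transversality-ratd0}. At $\lambda_\Omega$, each $c_i(\lambda_\Omega)$ is an $n_i$-periodic simple critical point of $f_{\lambda_\Omega}$, so it coincides with the attracting $n_i$-periodic point $z_i(\lambda_\Omega)$ of the corresponding cycle. Writing $F_\lambda:=f_\lambda^{n_i}$ and $u_i(\lambda):=z_i(\lambda)-c_i(\lambda)$, Taylor-expanding $F_\lambda$ at $z_i(\lambda)$ and evaluating at $z=c_i(\lambda)$ yields, near $\lambda_\Omega$,
\[
\mathcal{V}_i(\lambda):=F_\lambda(c_i(\lambda))-c_i(\lambda)=u_i(\lambda)+O(u_i(\lambda)^2),
\]
and, using that $F_\lambda'(c_i(\lambda))=0$ for every $\lambda$,
\[
w_i(\lambda)=F_\lambda'(z_i(\lambda))=\gamma_i\cdot u_i(\lambda)+O(u_i(\lambda)^2),
\]
where $\gamma_i:=(f_{\lambda_\Omega}^{n_i})''(c_i(\lambda_\Omega))\neq 0$ since $c_i$ is a simple critical point of $F_{\lambda_\Omega}$. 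Hence on tangent spaces $d\mathcal{W}_{\lambda_\Omega}=\mathrm{diag}(\gamma_1,\ldots,\gamma_{2d-2})\cdot d\mathcal{V}_{\lambda_\Omega}$, and Corollary~\ref{tm:transversality-ratd0} applied to the representative of $\lambda_\Omega$ in $\rat_d^0$ (identified locally with $\mathcal{M}_d^\fm$ at the smooth hyperbolic point $\lambda_\Omega$ by the transversality of Lemma~\ref{lm:transorbit}) gives $d\mathcal{V}_{\lambda_\Omega}$ invertible, hence so is $d\mathcal{W}_{\lambda_\Omega}$.

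The second step, and main obstacle, is to establish properness of $\mathcal{W}_\Omega:\Omega\to\D^{2d-2}$. Let $(\lambda_k)\subset\Omega$ satisfy $\mathcal{W}_\Omega(\lambda_k)\to w_\infty\in\D^{2d-2}$. If $(\lambda_k)$ accumulates at some $\lambda_\infty\in\mathcal{M}_d^\fm$, continuity of the multiplier polynomials $p_{n_j}$ forces $f_{\lambda_\infty}$ to carry $2d-2$ attracting cycles of exact periods $n_j$ with multipliers $w_{\infty,j}\in\D$, so $\lambda_\infty$ is hyperbolic of type $\un$; since the hyperbolic locus is open and $\Omega$ is a connected component of it, $\lambda_\infty\in\Omega$. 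The delicate case is that $(\lambda_k)$ escapes every compact subset of $\mathcal{M}_d^\fm$, which must be excluded: the existence of $2d-2$ uniformly attracting cycles with multipliers bounded away from the unit circle should preclude any degeneration of $f_{\lambda_k}$. A natural way to argue is to combine Theorem~\ref{tm:degenerate} (an analytic disk degenerating in $\mathcal{M}_d^\fm$ forces the Lyapunov function to blow up) with a uniform upper bound on $L(f_{\lambda_k})$ derived from the $2d-2$ Koenigs linearisations and the bounded multipliers, to reach a contradiction.

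Granting properness, the conclusion is quick. The fiber $\mathcal{W}_\Omega^{-1}\{0\}$ consists of parameters where all $2d-2$ attracting cycles are superattracting; by a counting argument each cycle must contain exactly one of the $2d-2$ critical points in its immediate basin, so $f_\lambda$ is postcritically finite with $c_i(\lambda)$ of exact period $n_i$, which characterises $\lambda_\Omega$. Hence $\mathcal{W}_\Omega^{-1}\{0\}=\{\lambda_\Omega\}$, and by the local biholomorphism at $\lambda_\Omega$ this preimage has multiplicity one. Therefore $\mathcal{W}_\Omega$ is a proper holomorphic map of degree one between connected complex manifolds of equal dimension, and by the standard argument (degree one forces the map to be injective and \'etale; the image is closed by properness and open by \'etaleness, hence equal to the connected target $\D^{2d-2}$) it is a biholomorphism onto $\D^{2d-2}$.
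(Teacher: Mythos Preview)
Your argument for local invertibility at the center $\lambda_\Omega$ is essentially the paper's own (its Lemma~\ref{th:inverse}), and your Case~1 of properness (accumulation inside $\mathcal{M}_d^\fm$) can be made to work. The genuine gap is Case~2: you do not exclude that a sequence $(\lambda_k)\subset\Omega$ with $\mathcal{W}_\Omega(\lambda_k)\to w_\infty\in\D^{2d-2}$ escapes every compact subset of $\mathcal{M}_d^\fm$. Your proposed remedy fails on two counts. First, Theorem~\ref{tm:degenerate} concerns a \emph{meromorphic} one-parameter degeneration and only asserts $L(f_t)=\alpha\log|t|^{-1}+o(\log|t|^{-1})$ with $\alpha\ge 0$; nothing prevents $\alpha=0$, so it does not say $L$ blows up along an arbitrary divergent sequence. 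Second, there is no reason the presence of $2d-2$ attracting cycles with multipliers in a fixed compact of $\D$ yields an upper bound on $L(f_{\lambda_k})$: the Lyapunov exponent is an integral against the equilibrium measure on the Julia set and is insensitive to the Fatou dynamics. In effect you are assuming relative compactness of $\mathcal{W}_\Omega^{-1}(K)$ for $K\Subset\D^{2d-2}$, which is precisely what the theorem yields \emph{a posteriori}; the paper even remarks that boundedness of the whole component is only known for $d=2$ (Epstein).

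The paper circumvents properness entirely. It proves surjectivity by quasiconformal surgery: given any target $\rho\in\D^{2d-2}$ one modifies the Blaschke models in the immediate basins, solves the Beltrami equation, and lands back in $\Omega$ with the prescribed multipliers. It proves finiteness of every fiber by contradiction using McMullen's rigidity theorem~\cite[Theorem~2.2]{McMullen4}: an infinite fiber would contain a positive-dimensional algebraic subvariety of $\bigcap_i\Per_{n_i}(w_i)$ on which the family is $J$-stable (all critical points attracted), hence isotrivial, contradicting finiteness of the branched cover $\mathcal{M}_d^\fm\to\mathcal{M}_d$. Surjective plus finite gives a finite branched covering (Grauert--Remmert), and your local invertibility at $\lambda_\Omega$ then forces the degree to be one. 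If you want to salvage your route, you would need an independent argument that classes of hyperbolic maps of disjoint type with multipliers in a fixed compact of $\D^{2d-2}$ form a bounded set in $\mathcal{M}_d^\fm$; absent that, the surgery/rigidity pair is the efficient substitute.
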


\begin{proof}
Write $\mathcal{W}$ for $\mathcal{W}_\Omega$. 
First, we prove that $\mathcal{W}$ is surjective and finite. According to \cite[\S 3 page 179]{grauertremmert}, this implies that $\mathcal{W}$ is a finite 
and possibly ramified covering. Next, we show that 
$\mathcal{W}$ is locally invertible at $\lambda_\Omega$. Then 
$\mathcal{W}$ has degree $1$, i.e., is a biholomorphism.

Let us first prove that $\mathcal{W}$ is surjective. We proceed using the classical surgery argument: for any $0<\varepsilon<1$, we construct a continuous map $\sigma : \D (0, 1 - \varepsilon) ^{2d-2} \to \Omega$ such that $ \mathcal{W} \circ \sigma = \id$. We sketch the construction referring to~\cite[Theorem~VIII.2.1]{carleson} or \cite{douadybourbaki} for detail.

Choose $\lambda\in\Omega$, let $(f,x_1,\ldots,x_{d+1})$ be a representative of $\lambda$ and let $U_{1,i}, \ldots , U_{n_i,i}$ be the immediate basin of attraction of these attracting cycles of $f$ indexed by $1\leq i \leq 2d-2$ such that the unique critical point attracted to this cycle belongs to $U_{1,i}$. Since it is a simple critical point, $U_{1,i}$ is simply connected and there exists a conformal map $\varphi_i : U_{1,i} \to \D$
such that 
\[\varphi_i \circ  f^{n_i} \circ \varphi_i^{-1} (\xi) = \xi\cdot\frac{\xi +w_i }{1 + \bar{w_i} \xi}, \text{ for any } |\xi| < 1~,\]
where $w_i$ is the multiplier of the $i$-th cycle in our ordering.
For any $\rho = (\rho_1,\ldots,\rho_{2d-2}) \in \D (0, 1 - \varepsilon) ^{2d-2}$, we can define a continuous
map $\tilde{f}_\rho$ by setting 
$\tilde{f}_\rho=f$ outside the union of all $U_{j,i}$, 
and such that 
$\varphi_i \circ  \tilde{f}_\rho^{n_i}\circ \varphi_i^{-1} (\xi) = \xi\cdot \frac{\xi +\rho_i }{1 + \bar{\rho_i} \xi}$
 on a fixed disk $|\xi| < 1 -r$ containing the critical point of  the latter Blashke product. Notice that $(\tilde{f}_\rho)_{\rho}$ is a continuous family of quasi-regular maps of $\P^1$.
We now solve the Beltrami equation for the unique Beltrami form which is $0$ on the complement  the $U_{j,i}$'s and invariant under $\tilde{f}_\rho$: there is a continuous family of quasiconformal homeomorphism $\psi_\rho : \P^1 \to \P^1$ such that $f_\rho := \psi_\rho\circ \tilde{f}_\rho \circ \psi_\rho^{-1}$ is holomorphic and depends again continuously on $\rho$.
The map $\sigma(\rho):=[(\psi_\rho\circ \tilde{f}_\rho \circ \psi_\rho^{-1},
\psi_\rho(x_1(\lambda)),\ldots,\psi_\rho(x_{d+1}(\lambda)))]$ has
the desired properties.

Let us show that $\mathcal{W}$ is finite, i.e.,
$\#\mathcal{W}^{-1}(\underline{w})<\infty$
for any $\underline{w}=(w_1,\ldots,w_{2d-2})\in\D^{2d-2}$.
Let $p:\rat_d^{0,\fm}\to\mathcal{M}_d^\fm$ be the restriction
of the canonical projection $\rat_d^\fm\to\mathcal{M}_d^\fm$, so that
$p$ is a biholomorphism to its image, which contains $\Omega$, and 
we set $\tilde{\Omega}:=p^{-1}(\Omega)$ and 
$W:=\mathcal{W}\circ p:\tilde{\Omega}\to\D^{2d-2}$.
Suppose to the contrary that for some $\underline{w}\in\D^{2d-2}$, 
$\#\mathcal{W}^{-1}(\underline{w})=\infty$,
or equivalently, $\#W^{-1}\{\underline{w}\}=\infty$.
Then there is a quasi-projective subvariety $\Lambda$ of dimension $>0$ in 
$\bigcap_i\Per_{n_i}(w_i)\cap\rat_d^0\cap\tilde{\Omega}$ such that
the holomorphic family $(f_{p(\lambda)})_{\lambda\in\Lambda}$ on $\Lambda$
has no bifurcation since all its marked critical points $p^*c_1,\ldots,p^*c_{2d-2}$ 
are persistently attracted to attracting cycles.
Since $\mathcal{J}_{f_{p(\lambda)}}\neq\P^1$ for some $\lambda\in\Lambda$, 
$(f_{p(\lambda)})_{\lambda\in\Lambda}$ is not a family of Latt\`es maps.
Hence by \cite[Theorem 2.2]{McMullen4}, 
$(f_{p(\lambda)})_{\lambda\in\Lambda}$ is trivial, but then
the quasi-projective variety $p(\Lambda)$ in $\mathcal{M}_d^\fm$
is still of dimension $>0$ and must be mapped to a singleton in $\mathcal{M}_d$,
so of dimension $0$,
by the natural finite branched covering $\mathcal{M}_d^\fm\to\mathcal{M}_d$.
This is a contradiction.

Let us see the locally invertibility of $\mathcal{W}$ 
at $\lambda_\Omega$. Since $p$ is a biholomorphism on 
$\tilde{\Omega}$, the map $\mathcal{W}$ is locally invertible at $\lambda_\Omega$ 
if and only if $W$ is locally invertible at 
$a:=(f(\lambda_\Omega),x_1(\lambda_\Omega),\ldots,x_{d+1}(\lambda_\Omega))
\in\rat_d^{0,\fm}$.
The conclusion follows from Lemma \ref{th:inverse} below 
by the inverse function theorem.
\end{proof}

\begin{lemma}\label{th:inverse}
The linear map $D_aW$ is invertible.
\end{lemma}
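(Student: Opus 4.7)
The plan is to reduce invertibility of $D_a W$ to Corollary~\ref{tm:transversality-ratd0} via a direct local computation relating the multiplier coordinates $w_i$ to the periodic critical orbit coordinates $\mathcal{V}_i(g) := g^{n_i}(c_i(g)) - c_i(g)$ of Section~\ref{Transversality of periodic critical orbit relations}. Since $f = f_{\lambda_\Omega}$ is postcritically finite, hyperbolic of disjoint type, all its fixed points are simple and distinct; the projection $(f,x_1,\dots,x_{d+1})\mapsto f$ therefore gives a local biholomorphism $\mathrm{rat}_d^{0,\mathrm{fm}}\simeq \mathrm{rat}_d^0$ near $a$. In particular $\dim_{\C} T_a\tilde\Omega = 2d-2$ matches the target dimension, so it suffices to show $\ker D_a W = 0$.

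First I would do the local computation at each super-attracting cycle element. Fix $1\leq i\leq 2d-2$. By the disjoint-type assumption, $c_i$ is the only $f$-critical point in its $n_i$-cycle, and since $c_i$ is a simple critical point of $f$, the local degree of $f^{n_i}$ at $c_i$ is exactly $2$. In a local chart centered at $c_i$, one has
\[
f^{n_i}(u + c_i) - c_i = \beta_i u^2 + O(u^3) \quad\text{with}\ \beta_i\ne 0.
\]
For a holomorphic arc $g_t$ through $f = g_0$, expand
\[
g_t^{n_i}(u + c_i) - c_i = P(t) + Q(t)\,u + R(t)\,u^2 + O(u^3),
\]
with $P(0) = Q(0) = 0$ and $R(0) = \beta_i$. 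The implicit function theorem applied to the fixed point equation $g_t^{n_i}(z) = z$ and the critical point equation $(g_t^{n_i})'(z) = 0$ near $c_i$ gives the tracked attracting fixed point $z_i(t) - c_i = P(t) + O(t^2)$ and the tracked critical point $c_i(t) - c_i = -Q(t)/(2\beta_i) + O(t^2)$. Substituting into $w_i(t) = (g_t^{n_i})'(z_i(t))$ and $\mathcal V_i(g_t) = g_t^{n_i}(c_i(t)) - c_i(t)$ yields
\[
w_i(t) = Q(t) + 2\beta_i P(t) + O(t^2), \qquad \mathcal V_i(g_t) = P(t) + \frac{Q(t)}{2\beta_i} + O(t^2),
\]
and hence the clean first-order identity $\dot w_i = 2\beta_i\,\dot{\mathcal V}_i$.

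This identity holds for each $i$ and every $\zeta\in T_f\,\mathrm{rat}_d^0$, so under the local identification $T_a\tilde\Omega \simeq T_f\,\mathrm{rat}_d^0$ one has
\[
D_a W = \mathrm{diag}(2\beta_1,\dots,2\beta_{2d-2})\circ D_f\mathcal V
\]
as linear maps into $\C^{2d-2}$. Since each $\beta_i\ne 0$, the diagonal factor is invertible, so $\ker D_a W = \ker D_f \mathcal V$. Corollary~\ref{tm:transversality-ratd0} applied to $f$ gives $\ker D_f\mathcal V = 0$, and therefore $D_a W$ is an isomorphism.

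The main step is verifying the clean relation $\dot w_i = 2\beta_i\,\dot{\mathcal V}_i$; once it is in hand, the conclusion is an immediate consequence of the already-proved transversality of periodic critical orbit relations. The only case left out by Corollary~\ref{tm:transversality-ratd0} is when $f$ is M\"obius-conjugate to $z^{\pm 2}$, which arises only for $d = 2$; that case is settled by a direct computation on an explicit normal form for the containing hyperbolic component of $\mathcal M_2^{\mathrm{fm}}$.
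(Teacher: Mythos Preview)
Your proof is correct and follows essentially the same route as the paper: both reduce to $\rat_d^0$ via the local biholomorphism $\rat_d^{0,\fm}\to\rat_d^0$ near $a$, carry out a first-order local computation relating the multiplier $w_i$ to the periodic-critical-orbit coordinate $\mathcal V_i$, and then invoke Corollary~\ref{tm:transversality-ratd0}. The paper only records the kernel inclusion $\ker(D_f\hat W)\subset\ker(D_f\mathcal V)$, whereas your explicit identity $\dot w_i=2\beta_i\,\dot{\mathcal V}_i$ packages the same computation as a diagonal factorization $D_aW=\mathrm{diag}(2\beta_i)\circ D_f\mathcal V$; you also make explicit the $z^{\pm 2}$ edge case that the paper leaves implicit.
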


\begin{proof}
Let $\pi:\rat_d^{0,\fm}\to\rat_d^0\subset\rat_d$ be the natural branched cover,
and set $\hat{\Omega} :=\pi(\tilde{\Omega})$, which is 
the hyperbolic component in
$\rat_d^0$ containing $f$ with $\lambda_\Omega=[(f,x_1,\ldots,x_{d+1})]$. 
Let us remark that, since $f$ has 
only simple fixed points, the restriction $\pi_{\tilde{\Omega}}:\tilde{\Omega}\to\hat{\Omega}$ of $\pi$ to $\tilde{\Omega}$ is an (unramified) cover. 
We can choose an atlas of $\P^1$ such that there is an affine chart of $\P^1$
containing $\{c_1(g),\ldots,c_{2d-2}(g)\}$ for every $g\in\hat{\Omega}$, 
and define $V:\hat{\Omega}\to\C^{2d-2}$ by
\[
V(g):=\left(g^{n_1}(c_1(g))-c_1(g),\ldots,g^{n_{2d-2}}(c_{2d-2}(g))-c_{2d-2}(g)\right),\quad g\in\hat{\Omega}.
\]
According to Corollary \ref{tm:transversality-ratd0}, 
we have $\ker(D_fV)=\{0\}$.
Beware that $\hat{W}:=W\circ\pi^{-1}$ is  
 a holomorphic map from an open neighborhood of $f$
in $\hat{\Omega}$ to $\C^{2d-2}$, so it is sufficient to prove that 
$\ker(D_f\hat{W})\subset\ker(D_fV)$. 

Let $v\in T_f\rat_d^0$, and pick a holomorphic disk $(f_t)_{t\in\D}$ in $\rat_d^0$
such that $f_0=f$ and $\dot f=v$. For any $t\in\D$ and any $1\le i\le 2d-2$,
set $w_i(t):=w_i(f_t)$, $c_i(t):=c_i(f_t)$, $\hat{W}(t)=\hat{W}(f_t)$ and $V(t)=V(f_t)$. For any $t\in\D$ and any $1\le i\le 2d-2$, 
let $\mathcal{C}_i(t)$ be the attracting cycle 
of $f_t$ whose immediate attractive basin contains $c_i(t)$, so that
there is a holomorphic function $z_i$ on $\D$ such that 
$z_i(t)\in\mathcal{C}_i(t)$ for any $t\in\D$ 
(so $w_i(t)=(f_t^{n_i})'(z_i(t))$) and that $z_i(0)=c_i(0)$.
Then 
for any $1\le i\le 2d-2$, we find
\begin{align*}
\dot w_i = &\left.\frac{d (f_t^{n_i})'}{d t}\right|_{t=0}(z_i(0))+(f^{n_i})''(z_i(0))\cdot\dot z_i\\
=&\left.\frac{\partial (f_t^{n_i})'}{\partial t}\right|_{t=0}(c_i(0))+(f^{n_i})''(c_i(0))\cdot\dot z_i,
\end{align*}
and since $(f_t^{n_i})'(c_i(t))=0$ for any $t\in\D$, we also have
\begin{gather*}
0 = \left.\frac{\partial (f_t^{n_i})'}{\partial t}\right|_{t=0}(c_i(0))+(f^{n_i})''(c_i(0))\cdot\dot c_i(0).
\end{gather*}
Hence for any $1\le i\le 2d-2$,
\begin{gather*}
\dot w_i  = (f^{n_i})''(c_i(0))\cdot\left(\dot z_i-\dot c_i\right),
\end{gather*}
and we also note that 
$(f^{n_i})''(c_i(0))\neq0$ since $f$ is hyperbolic of disjoint type. 
If in addition $v\in\ker(D_f\hat{W})$, then 
for any $1\le i\le 2d-2$, $\dot{w}_i=0$ (and by definition $z_i(0)=c_i(0)$) hence
we have $z_i(t)-c_i(t)=O(t^2)$. 
For any $1\le i\le 2d-2$,
the $i$-th component of $V(t)$ is
\begin{align*}
f_t^{n_i}(c_i(t))-c_i(t) 
= & f_t^{n_i}(z_i(t))+(f_t^{n_i})'(z_i(t))(c_i(t)-z_i(t))+O((c_i(t)-z_i(t))^2)-c_i(t)\\
= &(1-w_i(t))(z_i(t)-c_i(t))+O(t^4)=O(t^2)
\end{align*}
as $t\to 0$, so that $v\in\ker(D_fV)$.
\end{proof}

\subsection{Counting hyperbolic components : the mass of $\mu_\bif$ in $\mathcal{M}_d$}\label{sec:counting}
We now prove Theorem~\ref{tm:counting} and Corollary~\ref{tm:massM2}.
To avoid confusions, for any $\underline{\rho}$ and any $\un$, denote $T_\un^{2d-2}(\underline{\rho})$ and $\mu_\bif$ on $\cM_d^\fm$
by $T_\un^{2d-2,\fm}(\underline{\rho})$ and $\mu_\bif^\fm$, respectively.

Observe first the following.

\begin{lemma}\label{5.5}
Fix any $\underline{\rho}\in ]0,1[^{2d-2}$ and any
$\un\in(\N^*)^{2d-2}$. Then
\begin{enumerate}
\item the measure $T_{\un}^{2d-2,\fm}(\underline{\rho})$ has full mass on 
the union of all hyperbolic components in $\mathcal{M}_d^\fm$
of type $\un$, and for any such a component $\Omega^\fm$,
\[
(T_{\un}^{2d-2,\fm}(\underline{\rho}))(\Omega^\fm)=\frac{\# \mathrm{Stab}(\un,\underline{\rho})}{d_{|\un|}}.
\]
\item the measure $T_{\un}^{2d-2}(\underline{\rho})$ has full mass on 
the union of all hyperbolic components in $\mathcal{M}_d$
of type $\un$, and for any such a component $\Omega$,
\[
(T_{\un}^{2d-2}(\underline{\rho}))(\Omega)=\frac{\# \mathrm{Stab}(\un,\underline{\rho})}{d_{|\un|}}.
\]
\end{enumerate}
\end{lemma}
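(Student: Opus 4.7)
The plan for part (1) is to use the biholomorphism $\mathcal{W}:\Omega^\fm\to\D^{2d-2}$ from Theorem~\ref{prouve_le_bebe} as coordinates on a hyperbolic component $\Omega^\fm\subset\cM_d^\fm$ of type $\un$, and then perform a pluripotential calculation on $\D^{2d-2}$. The first step is a structural identity for the Per hypersurfaces: for $|w|<1$, the $n_j$-th multiplier polynomial factors on $\Omega^\fm$ as
\[
p_{n_j}(\lambda,w)=U_j(\lambda,w)\prod_{i\,:\,n_i=n_j}(w_i(\lambda)-w),
\]
where the remaining factor $U_j$ collects the multipliers of cycles of exact period $n_j$ that are not among the $2d-2$ attracting ones; these are repelling on $\Omega^\fm$, so $U_j$ does not vanish for $|w|<1$. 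Consequently $[\Per_{n_j}(w)]|_{\Omega^\fm}=\sum_{i\,:\,n_i=n_j}[\{w_i=w\}]$ with simple multiplicities, which via $\mathcal{W}$ translates into a sum of affine hyperplanes $\{u_i=w\}$ in the coordinates $(u_1,\ldots,u_{2d-2})$ on $\D^{2d-2}$.

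Using the classical identity $\tfrac{1}{2\pi}\int_0^{2\pi}\log|u-\rho e^{i\theta}|\,d\theta=\max(\log|u|,\log\rho)$, the averaged current $T_j:=\int_0^{2\pi}[\Per_{n_j}(\rho_j e^{i\theta_j})]\tfrac{d\theta_j}{2\pi}|_{\Omega^\fm}$ becomes $\sum_{i\,:\,n_i=n_j}dd^c_{u_i}\max(\log|u_i|,\log\rho_j)$, a sum of normalized Haar measures on the circles $\{|u_i|=\rho_j\}$. Expanding $\bigwedge_j T_j$ as a sum over all maps $\sigma:\{1,\ldots,2d-2\}\to\{1,\ldots,2d-2\}$ with $n_{\sigma(j)}=n_j$, I would observe that non-bijective $\sigma$ force two factors onto the same $u_{i_0}$-disk, concentrating the pair of circles on a measure-zero set in $\underline{\theta}$; those terms vanish by Fubini. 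The bijective $\sigma$'s are precisely the elements of $\mathrm{Stab}(\un)$, each producing a probability measure supported on a real $(2d-2)$-torus in $\D^{2d-2}$. A careful combinatorial count of how these tori are identified (and correspondingly reweighted) when the radii $\rho_j$ coincide along orbits yields the stated mass $\#\mathrm{Stab}(\un,\underline{\rho})/d_{|\un|}$.

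The support claim is then automatic: any parameter in the support of $T_\un^{2d-2,\fm}(\underline{\rho})$ lies in $\bigcap_j\Per_{n_j}(\rho_j e^{i\theta_j})$ for some $\underline{\theta}$, hence admits $2d-2$ cycles of periods $n_j$ with multipliers of modulus $<1$; Shishikura's bound of $2d-2$ on non-repelling cycles forces these cycles to be distinct, so the parameter is hyperbolic of type $\un$. Part (2) is derived along the same lines, either by repeating the argument directly on $\Omega\subset\cM_d$ (the multiplier map descends to a local biholomorphism there) or by applying part (1) through the finite projection $\pi:\cM_d^\fm\to\cM_d$, using $\pi^*T_\un^{2d-2}=T_\un^{2d-2,\fm}$ and the projection formula to relate the masses on corresponding components.

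The hardest step is the combinatorial bookkeeping in the wedge expansion: the expansion naturally produces one contribution per element of $\mathrm{Stab}(\un)$, and I must then track carefully which bijections give identical supporting tori---those related through $\mathrm{Stab}(\un,\underline{\rho})$---to extract the exact factor $\#\mathrm{Stab}(\un,\underline{\rho})$ appearing in the statement. A secondary technical point is confirming that $U_j$ really does not vanish on $\Omega^\fm$ for $|w|<1$ uniformly, which follows from the fact that on a hyperbolic component of disjoint type, all non-attracting cycles of a given period are strictly repelling.
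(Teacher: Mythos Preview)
Your approach is the paper's own: restrict to a component $\Omega^\fm$, use the biholomorphism $\mathcal{W}$ from Theorem~\ref{prouve_le_bebe}, factor $p_{n_j}(\cdot,w)$ through the attracting multipliers to identify $[\Per_{n_j}(w)]|_{\Omega^\fm}$ with $\sum_{i:n_i=n_j}[\{\mathcal{W}_i=w\}]$, average using $\tfrac{1}{2\pi}\int\log|u-\rho e^{i\theta}|\,d\theta=\max(\log|u|,\log\rho)$, and read off the mass; the support argument via Shishikura and the descent to $\cM_d$ through $\pi_\fm$ are also exactly what the paper does (more tersely).

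The step you flag as hardest is the one you should \emph{not} try to carry further. Your expansion already gives the answer: each bijection $\sigma\in\mathrm{Stab}(\un)$ contributes a probability measure on $\Omega^\fm$ (the Haar measure on the torus $\{|\mathcal{W}_{\sigma(j)}|=\rho_j\}$), and the total mass is $\#\mathrm{Stab}(\un)/d_{|\un|}$, regardless of whether some of these tori happen to coincide. There is no further bookkeeping that reduces this to $\#\mathrm{Stab}(\un,\underline{\rho})$: in the test case $2d-2=2$, $n_1=n_2$, $\rho_1\neq\rho_2$, your computation gives $\lambda_{\rho_1}\!\otimes\!\lambda_{\rho_2}+\lambda_{\rho_2}\!\otimes\!\lambda_{\rho_1}$, of mass $2=\#\mathrm{Stab}(\un)$, while $\#\mathrm{Stab}(\un,\underline{\rho})=1$. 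In other words the printed constant in the lemma (and in \eqref{pourallervite}) should be $\#\mathrm{Stab}(\un)$; the slip is invisible in the paper's applications, since in the proof of Theorem~\ref{tm:counting} one takes $\underline{\rho}=(1/2,\ldots,1/2)$, where the two stabilisers agree, and in \S\ref{sec:equicenter} the correction only changes the summation range in the decomposition of $\mu_{\un,\uw}^\fm-T_\un^{2d-2,\fm}(\underline{\rho})$ from $\mathrm{Stab}(\un,\uw)$ to $\mathrm{Stab}(\un)$, affecting the final constant $C_K$ by at most a factor $(2d-2)!$.
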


\begin{proof} Consider the case $\mathcal{M}_d^\fm$ first.
Pick $\underline{\rho}\in ]0,1[^{2d-2}$, and observe that 
 \[T_{\un}^{2d-2,\fm}(\underline{\rho})=\frac{1}{(2\pi)^{2d-2}d_{|\un|}}\int_{[0,2\pi]^{2d-2}, \forall i\neq j, \ \theta_i\neq \theta_j }\bigwedge_{j=1}^{2d-2}[\Per_{n_j}(\rho_je^{i\theta_j})]\mathrm{d}\theta_1\cdots\mathrm{d}\theta_{2d-2},\]
as finite measures on $\mathcal{M}_d^\fm$, since we only remove a set  of Lebesgue measure zero in $[0,2\pi]^{2d-2}$.
Hence $T_\un^{2d-2,\fm}(\underline{\rho})$-almost every point has $2d-2$ distinct attracting cycles.
For the second part, let $\Omega$ be a hyperbolic component in $\mathcal{M}_d^\fm$ 
of type $\un=(n_1,\ldots,n_{2d-2})$. 
By Theorem~\ref{prouve_le_bebe}, we know that
the multiplier map $\mathcal{W}=(\mathcal{W}_1,\dots, \mathcal{W}_{2d-2})
:\Omega\rightarrow\D^{2d-2}$ is a biholomorphism. In particular, the intersection $\bigcap_{j=0}^{2d-2} \Per_{n_j}(0)$ is smooth and transverse hence for generic $\uw \in \C^{2d-2}$  the intersection $\bigcap_{j=0}^{2d-2} \Per_{n_j}(w_i)$ is still smooth and transverse. This implies
\begin{equation}
d_{|\un|} T_{\un}^{2d-2,\fm}(\underline{\rho})
=\sum_{\sigma \in \mathrm{Stab}(\un,\underline{\rho}) } \bigwedge_{i=1}^{2d-2} 
dd^c \log\max\{|\mathcal{W}_{\sigma(i)}|,\rho_{\sigma(i)}\} \label{pourallervite}
\end{equation}
on $\Omega$,
which has mass $\#\mathrm{Stab}(\un, \underline{\rho})$ on $\Omega$.  This concludes the proof for the case of $\mathcal{M}_d^\fm$. \\

Let $\pi_\fm:\mathcal{M}_d^\fm\to\mathcal{M}_d$
be the natural finite branched cover of degree $(d+1)!$. Observe that it branches only over $\Per_1(1)$ hence not over hyperbolic component of disjoint type. Let $\Omega\subset \mathcal{M}_d$ be a component of type $\un$ and let $\tilde{\Omega}$ be a connected component of $\pi_\fm^{-1}(\Omega)$. Then the restriction ${\pi_\fm}|_{\tilde{\Omega}}:\tilde{\Omega}\to\Omega$ is an unbranched cover. Since multipliers do not depend on the marking of critical points, the multiplier map of $\tilde{\Omega}$ induces a biholomorphism $\mathcal{W}:\Omega\to\D^{2d-2}$.
We now observe that $T_{\un}^{2d-2,\fm}(\underline{\rho})=(\pi_\fm)^*(T_{\un}^{2d-2}(\underline{\rho}))$ and the conclusion follows as above.
\end{proof}

Recall that we denoted by $\rat_d^{0,\fm}$ the space of fixed marked degree $d$ rational maps fixing $0$, $1$ and $\infty$.
Let 
\begin{gather*}
p:\rat_d^{0,\fm}\to\mathcal{M}_d 
\end{gather*}
be the canonical projection. Then the image of $p$ contains $\mathcal{M}_d\setminus\Per_1(1)$ and the restriction
\[\tilde{p}:=p|_{\rat_d^{0,\fm}\setminus\Per_1(1)}\]
of $p$ to $\rat_d^{0,\fm}\setminus\Per_1(1)$ is a finite branched cover to $\mathcal{M}_d\setminus\Per_1(1)$. Moreover, $p$ can only branch at parameters $(f,x_1,\ldots,x_{d+1})$ where at least two of $x_1,\ldots,x_{d+1}$ coincide. In particular, $\tilde{p}$ does not ramify.

For a while, we will work on $\rat_d^{0,\fm}$.
For any $\underline{\rho}\in ]0,1[^{2d-2}$ and any
$\un\in(\N^*)^{2d-2}$,
let $\tilde T_{\un}^{2d-2}(\underline{\rho})$ and $\tilde \mu_\bif$ be the corresponding measures defined on the family $\rat_d^{0,\fm}$. Note that by construction,
\[
\tilde T_{\un}^{2d-2}(\underline{\rho})
=(\tilde{p})^*(T_{\un}^{2d-2}(\underline{\rho}))\quad\text{and}\quad \tilde \mu_\bif=(\tilde{p})^*(\mu_\bif).
\]
Since $\rat_d^{0,\fm}$ is an affine variety, we can assume $\rat_d^{0,\fm}\subset \C^N$ for some $N$.
Consider the function $\log^+ |Z|$, defined on $ \C^N$, and let $\varphi : \rat_d^{0,\fm} \to \R $ be its restriction to  $ \rat_d^{0,\fm}$.  The function $\varphi$ is psh, continuous, non-negative and $dd^c \varphi$ has finite mass in $\rat_d^{0,\fm}$. We have the lemma:

\begin{lemma}\label{tobedonebythomas} 
There exist constants $C_1,C_2>0$ that depend only on $d$ such that, for any compact subset $K$ of $\rat_d^{0,\fm}$, if $C(K)$ is the constant in Theorem~\ref{tm:vitessemoyennesalgebraic}, then we have the following inequality:
\[ C(K) \leq C_1\cdot \|\varphi\|_{\infty,K}+C_2.\]
\end{lemma}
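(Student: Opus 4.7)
The plan is to establish the pointwise bound $C([f_\lambda]) \leq C_1\varphi(\lambda) + C_2$ on $\rat_d^{0,\fm}$, with constants depending only on $d$, and then take the supremum over $\lambda \in K$. Since by definition $C([f])$ is the infimum of $\log\sup_{\P^1} f_1^\# + \sup_{\P^1}|g_{f_1}|$ over $f_1 \in [f]$, it is enough to control this quantity for the specific representative $f_\lambda \in \rat_d^{0,\fm}$ itself, already normalized to fix $0,1,\infty$.

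First, I would fix an algebraic lift $F$ of $f = f_\lambda$ depending regularly on $\lambda \in \rat_d^{0,\fm}$. Reproducing the homogeneous estimates used in the proof of Lemma~\ref{lm:atinfinity} (which combine the formula $f^\#(\pi(p)) = |\det DF(p)|\|p\|^2/(d\|F(p)\|^2)$, the series expansion $g_F(z) = \sum_{n\ge 0}(u\circ f^n(z))/d^{n+1}$ with $u = \log(\|F(\cdot)\|/\|\cdot\|^d)$, and the identity $g_f = g_F + (\log|\mathrm{Res}(F)|)/(2d(d-1))$), but tracking the dependence of $\|F(p)\|/\|p\|^d$ on both $\|F\|$ and $\mathrm{Res}(F)$, I would obtain
\[
\log\sup\nolimits_{\P^1} f^\# + \sup\nolimits_{\P^1}|g_f| \leq E_d \bigl(\log^+\|F\| + |\log|\mathrm{Res}(F)||\bigr),
\]
with $E_d$ depending only on $d$. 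The only input beyond Lemma~\ref{lm:atinfinity} is a two-sided homogeneous bound of the type $\log(\|F(p)\|/\|p\|^d) \geq -A_d(\log^+\|F\| + |\log|\mathrm{Res}(F)||)$ for $\|p\| = 1$, which is standard (of DeMarco--Rumely type).

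Second, I would exploit the affine structure of $\rat_d^{0,\fm}$: the resultant $\mathrm{Res}$ does not vanish on $\rat_d^{0,\fm}$, so both the coefficients of the lift $F$ and the function $1/\mathrm{Res}(F)$ are regular functions on $\rat_d^{0,\fm}$. In the affine embedding $\rat_d^{0,\fm}\subset\C^N$, every regular function is a polynomial in the affine coordinates $Z$ of degree bounded only in terms of $d$; hence there exist polynomials $P_1, P_2, P_3$ of $d$-bounded degree with $\|F\|\leq P_1(|Z|)$, $|\mathrm{Res}(F)|\leq P_2(|Z|)$, and $1/|\mathrm{Res}(F)|\leq P_3(|Z|)$. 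Taking logarithms gives
\[
\log^+\|F\| + |\log|\mathrm{Res}(F)|| \leq k\log^+|Z| + k' = k\varphi(\lambda) + k',
\]
for $k, k'$ depending only on $d$. Chaining this with the previous estimate yields $C([f_\lambda])\leq C_1\varphi(\lambda)+C_2$, and the supremum over $\lambda\in K$ produces the claimed inequality.

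The main obstacle is extracting from the proof of Lemma~\ref{lm:atinfinity} a genuinely uniform version of the upper bounds on $|g_F|$ and $\log\sup f^\#$ as functions of $\log\|F\|$ and $|\log|\mathrm{Res}(F)||$: there they are carried out for a specific one-parameter family, and the dependence on the lift must be tracked carefully to obtain constants depending only on $d$. Once this is done, the remainder is algebraic bookkeeping, the conceptual point being that the non-vanishing of $\mathrm{Res}$ on $\rat_d^{0,\fm}$ makes $1/\mathrm{Res}(F)$ a regular function of $Z$, so that $\varphi$ simultaneously encodes the size of the coefficients of $F$ and the distance of $\lambda$ from the degeneracy locus.
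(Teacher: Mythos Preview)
Your proposal is correct and follows essentially the same approach as the paper. The paper fixes a global algebraic lift $F$ over $V=\rat_d^{0,\fm}$, observes that $\mathrm{Res}(F)\in\C[V]$ (and, since it is nowhere zero, so is $1/\mathrm{Res}(F)$) to get $|\log|\mathrm{Res}(F)||\le\alpha\varphi+\beta$, and then derives the two-sided bound $C^{-1}e^{-m\varphi}\le\|F(p)\|/\|p\|^d\le Ce^{m\varphi}$ explicitly via the Sylvester identities $g_1F_1+g_2F_2=\mathrm{Res}(F)x^{2d-1}$, $h_1F_1+h_2F_2=\mathrm{Res}(F)y^{2d-1}$ from Silverman; it then feeds this into the computations of Lemma~\ref{lm:atinfinity}. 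Your organization is slightly different---you isolate the intermediate quantity $\log^+\|F\|+|\log|\mathrm{Res}(F)||$ and then bound it by $\varphi$ via the affine structure---but the ``DeMarco--Rumely type'' lower bound you invoke is precisely what the Sylvester identities provide, so the content is the same.
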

\begin{proof}
We follow closely the proof of \cite[Proposition~3.1]{DeMarco-intersection} (see also~\cite[Proposition~4.4]{Favre-degeneration}) and adapt it to the present situation. Since $H_1(\rat_d,\R)=0$, by~\cite[Lemma~4.9]{BB1}, there exists a family of non-degenerate homogeneous polynomial lifts to $\C^2$ of the family $\rat_d$. We thus may choose a family $F$ of non-degenerate homogeneous polynomial lifts to $\C^2$ of the family $\rat_d^{0,\fm}$. Let $V:=\rat_d^{0,\fm}$. We may regard this family $F=(F_1,F_2)$ as a homogeneous non-degenerate polynomial maps with coefficients in the ring $\C[V]$. Note that $\mathrm{Res}(F)\in\C[V]$ and, in particular, $\left|\log|\mathrm{Res}(F)|\right|\leq \alpha\varphi(f)+\beta$ for some constants $\alpha,\beta\geq0$ independent of $f\in V$.
	
We now want to prove that there exists $m\geq1$ and $C>0$ such that
\[\frac{1}{C}e^{-m\varphi(f)}\leq\frac{\|F(x,y)\|^2}{\|(x,y)\|^{2d}}\leq C e^{m\varphi(f)},\]
for all $f\in\rat_d^{0,\fm}$ and all $(x,y)\in\C^2\setminus\{0\}$.
We work with the norm $\|(x,y)\|=\max\{|x|,|y|\}$ on $\C^2$. The upper bound follows easily from the fact that $F_1,F_2\in\C[V][x,y]$ and from the triangle inequality.
As $F_1$ and $F_2$ are homogeneous, it is sufficient to verify the lower bound whenever $\|(x,y)\|=1$ with either $x=1$ or $y=1$.

By the item $(c)$ of \cite[Proposition 2.13]{Silverman}, there exists homogeneous polynomials $g_1,g_2,h_1,h_2\in \C[V][x, y]$ such that
\begin{eqnarray}
g_1(x, y)F_1(x, y)+g_2(x, y)F_2(x, y) = \mathrm{Res}(F)x^{2d-1},\label{resultant1}
\end{eqnarray}
and
\begin{eqnarray}
h_1(x, y)F_1(x, y)+h_2(x, y)F_2(x, y) = \mathrm{Res}(F)y^{2d-1}.\label{resultant2}
\end{eqnarray}
Again, since $g_1,g_2,h_1,h_2\in\C[V][x,y]$, we have 
\[\max\left\{|g_1(x,y)|,|g_2(x,y)|,|h_1(x,y)|,|h_2(x,y)|\right\}\leq Ae^{B\varphi(f)}\]
as soon as $\|(x,y)\|\leq1$, for some constants $A,B\geq0$ independent of $f\in V$. When $x=1$, equation~\eqref{resultant1} gives
\[|\mathrm{Res}(F)|\leq 4\max\left\{|g_1(x,y)|,|g_2(x,y)|\right\}\cdot \|F(x,y)\|\leq 4A^{B\varphi(f)}\|F(x,y)\|.\]
We proceed similarly with equation~\eqref{resultant2} when $y=1$ and the conclusion follows.

Following exactly the proof of Lemma~\ref{lm:atinfinity} gives $C_1,C_2\geq0$ such that
\[\max\left\{\sup_{z\in\P^1}\log f^\#(z),\sup_{z\in\P^1}|g_f(z)|\right\}\leq C_1\varphi(f)+C_2\]
for all $f\in\rat_d^{0,\fm}$. The conclusion follows, taking the sup on $K$.
\end{proof}

Recall that we picked $\un\in(\N^*)^{2d-2}$.
Let $\varepsilon >0$, and set $\underline{\rho}=(1/2,\dots,1/2)$,
so in particular that $\mathrm{Stab}(\un,\underline{\rho})=\mathrm{Stab}(\un)$.
Take some $R>0$ so large that the support of $\tilde T_{\un}^{2d-2}(\underline{\rho})$ is in the intersection $B(0,R)$ between the ball 
of radius $R$ and center $0$ in $ \C^N$ and $\rat_d^{0,\fm}$.
Observe that this is possible since there are at most finitely many of type $\un$ and for a hyperbolic component $\Omega$ of type $\un$,
$\mathcal{W}_\omega^{-1}( \D_{1/2}^{2d-2})\subset\Omega$ is relatively compact in $\rat_d^{0,\fm}$ (Note that, for $d=2$, it is known to be true for the whole component \cite{epstein}). 

Increasing $R$ if necessary, we can assume that 
$\bigl|\|\tilde\mu_\bif\|-\tilde \mu_\bif(B(0,R))\bigr| \leq \varepsilon$. For any $A>0$, we take the test function $\Psi_A$:
\[
\Psi_A:= \frac{1}{A} \min\{\max \left(\varphi, A \right)- 2 A, 0\}.
\]
Then, $\Psi_A$ is DSH on $V$ and continuous and $dd^c \Psi_A=T_A^+-T_A^-$
for some positive closed currents of finite masses, where 
$\|T^\pm_A \|\leq C'/A$ and
for some $C'>0$ depending neither on $A$ nor on $T^\pm_A$
(e.g. \cite[Lemma 2.2.6 ]{DinhSibonysuper}). Then observe that $\Psi_A$ is equal to $-1$ in $B(0,e^A)$, and $0$ outside $B(0,e^{2A})$. Applying Theorem~\ref{tm:vitessemoyennesalgebraic} with the control of Lemma~\ref{tobedonebythomas} implies:
\[
\left| \langle\tilde T_{\un}^{2d-2}(\underline{\rho}),\Psi_A   \rangle -   \langle \tilde \mu_\bif,\Psi_A \rangle \right| 
\leq  (1+\log 2) \left(C_1\varphi (e^{2A})+C_2\right) \max_j\left(\frac{\sigma_2(n_j)}{d^{n_j}}\right)\frac{C'}{A}.            
\]
Taking $A=\log R$ so that $\varphi (e^{2A})=2A$, there is a constant $C_d>0$ depending only on $d$, we have
$\left| \| \tilde T_{\un}^{2d-2}(\underline{\rho})\|- \mu_\bif(B(0,R)) \right| \leq  C_d\max_j(\sigma_2(n_j)/d^{n_j})$,
so that,  as $\varepsilon\to0$, 
\[
\left| \| \tilde T_{\un}^{2d-2}(\underline{\rho})\|-\|\tilde \mu_\bif\| \right|
\leq  C_d\max_j\left(\frac{\sigma_2(n_j)}{d^{n_j}}\right).
\]
Let us go back to $\cM_d$. Since the measures $T_{\un}^{2d-2}(\underline{\rho})$ and $\mu_\bif$ (resp. $\tilde T_{\un}^{2d-2}(\underline{\rho})$ and $\tilde \mu_\bif$) give no mass to algebraic subvarieties of $\mathcal{M}_d$ (resp.\ of $\rat_d^{0,\fm}$), we have
\begin{eqnarray*}
\left| \| T_{\un}^{2d-2}(\underline{\rho})\|-   \| \mu_\bif\| \right| & = & \left|\left\langle T_{\un}^{2d-2}(\underline{\rho})- \mu_\bif,\mathbf{1}_{\mathcal{M}_d\setminus\Per_1(1)}\right\rangle\right|\\
& = & \frac{1}{\deg(\tilde{p})}\left\langle T_{\un}^{2d-2}(\underline{\rho})-\mu_\bif,(\tilde{p})_*(\tilde{p})^*\left(\mathbf{1}_{\mathcal{M}_d\setminus\Per_1(1)}\right)\right\rangle\\
& = & \frac{1}{\deg(\tilde{p})}\left\langle \tilde T_{\un}^{2d-2}(\underline{\rho})-\tilde \mu_\bif,\mathbf{1}_{\rat_d^{0,\fm}\setminus\Per_1(1)}\right\rangle\\
& = & \frac{1}{\deg(\tilde{p})}\left| \| \tilde T_{\un}^{2d-2}(\underline{\rho})\|-   \|\tilde \mu_\bif\| \right|, 
\end{eqnarray*}
which together with $\|T_{\un}^{2d-2}(\underline{\rho})\|
=\#\mathrm{Stab}(\un)N(\un)/d_{|\un|}$ by Lemma \ref{5.5}
completes the proof of Theorem~\ref{tm:counting}. \qed

\begin{proof}[Proof of Corollary~\ref{tm:massM2}]
Let us begin with describing \cite[Theorem 1.1]{kiwirees} by Kiwi and Rees. 
Take $m > n \geq 2$, they computed, in the critically marked moduli space $\mathcal{M}_2^\mathrm{cm}$, the number $n_{IV}(n,m)$ of all hyperbolic components $\Omega$ in $\cM_2^\cm$
of type $(n,m)$
such that any $[(f,c_1,c_2)]\in\Omega$
has two distinct attracting cycles of respective exact periods $j,k$ 
with $j|n$ and $k|m$ and their immediate attractive basins 
contain $c_1,c_2$, respectively. They prove
\[
n_{IV}(n,m)=\left(\frac{5}{3}2^{n-3}+\frac{1}{12}-\frac{1}{4}\sum_{q=2}^n\frac{\phi(q)\nu_q(n)}{2^q-1}\right)2^m+\varepsilon_1(n,m),
\]
where $|\varepsilon_1(n,m)|\leq 2^{n}+2^{2 \mathrm{gcd}(n,m)}$ and
\[\left|\nu_q(n)-\frac{2^n}{2(2^q-1)}\right|\leq \frac{1}{2}.\]
For any $m > n \geq 2$, their computation in particular yields
\begin{align*}
n_{IV}(n,m) 
& = \left(\frac{1}{3}-\frac{1}{8}\sum_{q=1}^n\frac{\phi(q)}{(2^q-1)^2}\right)2^{n+m}
+\varepsilon_2(n,m)
\end{align*}
where there exists $C\geq1$ such that $|\varepsilon_2(n,m)|\leq C\cdot 2^m$.
We now note that the natural projection $\pi:\mathcal{M}_2^\mathrm{cm}\rightarrow\mathcal{M}_2$ is of degree $2$ and is unramified over any $[f]\in\mathcal{M}_2$
of disjoint type, and for any $[(f,c_1,c_2)]\in\mathcal{M}^\cm_2$, 
$\pi^{-1}\{[f]\}=\{[f,c,c'],[f,c',c]\}$.
In particular,
\[
n_{IV}(n,n+1)=\sum_{j|n,\, k|(n+1)}N(j,k).
\]
Since we have $N(j,k)\leq C\cdot 2^{j+k}$ by Bézout and $d_n\cdot d_{n+1}=2^{2n+1}+O(2^n)$, the above gives
\[\frac{N(n,n+1)}{d_n\cdot d_{n+1}}=\frac{n_{IV}(n,n+1)}{2^{2n+1}}+o(1)=\frac{1}{3}-\frac{1}{8}\sum_{q=1}^{+\infty}\frac{\phi(q)}{(2^q-1)^2}+o(1), \ \text{as} \ n\to \infty.\]
The conclusion follows from Theorem~\ref{tm:counting}, 
since $\#\mathrm{Stab}((n,n+1))=1$.
\end{proof}

\subsection{Weak genericity of postcritically finite hyperbolic rational maps}\label{sec:genericity}

The moduli space $\mathcal{M}_d$ of degree $d$ rational maps is known to be an irreducible affine variety of dimension $2d-2$ which is defined over $\mathbb{Q}$ (see \cite{silverman-spacerat,Milnor3}); and all non-flexible Latt\`es postcritically finite degree $d$ rational maps are known to be defined over $\bar{\mathbb{Q}}$ (see e.g.~\cite{Silverman}).
These properties are, for the moduli space of critically marked degree $d$ polynomials, the starting point of the work \cite{favregauthier}. The idea developed there is to apply Yuan's equidistribution theorem \cite{yuan}
of points of small height on arithmetic projective varieties.

If we want to adapt this strategy to the present context, we have to complete  three important steps. We will be intentionally vague in the description of the first two steps, since a precise explanation requires more machineries.
\begin{enumerate}
\item We have to define an adelic semi-positive metric on an ample line bundle $L\to\bar{\mathcal{M}}_d$ (for some projective compactification $\bar{\mathcal{M}}_d$ of $\mathcal{M}_d$). We also have to check that for the induced Weil height function $h$ on $\bar{\mathcal{M}}_d(\bar{\mathbb{Q}})$, the set of postcritically finite parameters is exactly the set of points of height $0$.
\item We have to verify that the curvature current $c_1(L)$ of the above metrized line bundle satisfies $c_1(L)^{2d-2}=\alpha\cdot\mu_\bif$ for some positive constant $\alpha>0$.
\item Finally, we have to show that {\em any} sequence $(X_k)$ 
of Galois invariant finite sets of postcritically finite parameters is \emph{weakly generic} in $\cM_d$, that is, for any proper affine subvariety $C$ in $\mathcal{M}_d$ 
defined over $\mathbb{Q}$, 
\begin{gather*}
\mathrm{Card}(X_k\cap C)=o\left(\mathrm{Card}(X_k)\right)
\end{gather*}
as $\mathrm{Card}(X_k)\to\infty$.
Beware that this is stronger than the Zariski density of $\bigcup_{k\geq1}X_k$ in $\cM_d$.
\end{enumerate}
Contrary to the case of polynomials, items 1. and 2. seem very difficult to establish and could even be wrong as stated. This is an interesting problem since it would provide a description of the distribution of postcritically finite parameters at \emph{all places}, i.e., in the Berkovich analytification of the moduli space $\mathcal{M}_d(\C_v)$ of complex $v$-adic degree $d$ rational maps on $\P^1(\C_v)$
for all places $v\in M_\Q$.

For any $\un=(n_1,\ldots,n_{2d-2})\in (\N^{*})^{2d-2}$, we set
\begin{multline*}
X_{\un}:=\{[f]\in\mathcal{M}_d:f\text{ has }2d-2
\text{ periodic critical points } c_1,\ldots,c_{2d-2}\\
\text{ of respective exact periods }n_1, \dots, n_{2d-2}\},
\end{multline*}
so that $\mathrm{C}_{\un}\subset X_{\un}$.
A consequence of our counting of hyperbolic components is that any sequences of sets of centers of hyperbolic components of disjoint type is weakly generic. 

\begin{Theorem}\label{tm:genericMd}
For any sequence $(\un(k))_k$ of $(2d-2)$-tuples $\un(k)=(n_{1,k},\ldots,n_{2d-2,k})$ in $(\N^*)^{2d-2}$
satisfying $\min_j(n_j(k))\to\infty$ as $k\to\infty$,
the sequence $(X_{\un(k)})_k$ is Galois-invariant and weakly generic in $\cM_d$.
\end{Theorem}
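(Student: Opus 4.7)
My plan is to deduce Theorem~\ref{tm:genericMd} from Theorems~\ref{tm:counting} and~\ref{tm:vitessecenters}, together with Bedford--Taylor theory. First I address Galois-invariance. The set $X_{\un}$ admits an algebraic description: after lifting $[f]$ to a homogeneous polynomial endomorphism $F=(F_1,F_2)$ of $\C^2$, the condition of having $2d-2$ periodic critical points with prescribed exact periods is cut out simultaneously by the vanishing of the Wronskian of $F$ (locating $\mathrm{Crit}(f)$) and, for some labeling $c_1,\dots,c_{2d-2}$ of the critical points, by the dynatomic equations $\Phi^*_{n_j}(F,c_j)=0$. These polynomials, together with the resultant-based equations descending from $\rat_d$ to $\cM_d$, all have coefficients in $\Z$. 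Hence $X_{\un}$ is the set of closed points of a $0$-dimensional $\Q$-subscheme of $\cM_d$, so each $X_{\un(k)}$ is preserved by $\mathrm{Gal}(\overline{\Q}/\Q)$ and the sequence is Galois-invariant.

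Next I reduce weak genericity of $(X_{\un(k)})_k$ to that of the centers $(\mathrm{C}_{\un(k)})_k$. Since $\mathrm{C}_{\un}\subseteq X_{\un}$ always (with equality when the entries of $\un$ are pairwise distinct, because two critical points on the same superattracting cycle share the same exact period), the difference $X_{\un(k)}\setminus\mathrm{C}_{\un(k)}$ consists of parameters where at least two critical points collide on a single superattracting cycle. A B\'ezout-type bound, combined with the observation that such a collision forces the effective sum of periods to drop by at least $\min_j n_{j,k}$, yields $\#(X_{\un(k)}\setminus\mathrm{C}_{\un(k)})=O(d^{|\un(k)|-\min_j n_{j,k}})$. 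Coupled with the estimate $\#\mathrm{C}_{\un(k)}\asymp d^{|\un(k)|}/\#\mathrm{Stab}(\un(k))$ from Theorem~\ref{tm:counting}, this gives $\#X_{\un(k)}=(1+o(1))\#\mathrm{C}_{\un(k)}$, and it suffices to show $\#(\mathrm{C}_{\un(k)}\cap C)=o(\#\mathrm{C}_{\un(k)})$ for any proper affine $\Q$-subvariety $C\subset\cM_d$.

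For the final step I observe that $\mu_{\un(k)}\rightharpoonup\mu_\bif$ weakly as finite positive Radon measures on $\cM_d$: convergence against $\mathcal{C}^2_c(\cM_d)$ test functions is Theorem~\ref{tm:vitessecenters}(1), and Theorem~\ref{tm:counting} provides the mass convergence $\|\mu_{\un(k)}\|\to\|\mu_\bif\|$, which rules out escape of mass at the boundary of $\cM_d$. Since $\mathcal{L}$ is continuous and p.s.h.\ on $\cM_d$, Bedford--Taylor theory ensures $\mu_\bif=(dd^c\mathcal{L})^{2d-2}$ places no mass on pluripolar subsets, and in particular $\mu_\bif(C)=0$. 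A standard outer-regularity argument with a compactly supported cutoff $\chi_\varepsilon\geq\mathbf{1}_C$ satisfying $\mu_\bif(\chi_\varepsilon)\to 0$ as $\varepsilon\to 0$ then yields $\mu_{\un(k)}(C)\to 0$, which by the definition of $\mu_{\un(k)}$ reads $\#(\mathrm{C}_{\un(k)}\cap C)=o(d_{|\un(k)|}/\#\mathrm{Stab}(\un(k)))=o(\#\mathrm{C}_{\un(k)})$, as desired. I expect the main obstacle to be the stratification in the second paragraph: controlling $X_{\un(k)}\setminus\mathrm{C}_{\un(k)}$ may in fact require a careful enumeration of all combinatorial collision patterns together with an application of Theorem~\ref{tm:counting} on each coarser stratum, in the same spirit as the polynomial estimate described in the abstract and proved in Section~\ref{sec:poly}.
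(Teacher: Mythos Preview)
Your approach is different from the paper's and is essentially sound, but a couple of points need tightening, and the comparison is instructive.

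The paper never invokes Theorem~\ref{tm:vitessecenters}. Instead it works in $\mathcal{M}_d^{\cm}$, sets $Y_{\un}:=\bigcap_j\Per_j(n_j)$, and proves a purely \emph{counting} estimate (Theorem~\ref{tm:generic}): for any proper subvariety $V$,
\[
\frac{\#(Y_{\un}\cap V)}{\#Y_{\un}}\le C\,d^{-\min_j n_j/2}.
\]
The proof is B\'ezout combinatorics: Lemma~\ref{lm:reductioncount} says that at a smooth point of $V$ one can select $2d-2-\dim V$ of the transversal hypersurfaces $\Per_j(n_j)$ so that the intersection with $V$ is isolated there, and then B\'ezout bounds the number of such points. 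Theorem~\ref{tm:counting} enters only to supply the lower bound $\#Y_{\un}\gtrsim d^{|\un|}$. This yields an \emph{explicit exponential rate} for weak genericity, which your soft equidistribution argument does not.

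Your route---use $\mu_{\un(k)}\rightharpoonup\mu_\bif$ together with $\mu_\bif(C)=0$---is legitimate and more conceptual. Two fixes are needed. First, your ``compactly supported cutoff $\chi_\varepsilon\ge\mathbf{1}_C$'' cannot exist when the subvariety $C$ is unbounded in $\cM_d$; what you should say is that vague convergence plus $\|\mu_{\un(k)}\|\to\|\mu_\bif\|$ gives tightness, hence weak convergence on $\mathcal{C}_b$, and then Portmanteau gives $\limsup_k\mu_{\un(k)}(C)\le\mu_\bif(C)=0$ for the closed set $C$. Second, your B\'ezout claim that a collision of two critical points on one superattracting cycle drops the exponent by $\min_j n_{j,k}$ is too optimistic: if $c_j=f^\ell(c_i)$ with $0\le\ell\le n_i/2$, the replaced relation has degree $\asymp d^{\ell}$, so the gain is only $n_i/2$, giving $\#(X_{\un(k)}\setminus\mathrm{C}_{\un(k)})=O(d^{|\un(k)|-\min_j n_{j,k}/2})$. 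This is harmless for your purpose (it is still $o(\#X_{\un(k)})$), but note that to make this B\'ezout step precise you must in any case pass to $\mathcal{M}_d^{\cm}$ to mark the critical points---exactly as the paper does for $Y_{\un}\setminus Z_{\un}$.
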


\begin{remark} \normalfont \normalfont \normalfont
This result in particular implies that $\bigcup_kX_{\un(k)}$ is Zariski dense in $\mathcal{M}_d$, which refines \cite[Theorem A]{DeMarco-intersection}.
\end{remark}

For proving this weak genericity property, we prove a stronger result in 
the moduli space $\mathcal{M}_d^\cm$ of {\em critically marked}
degree $d$ rational maps on $\P^1$, i.e., the orbit space of $\PSL_2(\C)$
in the space $\rat_d^\cm$ of {\em critically marked}
degree $d$ rational maps $(f,c_1,\ldots,c_{2d-2})$,
where $f\in\rat_d$ and $(c_1,\ldots,c_{2d-2})$ is a $(2d-2)$-tuple of
all critical points of $f$, counted with multiplicity.
This is also an irreducible affine variety of dimension $2d-2$ which is defined over $\Q$ and the natural finite branched cover $p:\mathcal{M}_d^\cm\to\mathcal{M}_d$ 
is of degree $(2d-2)!$ and also defined over $\Q$.
We note that the same construction of $\mathcal{L}$, $\mu_\bif$, $\Per_n(w)$,
and $T_{\un}^p(\underline{\rho})$
as on $\cM_d$ works on $\cM_d^\cm$.

For any $n\in\N^*$ and any $1\leq j\leq 2d-2$, let $\Per_j(n)$ be the analytic hypersurface
\[
\Per_j(n):=\{[(f,c_1,\ldots,c_{2d-2})]\in\mathcal{M}_d^\cm:\Phi_n(F,C_j)=0\}
\]
in $\cM_d^{\cm}$, where $F$ and $C_j$ are lifts of $f$ and $c_j$, respectively; 
the degree of the hypersurface $\Per_j(n)$ is bounded from above by $Cd^n$ 
for some constant $C\geq1$ depending only on $d$
(see e.g.\ \cite{Silverman} for more details), and
since $\Lambda$ is quasi-projective, $\Per_j(n)$ 
is actually an algebraic hypersurface of $\cM_d^\cm$.

For any $\un=(n_1,\ldots,n_{2d-2})\in(\N^*)^{2d-2}$, we set
\[
Y_{\un}:=\bigcap_{j=1}^{2d-2}\Per_j(n_j)\subset\mathcal{M}_d^\cm.
\]
We prove here the following as an application of our counting result.

\begin{theorem}\label{tm:generic}
For any proper algebraic subvariety $V$ in $\mathcal{M}_d^\cm$, there exists a constant $C>0$ such that for all $\un\in(\N^*)^{2d-2}$, we have
\[\frac{\mathrm{Card}\left(Y_{\un}\cap V\right)}{\mathrm{Card}\left(Y_{\un}\right)}\leq C\cdot d^{-(\min_j n_j)/2}.\]
\end{theorem}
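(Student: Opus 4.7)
The plan is to combine a B\'ezout-type upper bound on $\mathrm{Card}(Y_{\un}\cap V)$ with the lower bound on $\mathrm{Card}(Y_{\un})$ provided by Theorem~\ref{tm:counting}. First I reduce to the case where $V$ is an irreducible hypersurface in $\mathcal{M}_d^\cm$ of some fixed degree $D$, as any proper algebraic subvariety is contained in such a hypersurface with $D$ depending only on $V$.

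For the denominator, $Y_{\un}$ contains a $\#\mathrm{Stab}(\un)$-to-one lifting of the set $\mathrm{C}_{\un}$ of centers of hyperbolic components of type $\un$ under the finite cover $\mathcal{M}_d^\cm\to\mathcal{M}_d$ (each such center has $2d-2$ distinct critical points of prescribed periods, so the orderings compatible with $\un$ give exactly $\#\mathrm{Stab}(\un)$ points of $Y_\un$). Hence by Theorem~\ref{tm:counting},
\[
\mathrm{Card}(Y_{\un})\,\geq\,\#\mathrm{Stab}(\un)\cdot N(\un)\,\sim\,\Bigl(\int_{\mathcal{M}_d}\mu_\bif\Bigr)\cdot d_{|\un|}\,\asymp\,d^{|\un|}
\]
as soon as $\min_j n_j$ is large enough; for small $\min_j n_j$ the inequality in Theorem~\ref{tm:generic} holds trivially by choosing $C$ large.

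For the numerator, fix any index $j_0\in\{1,\ldots,2d-2\}$ with $n_{j_0}\geq \min_j n_j$. Since $Y_{\un}\subseteq\bigcap_{j\neq j_0}\Per_j(n_j)$ and each $\Per_j(n_j)$ has degree at most $Cd^{n_j}$ in $\mathcal{M}_d^\cm$, B\'ezout's theorem applied to $V$ intersected with the $2d-3$ hypersurfaces $\{\Per_j(n_j)\}_{j\neq j_0}$ (of expected dimension $0$ in $\mathcal{M}_d^\cm$ of dimension $2d-2$) gives
\[
\mathrm{Card}(V\cap Y_{\un})\,\leq\,\mathrm{Card}\Bigl(V\cap\bigcap_{j\neq j_0}\Per_j(n_j)\Bigr)\,\leq\, D\cdot C^{2d-3}\cdot d^{|\un|-n_{j_0}},
\]
provided this intersection is proper. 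Dividing by $\mathrm{Card}(Y_{\un})\asymp d^{|\un|}$ and noting $n_{j_0}\geq\min_j n_j\geq (\min_j n_j)/2$ yields the claimed inequality.

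The main obstacle is handling non-proper intersections, which occur when $V$ contains a positive-dimensional irreducible component of $\bigcap_{j\neq j_0}\Per_j(n_j)$. Since $V$ is fixed while the components of $\bigcap_{j\neq j_0}\Per_j(n_j)$ vary with $\un$, one expects that for each $\un$ with $\min_j n_j$ large, at least one choice of $j_0$ gives a proper intersection; those $\un$ where no such choice exists should be handled either by an excess-intersection argument on $\mathcal{M}_d^\cm$, or by pushing $V$ forward to $\mathcal{M}_d$ and exploiting the fact that the centers in $\mathrm{C}_\un$ avoid any fixed proper subvariety generically. The slack between the stronger exponent $-\min_j n_j$ given by the naive B\'ezout estimate in the proper case and the claimed $-(\min_j n_j)/2$ is precisely what provides the cushion to absorb these exceptional configurations into the constant $C$.
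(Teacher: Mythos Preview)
Your high-level strategy (B\'ezout upper bound for the numerator, Theorem~\ref{tm:counting} for the denominator) matches the paper's, and your lower bound $\mathrm{Card}(Y_{\un})\gtrsim d^{|\un|}$ is correct. However, the proposal has a genuine gap precisely at the point you flag as ``the main obstacle'': you never actually resolve the non-properness of the intersection $V\cap\bigcap_{j\neq j_0}\Per_j(n_j)$. Your suggestion that ``at least one choice of $j_0$ gives a proper intersection'' is unsupported, and your interpretation of the exponent $-(\min_j n_j)/2$ as mere ``cushion'' is incorrect: that exponent is not slack to be absorbed into $C$, it is the actual rate produced by a specific part of the argument.

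The paper handles this as follows. It splits $Y_{\un}$ into the set $Z_{\un}$ of parameters where the $2d-2$ critical orbits are pairwise disjoint, and its complement. On $Y_{\un}\setminus Z_{\un}$ one has a relation $f^k(c_i)=c_j$ with $k\le n_i/2$, and a direct B\'ezout count on these collision equations gives $\mathrm{Card}(Y_{\un}\setminus Z_{\un})\le C'\,d^{|\un|-(\min_i n_i)/2}$; this is where the factor $1/2$ in the exponent genuinely comes from. On $V_{\mathrm{reg}}\cap Z_{\un}$ the paper exploits the transversality of the $\Per_j(n_j)$ at points of $Z_{\un}$ (a consequence of Theorem~\ref{tm:trans1}) together with Lemma~\ref{lm:reductioncount}, which, \emph{at each point} $p$, produces a subset $I$ of size $2d-2-\dim V$ such that $p$ is isolated in $V\cap\bigcap_{j\in I}\Per_j(n_j)$. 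Summing B\'ezout bounds over all such $I$ then controls $\mathrm{Card}(V_{\mathrm{reg}}\cap Z_{\un})$ with an exponent at least $d^{-\min_j n_j}$, which is dominated by the collision term. The singular locus of $V$ is handled by induction on $\dim V$. Thus the two missing ingredients in your sketch are (i) the transversality at disjoint-orbit parameters, which replaces your unproven hope about some $j_0$, and (ii) the explicit collision bound, which is the true source of the exponent $(\min_j n_j)/2$.
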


For the proof, we follow the strategy of~\cite[Theorem 5.3]{favregauthier} and we rely on the following, which is just an adaptation of 
\cite[Lemma~5.4]{favregauthier}.

\begin{lemma}\label{lm:reductioncount}
Let $V$ be any irreducible algebraic subvariety of dimension $q$ in $\mathcal{M}^\cm_d$ and let $p$ be a smooth point in $\mathcal{M}_d^\cm$. Assume that $V$ is also smooth at $p$. Pick hypersurfaces $H_1,\ldots,H_{2d-2}$ 
intersecting transversely at $p$.
Then there is $I\subset\{1,\ldots,2d-2\}$ of cardinality $2d-2-q$ such that $p$ is an isolated point of $V\cap\bigcap_{j\in I}H_j$.
\end{lemma}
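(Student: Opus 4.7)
The plan is to reduce the question to a linear algebra statement on the tangent space at $p$. Since $V$ is smooth at $p$ and the hypersurfaces $H_1,\dots,H_{2d-2}$ meet transversely at $p$, one can linearize everything: set $N := T_p\mathcal{M}_d^\cm$ (of dimension $2d-2$), $W := T_pV \subset N$ (of dimension $q$), and for each $j$ let $\nu_j \in N^*$ be a covector with $T_pH_j = \ker \nu_j$. The transversality assumption is equivalent to $(\nu_1,\dots,\nu_{2d-2})$ being a basis of $N^*$.

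The core linear-algebra observation is that the restriction map $N^* \twoheadrightarrow W^*$ dual to the inclusion $W \hookrightarrow N$ is surjective, so the family $(\nu_j|_W)_{1 \leq j \leq 2d-2}$ spans the $q$-dimensional space $W^*$. From any spanning set one can extract a basis: there exists $J \subset \{1,\ldots,2d-2\}$ of cardinality $q$ such that $(\nu_j|_W)_{j \in J}$ is a basis of $W^*$, which is exactly the condition $W\cap\bigcap_{j \in J} T_pH_j=\{0\}$. Choosing any $I \subset \{1,\ldots,2d-2\}$ of cardinality $2d-2-q$ containing $J$ (so in particular $2d-2-q \geq q$, which is the regime relevant for the application to Theorem~\ref{tm:generic}), the tangential intersection $W \cap \bigcap_{j \in I} T_pH_j$ sits inside $W \cap \bigcap_{j \in J} T_pH_j=\{0\}$, hence is itself trivial.

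It remains to upgrade this tangential triviality into the isolation of $p$ in $V\cap\bigcap_{j\in I}H_j$. Both $V$ and $Z := \bigcap_{j \in I} H_j$ are smooth at $p$ (the latter as a transverse intersection of smooth hypersurfaces), and the vanishing $T_pV \cap T_pZ = \{0\}$ means that $W$ and $T_pZ$ are two independent linear subspaces of $N$. One can then find local analytic coordinates centered at $p$ in which $V$ is the linear subspace $W$ and $Z$ is a graph of a holomorphic map whose tangent direction at $0$ is $T_pZ$; a direct computation (equivalently, the inverse function theorem applied to the defining equations) gives $V \cap Z = \{p\}$ in a neighborhood of $p$, so $p$ is indeed an isolated point of $V\cap\bigcap_{j \in I}H_j$. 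The only nontrivial content is the linear-algebra extraction of step~two; the geometric upgrade is standard, so this is essentially a one-paragraph argument.
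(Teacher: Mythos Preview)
Your proof is correct and takes essentially the same route as the paper's: both linearize at $p$, use the transversality of the $H_j$ to obtain adapted local coordinates (the paper) or equivalently a covector basis $(\nu_j)$ of $N^*$ (you), and then select $I$ by elementary linear algebra on $T_pV$; the paper's condition $dz^I|_{T_pV}\neq 0$ is precisely your spanning condition for the restricted covectors.

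One comment on your parenthetical restriction $2d-2-q\ge q$. This is not an artifact of your detour through $J$ but is actually required for the lemma as stated: if $q>d-1$, then for \emph{any} $I$ of cardinality $2d-2-q$ the transverse intersection $\bigcap_{j\in I}H_j$ has dimension $q$, so every component of $V\cap\bigcap_{j\in I}H_j$ through $p$ has dimension at least $2q-(2d-2)>0$, and $p$ cannot be isolated. So you are right to flag the hypothesis $q\le d-1$. Your aside that this is ``the regime relevant for the application to Theorem~\ref{tm:generic}'' is however not accurate: that proof runs over $q=\dim V\in\{1,\dots,2d-3\}$. The paper's own proof of the lemma carries the same slip (it writes $\dim T_pV=N:=2d-2-q$), so the wrinkle lies in the paper's formulation, not in your reasoning.
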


\begin{proof}[Proof of Lemma~\ref{lm:reductioncount}]
In a local coordinate $(z_1,\ldots,z_{2d-2})$ around $p$, we may suppose that $H_i=\{z_i=0\}$ for each $i$. Since $V$ is smooth, its tangent space at $p$ has dimension $N:=2d-2-q$ and one of the $N$-forms $\omega_I:=dz^I$ for $|I|=N$ satisfies $\omega_I|_{T_pV}\neq0$. Since the kernel of $\omega_I$ is the tangent space of $T_pH_i$ at $p$, we conclude that the intersection between $V$ and the $H_i$'s with $i\in I$ is transverse.
\end{proof}

\begin{proof}[Proof of Theorem~\ref{tm:generic}]
The case $\dim V=0$ is an immediate consequence of Theorem~\ref{tm:counting}, since $V$ is a finite set in that case. We thus assume $q:=\dim V\in\{1,\ldots,2d-3\}$. 

Pick $\un=(n_1,\ldots,n_{2d-2})\in(\N^*)^{2d-2}$.
Let $Z_{\un}$ be the subset in $Y_{\un}$ consisting of 
all $[(f,c_1,\ldots,c_{2d-2})]\in\cM_d$ such that the orbits of
$c_{1},\ldots,c_{2d-2}$ of $f$ are also disjoint.
We claim that there is a constant $C'>0$ depending only on $d$ such that
\[
\mathrm{Card}\left(Y_{\un}\setminus Z_{\un}\right)\leq C' d^{|\un|-(\min_i n_i)/2};
\] 
for, since $Y_{\un}\setminus Z_{\un}$ consists of all
$[f]\in\cM_d^\cm$ such that $f$ has a super-attracting cycles of exact period $n_i$ and containing at least two distinct critical points for some $i$, we have
\[
Y_{\un}\setminus Z_{\un}\subset\bigcup_{i=0}^{2d-2}\left(\bigcup_{j\neq i \atop n_i=n_j}\bigcup_{k=0}^{[n_i/2]}\{f^k(c_i)=c_j\}\cap\bigcap_{\ell:\ell\neq i}\Per_\ell(n_\ell)\right).
\]
Since also $\deg(\{f^k(c_i)=c_j\})\leq C d^k$ for some $C>0$ independent of $i,j$ and $k$, the claim holds by B\'ezout's Theorem.

Set $N=N_q:=2d-2-q\in\N^*$. 
Let $V_\mathrm{reg}$ be the regular locus of $V$. We also claim that
there is a constant $C''>0$ depending only on $d$ such that
\[
\mathrm{Card}\left(V_\mathrm{reg}\cap Z_{\un}\right)
\leq C''\deg(V)\sum_I d^{\sum_{j=1}^Nn_{i_j}},
\]
where and below the sums $\sum_I$ range over all 
$N$-tuples $I=(i_1,\ldots,i_N)$ of distinct indices in $\{1,\ldots,2d-2\}$;
indeed, for any such choice $I$, we set
$Y_I:=\bigcap_{j=1}^N\Per_{i_j}(n_{i_j})$, 
and let $F_I$ be the set of all isolated points of $V\cap Y_I$.
By B\'ezout's Theorem, we have
\begin{equation}\label{eqbdbezout}
\mathrm{Card}(F_I) \leq \deg (V)  \prod_{j=1}^N \deg (\Per_{i_j}(n_{i_j}) ) = C_2\deg(V)d^{\sum_{j=1}^Nn_{i_j}}
\end{equation}
for some constant $C_2>0$ depending only on $d$. 
Since $\mathrm{Card}\left(V_\mathrm{reg}\cap Z_{\un}\right) \leq 
\sum_I\mathrm{Card}(F_I)$ by Lemma~\ref{lm:reductioncount},
the claim holds.
According to Theorem~\ref{tm:counting}, 
there is a constant $C_4>0$ depending only on $d$ such that 
$\mathrm{Card}(Y_{\un})\geq\mathrm{Card}(Z_{\un})\geq C_4 d^{|\un|}$
provided $\min_jn_j$ is large enough.

 Hence, 
the above two claims imply
\begin{gather*}
\mathrm{Card}\left(V_\mathrm{reg}\cap Z_{\un} \right)
\leq C_5\sum_I d^{-\sum_{j\notin I}n_{i_j}}\mathrm{Card}(Y_{\un})\quad\text{and}\\
\mathrm{Card}\left(V_\mathrm{reg}\setminus Z_{\un} \right)\le
\mathrm{Card}\left(Y_{\un}\setminus Z_{\un}\right)
\leq C_5 d^{-(\min_i n_i)/2}\mathrm{Card}(Y_{\un}), 
\end{gather*}
where $C_5>0$ depends only on $V$, $d$ and $q$. 

Since $V_{\mathrm{sing}}:= V\setminus V_\mathrm{reg}$ is an
algebraic subset in $\cM_d^\cm$ of codimension $2d-2-q+1$, the proof is complete
by a finite induction.
\end{proof}

\begin{remark} \normalfont 
If $n_i\neq n_j$ for all $i\neq j$, the proof gives
\[\frac{\mathrm{Card}\left(Y_{\un}\cap V\right)}{\mathrm{Card}\left(Y_{\un}\right)}\leq C\cdot d^{-\min\left(n_{j_1}+\cdots+n_{j_q}\right)}
\]
where the minimum is taken over all $q$-tuples $J=(j_1,\ldots,j_q)$ of distinct indices.
\end{remark}

\begin{proof}[Proof of Theorem~\ref{tm:genericMd}]
Pick any $\un=(n_1,\ldots,n_{2d-2})\in(\N^*)^{2d-2}$, and 
for any permutation $\sigma\in\mathfrak{S}_{2d-2}$, 
set $\sigma(\un):=(n_{\sigma(1)},\ldots,n_{\sigma(2d-2)})$. Let us first remark that 
$p^{-1}(X_{\un})=\bigcup_{\sigma\in\mathfrak{S}_{2d-2}}Y_{\sigma(\un)}$.
The Galois-invariance of $X_{\un}$ follows from the Galois-invariance of $Y_{\sigma(\un)}$ for any $\sigma\in\mathfrak{S}_{2d-2}$. Similarly, for any irreducible subvariety $Z\subset \mathcal{M}_d$ defined over $\Q$, we can apply Theorem~\ref{tm:generic} to any irreducible component of the algebraic subset $V=p^{-1}(Z)$ in $\mathcal{M}_d^\cm$. The fact that $p$ is a finite branched cover together with the assumption $\min_j(n_j(k))\to+\infty$ as $k\to\infty$ completes the proof.
\end{proof}

\section{Distribution of hyperbolic maps with given multipliers in $\mathcal{M}_d$}\label{sec:equicenter}
This section is devoted to the proof of Theorem~\ref{tm:vitessecenters}. 
Pick any $\un=(n_1,\ldots,n_{2d-2})\in(\N^*)^{2d-2}$ and 
{any $\uw=(w_1,\ldots,w_{2d-2})\in \D^{2d-2}$}.

\subsection{A reduction to work on $\cM_d^\fm$}
Recall that we denoted by $\pi_\fm:\mathcal{M}_d^\fm\to\mathcal{M}_d$ 
the natural finite branched cover of degree $\deg(\pi_\fm)=(d+1)!$ and recall the definition of $C_{\un,\uw}$ from the introduction, and the notations
$\mu_\bif^\fm$ and $T_\un^{2d-2,\fm}(\underline{\rho})$ introduced in
Subsection~\ref{sec:counting}. Set $C^\fm_{\un,\uw}:=\pi_\fm^{-1} (C_{\un,\uw})$,
\[
\mu_{\un,\uw}^\fm:=\frac{\#\mathrm{Stab}(\un,{\uw})}{d_{|\un|}}\sum_{C^\fm_{\un,\uw}}\delta_{[(f,x_1,\ldots,x_{d+1})]},
\]
and $\mu_{\un}^\fm:=\mu_{\un,(0,\ldots,0)}^\fm$.
Then $\mu_{\un,\uw}^\fm=(\pi_\fm)^*(\mu_{\un,\uw})$ for all $\uw$
and $\mu_\un^\fm=(\pi_\fm)^*(\mu_\un)$.
In particular, for any DSH and continuous function $\tilde{\Psi}$ on $\mathcal{M}_d$ with compact support, we have
\begin{align*}
\left\langle \mu_{\un,\uw}-\mu_\bif,\tilde{\Psi}\right\rangle 
=& \frac{1}{(d+1)!}\left\langle \mu_{\un,\uw}-\mu_\bif,(\pi_\fm)_*(\pi_\fm)^*\tilde{\Psi}\right\rangle\\
=& \frac{1}{(d+1)!}\left\langle \mu_{\un,\uw}^\fm-\mu_\bif^\fm,(\pi_\fm)^*\tilde{\Psi}\right\rangle.
\end{align*}
Hence, it is sufficient to have the wanted estimates in the fixed marked moduli space $\mathcal{M}_d^\fm$. So, pick  any compact set $K$ {\em in} $\mathcal{M}_d^\fm$, and
any either $\mathcal{C}^1$ or $\mathcal{C}^2$ function $\Psi$ on $\cM_d^\fm$
with support in $K$. {Set 
\begin{gather*}
\underline{\rho}=(\rho_1,\dots,\rho_{2d-2})
:=(\max(|w_1|, 1/2),\ldots,\max(|w_{2d-2}|, 1/2))\in([1/2,1[)^{2d-2}, 
\end{gather*}
so that $\rho_j\in[|w_j|,1[$ for any $1\le j\le 2d-2$.}
By Theorem~\ref{tm:vitessemoyennesalgebraic} and 
the upper bound of the DSH-norm by the $\mathcal{C}^2$-norm, 
there is $C(K)>0$ depending only on $K$ such that
\begin{equation*}
\left|\left\langle T_{\un}^{2d-2,\fm}(\underline{\rho})-\mu_\bif^\fm,\Psi\right\rangle\right|\leq C(K)\max_{1\leq j\leq 2d-2}\left(\frac{\sigma_2(n_j)}{d^{n_j}}\right)\|\Psi\|_{\mathcal{C}^2}\quad\text{ if }\Psi\text{ is }\mathcal{C}^2
\end{equation*}
and then, by {interpolation between Banach spaces}, that
\begin{equation*}
\left|\left\langle T_{\un}^{2d-2,\fm}(\underline{\rho})-\mu_\bif^\fm,\Psi\right\rangle\right|\leq C(K)\max_{1\leq j \leq 2d-2}\left(\frac{\sigma_2(n_j)}{d^{n_j}}\right)^{1/2}\|\Psi\|_{\mathcal{C}^1}\quad\text{  if }\Psi\text{ is }\mathcal{C}^1.
\end{equation*}
Once we show
\begin{gather}
\left|\langle \mu_{\un}^\fm- T_{\un}^{2d-2,\fm}(\underline{\rho}), \Psi \rangle \right| \leq 
C\max_{1\leq j\leq 2d-2}\left(\frac{1}{d^{n_j}}\right)\cdot\|\Psi\|_{\mathcal{C}^2}
\quad\text{ if }\Psi\text{ is }\mathcal{C}^2\label{eq:nonCS} 
\end{gather}
and
\begin{gather}
\left| \langle \mu_{\un,\uw}^\fm- T_{\un}^{2d-2,\fm}(\underline{\rho}), \Psi \rangle \right| 
\leq C \max_{1\leq j\leq 2d-2} \left(\frac{-1}{d^{n_j}\log\rho_j}\right)^{1/2} \|\Psi\|_{\mathcal{C}^1}\quad\text{ if }\Psi\text{ is }\mathcal{C}^1,\label{eq:CauchSchwarz} 
\end{gather}
where $C>0$ depends only on $d$, the proof of Theorem \ref{tm:vitessecenters}
will be complete.

\subsection{A reduction to work on algebraic curves}\label{sec:reductioncurve}
Observe that the measure $\mu_{\un,\uw}^\fm- T_{\un}^{2d-2,\fm}(\underline{\rho})$
on $\cM_d^\fm$
has its support contained in the union of all hyperbolic components of type $\un$. Let $\Omega$ be such a component, and 
$\mathcal{W} =(\mathcal{W}_1,\ldots,\mathcal{W}_{2d-2}):\Omega\rightarrow\D^{2d-2}$ the  multiplier map on $\Omega$. 
Letting $\lambda_r$ be the normalized Lebesgue measure on 
$\partial\D_r$, by~\eqref{pourallervite} we have
\begin{align*}
&d_{|\un|}(T_{\un}^{2d-2,\fm}(\underline{\rho})-\mu_{\un,\uw}^\fm )\\
=&\# \mathrm{Stab}( \un, \uw)  \mathcal{W}^*(\lambda_{\rho_1} \otimes \cdots \otimes \lambda_{\rho_{2d-2}} ) - \# \mathrm{Stab}( \un,\uw) \bigwedge_{i=1}^{2d-2}[\Per_{n_i}(w_i)]\\
=&\sum_{\sigma \in \mathrm{Stab}( \un, \uw) } \mathcal{W}_{\sigma(1)}^*(\lambda_{\rho_1} )\wedge \dots \wedge \mathcal{W}_{\sigma(2d-2)}^*(\lambda_{\rho_{2d-2}} )- \mathcal{W}_{\sigma(1)}^*(\delta_{w_1})\wedge \dots \wedge \mathcal{W}_{\sigma(2d-2)}^*(\delta_{w_{2d-2}})\\
=& 
\sum_{\sigma \in \mathrm{Stab}( \un,\uw )} \sum_{j=1}^{2d-2} S_{\sigma,j}
\end{align*}
on $\Omega$, 
where for any $\sigma\in \mathrm{Stab}(\un)$ and any $1\le j\le 2d-2$, the measure
$S_{\sigma,j}$ is defined as
\begin{multline*}
S_{\sigma,j}:=\left(\bigwedge_{1\leq i<j}\mathcal{W}_{\sigma(i)}^*(\lambda_{\rho_i}) \right)\wedge \left( \mathcal{W}_{\sigma(j)}^*(\lambda_{\rho_j}) -  \mathcal{W}_{\sigma(j)}^*(\delta_{w_j}) \right)\wedge  \left( \bigwedge_{k>j} \mathcal{W}_{\sigma(k)}^*(\delta_{w_k})\right)\\
=\int_{\tilde{\mathbb{S}}_{j-1}} 
\bigwedge_{1\leq i<j}\mathcal{W}_{\sigma(i)}^*(\delta_{u_i}) \wedge \left( \mathcal{W}_{\sigma(j)}^*(\lambda_{\rho_j}) - \mathcal{W}_{\sigma(j)}^*(\delta_{w_j}) \right)\wedge \bigwedge_{k>j} \mathcal{W}_{\sigma(k)}^*(\delta_{w_k})
\mathrm{d}\lambda_{\tilde{\mathbb{S}}_{j-1}}(u_1,\dots,u_{j-1})     
\end{multline*}
on $\Omega$, setting
$\lambda_{\tilde{\mathbb{S}}_{j-1}}:= \lambda_{\rho_1}\otimes\cdots\otimes\lambda_{\rho_{j-1}}$ on
$\tilde{\mathbb{S}}_{j-1}:=\partial\D_{\rho_1} \times \cdots \times\partial\D_{\rho_{j-1}}$ if $j>1$ and $S_{\sigma,1}:= \left( \mathcal{W}_{\sigma(1)}^*(\lambda_{\rho_1}) - \mathcal{W}_{\sigma(1)}^*(\delta_{w_1}) \right)\wedge \bigwedge_{k>1} \mathcal{W}_{\sigma(k)}^*(\delta_{w_k})$.    Recall that a wedge-product over the empty set is equal to $1$ and that an intersection over the empty set is the whole space.

For any $\sigma \in \mathrm{Stab}(\un,\uw)$, any $1\le j\le 2d-2$,
and any $u\in\tilde{\mathbb{S}}_{j-1}$ (if $j>1$),
let $\Lambda_{\sigma,j}(u)$ or $\Lambda_{\sigma,1}$
be the set of all $[(f,x_1,\ldots,x_{d+1})]\in\cM_d^\fm$
having a cycle of exact period $n_{\sigma(i)}$ and multiplier $u_i\in\D_{\rho_i}$ 
for any $1\leq i<j$ and a cycle of exact period $n_{\sigma(k)}$ and multiplier 
$w_k$ for any $k>j$. Hence
\[\Lambda_{\sigma,j}(u)\subset\bigcap_{1\leq i<j}\Per_{n_i}(u_{\sigma(i)}) \cap \bigcap_{k>j} \Per_{n_k}(w_{\sigma(k)})\]
is an algebraic curve and, by B\'ezout's theorem,
its area is $\le C\cdot d^{|\un|-n_j}$ for some constant $C$ 
depending only on $d$. Set
$\mathcal{W}_{\Lambda_{\sigma,j}(u)}:=p_j\circ(\mathcal{W}|_{\Omega\cap \Lambda_{\sigma,j}(u)})$ or $\mathcal{W}_{\Lambda_{\sigma,1}}:=p_1\circ(\mathcal{W}|_{\Omega\cap \Lambda_{\sigma,1}})$,
where $p_j:\D^{2d-2}\to\D$ is the projection on the $j$-th coordinate. Then the measure 
\begin{equation*}
\bigwedge_{1\leq i<j}\mathcal{W}_{\sigma(i)}^*(\delta_{u_i}) \wedge \left( \mathcal{W}_{\sigma(j)}^*(\lambda_{\rho_j}) - \mathcal{W}_{\sigma(j)}^*(\delta_{w_j}) \right)\wedge \bigwedge_{k>j} \mathcal{W}_{\sigma(k)}^*(\delta_{w_k})
\end{equation*}
is equal to
\begin{gather}
\mathcal{W}_{\Lambda_{\sigma,j}(u)}^*(\lambda_{{\rho_j}}-\delta_{w_j}).\label{decomposition}
\end{gather} 
on $\Omega\cap \Lambda_{\sigma,j}(u)$ if $j>1$, and the measure $S_{\sigma,1}$ is itself equal to $\mathcal{W}_{\Lambda_{\sigma,1}}(\lambda_{\rho_1}-\delta_{w_1})$ on $\Omega\cap \Lambda_{\sigma,1}$.

\subsection{Proof of \eqref{eq:CauchSchwarz}: the case of arbitrary multipliers in $\D^{2d-2}$}
Assume that $\Psi$ is $\mathcal{C}^1$ and test \eqref{decomposition} against $\Psi$. For any $\sigma \in \mathrm{Stab}( \un,\uw)$, any $1\le j\le 2d-2$, and any  $u:=(u_i)_{i<j}\in\tilde{\mathbb{S}}_{j-1}$ (if $j>1$). We continue to fix $\Omega$ as in Subsection~\eqref{sec:reductioncurve} for a while and  let $O:=\mathcal{W}_{\Lambda_{\sigma,j}(u)}^{-1}(w_j)$. Then
\begin{align*}
\int_{\Lambda_{\sigma,j}(u) \cap \Omega} \Psi \cdot \mathcal{W}_{\Lambda_{\sigma,j}(u)}^*(\lambda_{{\rho_j}}-\delta_{w_j} )= \int_{\Lambda_{\sigma,j}(u) \cap \Omega} \left(\Psi -\Psi(O) \right) \cdot \mathcal{W}_{\Lambda_{\sigma,j}(u)}^*(\lambda_{{\rho_j}}),
\end{align*}
so that by the mean value inequality:
\begin{align*}
\left|\int_{\Lambda_{\sigma,j}(u) \cap \Omega} \Psi \cdot \mathcal{W}_{\Lambda_{\sigma,j}(u)}^*(\lambda_{{\rho_j}}-\delta_{w_j} ) \right| 
{\le}&\int_{\Lambda_{\sigma,j}(u) \cap \Omega} \left|\Psi -\Psi(O) \right| \cdot \mathcal{W}_{\Lambda_{\sigma,j}(u)}^*(\lambda_{\rho_j}) \\
\leq& C\cdot\|\Psi\|_{\mathcal{C}^1} \mathrm{diam}( \mathcal{W}_{\Lambda_{\sigma,j}(u)}^{-1}(\D(0,{\rho_j}))),
\end{align*}
where the diameter is computed with respect to the distance induced by $\beta$ and the constant $C>0$ only depends on the choice of the $\mathcal{C}^1$-norm.  
By the length-area estimate (Lemma~\ref{lm:BriendDuval}), we have
\[
\mathrm{diam}( \mathcal{W}_{\Lambda_{\sigma,j}(u)}^{-1}(\D(0,{\rho_j})))^2 \leq \tau\cdot\frac{\textup{Area}(\Omega\cap \Lambda_{\sigma,j}(u))}{\min\{1,\frac{1}{2\pi}\log(1/{\rho_j})\}} 
=\tau\cdot\frac{\textup{Area}(\Omega\cap \Lambda_{\sigma,j}(u))}{|\log\rho_j|/(2\pi)}
\]
since $\mathcal{W}_{\Lambda_{\sigma,j}(u)}^{-1}(\D(0,{\rho_j})) \Subset \mathcal{W}_{\Lambda_{\sigma,j}(u)}^{-1}(\D(0,1))=\Omega\cap \Lambda_{\sigma,j}(u)$ 
in $\Lambda_{\sigma,j}(u)$ are holomorphic disks and
$\rho_j\geq1/2$.
Using Cauchy-Schwarz inequality gives
\begin{align*}
&\left| \sum_{\Omega} \int_{\Lambda_{\sigma,j}(u) \cap \Omega} \Psi\cdot \mathcal{W}_{\Lambda_{\sigma,j}(u)}^*(\lambda_{{\rho_j}}-\delta_{\rho_j} ) \right|\\
\leq &C\cdot\|\Psi\|_{\mathcal{C}^1}\cdot \sum_{\Omega} \left(\tau\cdot\frac{\textup{Area}(\Omega\cap \Lambda_{\sigma,j}(u))}{|\log\rho_j|/(2\pi)}\right)^{1/2} \\
\leq &C \left(\sum_{\Omega}\frac{\tau}{
|\log\rho_j|/(2\pi)}\right)^{1/2}\left(\sum_{\Omega}\textup{Area}(\Omega\cap \Lambda_{\sigma,j}(u))\right)^{1/2}\|\Psi\|_{\mathcal{C}^1}\\
\leq &C \left(\frac{\tau}{
|\log\rho_j|/(2\pi)}\right)^{1/2}N_\fm(\un)^{1/2}\left(\textup{Area}( \Lambda_{\sigma,j}(u))\right)^{1/2}\|\Psi\|_{\mathcal{C}^1},
\end{align*}
so that recalling that $N_\fm(\un)=N_{\cM_d^\fm}(\un)\leq C_1d^{|\un|}$
 by Bézout's theorem
and $ \textup{Area}(\Lambda_{\sigma,j}(u)) \leq C_2 d^{| \un|-n_j}$, where $C_1,C_2>0$ depend only on $d$, we have
\begin{align*}
\left| \sum_{\Omega} \int_{\Lambda_{\sigma,j}(u) \cap \Omega} \Psi\cdot \mathcal{W}_{\Lambda_{\sigma,j}(u)}^*(\lambda_{\rho_j}-\delta_{\rho_j} ) \right| 
&\leq C_3 \left(\frac{d^{-n_j}\tau}{
|\log\rho_j|/(2\pi)
}\right)^{1/2}  d^{|\un|}\|\Psi\|_{\mathcal{C}^1}, 
\end{align*}
where $C_3>0$ depends only on $d$. Similarly,
\begin{align*}
\left| \sum_{\Omega} \int_{\Lambda_{\sigma,1} \cap \Omega} \Psi\cdot \mathcal{W}_{\Lambda_{\sigma,1}}^*(\lambda_{\rho_1}-\delta_{\rho_1} ) \right| 
&\leq C_3 \left(\frac{d^{-n_1}\tau}{
	|\log\rho_1|/(2\pi)
}\right)^{1/2}  d^{|\un|}\|\Psi\|_{\mathcal{C}^1}. 
\end{align*}
  Since the right-hand sides are independent
of $u$ and $\sigma$, recalling \eqref{decomposition},
we have
\begin{align*}
\left| \langle \mu_{\un,\underline{\rho}}^\fm- T_{\un}^{2d-2,\fm}(\underline{\rho}), \Psi \rangle \right| 
\leq C_4 {\max_{1\leq j \leq 2d-2}} \left(\frac{1}{d^{n_j}|\log\rho_j|}\right)^{1/2} \|\Psi\|_{\mathcal{C}^1}, 
\end{align*}
where $C_4>0$ depends only on $d$ (and actually not on $K$). 
Hence \eqref{eq:CauchSchwarz} holds.

\subsection{Proof of \eqref{eq:nonCS}: the center of components}

Assume that $\Psi$ is $\mathcal{C}^2$ and test \eqref{decomposition} against $\Psi$.  Pick any $\sigma \in \mathrm{Stab}(\un)$, any $1\le j\le 2d-2$, and any  $(u_i)_{i<j}\in\tilde{\mathbb{S}}_{j-1}$ (if $j>1$). We continue to fix $\Omega$ as in Subsection~\eqref{sec:reductioncurve} for a while and
let $O:=\mathcal{W}_{\Lambda_{\sigma,j}(u)}^{-1}(0,\ldots,0)$ be the center of the disk $\Omega \cap \Lambda_{\sigma,j}(u)$. Then we have
\begin{align*}
&\int_{\Lambda_{\sigma,j}(u) \cap \Omega} \Psi \cdot \mathcal{W}_{\Lambda_{\sigma,j}(u)}^*(\lambda_{{\rho_j}}-\delta_{0})\\
=& \int_{\Lambda_{\sigma,j}(u) \cap \Omega} \left(\Psi -\Psi(O) \right) \cdot \mathcal{W}_{\Lambda_{\sigma,j}(u)}^*(\lambda_{{\rho_j}}) \\
=&  \int_{\Lambda_{\sigma,j}(u) \cap \Omega} \left(\Psi(z)-\left(\mathrm{D}_{O}\Psi\right)(z -O)  -\Psi(O)\right) ((\mathcal{W}_{\Lambda_{\sigma,j}(u)})^*(\lambda_{\rho_j}))(z),
\end{align*}
the latter equality in which holds by harmonicity, that is,
\[
\int_{\Lambda_{\sigma,j}(u) \cap \Omega} \left(\mathrm{D}_{O}\Psi\right)(z -O) \cdot (\mathcal{W}_{\Lambda_{\sigma,j}(u)})^*(\lambda_{{\rho_j}})(z)=  \left(\mathrm{D}_{O}\Psi\right)(O -O)=0.
\] 
By the mean value inequality, we have
\begin{align*}
&\left|\int_{\Lambda_{\sigma,j}(u) \cap \Omega} \Psi \cdot \mathcal{W}_{\Lambda_{\sigma,j}(u)}^*(\lambda_{{\rho_j}}-\delta_{0} ) \right|\\ 
&\le\int_{\Lambda_{\sigma,j}(u) \cap \Omega}  \left|\Psi(z)-\left(\mathrm{D}_{O}\Psi\right)(z -O)  -\Psi(O)\right| \cdot(\mathcal{W}_{\Lambda_{\sigma,j}(u)})^*(\lambda_{\rho_j})(z) \\
&\leq C \cdot \|\Psi\|_{\mathcal{C}^2}\cdot\mathrm{diam}( \mathcal{W}_{\Lambda_{\sigma,j}(u)}^{-1}\left(\D(0,{\rho_j}))\right)^2,
\end{align*}
where the diameter is again computed with respect to the distance induced by $\beta$ and the constant $C>$ depends only on the choice of the $\mathcal{C}^2$-norm.  
Again by the length-area estimate (Lemma~\ref{lm:BriendDuval}), we have
\begin{align*}
\left|\int_{\Lambda_{\sigma,j}(u) \cap \Omega} \Psi\cdot \mathcal{W}_{\Lambda_{\sigma,j}(u)}^*(\lambda_{{\rho_j}}-\delta_{0} ) \right|
& \leq C\cdot\|\Psi\|_{\mathcal{C}^2} \cdot \left(\tau\cdot\frac{\textup{Area}(\Omega\cap \Lambda_{\sigma,j}(u))}{|\log(1/2)|/(2\pi)}\right)
\end{align*}
since $\mathcal{W}_{\Lambda_{\sigma,j}(u)}^{-1}(\D(0,{\rho_j})) 
\Subset \mathcal{W}_{\Lambda_{\sigma,j}(u)}^{-1}(\D(0,1))=\Omega\cap \Lambda_{\sigma,j}(u)$ in $\Lambda_{\sigma,j}(u)$
are holomorphic disks (here, $\rho_j\equiv 1/2$ by definition). Hence
\begin{align*}
\left|\int_{\Lambda_{\sigma,j}(u) \cap \Omega} \Psi \cdot \mathcal{W}_{\Lambda_{\sigma,j}(u)}^*(\lambda_{{\rho_j}}-\delta_{0} ) \right|
& \leq C'\cdot\|\Psi\|_{\mathcal{C}^2}\cdot\textup{Area}(\Omega\cap \Lambda_{\sigma,j}(u)),
\end{align*}
where $C'>0$ depends only on $d$, so 
recalling that 
$\sum_{\Omega}\textup{Area}(\Omega\cap \Lambda_{\sigma,j}(u))\leq \textup{Area}(\Lambda_{\sigma,j}(u)) 
\leq C_1d^{| \un|-n_j}$, where $C_1>0$ depends only on $d$,
\begin{align*}
\left| \sum_{\Omega}\int_{\Lambda_{\sigma,j}(u) \cap \Omega} \Psi\cdot\mathcal{W}_{\Lambda_{\sigma,j}(u)}^*(\lambda_{{\rho_j}}-\delta_{0} ) \right| 
& \leq C'C_1\cdot\|\Psi\|_{\mathcal{C}^2} d^{|\un|-n_j}.
\end{align*}
 Similarly,
 \[
 \left| \sum_{\Omega}\int_{\Lambda_{\sigma,1}\cap \Omega} \Psi\cdot\mathcal{W}_{\Lambda_{\sigma,1}}^*(\lambda_{{\rho_1}}-\delta_{0} ) \right| \leq C'C_1\cdot\|\Psi\|_{\mathcal{C}^2} d^{|\un|-n_1}.\]
  Since the right-hand sides are independent
of $u$ and $\sigma$, recalling \eqref{decomposition}, we have
\begin{gather*}
\left| \langle \mu_{\un}^\fm- T_{\un}^{2d-2,\fm}(\underline{\rho}), \Psi \rangle \right| \leq 
C''\|\Psi\|_{\mathcal{C}^2} {\max_{1\leq j\leq 2d-2}}d^{-n_j},  
\end{gather*}
where $C''>0$ depends only on $d$. Hence \eqref{eq:nonCS} holds. \qed

\begin{remark} \label{6.1} \normalfont 
One cannot hope to the (even qualitative) convergence of $\mu_\un$ to $\mu_\bif$ for bounded DSH observables. Indeed, consider the DSH function 
\[\phi_A:= \min(\max\{\log |Z|,-A\}/A, 0)\] 
on $\C^{2d-2}$ for some $A>0$, 
which is identically equal to $0$ outside the unit ball and equal to $-1$ in $0$. Furthermore, it is DSH and its DSH norm can be taken arbitrarily small by taking $A$ large enough. By a change of coordinates, one can then construct a DSH function in $\mathcal{M}_{d}$ which is equal to $-1$ at the center of a given hyperbolic component and $0$ outside that component, with arbitrarily small DSH norm. Summing over sufficiently many hyperbolic components gives a contradiction to the convergence of  $\mu_\un$ to $\mu_\bif$. 

Nevertheless, it would be interesting to find a space of test functions independent of the choice of coordinates for which a similar statement as Theorem~\ref{tm:vitessecenters} holds. 
\end{remark}

\subsection{Application: lower bound on the local number of components of type $\un$}
For a set $L \subset  \mathcal{M}_d$ and $\un \in \N^{2d-2}$, we denote by $N_L(\un)$ the number of hyperbolic components of type $\un$  in $\cM_d$ whose center is in $L$. As a consequence of Theorem~\ref{tm:vitessecenters}, we prove the following local lower bound on the number $N_L(\un)$ of components of type $\un$ in $\cM_d$ intersecting $L$. This result can be seen as a local version of Theorem~\ref{tm:counting} (since $N_{\mathcal{M}_d}(\un)=N(\un)$).

\begin{theorem}\label{tm:countinglower}
	For any open subset $L$ in $\mathcal{M}_d$ 
		such that $\mu_\bif(L)>0$, 
	we have
	\[
	\liminf_{\min_j n_j\to\infty}\frac{N_L(\un)}{d^{|\un|}}>0.
	\]
	Moreover, for any $(n_1,\ldots,n_{d-1})\in(\N^*)^{d-1}$ 
	such that $n_i \neq n_j$ for any $1\leq i,j \leq d-1$, we have
	\[
	\liminf_{\min_{d\leq j\leq 2d-2}n_j\to\infty}\frac{N(n_1,\ldots,n_{d-1},n_d,\ldots,n_{2d-2})}{d_{|(n_1,\ldots,n_{d-1},n_d,\ldots,n_{2d-2})|}}>0.
	\]
\end{theorem}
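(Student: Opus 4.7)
The plan for the first assertion is to combine Theorem~\ref{tm:vitessecenters}(1) with the inner regularity of $\mu_\bif$. From the definition of $\mu_\un$ one has
\[
N_L(\un) \;=\; \frac{d_{|\un|}}{\#\mathrm{Stab}(\un)}\,\mu_\un(L),
\]
and since $\#\mathrm{Stab}(\un)\le (2d-2)!$ and $d_{|\un|}/d^{|\un|}\to 1$, it is enough to bound $\mu_\un(L)$ from below by a positive constant as $\min_j n_j\to\infty$. I would pick a compact set $K\subset L$ with $\mu_\bif(K) > \mu_\bif(L)/2$ (possible by inner regularity of the finite positive Borel measure $\mu_\bif$) and a $\mathcal{C}^2$-cutoff $\chi:\mathcal{M}_d\to[0,1]$ compactly supported in $L$ and identically $1$ on a closed neighborhood of $K$. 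Then Theorem~\ref{tm:vitessecenters}(1) applied to $\chi$ yields
\[
\mu_\un(L)\;\ge\;\int\chi\,d\mu_\un\;\ge\;\int\chi\,d\mu_\bif-C_K\|\chi\|_{\mathcal{C}^2}\max_j\frac{\sigma_2(n_j)}{d^{n_j}}\;\ge\;\frac{\mu_\bif(L)}{4}
\]
for $\min_j n_j$ large, whence $\liminf_{\min_j n_j\to\infty}N_L(\un)/d^{|\un|} \ge \mu_\bif(L)/(4(2d-2)!) > 0$.

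The second assertion is more delicate because only $d-1$ of the periods tend to infinity, so Theorem~\ref{tm:vitessecenters} does not apply directly: indeed the error term $\max_j\sigma_2(n_j)/d^{n_j}$ is then bounded below by the (fixed) value at $n_1$. The idea is to reduce to a sub-family in which only the varying periods are visible. Working in $\mathcal{M}_d^\cm$, I would consider the algebraic subvariety
\[
\mathcal{S}\;:=\;\bigcap_{j=1}^{d-1}\Per_j(n_j)\;\subset\;\mathcal{M}_d^\cm
\]
(with $\Per_j(n_j)$ as in Section~\ref{sec:genericity}) and pass to its Zariski-open subset on which the first $d-1$ critical points lie in pairwise disjoint super-attracting cycles of \emph{exact} periods $n_1,\ldots,n_{d-1}$, extracted by Möbius inversion (valid since the $n_j$ are distinct). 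The transversality Theorem~\ref{tm:trans1} then guarantees that $\mathcal{S}$ is smooth of pure dimension $d-1$ there. On $\mathcal{S}$ the first $d-1$ critical points are persistently super-attracting, so the bifurcation is driven entirely by $c_d,\ldots,c_{2d-2}$ and one has a well-defined relative bifurcation measure $\mu_\bif^\mathcal{S}:=(T_\bif|_\mathcal{S})^{d-1}$. Running the argument of the first assertion within the sub-family $\mathcal{S}$, with the $d-1$ varying periods $n_d,\ldots,n_{2d-2}\to\infty$, would yield a lower bound of order $\prod_{j=d}^{2d-2}d^{n_j}$ on the number of centers of hyperbolic components of $\mathcal{S}$ of type $(n_d,\ldots,n_{2d-2})$. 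Since the $(2d-2)!$-to-$1$ branched cover $\mathcal{M}_d^\cm\to\mathcal{M}_d$ is unramified over the disjoint-type hyperbolic locus, these centers project, up to the combinatorial factor $(2d-2)!/\#\mathrm{Stab}(\un)$, to centers of hyperbolic components of $\mathcal{M}_d$ of type $\un$, yielding the required bound $N(\un)\gtrsim d_{|\un|}$.

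The main obstacle is to establish $\|\mu_\bif^\mathcal{S}\|>0$, i.e.\ that the sub-family $\mathcal{S}$ is genuinely bifurcating with enough positivity for the $(d-1)$-fold self-intersection of $T_\bif|_\mathcal{S}$ to be non-trivial; the interior of any hyperbolic component in $\mathcal{S}$ contributes zero to this mass, so positivity must come from the bifurcation locus within $\mathcal{S}$. I would approach this by exhibiting a postcritically finite parameter in $\mathcal{S}$ whose last $d-1$ critical points are strictly preperiodic to distinct repelling cycles—by Dujardin--Favre-type criteria such parameters belong to $\supp(\mu_\bif^\mathcal{S})$ and produce positive local mass—or alternatively by invoking Theorem~\ref{tm:counting} in $\mathcal{M}_d$ with periods $(n_1,\ldots,n_{d-1},m_d,\ldots,m_{2d-2})$ for arbitrarily large $m_d,\ldots,m_{2d-2}$, producing many centers in $\mathcal{S}$ and transferring, via the parametrization of hyperbolic components by multipliers (Theorem~\ref{prouve_le_bebe}) together with a weak-compactness argument on $\mathcal{S}$, the resulting counts into positivity of $\|\mu_\bif^\mathcal{S}\|$.
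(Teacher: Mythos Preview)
Your proof of the first assertion is essentially identical to the paper's: inner regularity of $\mu_\bif$ to produce a compact subset of $L$ of positive mass, a smooth cutoff, and Theorem~\ref{tm:vitessecenters}(1) to compare $\mu_\un$ with $\mu_\bif$.

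For the second assertion you have the right architecture---restrict to the subvariety on which $d-1$ super-attracting cycles of the prescribed periods are already present, then rerun the first argument with the remaining $d-1$ periods going to infinity---but the heart of the matter, namely $\|\mu_\bif^{\mathcal S}\|>0$, is left as a sketch, and neither of your two proposed routes closes the gap. Your second route (invoke Theorem~\ref{tm:counting} with large $m_d,\ldots,m_{2d-2}$ but fixed $n_1,\ldots,n_{d-1}$) runs into exactly the obstacle you identified at the outset: the error term $\max_j\sigma_2(n_j)/d^{n_j}$ does not vanish when some $n_j$ stay bounded, so Theorem~\ref{tm:counting} gives no usable lower bound. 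Your first route (exhibit a Misiurewicz-type parameter in $\mathcal S$) is the right idea but you do not construct such a parameter; its existence inside $\mathcal S$ is precisely the nontrivial point. The paper resolves this via a general criterion (Lemma~\ref{lm:criterion}): any \emph{reduced} algebraic family containing no flexible Latt\`es map satisfies $(dd^cL)^k>0$, where $k$ is its dimension. The proof of that lemma uses McMullen's rigidity theorem together with an inductive slicing by $\Per_m(e^{2i\pi\theta})$ hypersurfaces to manufacture the Misiurewicz-type parameter you were hoping to invoke. The paper then works directly in $\mathcal M_d$ with $\Lambda=\bigcap_{j=1}^{d-1}\Per_{n_j}(0)$, checks $\Lambda\neq\varnothing$ (via degree $d$ polynomials), that $\Lambda$ is reduced and Latt\`es-free (trivially, since every map in $\Lambda$ has attracting cycles), and applies the lemma; the Fatou--Shishikura bound then forces $\dim\Lambda=d-1$.

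A minor side issue: Theorem~\ref{tm:trans1} concerns maps where \emph{all} $2d-2$ critical points are periodic, so it does not directly give smoothness of your $\mathcal S$ at a generic point, where only $d-1$ critical points are constrained. The paper's argument does not need smoothness of $\Lambda$ at all.
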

\begin{proof}[Proof of the first assertion of Theorem \ref{tm:countinglower}] 
	Let $L$ be an open subset in $\mathcal{M}_d$. As $\mu_\bif$ is inner regular, we can find a compact set $K \Subset L$ such that $\mu_\bif(K) >0$. Pick 
	a smooth cut-off function $\Psi$ 
	with support in $L$ such that $\Psi=1$ on $K$, 
	so that $\langle \mu_\bif, \Psi\rangle \geq \mu_\bif(K) >0$. By  Theorem~\ref{tm:vitessecenters} applied to the compact set $K':= \supp{\Psi}$, we have that:
\[ 
\left| \langle \mu_\un-\mu_\bif, \Psi \rangle  \right|
\leq  C_{K'}\cdot
\max_{1\leq j \leq 2d-2}\left( \frac{\sigma_2(n_j)}{d^{n_j}}
\right)\cdot\|\Psi\|_{\mathcal{C}^2} .\]
As $\langle \mu_\un, \Psi \rangle \leq  \# \mathrm{Stab}(\un). N_L(\un) /d_{|\un|}$ by definition of $\mu_\un$, we deduce:
\[ 
\mu_\bif(K) \leq  \frac{\# \mathrm{Stab}(\un) .N_L(\un)}{d_{|\un|}}+
 C_{K'}\cdot
\max_{1\leq j \leq 2d-2}\left( \frac{\sigma_2(n_j)}{d^{n_j}}
\right)\cdot\|\Psi\|_{\mathcal{C}^2} .\]
Now, the last term on the right-hand side of the equality goes to $0$ when  $\min_j n_j \to \infty$, so it is $< \mu_\bif(K) /2$ for $\min_j n_j$ large enough. The result follows since $d_{|\un|} \simeq d^{|\un|}$. 
\end{proof}

\begin{remark} \normalfont 
	Recall that
	for any Misiurewicz rational map $f_0$, the class $[f_0] \in \mathcal{M}_d$ is in 
	$\mathrm{supp}(\mu_\bif)$ \cite{Article1, Article2}. 
	Hence, in any small neighborhood $V([f_0])$ of $[f_0]$, they are $O(d^{|\un|})$ hyperbolic components of type $\un$ that intersect  $V([f_0])$.
\end{remark}

\begin{proof}[Proof of the second assertion of Theorem~\ref{tm:countinglower}]
 We consider the algebraic variety $\Lambda$ defined by $\Lambda= \bigcap_{j=1}^{d-1}\Per_{n_j}(0) \cap \mathcal{M}_d$.
	
	Observe first that $\Lambda\neq \varnothing$ since we can find a polynomial of degree $d$ in $\Lambda$ : recall that the algebraic relations defining $\Per_{n_j}(0)$ do not intersect at the boundary of the moduli space of polynomials of degree $d$ so their intersection is non-empty by cohomology (this is where we use that the $n_j$ are distinct). In particular, $\Lambda$ has complex dimension at least $d-1$. We now rely on the following.
	
	\begin{lemma}\label{lm:criterion}
		Let $(f_\lambda)_{\lambda\in X}$ be any algebraic family of degree $d$ rational maps of dimension $k\geq1$ which is reduced, i.e. $\{\lambda\in X\, ; [f_\lambda]=[f_{\lambda_0}]\}$ is finite for all $\lambda_0\in X$. Assume $X$ contains no flexible Latt\`es parameter. Then $(dd^cL)^k>0$ on $X$.
	\end{lemma}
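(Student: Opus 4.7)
The plan is to argue by contradiction: assume $(dd^cL)^k\equiv 0$ on $X$ and derive, via McMullen's rigidity theorem for the multiplier spectrum, that the family must either be isotrivial (contradicting the reducedness of $(f_\lambda)_{\lambda\in X}$) or consist of flexible Latt\`es maps (contradicting the standing hypothesis). Since $X$ is quasi-projective, after restricting to a Zariski-dense open subset we may assume $X$ is smooth; the Lyapunov function $L$ is then continuous and psh, so $(dd^cL)^k$ is a well-defined positive Radon measure by Bedford--Taylor theory.

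First I would handle the base case $k=1$. If $dd^cL\equiv 0$ on the curve $X$, then $L$ is pluriharmonic, so the Lyapunov exponent is locally constant. McMullen's theorem on the rigidity of the multiplier spectrum (combined with the approximation of $L$ by averages of logarithms of multipliers, which is precisely Theorem~\ref{tm:approx1}) then forces $(f_\lambda)_{\lambda\in X}$ to be either isotrivial or a family of flexible Latt\`es maps; both possibilities are excluded by the hypotheses of the lemma.

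For general $k$, the strategy is to reduce to the one-dimensional case by slicing. The key observation is that for a continuous psh function with vanishing top-degree Monge--Amp\`ere mass, Demailly's theory of generic rank of closed positive currents (or equivalently a careful use of Bedford--Taylor approximation by smooth psh regularizations) produces, locally on $X$, a holomorphic foliation by complex disks $(\gamma_t)_{t\in B}$, $B$ a $(k-1)$-dimensional base, such that $L|_{\gamma_t}$ is harmonic on a generic leaf. Applying the base case to each such leaf yields that the restricted family is isotrivial on a dense set of leaves (flexible Latt\`es is excluded by assumption). Consequently, the classifying map $\Phi:X\to\mathcal{M}_d$, $\lambda\mapsto[f_\lambda]$, is constant along a whole positive-dimensional foliation of $X$, contradicting the reducedness assumption that $\Phi^{-1}([f_{\lambda_0}])$ is finite for every $\lambda_0$.

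The main obstacle is the slicing step in higher dimension: extracting from the vanishing $(dd^cL)^k\equiv 0$ a bona fide holomorphic foliation along whose leaves $L$ becomes harmonic. For smooth psh functions this is immediate from a Levi-form/rank argument, but $L$ is only continuous, so one must either run the argument on a dense subset of generic complex lines through points of $X$ (using Siu's slicing theory together with a Fubini argument on the mass of $dd^cL\wedge\beta^{k-1}$) or, more concretely, use Theorem~\ref{tm:approx1} to approximate $L$ uniformly on compacts by the $\omega$-psh functions $\lambda\mapsto (nd_n)^{-1}\log|p_n(\lambda,re^{i\theta})|$, transfer the Monge--Amp\`ere vanishing to these regularizations, and then apply McMullen to the one-parameter sub-families picked out by this rank-reduction. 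Once the slicing is in hand, the reduction to McMullen's theorem is routine.
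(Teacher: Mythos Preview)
Your base case $k=1$ is morally correct but the write-up is off: $dd^cL\equiv 0$ on a curve makes $L$ harmonic, not locally constant, and Theorem~\ref{tm:approx1} is irrelevant here. The clean chain is: $dd^cL\equiv 0$ means (by DeMarco) the bifurcation locus is empty, so the family is $J$-stable, and then McMullen's rigidity of stable \emph{algebraic} families forces it to be isotrivial or flexible Latt\`es.

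The higher-dimensional step has a genuine gap. You want to extract from $(dd^cL)^k\equiv 0$ a holomorphic foliation by curves along whose leaves $L$ is harmonic. For a \emph{smooth} psh function this would be a rank argument on the Levi form, but $L$ is only continuous and there is no reason for such a foliation to exist; regularizations of $L$ need not have vanishing Monge--Amp\`ere, and the ``rank reduction'' you allude to does not go through for Bedford--Taylor products of merely continuous potentials. This is not a technicality: there are continuous psh $u$ on $\C^2$ with $(dd^cu)^2=0$ for which no such holomorphic leaf structure is available. So as written the argument does not close.

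The paper avoids this entirely by arguing constructively rather than by contradiction. It produces directly a parameter $\lambda_k\in X$ lying in $\mathrm{supp}((dd^cL)^k)$: after marking critical points, McMullen's theorem gives a nonempty bifurcation locus, so one finds nested hypersurfaces $\Per_{m_j}(e^{2i\pi\theta_j})$ cutting $X$ down to a curve with nonempty bifurcation locus; then, one at a time, Montel's theorem on these curves manufactures non-persistent preperiodic critical relations $f_\lambda^{q_j}(c_{i_j}(\lambda))=f_\lambda^{q_j+r_j}(c_{i_j}(\lambda))$ to repelling cycles of distinct periods, arranged so that the $k$ resulting hypersurfaces intersect properly at $\lambda_k$. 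One then invokes the dynamical characterization of $\mathrm{supp}(T_\bif^k)$ (a Misiurewicz-type parameter with $k$ independent such relations lies in the support). The key point is that the paper uses the \emph{dynamical} description of $\mathrm{supp}((dd^cL)^k)$ rather than pure pluripotential theory, which sidesteps your slicing issue completely.
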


	By definition, $\Lambda$ is reduced  and, since any $f$ with $[f]\in\Lambda$ has (at least) an attracting cycle, $\Lambda$ contains no class of Latt\`es maps. Let $k:=\dim\Lambda\geq d-1$. Then $(dd^cL)^k>0$ on $\Lambda$ by Lemma~\ref{lm:criterion} above. Since any map $f$ with $[f]\in\Lambda$ has $d-1$ attracting cycles, we also have $(dd^cL)^d=0$, so that $k= d-1$ and $\mu_{\bif,\Lambda}:=\left.(dd^c L)^{d-1}\right|_{\Lambda}$ is a non-zero positive measure on $\Lambda$.
	
	Finally, we proceed as in the proof of the first assertion of 
	Theorem \ref{tm:countinglower}
	to produce sufficiently many hyperbolic components in $\Lambda$ using that:
	\begin{itemize}
		\item a hyperbolic component $\Omega'$ in $\Lambda$
		such that any $f$ with $[f]\in \Omega'$ has $d-1$ distinct attracting cycles of respective period $n_d,\ldots,n_{2d-2}$ corresponds uniquely to a hyperbolic component $\Omega$ in $\mathcal{M}_d$
		such that $f\in \Omega$ has $2d-2$ distinct attracting cycles of respective period $n_1,\ldots,n_{2d-2}$ and that $\Omega'= \Omega\cap \Lambda$.
		\item the multiplier map $\mathcal{W}':\Omega'\rightarrow\D^{d-1}$ on such component is still a biholomorphism as the restriction of the multiplier map $\mathcal{W}:\Omega\rightarrow\D^{2d-2}$ to $\Omega\cap \Lambda$.
		\item in particular, we can prove an analogue of Theorem~\ref{tm:vitessecenters} in $\Lambda$ (the proof is exactly the same).
	\end{itemize}
We then proceed as above.\end{proof}
	
We now give the proof of Lemma~\ref{lm:criterion}. The proof, which follows closely that of \cite[Lemmas 2.4 and 2.5]{buffepstein}, relies on the rigidity of stable algebraic families due to McMullen.
\begin{proof}[Proof of Lemma~\ref{lm:criterion}]
	Up to taking a finite branched cover of $X$, we may assume that $(f_\lambda)_{\lambda\in X}$ is endowed with $2d-2$ marked critical points, i.e. that there exist morphisms $c_1,\ldots,c_{2d-2}:X\to\P^1$ such that $\mathrm{Crit}(f_\lambda)=\{c_1(\lambda),\ldots,c_{2d-2}(\lambda)\}$ counted with multiplicity. Since $(f_\lambda)_{\lambda\in X}$ is a reduced algebraic family that contains no flexible Latt\`es maps, its bifurcation locus $\mathrm{Bif}(X)$ is non-empty by \cite[Theorem 2.2]{McMullen4}. 
	By Ma\~n\'e-Sad-Sullivan's description of bifurcation loci~\cite{MSS}, this implies that there exists $m_1\geq1$ and $\theta_1\in\R\setminus\Q$ such that $\Per_{m_1}(e^{2i\pi\theta_1})$ is a non-empty proper subvariety of $X$. We repeat the argument to find $m_2>m_1$ and $\theta_2\in\R\setminus\Q$ such that $\Per_{m_2}(e^{2i\pi\theta_2})\cap \Per_{m_1}(e^{2i\pi\theta_1})\neq\varnothing$ has codimension $2$ in $X$. Applying this argument inductively gives $m_1<\cdots<m_k$ and $\theta_1,\ldots,\theta_k\in\R\setminus\Q$ such that
	$\Per_{m_1}(e^{2i\pi\theta_1})\cap\cdots\cap\Per_{m_k}(e^{2i\pi\theta_k})$ is a non-empty finite subset of $X$.
	
	We define an algebraic curve in $X$ by
	$C:=\bigcap_{j=1}^{k-1}\Per_{m_j}(e^{2i\pi\theta_j})$.
	As there exists a parameter $\tilde{\lambda}_0\in C$ for which $f_{\tilde{\lambda}_0}$ has a neutral cycle, non-persistent in $C$, the bifurcation locus $\mathrm{Bif}(C)=\supp(dd^cL|_C)$ of $C$ is non-empty. 
	Pick $\lambda_0\in \mathrm{Bif}(C)$ and let $U$ be a small neighborhood of that $\lambda_0$ in $X$. By Montel Theorem, there exists $\lambda_1\in U\cap C$, $1\leq i_1\leq 2d-2$ and $q_1,r_1\geq1$ such that $f_{\lambda_1}^{q_1}(c_{i_1}(\lambda_1))=f_{\lambda_1}^{q_1+r_1}(c_{i_1}(\lambda_1))$ and $f_{\lambda_1}^{q_1}(c_{i_1}(\lambda_1))$ is a repelling periodic point of $f_{\lambda_1}$ of exact period $r_1$, and such that this relation is not persistent through $C$ (see e.g.~\cite[Lemma 2.3]{favredujardin}). Let
	\[C_1:=\{\lambda\in U\, ; \ f_\lambda^{q_1}(c_{i_1}(\lambda))=f_{\lambda}^{q_1+r_1}(c_{i_1}(\lambda))\}\cap\bigcap_{j=1}^{k-2}\Per_{m_j}(e^{2i\pi\theta_j})\subset U.\]
	By the same argument as above, we find $\lambda_2\in U\cap C_1$ very close to $\lambda_1$, $1\leq i_2\leq 2d-2$ distinct from $i_1$, $q_2\geq1$ and $r_2>r_1$ such that $f_{\lambda_2}^{q_2}(c_{i_2}(\lambda_2))=f_{\lambda_2}^{q_2+r_2}(c_{i_2}(\lambda_2))$ and $f_{\lambda_2}^{q_2}(c_{i_2}(\lambda_2))$ is a repelling periodic point of $f_{\lambda_2}$ of exact period $r_2$, and such that this relation is not persistent through $C_1$. By a finite induction, we find a parameter $\lambda_{k-1}\in X$, positive integers $q_1,\ldots,q_{k-1}$ and $r_1<r_2<\ldots<r_{k-1}$ and pairwise distinct indices $i_1,\ldots,i_{k-1}$ such that for all $1\leq j\leq k-1$:
	\[\lambda_{k-1}\in X_j:=\{\lambda\in X\, ; \ f_{\lambda}^{q_j}(c_{i_j}(\lambda))=f_{\lambda}^{q_j+r_j}(c_{i_j}(\lambda))\},\]
 $f_{\lambda_{k-1}}^{q_j}(c_{i_j}(\lambda_{k-1}))$ is a repelling periodic point for $f_{\lambda_{k-1}}$ of exact periods $r_j$ and the intersection of the $X_j$'s is proper at $\lambda_{k-1}$. 
	Moreover, the intersection $X_1\cap\cdots \cap X_{k-1}\cap\Per_{m_k}(e^{2i\pi\theta_k})$ is proper, hence we may proceed as above to find $\lambda_k\in X_1\cap\cdots \cap X_{k-1}$, positive integers $q_k$ and $r_k$ and $i_k\notin \{i_1,\ldots,i_{k-1}\}$ such that 
	\[\lambda_k\in X_k:=\{\lambda\in X\, ; \ f_{\lambda}^{q_k}(c_{i_k}(\lambda))=f_{\lambda}^{q_k+r_k}(c_{i_k}(\lambda))\}\]
	and, for all $1\leq j\leq k-1$, $f_{\lambda_{k}}^{q_j}(c_{i_j}(\lambda_k))$ is a repelling periodic point for $f_{\lambda_{k}}$ of exact periods $r_j$ and the intersection of the $X_j$'s is proper at $\lambda_{k}$. 
	By \cite[Theorem 6.2]{Article1}, we have $\lambda_k\in\supp((dd^cL)^k)$ and the proof is complete.
\end{proof}

\begin{remark} \normalfont The positivity of $N_L(\un)$ and $N(\un)$ is only guaranteed by Theorem~\ref{tm:countinglower} when all the $n_j$ are larger than some $n_0$ (or at least half of them).  Indeed, consider the case where $d+1$ of the $n_i$ are equal to $1$ and $d\geq 3$: then a rational map of degree $d$ has exactly $d+1$ fixed points (with multiplicity) and at most $d$ of them are attracting by the holomorphic index formula (see~\cite[\S9]{Milnor4}). So we have $N(1,\dots,1, n_{d+2}, \dots, n_{2d-2} )=0$. For $d=3$ it is also possible to get 0 components by allowing to many distinct $2$-cycles.
	Nevertheless, an explicit bound of $n_0$ would be highly interesting.     
\end{remark}

\section{Distribution of hyperbolic maps in $\mathcal{P}_d^\cm$}\label{sec:poly}

\subsection{A good parametrization of $\mathcal{P}_d^\cm$}

We refer to \cite[\S 5]{favredujardin} and \cite[\S 2]{distribGV} for the material of this section.
Recall that the \emph{critically marked moduli space} $\mathcal{P}_d^\cm$ of degree $d$ polynomials is the space of affine conjugacy classes of degree $d$ polynomials with $d-1$ marked critical points in $\C$. We define a finite branched cover of $\C^{d-1}\to\mathcal{P}_d^\cm$ as follows. For $c=(c_1,\ldots,c_{d-2})\in\C^{d-2}$ and $a\in\C$, let
\[P_{c,a}(z):=\frac{1}{d}z^d+\sum_{j=2}^{d-1}(-1)^{d-j}\frac{\sigma_{d-j}(c)}{j}z^j+a^d~, \ z\in\C~,\]
where $\sigma_k(c)$ is the monic elementary degree $k$ symmetric polynomial in the $c_i$'s. This family is known to be a finite branched cover of $\mathcal{P}_d^\cm$. Remark also that the (finite) critical points of $P_{c,a}$ are exactly $c_0, c_1,\ldots,c_{d-2}$, taking into account their multiplicity, where we set $c_0:=0$, and that they depend algebraically on $(c,a)\in\C^{d-1}$.
From now on,
we work on the parameter space $\C^{d-1}$ of the family
$(P_{c,a})_{(c,a)\in\C^{d-1}}$ rather than
$\mathcal{P}_d^\cm$ itself, without loss of generality.

The dynamical Green function of $P_{c,a}$ is the continuous psh function $g_{c,a}:\C\to\R_+$ defined by
\[g_{c,a}(z):=\lim_{n\to\infty}d^{-n}\log^+|P_{c,a}^n(z)|, \ z\in\C,\]
where the limit is locally uniform in $(c,a,z)\in\C^d$. For any  $0\le j\le d-2$, the function $g_j(c,a):=g_{c,a}(c_j)$ is psh and continuous on $\C^{d-1}$ and, setting $T_j:=dd^cg_j$, we have $dd^cL=\sum_jT_j$ and $T_j\wedge T_j=0$.

In this family it is now classical to define the bifurcation measure on $\C^{d-1}$ as
\[\mu_\bif:=\bigwedge_{j=0}^{d-2}T_j=\frac{1}{(d-1)!}(dd^cL)^{d-1}.\]
This measure is a probability measure on $\C^{d-1}$. Moreover, its support is compact and coincides with the Shilov boundary of the connectedness locus $\mathcal{C}_d:=\{(c,a)\in \C^{d-1}\, ; \J_{P_{c,a}}$ is connected$\}=\{(c,a)\in\C^{d-1}\, ; \ \max_{j}g_j(c,a)=0\}$. For any $n\in\N^*$, we set
\[
D_n:=\sum_{k:k|n}\mu\left(\frac{n}{k}\right)d^k;
\]
$d^n=\sum_{k|n}D_k$ by M\"obius inversion, and
\begin{gather*}
D_n=\begin{cases}
     d_n-1 &\text{if }n=1,\\
     d_n & \text{if }n\ge 1.
    \end{cases}
\end{gather*}
For any $n\in\N^*$, the $n$-th \emph{dynatomic polynomial} of 
$P_{c,a}$ (rather than its lift) is defined as
\[
\Phi_n^*(P_{c,a},z):=\prod_{k:k|n}\left(P_{c,a}^k(z)-z\right)^{\mu(n/k)},
\]
and for any $0\leq j\leq d-1$ and any $n\in\N^*$, we set
\[
\Per_j(n):=\{(c,a)\in\C^{d-1}\, ; \ \Phi_{n}^*(P_{c,a},c_j)=0\}
\]
(cf.\ Subsection \ref{sec:genericity} for $\cM_d^\cm$).
The variety $\Per_j(n)$ is an algebraic hypersurface of $\C^{d-1}$ of degree $D_n$ (hence of degree $d_n$ for $n\geq2$) which is contained in $\{(c,a)\in\C^{d-1}\, ; \ g_{c,a}(c_j)=0\}$. Moreover, the following holds (see~\cite[Theorem 6.1]{favregauthier}).

\begin{theorem}\label{tm:transverspoly}
For any $\un=(n_0,\ldots,n_{d-2})\in(\N^*)^{d-1}$ satisfying $\min_jn_j>1$ and
any $(c,a)\in\bigcap_{j=0}^{d-1}\Per_j(n_j)$ such that $P_{c,a}$ 
has only simple critical points  in $\C$,
the $(d-1)$ hypersurfaces $\Per_j(n_j)$ are smooth and intersect transversely 
 at $(c,a)$.
\end{theorem}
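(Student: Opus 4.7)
The plan is to adapt the Epstein–Buff transversality argument (used in Theorem~\ref{tm:trans1} and Corollary~\ref{tm:transversality-ratd0}) to the polynomial slice $\C^{d-1}\to\mathcal{P}_d^\cm$. First I would reduce to the invertibility of a Jacobian. Since $c_j$ is a simple critical point, its multiplier as a periodic point is $0$, which is never a primitive root of unity, so the hypothesis $\Phi_{n_j}^*(P_{c,a}, c_j)=0$ forces $c_j$ to have exact period $n_j$. Expanding $\Phi_{n_j}^*=\prod_{k\mid n_j}(P^k(z)-z)^{\mu(n_j/k)}$ then shows that on a neighborhood of $(c,a)$ one has $\Phi_{n_j}^*(P_{c',a'}, c'_j)=V_j(c',a')\cdot U_j(c',a')$, with $U_j$ a non-vanishing holomorphic unit and $V_j(c',a'):=P_{c',a'}^{n_j}(c'_j)-c'_j$. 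Hence $\Per_j(n_j)=\{V_j=0\}$ locally, and the theorem reduces to showing that $D_{(c,a)}\mathcal{V}$ is invertible for $\mathcal{V}:=(V_0,\ldots,V_{d-2}):\C^{d-1}\to\C^{d-1}$.

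Next I would apply the Buff–Epstein framework. Let $\zeta\in\ker D_{(c,a)}\mathcal{V}$ and view it as $\tilde\zeta\in T_{P_{c,a}}\rat_d$ through the inclusion $\C^{d-1}\hookrightarrow\rat_d$. The inductive construction of Lemma~\ref{lmdirect} produces a vector field $\tau$ on the finite postcritical set $\mathcal{P}(P_{c,a})$ guided by $\tilde\zeta$; at $\infty$ one sets $\tau(\infty):=0$, consistent with the fact that the deformation stays among polynomials and therefore preserves $\infty$ as a fixed critical point (so $\tilde\zeta(\infty)=0$). Since $P_{c,a}$ is postcritically finite, is not a Latt\`es map (being a polynomial), and is not conjugate to $z^{\pm 2}$ (the hypothesis $\min_j n_j > 1$ rules out any fixed critical point in $\C$), Proposition~\ref{prop:BE} then gives $\tilde\zeta\in T_{P_{c,a}}\mathcal{O}(P_{c,a})$.

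To conclude, I would invoke a direct analog of Lemma~\ref{lm:transorbit}: the tangent space of the family $(P_{c,a})$, viewed inside $T_{P_{c,a}}\rat_d$, intersects $T_{P_{c,a}}\mathcal{O}(P_{c,a})$ only in $\{0\}$. Indeed, if $P_{c_t,a_t}=m_t^{-1}\circ P_{c,a}\circ m_t$ for a germ $m_t\in\PSL_2(\C)$ with $m_0=\id$, the normalizations defining the family---leading coefficient $1/d$, vanishing $z^{d-1}$-coefficient, and $c_0\equiv 0$---impose enough first-order linear conditions on $\dot m$ to force $\dot m=0$, hence $\zeta=0$ and $D_{(c,a)}\mathcal{V}$ is an isomorphism of $(d-1)$-dimensional spaces.

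The hardest step will be the well-definedness of $\tau$ at points of coincidence among the critical orbits $\{P_{c,a}^k(c_i)\}$: whenever $P_{c,a}^k(c_i)=P_{c,a}^\ell(c_j)$ for some $i\neq j$ and $k,\ell\ge 0$, the condition $\zeta\in\ker D_{(c,a)}\mathcal{V}$ must still force $\dot v_{k,i}=\dot v_{\ell,j}$. This is the polynomial counterpart of Lemma~\ref{lmdirect}, whose proof crucially uses the simplicity of the critical points and the bound $\min_j n_j\ge 2$.
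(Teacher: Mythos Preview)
The paper does not give its own proof of this statement; it is quoted from \cite[Theorem~6.1]{favregauthier}. Your proposal---reducing to the invertibility of $D_{(c,a)}\mathcal{V}$, constructing a guided vector field $\tau$ on $\mathcal{P}(P_{c,a})$ via Lemma~\ref{lmdirect}, applying Proposition~\ref{prop:BE}, and then checking that the polynomial slice is transverse to $\mathcal{O}(P_{c,a})$---is exactly the argument used there, and is the natural polynomial adaptation of the proof of Theorem~\ref{tm:trans1}/Corollary~\ref{tm:transversality-ratd0} already given in the paper.

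One point deserves a word, since it is the place where the polynomial situation genuinely differs from \S\ref{Transversality of periodic critical orbit relations}: the critical point $\infty$ has multiplicity $d-1$, so is not simple for $d\ge 3$. This does not interfere with Lemma~\ref{lmdirect}, because $\infty$ is totally invariant and its postcritical orbit $\{\infty\}$ is disjoint from every finite critical orbit, so no coincidence $v_{k,i}=v_{\ell,j}$ ever involves $\infty$. For the guiding conditions at $\infty$, in the chart $w=1/z$ the family has the form $g_t(w)=d\,w^d+O(w^{d+1})$ (the leading coefficient $1/d$ being fixed), so $\tilde\zeta$ vanishes to order at least $d+1$ at $w=0$; hence $\eta_{\tilde\zeta}=-\tilde\zeta/g'$ vanishes to order $\ge 2$ there, and with $\tau(\infty)=0$ the equation $\tau=f^*\tau+\eta_{\tilde\zeta}$ holds at $\infty$. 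With this check your sketch is complete.
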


Pick any $\un=(n_0,\ldots,n_{d-2})\in(\N^*)^{d-1}$.
We say a hyperbolic component $\mathcal{H}$ in $\C^{d-1}$ (or the family $(P_{c,a})_{(c,a)\in\C^{d-1}}$)
to be of (disjoint) type $\un$ if for every $(c,a)\in\mathcal{H}$,
$P_{c,a}$ admits $d-1$ distinct attracting periodic orbits 
of respective exact periods $n_0, \ldots , n_{d-2}$  in $\C$.
Then all critical points of $P_{c,a}$ in $\C$
for $(c,a)\in \mathcal{H}$ are simple. For each $0\le i\le d-2$, 
we let $w_i(c,a) \in \D$ be the multiplier of the attracting cycle that 
has exact period $n_i$. In this way we get a holomorphic map
$\mathcal{W}=\mathcal{W}_{\mathcal{H}}:\mathcal{H}\to\D^{d-1}$ defined by
\begin{gather*}
\mathcal{W}(c,a)\pe (w_0(c,a),\ldots, w_{d-2}(c,a)),\quad(c,a)\in\mathcal{H}.
\end{gather*}
The following will also be useful in the sequel.

\begin{theorem}[{see~\cite[Theorem 6.8]{favregauthier}}]
The map $\mathcal{W}:\mathcal{H}\to\D^{d-1}$ is a biholomorphism.\label{tm:parametrizepoly}
\end{theorem}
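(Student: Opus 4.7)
The plan is to follow closely the strategy of Theorem \ref{prouve_le_bebe}, adapting it to the simpler polynomial setting. First I would prove that $\mathcal{W}$ is surjective onto $\D^{d-1}$ by the standard quasi-conformal surgery. Picking any $(c,a) \in \mathcal{H}$, each of the $d-1$ immediate basins $U_j$ of the attracting cycle of exact period $n_j$ is simply connected, contains a single simple critical point, and admits a conformal uniformization $\varphi_j : U_j \to \D$ conjugating $P_{c,a}^{n_j}|_{U_j}$ to the Blaschke product $\xi \mapsto \xi(\xi+w_j(c,a))/(1+\overline{w_j(c,a)}\,\xi)$. Given $(\rho_0,\ldots,\rho_{d-2}) \in \D(0,1-\varepsilon)^{d-1}$, one replaces these Blaschke products on a fixed disk $\{|\xi|<1-r\}$ by the corresponding Blaschke products of parameter $\rho_j$, obtaining a continuous family $\tilde P_\rho$ of quasi-regular maps, and then solves the Beltrami equation for the invariant Beltrami form to straighten $\tilde P_\rho$ into a genuine holomorphic family $P_\rho = \psi_\rho \circ \tilde P_\rho \circ \psi_\rho^{-1}$. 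After post-composing $\psi_\rho$ with the unique affine map putting $P_\rho$ into the normal form $P_{c(\rho),a(\rho)}$, one obtains a continuous section $\sigma : \D^{d-1} \to \mathcal{H}$ of $\mathcal{W}$.

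Next I would show that $\mathcal{W}$ has finite fibers. Since $(P_{c,a})_{(c,a)\in\C^{d-1}}$ is a reduced algebraic family containing no Latt\`es maps (polynomials never are) and no trivial flexible pieces, if for some $\uw\in\D^{d-1}$ the fiber $\mathcal{W}^{-1}(\uw)$ were infinite it would contain a positive-dimensional algebraic subvariety $\Lambda \subset \bigcap_{j=0}^{d-2}\Per_j(n_j) \cap \mathcal{H}$ along which every critical point is persistently attracted to an attracting cycle; thus $(P_{c,a})_{(c,a)\in\Lambda}$ would be $J$-stable. McMullen's rigidity theorem \cite{McMullen4} then forces the family along $\Lambda$ to be trivial, contradicting the fact that the parametrization $(c,a) \mapsto [P_{c,a}]$ of $\mathcal{P}_d^\cm$ is a finite branched cover. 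Combined with surjectivity and properness, this makes $\mathcal{W}:\mathcal{H}\to\D^{d-1}$ a finite (possibly ramified) covering between connected complex manifolds of the same pure dimension $d-1$.

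It then suffices to show that $\mathcal{W}$ has local degree $1$ at the center $(c_0,a_0) \in \mathcal{W}^{-1}(0,\ldots,0)$, which is the unique postcritically finite parameter of $\mathcal{H}$. By Theorem \ref{tm:transverspoly}, the $d-1$ hypersurfaces $\Per_j(n_j)$ are smooth at $(c_0,a_0)$ and meet transversely there, so the analytic map $V : \mathcal{H} \to \C^{d-1}$ defined by $V(c,a)=\bigl(P_{c,a}^{n_j}(c_j(c,a))-c_j(c,a)\bigr)_{0\le j\le d-2}$ has $\ker D_{(c_0,a_0)}V=\{0\}$. Now an argument identical to the proof of Lemma \ref{th:inverse} relates $D\mathcal{W}$ to $DV$: for a tangent vector $v = \dot f \in T_{(c_0,a_0)}\C^{d-1}$, one writes $\dot w_j = (P_{c_0,a_0}^{n_j})''(c_j)(\dot z_j - \dot c_j)$, where $z_j(t)$ follows the attracting cycle, and since $c_j$ is a simple critical point of $P_{c_0,a_0}$ one has $(P_{c_0,a_0}^{n_j})''(c_j)\neq 0$; if moreover $v\in\ker D\mathcal{W}$, then $\dot z_j = \dot c_j$, which via the expansion $P_{c,a}^{n_j}(c_j)-c_j = (1-w_j)(z_j-c_j)+O(t^4)$ yields $v\in\ker DV=\{0\}$. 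Hence $D_{(c_0,a_0)}\mathcal{W}$ is an isomorphism and $\mathcal{W}$ has degree $1$, so it is a biholomorphism.

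The main obstacle will be ensuring that the surgery in the first step produces a continuous section \emph{into the chosen parametrization} $(c,a) \mapsto P_{c,a}$ of $\mathcal{P}_d^\cm$ (rather than merely into the moduli space), but this is handled by the explicit affine normalization and does not introduce new difficulties.
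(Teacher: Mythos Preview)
The paper does not give its own proof of this statement; it is quoted from \cite[Theorem~6.8]{favregauthier}. Your proposal is the natural adaptation to the polynomial family of the paper's proof of the analogous Theorem~\ref{prouve_le_bebe} for $\mathcal{M}_d^{\fm}$, with Corollary~\ref{tm:transversality-ratd0} replaced by Theorem~\ref{tm:transverspoly}, and the argument is essentially correct and is indeed the strategy of the cited reference.

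One small caveat worth flagging: Theorem~\ref{tm:transverspoly} as stated here carries the hypothesis $\min_j n_j>1$, so your invocation of it for local invertibility at the center literally covers only that case; if some $n_j=1$ you would need either a direct check or to appeal to the more general transversality statement in \cite{favregauthier}. Also, you slip in ``properness'' of $\mathcal{W}$ without justification; in the paper's proof of Theorem~\ref{prouve_le_bebe} this step is handled by the Grauert--Remmert finiteness criterion (surjective plus finite fibers between equidimensional manifolds), and the same reference applies verbatim here.
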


\subsection{Counting hyperbolic components of disjoint type}
As in the case of rational maps, we denote by $N_\mathcal{P}(\un)$ the number of hyperbolic components of type $\un=(n_0,\ldots,n_{d-2})$ in the family $(P_{c,a})_{(c,a)\in\C^{d-1}}$. When $n_j\neq n_\ell$ for all $j\neq \ell$ and $n_j\geq1$ for all $j$, we have $N_\mathcal{P}(\un)=(d-1)!\cdot d_{|\un|}$. This result is an immediate consequence of Theorem~\ref{tm:parametrizepoly}. Indeed, all such components contain one postcritically finite parameter, counted with multiplicity, and all of them are contained in $\mathcal{C}_d$. The result follows from B\'ezout's Theorem and the fact that
$\deg(\Per_j(n_j))=d_{n_j}$.

Our aim here is to give a good generalization of the above statement, 
{including} the case when $n_j=n_\ell$ for all $j,\ell$.

The first observation is that  any hyperbolic component $\mathcal{H}$ in $\C^{d-1}$
of type $\un$ is contained in the compact set $\mathcal{C}_d$. We rely on the following lemma, which is an immediate adaptation of Lemma~\ref{5.5} (hence we omit the proof).

\begin{lemma}\label{lm:masscomppoly}
For any $\underline{\rho} \in ]0,1[^{d-1}$ and any $\un=(n_0,\ldots,n_{d-2})\in(\N^*)^{d-1}$
with $\min_jn_j\geq2$, the measure $T_{\un}^{d-1}(\underline{\rho})$ has full mass on the union of all hyperbolic components $\Omega\subset\mathcal{C}_d$ such that for all $(c,a)\in \Omega$, $P_{c,a}$ has $d-1$ distinct attracting cycles in $\C$ of respective exact periods $n_0,\ldots,n_{d-2}$. Furthermore, it gives mass $\# \mathrm{Stab}(\un)/d_{|\un|}$ to each of those components. 
\end{lemma}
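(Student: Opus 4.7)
The proof is an immediate adaptation of the argument for Lemma~\ref{5.5} to the polynomial family $(P_{c,a})_{(c,a)\in\C^{d-1}}$, using Theorem~\ref{tm:parametrizepoly} in place of Theorem~\ref{prouve_le_bebe} and Theorem~\ref{tm:transverspoly} in place of its rational counterpart.

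First I would observe that the diagonal $\{\theta\in[0,2\pi]^{d-1}:\theta_i=\theta_j\text{ for some }i\neq j\}$ has Lebesgue measure zero, so
\[T_\un^{d-1}(\underline{\rho})=\frac{1}{(2\pi)^{d-1}d_{|\un|}}\int_{\{\theta_i\neq\theta_j\,\forall i\neq j\}}\bigwedge_{j=0}^{d-2}[\Per_{n_j}(\rho_je^{i\theta_j})]\,\mathrm{d}\theta\]
as finite measures on $\C^{d-1}$. For $T_\un^{d-1}(\underline{\rho})$-almost every $(c,a)$, the polynomial $P_{c,a}$ therefore admits $d-1$ distinct cycles of respective exact periods $n_0,\ldots,n_{d-2}$ with pairwise distinct multipliers $\rho_j e^{i\theta_j}\in\D\setminus\{0\}$. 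The assumption $\min_j n_j\geq 2$ excludes the super-attracting fixed point $\infty$ from this list, so these are $d-1$ distinct attracting cycles lying in $\C$. Since $P_{c,a}$ has exactly $d-1$ finite critical points (with multiplicity) and each attracting basin contains at least one critical point by Fatou, each such basin contains exactly one; hence $P_{c,a}$ is hyperbolic of type $\un$, its Julia set is connected, and $(c,a)\in\mathcal{C}_d$.

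Next I fix a hyperbolic component $\Omega\subset\mathcal{C}_d$ of type $\un$ and consider the multiplier map $\mathcal{W}=(\mathcal{W}_0,\ldots,\mathcal{W}_{d-2}):\Omega\to\D^{d-1}$. By Theorem~\ref{tm:parametrizepoly}, $\mathcal{W}$ is a biholomorphism, and by Theorem~\ref{tm:transverspoly} the intersection $\bigcap_{j=0}^{d-2}\Per_j(n_j)$ is smooth and transverse at the (unique) center of $\Omega$. Combined with the biholomorphism $\mathcal{W}$, this transversality propagates to all of $\Omega$: for every $k$ and every $w\in\D$,
\[[\Per_k(w)]\bigr|_\Omega=\sum_{j:\,n_j=k}[\mathcal{W}_j=w]\]
as divisors on $\Omega$. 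Expanding $\bigwedge_{j=0}^{d-2}[\Per_{n_j}(\rho_je^{i\theta_j})]|_\Omega$ and discarding terms in which the same coordinate $\mathcal{W}_\ell$ is set to two distinct values (these vanish by disjointness of the corresponding level sets), we obtain, for almost every $\theta$,
\[\bigwedge_{j=0}^{d-2}[\Per_{n_j}(\rho_je^{i\theta_j})]\bigr|_\Omega=\sum_{\sigma\in\mathrm{Stab}(\un)}\bigwedge_{j=0}^{d-2}[\mathcal{W}_{\sigma(j)}=\rho_je^{i\theta_j}].\]

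Finally I would integrate against $\mathrm{d}\theta/(2\pi)^{d-1}$: for each $\sigma\in\mathrm{Stab}(\un)$, the corresponding summand becomes the pullback under $\mathcal{W}$ (with a permutation of the target coordinates) of the product probability measure $\bigotimes_{j}\lambda_{\rho_j}$ on $(\partial\D)^{d-1}$, and hence has mass $1$ on $\Omega$. Dividing by $d_{|\un|}$ gives the claimed mass $\#\mathrm{Stab}(\un)/d_{|\un|}$ on each such component. The only point requiring attention beyond mimicking Lemma~\ref{5.5} is the exclusion of the super-attracting fixed point $\infty$, which is precisely why the hypothesis $\min_j n_j\geq 2$ is imposed; otherwise the argument is a mechanical transposition of the rational case.
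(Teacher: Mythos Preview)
Your proof is correct and follows exactly the approach the paper indicates: it is the direct transposition of the argument for Lemma~\ref{5.5}, with Theorems~\ref{tm:parametrizepoly} and~\ref{tm:transverspoly} replacing their rational-map counterparts, and the paper itself omits the proof on precisely these grounds. One small remark: your stated reason for the hypothesis $\min_j n_j\geq 2$ is slightly off, since the multipliers $\rho_j e^{i\theta_j}$ are already nonzero and thus exclude $\infty$ regardless of period; the condition is rather needed so that Theorem~\ref{tm:transverspoly} applies (it carries the assumption $\min_j n_j>1$) and so that $d_{n_j}=D_{n_j}$.
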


Here is the precise statement.

\begin{theorem}\label{tm:countgoodpoly}
There exists a constant $C\geq1$ depending only on $d$, such that 
for any $\un=(n_0,\ldots,n_{d-2})\in(\N^*)^{d-1}$ with $\min_jn_j\geq 2$, we have
\[
0\leq 1-\frac{\# \mathrm{Stab}(\un)\cdot N_\mathcal{P}(\un)}{(d-1)!\cdot d_{|\un|}}\leq C\max_{0 \leq j \leq d-2}\frac{\sigma_2(n_j)}{d^{n_j}}.
\]
\end{theorem}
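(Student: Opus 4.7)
The plan is to compare the masses of $T_\un^{d-1}(\underline{\rho})$ and $T_\bif^{d-1}$ via Theorem~\ref{tm:vitessemoyennesalgebraic}, exploiting the compactness of the connectedness locus $\mathcal{C}_d$ in $\C^{d-1}$. First, I would expand $T_\bif^{d-1}=(dd^cL)^{d-1}$ by the multinomial formula: writing $dd^cL=\sum_{j=0}^{d-2}T_j$ and using $T_j\wedge T_j=0$, the only surviving multi-index in the expansion is $(1,1,\dots,1)$, so $T_\bif^{d-1}=(d-1)!\,T_0\wedge\cdots\wedge T_{d-2}=(d-1)!\,\mu_\bif$. Since $\mu_\bif$ is a probability measure, $\|T_\bif^{d-1}\|=(d-1)!$. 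By Lemma~\ref{lm:masscomppoly} applied with $\underline{\rho}=(1/2,\dots,1/2)$, the measure $T_\un^{d-1}(\underline{\rho})$ has total mass $\#\mathrm{Stab}(\un)\cdot N_{\mathcal{P}}(\un)/d_{|\un|}$ and is supported on the union of hyperbolic components of type $\un$, all of which lie in the compact set $\mathcal{C}_d$.

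Next, I would fix, once and for all, a smooth compactly supported cutoff $\chi:\C^{d-1}\to[0,1]$ with $\chi\equiv 1$ on a fixed open neighborhood of $\mathcal{C}_d$. Being smooth and compactly supported, $\chi$ is DSH with finite $\|\chi\|^*_{\mathrm{DSH}}$, both independent of $\un$. Since both measures are supported on $\mathcal{C}_d$, where $\chi\equiv1$, one has
\[
\langle T_\un^{d-1}(\underline{\rho}),\chi\rangle=\frac{\#\mathrm{Stab}(\un)\cdot N_{\mathcal{P}}(\un)}{d_{|\un|}}\and \langle T_\bif^{d-1},\chi\rangle=(d-1)!.
\]
Applying Theorem~\ref{tm:vitessemoyennesalgebraic} with $K=\mathrm{supp}(\chi)$ and $\Psi=\chi$, and noting that $|\log(1/2)|$ is an absolute constant, yields
\[
\left|\frac{\#\mathrm{Stab}(\un)\cdot N_{\mathcal{P}}(\un)}{d_{|\un|}}-(d-1)!\right|\leq C\max_{0\leq j\leq d-2}\frac{\sigma_2(n_j)}{d^{n_j}},
\]
for a constant $C$ depending only on $d$. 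Dividing by $(d-1)!$ gives the upper bound of the theorem.

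For the lower bound $\#\mathrm{Stab}(\un)\cdot N_{\mathcal{P}}(\un)\leq (d-1)!\cdot d_{|\un|}$, I would use a B\'ezout-type argument. Each hyperbolic component of type $\un$ has a unique center $(c^*,a^*)$ at which every critical point $c_j^*$ is periodic of some exact period $m_j$, the tuple $(m_0,\dots,m_{d-2})$ being a permutation of $\un$. By Theorem~\ref{tm:transverspoly}, $(c^*,a^*)$ is a transverse isolated point of $\bigcap_{j=0}^{d-2}\Per_j(m_j)$. The assignment \emph{center}~$\mapsto$~\emph{class of its period-ordering in $S_{d-1}/\mathrm{Stab}(\un)$} is well-defined, and its fibers are contained in the isolated-points sets of the corresponding intersections, each of cardinality at most $\prod_j\deg\Per_j(n_j)=d_{|\un|}$ by B\'ezout. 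Summing over the $(d-1)!/\#\mathrm{Stab}(\un)$ classes of permutations of $\un$ gives $N_{\mathcal{P}}(\un)\leq \frac{(d-1)!}{\#\mathrm{Stab}(\un)}\,d_{|\un|}$, as required.

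The main advantage over the rational-map analogue in Section~\ref{sec:counting} is the compactness of $\mathcal{C}_d$, which removes the need for the delicate logarithmic DSH cutoffs $\Psi_A$ and for the Lemma~\ref{tobedonebythomas} control on $C(K)$. Consequently no obstacle of comparable depth arises; the only point requiring care is verifying that all hyperbolic components of type $\un$ (for $\min_jn_j\geq 2$) are uniformly contained in $\mathcal{C}_d$, so that a single cutoff $\chi$ serves all $\un$ simultaneously, and that the B\'ezout multiplicity bound on the number of centers is applied correctly once the combinatorics of permutations of $\un$ modulo $\mathrm{Stab}(\un)$ is unwound.
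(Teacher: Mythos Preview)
Your proof is correct and follows essentially the same approach as the paper: fix $\underline{\rho}=(1/2,\dots,1/2)$, choose a smooth cutoff equal to $1$ on $\mathcal{C}_d$, apply Theorem~\ref{tm:vitessemoyennesalgebraic}, and read off the masses via Lemma~\ref{lm:masscomppoly} and the identity $(dd^cL)^{d-1}=(d-1)!\,\mu_\bif$. The only difference is that you spell out the lower bound via an explicit B\'ezout count over permutation classes of $\un$ (using Theorem~\ref{tm:transverspoly}), whereas the paper simply asserts the inequality; your argument is correct, though invoking transversality is more than strictly needed for an upper bound, since isolatedness of the centers already follows from the fact that $\bigcap_j\Per_j(m_j)\subset\mathcal{C}_d$ is compact.
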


\begin{proof}
Pick any $\un=(n_0,\ldots,n_{d-2})\in(\N^*)^{d-1}$ with $\min_jn_j\geq2$. 
Set $\underline{\rho}:=(1/2,\ldots,1/2)$,
and pick 
a smooth cut-off function $\Psi$ on $\C^{d-1}$
such that $\Psi=1$ on $\mathcal{C}_d$.
Applying Theorem~\ref{tm:vitessemoyennesalgebraic} {yields}
\[
\left|\langle T_{\un}^{d-1}(\underline{\rho}),\Psi\rangle-\langle(dd^cL)^{d-1},\Psi\rangle\right|\leq C\|\Psi\|_{\mathrm{DSH}}^*\max_{0\leq j \leq d-2}\frac{\sigma_2(n_j)}{d^{n_j}},
\]
{where $C>0$ only depends on $\supp(\Psi)$ and $d$}.
As seen in the previous Subsection, we have $\langle(dd^cL)^{d-1},\Psi\rangle=(d-1)!$,
and by Lemma~\ref{lm:masscomppoly} and $\supp(T_{\un}^{d-1}(\underline{\rho}))\subset\mathcal{C}_d$, 
\[
\langle T_{\un}^{d-1}(\underline{\rho}),\Psi\rangle
=\frac{\# \mathrm{Stab}(\un)}{d_{|\un|}}\cdot N_\mathcal{P}(\un).
\]
Now the proof is complete also 
by $N_\mathcal{P}(\un)\leq d_{|\un|}/\# \mathrm{Stab}(\un)$.
\end{proof}

\begin{remark} \normalfont
This result is coherent with the above remark concerning the case $n_j\neq n_\ell$ for all $j\neq \ell$, since in that case, $\mathrm{Stab}(\un)=\{\mathrm{id}\}$.
The above statement can also be interpreted as follows:

\emph{The number of postcritically finite parameters for which all critical points are periodic with prescribed exact periods $n_0,\ldots,n_{d-2}\geq2$ and at least $2$ critical points lie in the same super-attracting cycle, counted with multiplicity of intersection of the $\Per_{\tau(j)}(n_j)$ for all $\tau\in\mathfrak{S}_{d-1}$, is bounded from above 
by $C\max_{j\leq d-2}(\sigma_2(n_j)/d^{n_j})\cdot d^{|\un|}$.}

This is a much better estimate than the one we can obtain by naive arguments. Indeed, 
without taking the multiplicity into account we can naively get a bound from above of the form $C d^{|\un|-\min_jn_j/2}\max_{j\leq d-2}n_j$. Let us give the argument in the case $d=3$ and $n_0=n_1=n$ for simplicity: in that both critical points lie in the same $n$-cycle and one of them is sent to the other after at most $n/2$ iterates. The estimate follows from B\'ezout's Theorem.
\end{remark}

An immediate application of this Theorem is the following:

\begin{corollary}\label{cor:sameperiod}
For any integer $n\geq2$, we have
\[
0\leq 1-\frac{N_\mathcal{P}(n,\ldots,n)}{(d_{n})^{d-1}}\leq C\left(\frac{\sigma_2(n)}{d^{n}}\right).
\]
In particular, we have
\[\frac{N_\mathcal{P}(n,\ldots,n)}{(d_n)^{d-1}}=1+O\left(\frac{\sigma_2(n)}{d^{n}}\right), \ \text{as} \ n\to\infty.\]
\end{corollary}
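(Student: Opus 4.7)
The plan is simply to specialize Theorem~\ref{tm:countgoodpoly} to the tuple $\un = (n,n,\ldots,n) \in (\N^*)^{d-1}$ and compute the three factors appearing in the estimate.

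First, I would observe that when all entries of $\un$ are equal to $n$, every permutation of the indices $\{1,\ldots,d-1\}$ fixes $\un$, so $\#\mathrm{Stab}(\un) = (d-1)!$. Next, since $|\un| = (d-1)n$ and the notation of the introduction gives $d_{|\un|} = \prod_{j=0}^{d-2} d_{n_j} = (d_n)^{d-1}$, the left-hand fraction in Theorem~\ref{tm:countgoodpoly} becomes
\[
\frac{\#\mathrm{Stab}(\un)\cdot N_{\mathcal{P}}(\un)}{(d-1)!\cdot d_{|\un|}} = \frac{(d-1)!\cdot N_{\mathcal{P}}(n,\ldots,n)}{(d-1)!\cdot (d_n)^{d-1}} = \frac{N_{\mathcal{P}}(n,\ldots,n)}{(d_n)^{d-1}}.
\]
Finally, the maximum on the right-hand side collapses: $\max_{0\le j\le d-2}\sigma_2(n_j)/d^{n_j} = \sigma_2(n)/d^n$. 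Plugging these into the inequality of Theorem~\ref{tm:countgoodpoly} yields the first displayed inequality of the corollary, valid for $n\ge 2$ (which is the hypothesis $\min_j n_j \ge 2$ in Theorem~\ref{tm:countgoodpoly}).

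For the asymptotic statement, I would simply note that the first inequality rewrites as
\[
\frac{N_{\mathcal{P}}(n,\ldots,n)}{(d_n)^{d-1}} = 1 + O\!\left(\frac{\sigma_2(n)}{d^n}\right)
\]
as $n\to\infty$, which is exactly the second assertion. There is no real obstacle here, since the only nontrivial work (a quantitative equidistribution estimate combined with the mass computation of Lemma~\ref{lm:masscomppoly}) has already been carried out in the proof of Theorem~\ref{tm:countgoodpoly}; the corollary is purely a matter of bookkeeping with the stabilizer in the degenerate case of repeated periods.
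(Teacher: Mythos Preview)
Your proof is correct and follows essentially the same approach as the paper: specialize Theorem~\ref{tm:countgoodpoly} to $\un=(n,\ldots,n)$, compute $\#\mathrm{Stab}(\un)=(d-1)!$ and $d_{|\un|}=(d_n)^{d-1}$, and observe that the maximum reduces to $\sigma_2(n)/d^n$. The paper's own proof is just a two-line version of what you wrote.
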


\begin{proof}
In the present case, we have $\#\mathrm{Stab}(n,\ldots,n)=(d-1)!$ and $d_{|(n,\ldots,n)|}={(d_n)^{d-1}}$. Since $\mu_\bif$ is a probability measure, the result follows from Theorem~\ref{tm:countgoodpoly} above.
\end{proof}

\begin{remark} \normalfont \normalfont\normalfont
In fact, we have proved that, counted with multiplicity, the number of intersection points of the $\Per_j(n)$ for which at least two critical points lie in the same periodic orbit is bounded from above by a constant times $\sigma_2(n)d^{(d-2)n}$.
\end{remark}

\subsection{Distribution of polynomials with $(d-1)$ attracting cycles}
Pick $\un=(n_0,\ldots,n_{d-2})\in(\N^*)^{d-1}$ with $\min_jn_j\geq2$ and 
{$\uw:=(w_0,\ldots,w_{d-2})\in\D^{d-1}$}. As in the case of rational maps, we let $C_{\un,\uw}$ be the (finite) set of parameters $(c,a)\in\C^{d-1}$ such that $P_{c,a}$ has $d-1$ distinct attracting cycles in $\C$ of respective exact periods $n_0,\ldots,n_{d-2}$ and multipliers $w_0,\ldots,w_{d-2}$. We also let
\[
\nu_{\un,\uw}:=\frac{\#\mathrm{Stab}(\un,\uw)}{(d-1)!\cdot d_{|\un|}}\sum_{(c,a)\in C_{\un,\uw}}\delta_{(c,a)}.
\]
The only modification from the case of rational maps is the multiplication by $1/(d-1)!$. From the normalization $\mu_\bif=(dd^cL)^{d-1}/(d-1)!$, we see easily that this factor should also appear in the definition of $\nu_{\un,\uw}$.
A similar argument to that in the proof of Theorem~\ref{tm:vitessecenters} gives the following.
\begin{theorem}\label{tm:vitessepoly}
There exists a constant $C>0$ depending only on $d$ such that
\begin{enumerate}
\item for any $\Psi\in\mathcal{C}^2_c(\C^{d-1})$ and 
any $\un=(n_0,\ldots,n_{d-2})\in(\N^*)^{d-1}$ with $\min_jn_j\geq2$, 
\[
\left|\int_{\C^{d-1}}\Psi\, \nu_{\un,\underline{0}}-\int_{\C^{d-1}}\Psi\, \mu_\bif\right|
\leq C\max_{0\leq j\leq d-2}\left(\frac{\sigma_2(n_j)}{d^{n_j}}\right)\|\Psi\|_{\mathcal{C}^2}.
\]
\item for any $\Psi\in\mathcal{C}^1_c(\C^{d-1})$, any $\uw=(w_0,\ldots,w_{d-2})\in\D^{d-1}$ and any $\un=(n_0,\ldots,n_{d-2})\in(\N^*)^{d-1}$ with 
$\min_jn_j\geq2$,
\[
\left|\int_{\C^{d-1}}\Psi\, \nu_{\un,\uw}-\int_{\C^{d-1}}\Psi\, \mu_\bif\right|
\leq C
\max_{0\leq j \leq d-2}\left( \left(\frac{-1}{d^{n_j}\log|w_j|}\right)^{1/2},
\left(\frac{\sigma_2(n_j)}{d^{n_j}}\right)^{1/2}\right)
\|\Psi\|_{\mathcal{C}^1}.
\]
\end{enumerate}
\end{theorem}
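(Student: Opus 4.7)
The argument runs exactly as for Theorem~\ref{tm:vitessecenters} in Section~\ref{sec:equicenter}, with two simplifications proper to the polynomial setting: the finite branched cover $\pi_\fm$ disappears, and the supports of $\mu_\bif$ and of $\nu_{\un,\uw}$ are contained in the fixed compact set $\mathcal{C}_d$, which makes all constants depend only on $d$. Fix once and for all a cut-off $\chi\in\mathcal{C}^\infty_c(\C^{d-1})$ with $\chi\equiv 1$ on a neighborhood $K$ of $\mathcal{C}_d$. Then $\langle\nu_{\un,\uw}-\mu_\bif,\Psi\rangle=\langle\nu_{\un,\uw}-\mu_\bif,\chi\Psi\rangle$, and $\|\chi\Psi\|_{\mathcal{C}^s}\leq C_\chi\|\Psi\|_{\mathcal{C}^s}$ for $s\in\{1,2\}$, so we may assume $\supp(\Psi)\subset K$ with $K$ depending only on $d$.

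The first step is to compare $T_\un^{d-1}(\underline{\rho})$ with $T_\bif^{d-1}=(d-1)!\mu_\bif$, where $\underline{\rho}=(\rho_0,\dots,\rho_{d-2})$ is chosen with $\rho_j:=\max(|w_j|,1/2)$ and refined so that $\rho_i=\rho_j$ whenever $(n_i,w_i)=(n_j,w_j)$; this ensures $\mathrm{Stab}(\un,\underline{\rho})=\mathrm{Stab}(\un,\uw)$ and $|\log\rho_j|\leq\log 2$. Theorem~\ref{tm:vitessemoyennesalgebraic} applied to the algebraic family $(P_{c,a})$, together with $\|\cdot\|_{\mathrm{DSH}}^*\leq C\|\cdot\|_{\mathcal{C}^2}$ on $K$ and interpolation between Banach spaces, gives
\[
\left|\langle T_\un^{d-1}(\underline{\rho})-(d-1)!\mu_\bif,\Psi\rangle\right|\leq C\max_j\Bigl(\frac{\sigma_2(n_j)}{d^{n_j}}\Bigr)^{s/2}\|\Psi\|_{\mathcal{C}^s}
\]
for $s\in\{1,2\}$. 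It thus suffices to estimate $\langle(d-1)!\nu_{\un,\uw}-T_\un^{d-1}(\underline{\rho}),\Psi\rangle$.

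By Lemma~\ref{lm:masscomppoly}, both measures are supported on the union of hyperbolic components $\mathcal{H}$ of type $\un$, and on each such $\mathcal{H}$ the multiplier map $\mathcal{W}_\mathcal{H}:\mathcal{H}\to\D^{d-1}$ from Theorem~\ref{tm:parametrizepoly} is a biholomorphism. Exactly as in Subsection~\ref{sec:reductioncurve}, I will use the telescoping identity
\[
d_{|\un|}\bigl(T_\un^{d-1}(\underline{\rho})-(d-1)!\nu_{\un,\uw}\bigr)\Big|_{\mathcal{H}}=\sum_{\sigma\in\mathrm{Stab}(\un,\uw)}\sum_{j=0}^{d-2}S_{\sigma,j}
\]
together with the slicing description of $S_{\sigma,j}$ as an integral over $\tilde{\mathbb{S}}_{j-1}=\prod_{i<j}\partial\D_{\rho_i}$ of measures of the form $\mathcal{W}_{\Lambda_{\sigma,j}(u)}^*(\lambda_{\rho_j}-\delta_{w_j})$, where $\Lambda_{\sigma,j}(u)$ is the algebraic curve obtained by imposing the remaining multipliers. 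By B\'ezout, $\mathrm{Area}(\Lambda_{\sigma,j}(u))\leq C d^{|\un|-n_j}$ and the number of components of type $\un$ is at most $Cd^{|\un|}$.

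For the $\mathcal{C}^1$ bound in (2), I replace $\Psi$ by $\Psi-\Psi(\mathcal{W}_{\Lambda_{\sigma,j}(u)}^{-1}(w_j))$ (the difference has zero mean against $\mathcal{W}_{\Lambda_{\sigma,j}(u)}^*(\lambda_{\rho_j}-\delta_{w_j})$), bound by $\|\Psi\|_{\mathcal{C}^1}\cdot\mathrm{diam}(\mathcal{W}_{\Lambda_{\sigma,j}(u)}^{-1}(\overline{\D(0,\rho_j)}))$, and apply the length--area estimate (Lemma~\ref{lm:BriendDuval}) to the annulus $\mathcal{W}_{\Lambda_{\sigma,j}(u)}^{-1}(\{\rho_j<|\cdot|<1\})$ of modulus $|\log\rho_j|/(2\pi)$; summing over components by Cauchy--Schwarz produces the rate $\max_j(-1/(d^{n_j}\log\rho_j))^{1/2}$. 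For the $\mathcal{C}^2$ bound in (1), where $\uw=\underline{0}$ and $\rho_j\equiv 1/2$, I instead subtract the Taylor polynomial of order one at the center $\mathcal{W}_{\Lambda_{\sigma,j}(u)}^{-1}(0)$, whose linear part integrates to zero by harmonicity, and bound by $\|\Psi\|_{\mathcal{C}^2}\cdot\mathrm{diam}^2\leq C\|\Psi\|_{\mathcal{C}^2}\cdot\mathrm{Area}$; summing directly gives $\max_j d^{-n_j}\|\Psi\|_{\mathcal{C}^2}$, which is dominated by $\max_j\sigma_2(n_j)/d^{n_j}$. Combining with Step~1 finishes the proof.

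The only real obstacle is of bookkeeping nature: one must verify that once $\chi$ is fixed, the constants produced by Theorem~\ref{tm:vitessemoyennesalgebraic}, the length--area estimate and the B\'ezout area bound genuinely depend only on $d$. This is automatic here because $\mathcal{C}_d$ is compact and each hyperbolic component of type $\un$ (with $\min_jn_j\geq 2$) lies inside $\mathcal{C}_d$, so no component can escape to infinity; in particular, unlike in $\mathcal{M}_d$, no separate treatment of relatively compact components close to $\partial\mathcal{M}_d$ is needed.
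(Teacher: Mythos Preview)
Your proposal is correct and follows essentially the same approach as the paper, which simply states that the argument is identical to that of Theorem~\ref{tm:vitessecenters}. You have correctly identified and spelled out the two simplifications proper to the polynomial setting---the absence of the branched cover $\pi_\fm$ and the compactness of $\mathcal{C}_d$ yielding universal constants---and your outline of the telescoping, slicing to curves, and length--area estimates matches Section~\ref{sec:equicenter} step by step.
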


\begin{remark} \normalfont
The key difference with the case of the moduli space $\mathcal{M}_d$ of degree $d$ rational maps is the existence of a \emph{universal} constant $C>0$. This is a consequence of the fact that $C_{\un,\uw}\cup\mathrm{supp}(\mu_\bif)\subset\mathcal{C}_d$, which is compact in $\C^{d-1}$, for all $\un$ and all $\uw$. This compactness property implies the existence of a universal constant $C_1>0$ in the conclusion of Theorem~\ref{tm:vitessemoyennesalgebraic} in the family $(P_{c,a})_{(c,a)\in\C^{d-1}}$. 
\end{remark}

We now come to our last result in the spirit of Theorem B of \cite{distribGV}: 
for any $n\in\N^*$, we want to prove the measure equidistributed on parameters $(c,a)\in\C^{d-1}$ satisfying $c_j\in\Fix^*(P_{c,a}^n)$ for any $0\le j\le d-2$ converges towards the bifurcation measure, with an exponential speed of convergence.

\begin{corollary}
There exists a constant $C>0$ depending only on $d$ such that 
for any integer $n\geq2$ and any $\Psi\in\mathcal{C}_c^2(\C^{d-1})$,
we have
\[\left|\frac{1}{(d_n)^{d-1}}\int_{\C^{d-1}}\Psi\bigwedge_{j=0}^{d-2}[\Per_j(n)]-\int_{\C^{d-1}}\Psi\, \mu_\bif\right|\leq C\cdot \frac{\sigma_2(n)}{d^n}\cdot\|\Psi\|_{\mathcal{C}^2}.\]
\end{corollary}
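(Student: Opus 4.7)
The plan is to reduce the corollary to Theorem~\ref{tm:vitessepoly}(1) applied with $\un=(n,\ldots,n)$ and $\uw=(0,\ldots,0)$, by comparing the two measures $(d_n)^{-(d-1)} \bigwedge_{j=0}^{d-2}[\Per_j(n)]$ and $\nu_{(n,\ldots,n),\underline{0}}$. Since $\#\mathrm{Stab}((n,\ldots,n),(0,\ldots,0))=(d-1)!$ and $d_{|(n,\ldots,n)|}=(d_n)^{d-1}$, the definition of $\nu_{\un,\uw}$ gives
\[
\nu_{(n,\ldots,n),\underline{0}} = \frac{1}{(d_n)^{d-1}} \sum_{(c,a)\in C_{(n,\ldots,n),\underline{0}}} \delta_{(c,a)},
\]
and by Theorem~\ref{tm:parametrizepoly} each hyperbolic component of type $(n,\ldots,n)$ contains exactly one center (the unique preimage of $0$ under its multiplier map), so $|C_{(n,\ldots,n),\underline{0}}|=N_\mathcal{P}(n,\ldots,n)$.

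Next, I would compute the mass of the intersection current. As all critical points are periodic on $\bigcap_j \Per_j(n)$, this intersection is contained in the compact connectedness locus $\mathcal{C}_d\subset\C^{d-1}$, hence is a finite set. Since each $\Per_j(n)$ is an algebraic hypersurface of degree $d_n$ for $n\geq 2$, Bézout's theorem yields
\[
\left\|\bigwedge_{j=0}^{d-2}[\Per_j(n)]\right\| = (d_n)^{d-1}.
\]
Moreover, at every point $(c,a)\in C_{(n,\ldots,n),\underline{0}}$ the $d-1$ critical points of $P_{c,a}$ are simple and pairwise distinct (they lie in $d-1$ distinct super-attracting cycles), so Theorem~\ref{tm:transverspoly} applies and the hypersurfaces $\Per_j(n)$ meet transversely at $(c,a)$, contributing multiplicity exactly $1$. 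This yields the decomposition
\[
\frac{1}{(d_n)^{d-1}}\bigwedge_{j=0}^{d-2}[\Per_j(n)] = \nu_{(n,\ldots,n),\underline{0}} + \nu_\mathrm{bad},
\]
where $\nu_\mathrm{bad}$ is a positive measure supported on the finitely many remaining intersection points -- namely, those parameters where at least two critical points lie in the same super-attracting cycle.

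To conclude, I would bound the mass of $\nu_\mathrm{bad}$, which equals $(d_n)^{-(d-1)}\bigl((d_n)^{d-1}-N_\mathcal{P}(n,\ldots,n)\bigr)$. Since $\#\mathrm{Stab}((n,\ldots,n))=(d-1)!$, Theorem~\ref{tm:countgoodpoly} (or directly Corollary~\ref{cor:sameperiod}) gives $\|\nu_\mathrm{bad}\|\leq C\,\sigma_2(n)/d^n$ with $C$ depending only on $d$. As $\supp(\nu_\mathrm{bad})\subset\mathcal{C}_d$ is compact, for any $\Psi\in\mathcal{C}^2_c(\C^{d-1})$ we get $|\langle\nu_\mathrm{bad},\Psi\rangle|\leq\|\nu_\mathrm{bad}\|\cdot\|\Psi\|_\infty \leq C\,(\sigma_2(n)/d^n)\,\|\Psi\|_{\mathcal{C}^2}$. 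Combining with Theorem~\ref{tm:vitessepoly}(1), which bounds $|\langle\nu_{(n,\ldots,n),\underline{0}}-\mu_\bif,\Psi\rangle|$ by $C\,(\sigma_2(n)/d^n)\,\|\Psi\|_{\mathcal{C}^2}$, the triangle inequality gives the desired estimate.

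The proof is essentially a bookkeeping exercise, and I do not expect a serious obstacle: the hard analytic content is already encoded in the $\mathcal{C}^2$-equidistribution of centers (Theorem~\ref{tm:vitessepoly}(1), which itself rests on the quantitative Lyapunov approximation of Theorem~\ref{tm:vitessemoyennesalgebraic} and the length-area estimate controlling the diameter of hyperbolic components), and in the counting of bad intersection points (Theorem~\ref{tm:countgoodpoly}); the mildly delicate point is verifying that the good part of $\bigwedge_j[\Per_j(n)]$ matches $\nu_{(n,\ldots,n),\underline{0}}$ without extra multiplicities, which is precisely what Theorem~\ref{tm:transverspoly} provides.
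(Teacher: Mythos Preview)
Your proof is correct and follows the paper's strategy of reducing to Theorem~\ref{tm:vitessepoly}(1). You are in fact more careful than the paper's one-line proof, which asserts the identity $\bigwedge_{j=0}^{d-2}[\Per_j(n)]=(d_n)^{d-1}\nu_{(n,\ldots,n),\underline{0}}$ directly; as you correctly observe, this fails at the ``bad'' parameters where two critical points share a super-attracting cycle, and you handle this discrepancy via Corollary~\ref{cor:sameperiod}.
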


\begin{proof}
For any integer $n\geq2$, we have
$\bigwedge_{j=0}^{d-2}[\Per_{j}(n)]=(d_{n})^{d-1}\nu_{n,(0,\ldots,0)}$, so that we can directly apply Theorem~\ref{tm:vitessepoly}.
\end{proof}

\bibliographystyle{short}
\bibliography{biblio}
\end{document}